\documentclass[a4paper, reqno, twoside, 10pt]{amsart}

\usepackage{amsmath,amssymb,amscd,mathtools}
\usepackage{booktabs,mathrsfs,array}
\usepackage{enumitem}

\usepackage[T1]{fontenc}
\usepackage{microtype}
\usepackage{newtxtext,newtxmath}
\usepackage{helvet}% selects Helvetica as sans-serif font
\usepackage{courier}% selects Courier as typewriter font

\usepackage[hidelinks]{hyperref}

\newtheorem{theorem}{Theorem}[section]
\newtheorem{maintheorem}{Theorem}

\newtheorem{lemma}[theorem]{Lemma}
\newtheorem{proposition}[theorem]{Proposition}
\newtheorem*{proposition*}{Proposition}
\newtheorem{corollary}[theorem]{Corollary}

\newtheorem*{conjecture*}{Conjecture}

\theoremstyle{definition}

\newtheorem{definition}[theorem]{Definition}

\newtheorem{example}[theorem]{Example}

\newtheorem{remark}[theorem]{Remark}
\newtheorem*{remark*}{Remark}

\numberwithin{equation}{section}

\DeclareMathOperator{\Hom}{Hom}
\DeclareMathOperator{\Ext}{Ext}
\DeclareMathOperator{\dep}{depth}
\DeclareMathOperator{\indx}{index}
\DeclareMathOperator{\coker}{coker}
\DeclareMathOperator{\Tot}{Tot}
\DeclareMathOperator{\Spec}{Spec}
\DeclareMathOperator{\End}{End}
\DeclareMathOperator{\length}{length}
\DeclareMathOperator{\Der}{Der}
\DeclareMathOperator{\Res}{Res}
\DeclareMathOperator{\divr}{div}
\DeclareMathOperator{\im}{im}
\DeclareMathOperator{\Specan}{Specan}
\DeclareMathOperator{\tr}{tr}
\DeclareMathOperator{\mult}{mult}
\DeclareMathOperator{\SL}{SL}
\DeclareMathOperator{\GL}{GL}
\DeclareMathOperator{\wt}{wt}
\DeclareMathOperator{\id}{id}
\DeclareMathOperator{\Sym}{Sym}
\DeclareMathOperator{\Supp}{Supp}
\DeclareMathOperator{\rank}{rank}
\DeclareMathOperator{\Aut}{Aut}

\hypersetup{pdftitle={Positivity of Wahl defects and representations from index-one covers}, pdfauthor={Heesang Park and Dongsoo Shin}, pdfsubject={Wahl defect; differential forms and residues; canonical representation; geometric genus; specialization of reflexive canonical powers}, pdfkeywords={normal surface singularity, Wahl defect, rational surface singularity, Q-Gorenstein singularity, canonical cover, rational homology disk smoothing}}

\begin{document}
%\title[Wahl defects and representations]{Positivity of Wahl defects and representations from index-one covers}

\title[Wahl defects and specialization]{Positivity of Wahl defects and specialization of reflexive powers of the canonical sheaf}

\author[H. Park]{Heesang Park}

\address{Department of Mathematics, Konkuk University, Seoul, Republic of Korea}

\email{HeesangPark@konkuk.ac.kr}

\author[D. Shin]{Dongsoo Shin}
%\thanks{Corresponding author: Dongsoo Shin (dsshin@cnu.ac.kr).}

\address{Department of Mathematics, Chungnam National University, Daejeon, Republic of Korea}

\email{dsshin@cnu.ac.kr}

\subjclass[2020]{14J17, 14B07, 32S25, 32S30}

\keywords{canonical cover, normal surface singularity, Q-Gorenstein surface singularity, rational homology disk smoothing, rational surface singularity, Wahl defect}

\begin{abstract}
Let \((X,E) \to (V,0)\) be the minimal good resolution of a normal complex surface singularity, with \(E\) reduced. The Wahl defect is \(\Delta(V,0) :=h^1(\mathcal{O}_X)-h^1(S_X)+h^1(\bigwedge^2 S_X)\), where \(S_X=\Theta_X(-\log E)\). For non-Gorenstein \(\mathbb{Q}\)-Gorenstein singularities \((V,0)\), we prove that \(\Delta(V,0)\geq0\), with equality precisely in the quasi-homogeneous case. This proves Wahl's Main Conjecture in this class and his \(\mathbb{Q}\)-Gorenstein and Rational Conjectures. The proof expresses \(\Delta\) as the multiplicity of the canonical character in a representation constructed from differential forms and residues. We also prove \(\Delta+4p_g\geq0\) for arbitrary normal surface singularities, with equality precisely for rational quasi-homogeneous singularities. For smoothings of rational surface singularities, we give a Riemann--Roch formula for the lengths of the cokernels of the restriction maps to the central fiber for all reflexive canonical powers, in terms of the Milnor number and periodic local Riemann--Roch corrections. This yields numerical and representation-theoretic criteria for \(\mathbb{Q}\)-Gorenstein smoothings, defined as quotients of equivariant smoothings of the index-one cover with trivial group action on the base. If a rational singularity admits a \(\mathbb{Q}\)-Gorenstein smoothing, such smoothings are precisely those with minimal Milnor number, or equivalently those whose smoothing components have minimal dimension. Combining these results, we prove that every rational homology disk smoothing is \(\mathbb{Q}\)-Gorenstein, has a rational weighted homogeneous central fiber, and occurs over a one-dimensional smoothing component. Together with the known weighted homogeneous classification, this completes the analytic classification of normal surface singularities admitting rational homology disk smoothings.
\end{abstract}

\maketitle

\section{Introduction}
\label{sec:introduction}

This paper studies positivity of the Wahl defect and specialization of reflexive canonical powers under smoothings of normal complex surface singularities. Our first main result proves positivity with a quasi-homogeneity equality criterion for \(\mathbb{Q}\)-Gorenstein surface singularities. Our second main result computes the specialization defects of all reflexive canonical powers under smoothings of rational surface singularities. Together with the known weighted homogeneous classification, these results complete the analytic classification of normal surface singularities admitting rational homology disk smoothings.

Let \((V,0)\) be a normal complex surface singularity, and let \(\pi\colon(\mathcal{V},0)\to(T,0)\) be a one-parameter smoothing over a smooth base. Write \(M_\pi\) for its Milnor fiber and \(\mu_\pi=b_2(M_\pi)\) for its \emph{Milnor number}. Following Wahl~\cite{Wahl2015}, we write \(\tau_\pi\) for the \emph{Tjurina number} of the smoothing, namely the dimension of the corresponding smoothing component in the base of a semi-universal deformation of \((V,0)\). Both invariants may depend on \(\pi\).

Let \(p\colon(X,E)\to(V,0)\) be the minimal good resolution, with \(E\) reduced, and put
\[
 S_X=\Theta_X(-\log E),\quad
 L_X=\bigwedge\nolimits^2S_X=\mathcal{O}_X(-(K_X+E)).
\]
The \emph{Wahl defect} is
\[
 \Delta(V,0):=h^1(X,\mathcal{O}_X)-h^1(X,S_X)+h^1(X,L_X).
\]
If \((V,0)\) is not a rational double point, Wahl~\cite[Corollary~2.5]{Wahl2015} gives
\begin{equation}\label{eq:introduction-wahl-smoothing}
 \Delta(V,0)=1+\mu_\pi-\tau_\pi+\alpha_\pi.
\end{equation}
Here \(\alpha_\pi\geq0\) is the length of the cokernel of the specialization map for the dual relative canonical module. The formula relates the singularity invariant \(\Delta(V,0)\) to the smoothing invariants \(\mu_\pi,\tau_\pi\) and the canonical-specialization correction \(\alpha_\pi\). We study both the defect and the specialization correction, extending the latter to every reflexive canonical power.

\subsection{Positivity of the Wahl defect}

The positivity question originates in the classical comparison of Milnor and Tjurina numbers. For an isolated hypersurface singularity, the Jacobian algebra surjects onto the Tjurina algebra, so \(\mu\geq\tau\); Saito~\cite{Saito1971} proved that equality characterizes quasi-homogeneity. The same inequality and equality criterion hold for isolated complete intersection singularities by work of Greuel~\cite{Greuel1980}, Looijenga--Steenbrink~\cite{LooijengaSteenbrink1985}, Wahl~\cite{Wahl1985}, and Vosegaard~\cite{Vosegaard2002}. For normal Gorenstein surface singularities other than rational double points, Wahl~\cite{Wahl1985} proved the corresponding statement \(\Delta(V,0)\geq1\), with equality if and only if \((V,0)\) is quasi-homogeneous. Here quasi-homogeneity means the existence of a good \(\mathbb{C}^*\)-action; equivalently, the singularity is analytically isomorphic to one defined by weighted homogeneous equations. We therefore use quasi-homogeneous and weighted homogeneous synonymously.

For non-Gorenstein quasi-homogeneous surface singularities, Wahl~\cite[Theorem~3.3(i)]{Wahl2015} proved that \(\Delta(V,0)=0\). His Main Conjecture~\cite[p.~317]{Wahl2015} predicts that every non-Gorenstein normal surface singularity satisfies \(\Delta(V,0)\geq0\), with equality precisely in the quasi-homogeneous case. The conjecture is false in general: Park--Shin~\cite{ParkShinNonlinear} construct families with arbitrarily negative Wahl defect. Our first main result proves the conjecture for non-Gorenstein \(\mathbb{Q}\)-Gorenstein surface singularities, in a form that also includes the Gorenstein case.

\begin{maintheorem}[Theorem~\ref{thm:main}]
Let \((V,0)\) be a normal \(\mathbb{Q}\)-Gorenstein complex surface singularity. Put
\[
 \varepsilon(V,0)=
 \begin{cases}
 1&\text{if \((V,0)\) is Gorenstein},\\
 0&\text{if \((V,0)\) is non-Gorenstein}.
 \end{cases}
\]
If \((V,0)\) is not a rational double point, then
\[
 \Delta(V,0)\geq\varepsilon(V,0),
\]
with equality if and only if \((V,0)\) is quasi-homogeneous. For a rational double point, \(\Delta(V,0)=0\).
\end{maintheorem}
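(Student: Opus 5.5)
We plan to reduce to the Gorenstein case via the index-one (canonical) cover, following the abstract's hint that \(\Delta\) is the multiplicity of the canonical character in a representation built from differential forms and residues. Let \((V,0)\) be \(\mathbb{Q}\)-Gorenstein of index \(r\), and let \((W,0)\to(V,0)\) be the index-one cover. This is a cyclic \(\mu_r\)-quotient, étale off the origin, with \((W,0)\) Gorenstein. The rational double point case is classical: there \(h^1(\mathcal{O}_X)=0\), and the \(h^1(S_X)\) and \(h^1(L_X)\) terms vanish because RDPs are taut and rigid in the equisingular sense. So we assume \((V,0)\) is not an RDP.

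\textbf{Step 1: a representation-theoretic formula for \(\Delta\).} Take a \(\mu_r\)-equivariant good resolution \(Y\to W\) dominating \(X\) via the quotient. Form the \(\mu_r\)-representations
\[
H^1(Y,\mathcal{O}_Y),\qquad H^1(Y,\Theta_Y(-\log E_Y)),\qquad H^1(Y,\mathcal{O}_Y(-(K_Y+E_Y))).
\]
The invariant parts of the first two recover \(h^1(\mathcal{O}_X)\) and \(h^1(S_X)\), because cyclic quotient singularities are rational and logarithmic tangent sheaves descend. The key point is that \(L_X\) corresponds not to the invariant part but to the isotypic piece of \(L_Y\) twisted by the character \(\chi\) through which \(\mu_r\) acts on a generator of \(\omega_W\). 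Using Serre duality and the residue sequence
\[
0\to\Omega^2_Y\to\Omega^2_Y(E_Y)\to\omega_{E_Y}\to0,
\]
we express \(H^1(L_Y)\) through \(H^0(\Omega^2_Y(2E_Y))\) modulo forms, and then take \(\chi\)-isotypic components.

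\textbf{Step 2: compare with Wahl's Gorenstein argument.} Wahl proves \(\Delta(W)\geq1\) by exhibiting \(\Delta(W)-1\) as the dimension of a cokernel. Concretely, this is the failure of the Euler-type derivation or the contraction map \(\Theta\otimes\omega\to\Omega^1\) to be surjective onto a suitable piece, with equality exactly when a good \(\mathbb{C}^*\)-action exists. That map is \(\mu_r\)-equivariant. We would show that
\[
\Delta(V,0)=\bigl[\text{this cokernel} \oplus \text{correction}\bigr]_\chi,
\]
that is, the multiplicity of the canonical character \(\chi\). Here the correction is a one-dimensional summand contributed by \(\omega_W\) itself. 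It carries character \(\chi\), and \(\chi\) is nontrivial exactly when \((V,0)\) is non-Gorenstein. This accounts for \(\varepsilon\): in the Gorenstein case \(r=1\) and the trivial summand gives the \(+1\), while for \(r>1\) it lands in the wrong isotypic piece and contributes \(0\). Nonnegativity of a multiplicity then gives \(\Delta\geq\varepsilon\).

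\textbf{Step 3: the equality case.} We need \(\Delta=\varepsilon\) if and only if the \(\chi\)-component of the cokernel vanishes, and we must show this forces a good \(\mathbb{C}^*\)-action on \(W\) commuting with \(\mu_r\), hence on \(V\). The converse direction is Wahl's \(\Delta=0\) for non-Gorenstein quasi-homogeneous singularities, together with the Gorenstein case.

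\textbf{Main obstacle.} The hardest step is the forward direction of Step 3, since only one isotypic piece vanishes rather than the whole cokernel. We would first try to produce a \(\mu_r\)-semi-invariant derivation \(D\) of character \(\chi^{-1}\)-twisted weight with \(D\cdot\omega=\omega\) up to units, i.e. an Euler-like derivation. Averaging over \(\mu_r\) makes \(D\) invariant. We would then apply the Saito/Wahl criterion: a derivation whose semisimple part has positive eigenvalues on \(\mathfrak{m}/\mathfrak{m}^2\) yields a good \(\mathbb{C}^*\)-action. The delicate point is checking that the relevant vanishing really produces such a \(D\) in the \(\chi\)-isotypic piece, rather than in some other character.
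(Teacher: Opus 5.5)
Your outline has the right architecture (index-one cover, multiplicity of the canonical character, divergence and the Euler field for the equality case). However, the steps that carry the proof are asserted rather than supplied, and the bookkeeping as written is wrong.

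First, your correction term is inconsistent. A one-dimensional summand that carries character $\chi$ contributes $1$ to the $\chi$-multiplicity for every $r$. Step~2 would then give $\Delta\ge1$ for non-Gorenstein singularities too, contradicting the value $\Delta=0$ for non-Gorenstein quasi-homogeneous ones that you invoke in Step~3. What holds is $[\mathbb{C}_\kappa]\mathcal{D}_Y=\mathbf{1}+[\mathcal{Q}]-[\mathcal{P}]$ on the minimal good resolution $Y$ of the cover. Contraction with the eigen-generator $\eta$ twists \emph{all three} sheaves into the terms of the polar complex $\mathscr{C}_D^\bullet$, not just $L$. The extra summand is the \emph{trivial} representation $H^3$ of the link (equivalently $T_B=(\omega_B/\mathfrak{m}_B\omega_B)\otimes\mathbb{C}_{\kappa^{-1}}$), and its $\kappa$-multiplicity is $\delta_{\kappa,1}=\varepsilon$.

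Second, in the Gorenstein case $\Delta-1=\dim\mathcal{Q}-\dim\mathcal{P}$ is a difference of de~Rham invariants, not the cokernel of one equivariant map. A nonnegative difference of dimensions says nothing about $\kappa$-isotypic parts. You need an actual representation of class $[\mathcal{Q}]-[\mathcal{P}]$. The paper builds it from the injection $\mathcal{P}\hookrightarrow W_F$ (restriction of extended closed one-forms) and the residue sequence $0\to I_{\mathrm{reg}}\to I_{\log}\to W_F\to0$.

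Third, your Step~1 descent does not fit with this. On a resolution of the cover that maps to $X$ (e.g.\ a toroidal resolution of the normalization of $X$), one has $q^*L_X\simeq L_Y$, so $h^1(L_X)$ is an invariant part, not a twisted one. Moreover, such a $Y$ is not the minimal good resolution of the cover, and the defect is not a resolution invariant (compare Lemma~\ref{lem:resolution-comparison}). You would need an equivariant blowup comparison together with Okuma's extension on $X$. The paper instead proves $\Delta(A)=\mult_{\mathbf{1}}(\mathcal{D}_Y)$ as follows:
\begin{itemize}
\item it takes an invariant function with $|G|\mid m_i$;
\item it uses the identity $[B/J]=\mathcal{D}_Y+\mathcal{T}_M$ and shows that the traces of $\mathcal{T}_M$ vanish at every $g\neq1$;
\item it uses the free action on the Milnor fibre and the function-index formula $\Delta=1+j_f-\mu_f$.
\end{itemize}

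For the equality criterion, the divergence cokernel is only one of three summands. It equals $\bigl(B/(\divr_\eta\Der_{\mathbb{C}}B+(p_*\mathcal{O}_Y(-D))_0)\bigr)\otimes\mathbb{C}_\kappa$, and it can detect the class of $\eta$ only when $D=Z-F\neq0$. When $D=0$ (a cycle of rational curves, e.g.\ the cusp covers of quotient-cusps, which are rational and not quasi-homogeneous) this cokernel vanishes, so your ``iff'' fails. The needed $\kappa$-summand there is $W_F/\mathcal{P}\simeq\mathbb{C}_\kappa$, coming from $\omega_F\simeq\mathcal{O}_F\otimes\mathbb{C}_\kappa$ and $\mathcal{P}=0$.

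Even when $D\neq0$, two further points need proof:
\begin{itemize}
\item that a derivation with unit $\eta$-divergence acts non-nilpotently on $\mathfrak{m}_B/\mathfrak{m}_B^2$, before Scheja--Wiebe applies (the paper uses a trace argument on $J_{\mathrm{can}}/\mathfrak{m}_BJ_{\mathrm{can}}$);
\item that the resulting good $\mathbb{C}^*$-action can be chosen to commute with $G$ (the paper argues contrapositively via M\"uller's theorem).
\end{itemize}

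Finally, smooth and rational double point covers lie outside the Gorenstein framework, since there $Z\not\ge F$. You should say that such $V$ are linear quotients, hence quasi-homogeneous and covered by the cited result. It follows that a non-quasi-homogeneous $V$ has a singular cover that is not a rational double point.
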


The proof realizes the Wahl defect as the multiplicity of the canonical character in an actual finite-dimensional representation constructed from differential forms and residues on the index-one cover; see Theorem~\ref{thm:uniform}. Theorem~\ref{thm:main} requires neither smoothability of \((V,0)\) nor a complete intersection hypothesis on its index-one cover.

A \emph{strong \(\mathbb{Q}\)-Gorenstein smoothing} is a quotient of an equivariant smoothing of the index-one cover, with the cyclic group acting trivially on the base, as in Wahl~\cite[\S5]{Wahl2015}. We omit the qualifier when referring to this condition. The \emph{weak \(\mathbb{Q}\)-Gorenstein condition} requires only that \(K_{\mathcal{V}}\) be \(\mathbb{Q}\)-Cartier. These are the \(\mathbb{Q}\)-Gorenstein and naively \(\mathbb{Q}\)-Gorenstein conditions, respectively, of Lee--Nakayama~\cite[Definition~7.1 and Remark~7.6]{LeeNakayama2018}.

Theorem~\ref{thm:main} also proves Wahl's \(\mathbb{Q}\)-Gorenstein and Rational Conjectures~\cite[p.~318 and Conjecture~4.1]{Wahl2015}. For a strong \(\mathbb{Q}\)-Gorenstein smoothing, Wahl~\cite[Lemma~5.1]{Wahl2015} gives \(\alpha_\pi=0\), so \eqref{eq:introduction-wahl-smoothing} yields
\[
 1+\mu_\pi\geq\tau_\pi
\]
for a non-Gorenstein central singularity, with equality precisely in the quasi-homogeneous case; see Theorem~\ref{thm:QG-conjecture}. Every rational surface singularity is \(\mathbb{Q}\)-Gorenstein, and a rational Gorenstein surface singularity is a rational double point. Thus, for a rational singularity other than a rational double point, the same theorem gives
\[
 h^1(X,L_X)\geq h^1(X,S_X),
\]
again with equality precisely in the quasi-homogeneous case; see Theorem~\ref{thm:rational-conjecture}. These results do not settle Wahl's codimension-two conjecture~\cite[Conjecture~6.1]{Wahl2015}.

\subsection{Specialization of reflexive canonical powers}

Our second main result concerns the restriction of reflexive canonical powers under an arbitrary one-parameter smoothing of a rational surface singularity, without assuming that the smoothing is \(\mathbb{Q}\)-Gorenstein. Put \(R:=\mathcal{O}_{\mathcal{V},0}\), \(A:=\mathcal{O}_{V,0}\), and let \(t\) be the pullback of a coordinate on \(T\), so that \(A=R/(t)\). For \(i\in\mathbb{Z}\), define the \emph{specialization defect}
\[
 \delta_i(\pi):=
 \length_A\coker
 \left(\omega_R^{[i]}/t\omega_R^{[i]}\to\omega_A^{[i]}\right).
\]
Thus \(\delta_i(\pi)=0\) means that the \(i\)-th reflexive canonical power specializes, and Wahl's correction satisfies \(\alpha_\pi=\delta_{-1}(\pi)=\delta_2(\pi)\).

\begin{maintheorem}[Theorem~\ref{thm:rr} and Corollary~\ref{cor:periodic}]
Let \(\pi\) be a one-parameter smoothing of a rational normal complex surface singularity with local ring \(A\) and canonical index \(r>1\). Let \(n\) be the number of exceptional components of its minimal good resolution and \(K\) its numerical canonical cycle. There are rational numbers \(\gamma_i(A)\), depending only on \(A\), such that
\[
 \delta_i(\pi)=\binom{i}{2}(\mu_\pi-n-K^2)+\gamma_i(A)
 \quad(i\in\mathbb{Z}).
\]
They satisfy
\[
 \gamma_{i+r}(A)=\gamma_i(A),\quad
 \gamma_i(A)=\gamma_{1-i}(A),\quad
 \gamma_0(A)=\gamma_1(A)=0.
\]
\end{maintheorem}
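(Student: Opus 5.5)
The plan is to compute $\delta_i(\pi)$ by comparing Euler characteristics on the central fiber and on the Milnor fiber. Reflexive rank-one sheaves on the total space are controlled by local cohomology, and the Milnor number enters through topology.

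\textbf{Step 1: the defect as a difference of local cohomology lengths.} Since $R$ is a three-dimensional normal Cohen--Macaulay ring and $\omega_R^{[i]}$ is reflexive, $t$ is a nonzerodivisor on it. The sequence $0\to\omega_R^{[i]}\xrightarrow{t}\omega_R^{[i]}\to\omega_R^{[i]}/t\to0$ embeds $\omega_R^{[i]}/t\omega_R^{[i]}$ into $\omega_A^{[i]}$. The reason is that both sheaves agree off the origin: the smoothing has an isolated singular point at $0$, so $\mathcal{V}\setminus\{0\}$ is smooth near $V\setminus\{0\}$, and $\omega_{\mathcal V}$ restricts to $\omega_V$ by adjunction there. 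Hence
\[
\delta_i(\pi)=\length H^1_{\{0\}}(\omega_R^{[i]}/t\omega_R^{[i]}).
\]
Using the long exact local cohomology sequence, this equals the length of the $t$-torsion of $H^2_{\{0\}}(\omega_R^{[i]})$. We have $H^2_{\{0\}}(\omega_R^{[i]})=H^1(\mathcal V\setminus 0,\omega^{[i]})$, which has finite length when $\omega_R^{[i]}$ is $S_2$. So $\delta_i(\pi)$ is the length of the kernel of $t$ on this finite-length module, and that length equals the length of its cokernel.

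\textbf{Step 2: a Riemann--Roch comparison.} I would work on a resolution. Take a good resolution of the Milnor-fiber family, or work on the generic fiber $M_\pi$ as a smooth Stein surface compactified by the link. Compare $\chi$-type invariants of $\omega^{\otimes i}$ on the Milnor fiber with those of $\omega_A^{[i]}$ computed on $X$ via $p_*\mathcal O_X(\lfloor iK_X\rfloor$-type$)$. For a rational singularity, $\length(\omega_A^{[i]}/\text{(something canonical)})$ is given by the local Riemann--Roch formula
\[
\tfrac12 D_i\cdot(D_i-K)+\text{correction},
\]
where $D_i$ is the round-down of $-iK$ adjusted to the anti-nef representative. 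The correction depends only on $i \bmod r$, because $rK$ is an integral Cartier-type cycle locally. On the smoothing side, the analogous quantity is quadratic in $i$ with leading coefficient $\binom{i}{2}\cdot c_1^2$-type data. Laufer--Steenbrink-type formulas give $c_1^2+c_2$ of the Milnor fiber in terms of $\mu_\pi$, $K^2$ and $n$, via $\mu=12p_g+K^2+b_2(X)-b_1$ with $p_g=0$ and $b_2(X)=n$. This is how the coefficient $\mu_\pi-n-K^2$ arises. Subtracting, $\delta_i-\binom{i}{2}(\mu_\pi-n-K^2)$ becomes an expression involving only $A$; call it $\gamma_i(A)$.

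\textbf{Step 3: symmetries.} Periodicity $\gamma_{i+r}=\gamma_i$ follows because $\omega_A^{[i+r]}\cong\omega_A^{[i]}$ on $A$ (the index is $r$), while the quadratic term absorbs the change in the Riemann--Roch part. The symmetry $\gamma_i=\gamma_{1-i}$ comes from Serre/local duality: $\omega^{[1-i]}=\Hom(\omega^{[i]},\omega)$ on both $R$ and $A$, and $\binom{i}{2}=\binom{1-i}{2}$. The normalizations are immediate. $\delta_0=0$ since $R/t=A$. $\delta_1=0$ since $\omega_R/t\omega_R\cong\omega_A$, because $R$ is Cohen--Macaulay.

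\textbf{Main obstacle.} I expect the main obstacle to be Step 2, namely making the Riemann--Roch comparison rigorous on the total space. The ring $R$ is not $\mathbb{Q}$-Gorenstein in general, so the coefficient of the quadratic term must be extracted from the topology of $M_\pi$ rather than from an intersection number on $\mathcal V$. My first attempt would be to use the $i$-additivity of the second differences $\delta_{i+1}-2\delta_i+\delta_{i-1}$. I would show this second difference is constant modulo an $A$-periodic term by computing $\chi$ of $\omega^{\otimes i}$ on a compactified Milnor fiber, whose $c_1^2$ is identified with $\mu_\pi-n-K^2$ via the signature and Euler characteristic formulas.
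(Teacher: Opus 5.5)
\textbf{Steps 1 and 3.} These are essentially what the paper does. The identification $C_i\simeq\ker(t\mid H^2_{\mathfrak m_R}(M_i))$ gives $\delta_0=\delta_1=0$. The symmetry comes from $\delta_i=\delta_{1-i}$ together with $\binom{1-i}{2}=\binom{i}{2}$. For $\delta_i=\delta_{1-i}$ you still need one identification: after applying $\Hom_R(-,\omega_R)$ to multiplication by $t$ on $M_i$, the kernel of $t$ on $\Ext^1_R(M_i,\omega_R)$ is $C_{1-i}$ and its cokernel is the Matlis dual of $C_i$.

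\textbf{Step 2 is a genuine gap.} It carries all the content of the statement, and the tools you name there do not work.
\begin{itemize}
\item The formula $\mu=12p_g+K^2+b_2-b_1$ is the Laufer--Looijenga formula for Gorenstein or $\mathbb{Q}$-Gorenstein smoothings. For an arbitrary smoothing of a rational singularity it is false. In fact $\mu_\pi-n-K^2$ measures exactly its failure: it vanishes precisely when $K_{\mathcal V}$ is $\mathbb{Q}$-Cartier. Invoking it would kill the quadratic term.
\item The Milnor fiber is a noncompact Stein surface with boundary, so it has no $c_1^2$. Gluing in the link does not produce a complex surface to which Riemann--Roch or Noether applies.
\item Most importantly, you give no mechanism turning the local length $\delta_i$ into a difference of Euler characteristics. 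A local Riemann--Roch for $\omega_A^{[i]}$ alone never sees the smoothing and cannot produce $\mu_\pi$.
\end{itemize}
Without Step 2, your periodicity argument in Step 3 (``the quadratic term absorbs the change'') is also unsupported.

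\textbf{The missing idea is globalization}, following Looijenga's Riemann--Roch method.
\begin{enumerate}
\item By Steenbrink, the smoothing extends to a flat projective family $g\colon\mathcal Y\to T$ whose central fiber $Y_0$ has the given point as its only singularity.
\item The sheaf $\omega^{[i]}_{\mathcal Y/T}$ is torsion-free, hence flat, over $T$. It restricts to $\omega_{Y_t}^{\otimes i}$ on $Y_t$ and to $M_i/tM_i$ near the singular point of $Y_0$. Constancy of $\chi$ and the specialization sequence then give $\delta_i=\chi(Y_0,\omega^{[i]}_{Y_0})-\chi(Y_t,\omega_{Y_t}^{\otimes i})$.
\item Take a resolution $p\colon\overline X\to Y_0$ on which the full pullback $\mathcal O_{\overline X}(iK_{\overline X}+Z_i)$ of $\omega^{[i]}_{Y_0}$ is invertible. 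Write this sheaf locally as a quotient of $\mathcal O_{\overline X}^{\oplus m}$ and use $R^1p_*\mathcal O_{\overline X}=0$ (rationality) and $R^2p_*=0$. This gives $R^1p_*=0$ for it, so $\chi(Y_0,\omega^{[i]}_{Y_0})=\chi(\overline X,\mathcal O(iK_{\overline X}+Z_i))$.
\item Rationality and flatness give $\chi(\mathcal O_{\overline X})=\chi(\mathcal O_{Y_t})$. Since $E$ is a tree of $n$ rational curves and $b_1(M_\pi)=0$, one has $e(\overline X)-e(Y_t)=n-\mu_\pi$. Noether's formula then yields $K_{\overline X}^2-K_{Y_t}^2=\mu_\pi-n$.
\item Riemann--Roch on the two projective surfaces gives $\delta_i=\tfrac12u_i(u_i-K)+\binom{i}{2}(\mu_\pi-n-K^2)$ with $u_i=iK+Z_i$.
\end{enumerate}

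Thus $\gamma_i(A)=\tfrac12u_i(u_i-K)$ is manifestly local. Independence of the resolution, and invariance of $n+K^2$, is a one-line blowup check. Periodicity is simply $u_{i+r}=u_i$, because the full pullbacks of $\omega_A^{[i+r]}$ and $\omega_A^{[i]}$ coincide. With this, your second-difference strategy is unnecessary; as sketched, it would not identify the constant anyway.
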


The formula separates the dependence on the smoothing, through \(\mu_\pi\), from the local Riemann--Roch corrections \(\gamma_i(A)\) determined by the central singularity. The case \(i=-1\) recovers Wahl's anticanonical specialization formula~\cite[Theorem~1.1(i)]{Wahl2015}; the formula above applies to every reflexive canonical power.

Combining this formula with the finite specialization criterion of Lee--Nakayama~\cite[Lemma~7.3 and Proposition~7.11(i)]{LeeNakayama2018}, recalled in Proposition~\ref{thm:finite}, gives
\[
 \pi\text{ is strongly }\mathbb{Q}\text{-Gorenstein}
 \quad\Longleftrightarrow\quad
 \mu_\pi=n+K^2\text{ and }\gamma_j(A)=0\quad(2\leq j<r);
\]
see Corollary~\ref{cor:numerical}. Under the weak \(\mathbb{Q}\)-Gorenstein hypothesis, specialization of the compatible canonical algebras defines a finite-dimensional representation of the canonical covering group whose character multiplicities are the corrections \(\gamma_j(A)\). Its vanishing is equivalent to the strong condition; see Proposition~\ref{prop:canonical-specialization-representation}, the canonical-specialization representation criterion (CSRC). Example~\ref{ex:weak-qg} shows that the weak condition alone does not suffice for Wahl's inequality, even for certain weighted homogeneous rational surface singularities.

The specialization formula also compares all smoothings of a fixed rational singularity. If one strong \(\mathbb{Q}\)-Gorenstein smoothing exists, then the strong smoothings are precisely those with minimal Milnor number, or equivalently minimal smoothing-component dimension; see Proposition~\ref{prop:one-qg} and Corollary~\ref{cor:qg-minimal-components}. In particular, every weakly \(\mathbb{Q}\)-Gorenstein smoothing of the same singularity is then strong. Section~\ref{sec:consequences} applies this characterization to Gorenstein modifications of smoothing total spaces and to the variation of \(K^2\) in projective surface smoothings.

\subsection{Rational homology disk smoothings}

We apply the two main results together to rational homology disk (QHD) smoothings, whose Milnor fibers have the rational homology of the \(4\)-disk. Their classification is a longstanding problem, dating back to examples constructed in the early 1980s; see Looijenga--Wahl~\cite{LooijengaWahl1986} and Wahl~\cite{Wahl1981}.

A singularity admitting a QHD smoothing is rational by Durfee's formula; see also Stipsicz--Szab\'{o}--Wahl~\cite[Proposition~2.4(1)]{SSW2008}. The canonical-specialization formula, together with the resolution-lattice properties in Stipsicz--Szab\'{o}--Wahl~\cite[Proposition~2.4(2), (4)]{SSW2008}, forces every reflexive canonical power to specialize. Thus every QHD smoothing is strongly \(\mathbb{Q}\)-Gorenstein; see Theorem~\ref{thm:qhd-qg}. This extends Wahl's result for weighted homogeneous singularities~\cite[Theorem~3.4]{Wahl2013} without assuming weighted homogeneity. Applying positivity to \eqref{eq:introduction-wahl-smoothing} then gives the following joint consequence.

\begin{maintheorem}[Theorem~\ref{thm:qhd-qg} and Corollary~\ref{cor:qhd-components}]
Every rational homology disk smoothing \(\pi\) of a normal complex surface singularity \((V,0)\) is strongly \(\mathbb{Q}\)-Gorenstein. Its central singularity is rational and weighted homogeneous, and
\[
 \tau_\pi=1,\quad \alpha_\pi=0,\quad \Delta(V,0)=0.
\]
\end{maintheorem}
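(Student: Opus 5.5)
The plan is to combine the specialization formula of the second main theorem with the positivity theorem, following the route sketched in the introduction. First, since the Milnor fiber \(M_\pi\) is a rational homology disk, Durfee's formula (or Stipsicz--Szab\'{o}--Wahl, Proposition~2.4(1)) shows that \((V,0)\) is rational. Also \(\mu_\pi=b_2(M_\pi)=0\). The resolution-lattice properties in Stipsicz--Szab\'{o}--Wahl, Proposition~2.4(2),(4), should give \(K^2+n=0\), since \(\mu=0\) forces the signature and Euler-characteristic relation \(\mu_\pi=n+K^2\) that holds for minimal-Milnor-number smoothings. They also say that \(H_1(\partial M)\) has order \(d^2\), with the discriminant group containing a metabolic subgroup. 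Hence \(\binom{i}{2}(\mu_\pi-n-K^2)=0\), and the second main theorem reduces to \(\delta_i(\pi)=\gamma_i(A)\) for every \(i\).

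The key step is to show that \(\gamma_j(A)=0\) for \(2\le j<r\). Then Corollary~\ref{cor:numerical} (the numerical criterion derived from Proposition~\ref{thm:finite}) gives that \(\pi\) is strongly \(\mathbb{Q}\)-Gorenstein. I expect this step to be the main obstacle. My first attempt would use that \(\delta_i(\pi)\ge0\) is an integer for every \(i\), while the \(\gamma_i\) are determined by local Riemann--Roch corrections of the divisorial sheaves \(\mathcal{O}(iK)\). For a QHD smoothing, the class of \(K_V\) in the local class group should extend over the Milnor fiber, where the class group is \(H^2(M;\mathbb{Z})\), a torsion group. I would therefore identify \(\gamma_i\) with a sum of local corrections that is a topological pairing quantity, such as a linking-form value of \(iK\). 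The lattice condition \(\mu=0\), through the isotropy of the image of \(H_2(M,\partial M)\) in the discriminant group, should force these values to vanish. An alternative would be to exploit the symmetry \(\gamma_i=\gamma_{1-i}\) and periodicity, together with an Euler-characteristic count of \(\sum_i\delta_i\) over a period, to show that a nonnegative sum that vanishes forces each term to vanish.

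Given strong \(\mathbb{Q}\)-Gorensteinness, Wahl's Lemma~5.1 gives \(\alpha_\pi=0\). Since \((V,0)\) is rational and not a rational double point (an RDP has \(\mu>0\)), it is non-Gorenstein. Equation~\eqref{eq:introduction-wahl-smoothing} then gives \(\Delta(V,0)=1+0-\tau_\pi+0=1-\tau_\pi\). The first main theorem gives \(\Delta\ge0\), so \(\tau_\pi\le1\). A smoothing component has dimension at least one, because it contains the smoothing direction, so \(\tau_\pi=1\) and \(\Delta=0\). The equality case of Theorem~\ref{thm:main} then shows that \((V,0)\) is quasi-homogeneous, completing the statement.
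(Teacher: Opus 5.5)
Your outer framework matches the paper: reduce to \(\delta_i(\pi)=0\) via Theorem~\ref{thm:rr} and Proposition~\ref{thm:finite}, then feed strong \(\mathbb{Q}\)-Gorensteinness into Theorem~\ref{thm:QG-conjecture} to get \(1\le\tau_\pi\le 1\), quasi-homogeneity, \(\alpha_\pi=0\) and \(\Delta(V,0)=0\). Your last paragraph is fine.

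The gap is where you expected it: you never prove \(\gamma_j(A)=0\), and neither suggested route works as stated.

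\emph{Linking-form route.} A discriminant-group or linking-form argument only sees \(u_i(u_i-K)/2\) modulo \(\mathbb{Z}\). You already have integrality, since \(\delta_i\) is a length. The exact value depends on the specific representative \(u_i=iK+Z_i\) determined by the full pullback, not just on its class in the discriminant group.

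\emph{Sum-over-a-period route.} This needs \(\sum_{j=0}^{r-1}\gamma_j(A)=0\), for which you give no reason. The sum is not zero in general: in Example~\ref{ex:weak-qg}, \(r=3\) and \(\gamma_2=2\).

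\emph{What is missing.} You need an \emph{upper} bound \(\delta_i\le 0\). The paper gets it from the Stipsicz--Szab\'{o}--Wahl embedding of the resolution lattice into the negative diagonal lattice \(\bigoplus_a\mathbb{Z}e_a\), \(e_a\cdot e_b=-\delta_{ab}\), of the same rank. Under this embedding \(K\) is integral and characteristic with \(K^2=-n\). These properties survive point blowups, so you may pass to a resolution on which the full pullback of \(\omega_A^{[i]}\) is invertible.

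Being characteristic makes every coordinate of \(K\) odd, and \(\sum_a k_a^2=n\) forces \(k_a=\pm1\). After sign changes, \(K=\sum_a e_a\). Since \(\mu_\pi=0\) and \(K^2=-n\), Theorem~\ref{thm:rr} gives \(\delta_i=u_i(u_i-K)/2\). Here \(u_i=\sum_a u_{i,a}e_a\) has integer coordinates, because \(K\) and \(Z_i\) are integral in the diagonal lattice. Hence
\[
 0\le\delta_i=-\tfrac12\sum_{a=1}^n u_{i,a}(u_{i,a}-1)\le 0 .
\]
This gives \(\delta_i=0\) for every \(i\) directly, without separating \(\gamma_i\) from the Milnor-number term.

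\emph{A smaller point.} The same lattice input is what gives \(K^2+n=0\). Your justification via ``\(\mu_\pi=n+K^2\) for minimal-Milnor-number smoothings'' is circular. That relation is the weak \(\mathbb{Q}\)-Gorenstein condition (Proposition~\ref{prop:weak-numerical}); for an arbitrary smoothing, Corollary~\ref{cor:periodic} gives only \(\mu_\pi\ge n+K^2\).
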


This reduction to weighted homogeneity is independent of the existing graph classification lists. Combining it with the weighted homogeneous classification of Stipsicz--Szab\'{o}--Wahl~\cite{SSW2008}, Bhupal--Stipsicz~\cite{BhupalStipsicz2011}, and Fowler~\cite{Fowler2013} gives the complete classification, up to analytic isomorphism, in Corollary~\ref{thm:qhd-classification}.

\begin{remark*}
Theorem~\ref{thm:qhd-qg} and Proposition~\ref{prop:canonical-specialization-representation} give the character equations \(\gamma_j(A)=0\) for QHD smoothings. In Park--Shin~\cite{ParkShinQHD}, these equations, together with numerical identities and Hodge inequalities, determine the possible Seifert data in the three-arm case with central self-intersection \(-2\). Marked projective compactifications and other algebraic-geometric arguments treat the remaining cases. Together with the structural properties established here, these arguments recover the graph classification of Bhupal--Stipsicz~\cite{BhupalStipsicz2011} and the corresponding analytic structures of Fowler~\cite{Fowler2013}, without using the existing graph classification lists as inputs; see Remark~\ref{rem:qhd-csrc-application}.
\end{remark*}

\begin{remark*}
Beke--Park--Shin--Stipsicz~\cite{BPSS2026} prove that a normal complex surface singularity whose minimal good resolution graph has at least two nodes admits no rational homology disk smoothing. Their proof combines Wahl's formula for the dimension of a rational homology disk smoothing component with estimates for logarithmic cohomology on the resolution. Together with the known classification in the quasi-homogeneous case, this yields the same complete classification.
\end{remark*}

\subsection{Function indices and methods of proof}

The positivity proof uses a formula expressing the Wahl defect in terms of a function on the singularity. Put \(A=\mathcal{O}_{V,0}\). For \(f\in\mathfrak{m}_A\) defining a reduced curve and having an isolated critical point at the origin, let \(F_f\) be its Milnor fiber and put
\[
 j_f=\dim_{\mathbb{C}}A/df(\Der_{\mathbb{C}}A),\quad
 \mu_f=b_1(F_f).
\]
Here \(df(\Der_{\mathbb{C}}A)\) is the ideal generated by \(\delta(f)\) for \(\delta\in\Der_{\mathbb{C}}A\).

\begin{maintheorem}[Proposition~\ref{prop:index}]
Let \((V,0)\) be a normal complex surface singularity which is not a rational double point. For every holomorphic function \(f\colon(V,0)\to(\mathbb{C},0)\) defining a reduced curve and having an isolated critical point at the origin,
\[
 \Delta(V,0)=1+j_f-\mu_f.
\]
\end{maintheorem}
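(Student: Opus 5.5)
The plan is to compute both $j_f$ and $\mu_f$ on the minimal good resolution $p\colon(X,E)\to(V,0)$ and then compare them with the three cohomology groups that make up $\Delta(V,0)$. Let $C=\overline{p^{-1}(f^{-1}(0)\setminus 0)}$ be the strict transform of the curve $f=0$. Since $j_f$ and $\mu_f$ depend only on $f$, we may first pass to a further good embedded resolution $X'\to X$ on which $D=(f\circ p)^{-1}(0)_{\mathrm{red}}=E'+C$ is a simple normal crossing divisor. We use that $\Delta$ can equally be computed on any good resolution, since $h^1(S)-h^1(L)-h^1(\mathcal{O})$ is blow-up invariant for reduced normal crossing exceptional divisors (Wahl).

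The first step treats $\mu_f$. By A'Campo's formula, or equivalently by the standard computation of the Euler characteristic of the Milnor fiber from the embedded resolution, we get
\[
1-\mu_f=\chi(F_f)=\sum_{E_v\subset E'} m_v\,\chi(E_v^\circ),
\]
where $m_v$ is the multiplicity of $f\circ p$ along $E_v$ and $E_v^\circ=E_v\setminus(D-E_v)$. We then rewrite this as an intersection-theoretic quantity: $\chi(E_v^\circ)=-(K_{X'}+D)\cdot E_v$. Setting $Z=\operatorname{div}(f\circ p)-C$, this gives
\[
1-\mu_f=-(K_{X'}+D)\cdot Z.
\]

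The second step treats $j_f$. The derivations of $A$ are $H^0(X'\setminus E',\Theta)=H^0(X',S_{X'})$, by Burns--Wahl together with normality and depth 2. The map $\delta\mapsto\delta(f)$ factors through the logarithmic contraction $S_{X'}(-\log C)\to\mathcal{O}_{X'}(-Z)\otimes\ldots$. More precisely, there is an exact sequence
\[
0\to\Theta_{X'}(-\log D)\to S_{X'}\xrightarrow{\,df\,}\mathcal{O}_{X'}(-Z)|_{?}\ldots
\]
We make this precise by using the 1-form $\eta=d(f\circ p)/(f\circ p)$, which is a section of $\Omega^1_{X'}(\log D)$ vanishing nowhere near $D$ (normal crossings). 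Contraction with $\eta$ gives
\[
0\to \Lambda\to \Theta_{X'}(-\log D)\xrightarrow{\iota_\eta}\mathcal{O}_{X'}\to 0,
\]
with $\Lambda\cong\bigwedge^2\Theta_{X'}(-\log D)=\mathcal{O}(-(K+D))$. Twisting by $\mathcal{O}(-Z)$ and comparing $\Theta(-\log D)$ with $S_{X'}$ along $C$, we identify $df(\Der A)$ with the image of $H^0$ in $H^0(\mathcal{O}(-Z))\supseteq fA$ up to a correction supported on $C$. Taking cohomology and lengths then expresses $j_f$ as
\[
j_f=h^1(S)-h^1(L)-h^1(\mathcal{O})+\text{(an Euler characteristic term)}.
\]
That term is computed by Riemann--Roch on the nonreduced cycle $Z$ to be $-(K+D)\cdot Z$, up to the constant $1$ coming from $A/fA\to A/df(\Der A)$ and from the normalization of $\mu_f$ against $b_0$. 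The hypothesis that $(V,0)$ is not a rational double point enters here: it guarantees $H^0(L_X)$ behaves correctly, i.e.\ the vanishing $H^1_E(L)$ matches, so that no extra Gorenstein correction appears. This is exactly where Wahl's Corollary 2.5 distinguishes RDPs.

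Combining the two steps gives
\[
j_f-\mu_f=-\Delta\ldots,
\]
and after tracking signs, $\Delta=1+j_f-\mu_f$. The main obstacle is the second step. There we must show that the cokernel of $H^0(S)\to H^0(\mathcal{O}(-Z))$ modulo the relevant subspace is controlled exactly by $H^1$ groups, with all local contributions along $C$ cancelling. To handle this I would use the isolated critical point hypothesis to show that the contraction is surjective off $E'$, and I would use Laufer-type vanishing $H^1(\mathcal{O}(-Z-(K+D)))\to\ldots$ via the Grauert--Riemenschneider theorem to kill the error terms.
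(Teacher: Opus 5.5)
\textbf{There is a genuine gap.} Your first step is fine and coincides with the paper's Lemma~\ref{lem:milnor-cycle}: $\chi(F_f)=\sum_v m_v\chi(E_v^\circ)=Z^2-(K_{X'}+E')\cdot Z$. The gap is in passing to an embedded resolution $X'$ of an \emph{arbitrary} $f$. That step rests on two claims, and both are false.

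\emph{(i) The defect is not blow-up invariant.} Write $\Delta_Y:=h^1(\mathcal{O}_Y)-h^1(S_Y)+h^1(L_Y)$ for the naive defect on a good resolution $Y$. Blowing up a smooth point $y$ of the exceptional divisor raises $h^1(S_Y)$ by $1-\rank(H^0(S_Y)\to T_yE_Y)$. It raises $h^1(L_Y)$ by $1-c$, where $c=\rank(H^0(L_Y)\to L_Y|_y)$. So $\Delta_Y$ changes by $\rank(H^0(S_Y)\to T_yE_Y)-c$, which is nonzero in general. For example, on the cone over a smooth plane cubic, $L_X\simeq\mathcal{O}_X$, so $c=1$ everywhere, while every derivation is tangentially zero along $E$. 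Blowing up any point of $E$ therefore lowers the naive defect from $1$ to $0$.

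\emph{(ii) $H^0(X',S_{X'})=\Der_{\mathbb{C}}A$ fails on non-minimal resolutions.} A vector field lifts through a point blowup only if it vanishes at the center. For the cyclic quotient $\frac13(1,1)$, the $\mathfrak{sl}_2$-fields are tangentially nonzero at every point of $E$. The function $f=x^2y+y^6$ has strict transform tangent to $E$, so its embedded resolution loses derivations.

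Consequently, your identification of $J_f$ with the image of $H^0(S_{X'})$, and your final count, are both off by uncontrolled terms. Your bookkeeping also has no term on which the non-RDP hypothesis can act. Yet the formula fails for rational double points, where $\Delta=0$ but $1+j_f-\mu_f=1$.

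Step~2 itself is not carried out. The sign in $j_f=h^1(S)-h^1(L)-h^1(\mathcal{O})+\cdots$ is reversed. No correction along the noncompact curve $C$ is needed: once $C$ is smooth, transverse to the exceptional divisor, and away from its nodes, the sequence $0\to L(Z)\to S\xrightarrow{df}\mathcal{O}(-Z)\to 0$ is already exact (Lemma~\ref{lem:logarithmic-differential}).

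\textbf{How the paper avoids (i) and (ii).}
\begin{itemize}
\item By Lemma~\ref{lem:function-conservation}, $j_f-\mu_f$ is unchanged under $f\mapsto f+t\ell$, so $f$ may be taken with general linear part.
\item It then works on a principalization $\widetilde X$ of $\mathfrak{m}_A\mathcal{O}_X$. There every derivation preserves the residual ideal and hence vanishes at each center, so $H^0(S_{\widetilde X})=\Der_{\mathbb{C}}A$. The strict transform of the general $f$ is already smooth and transverse.
\item The change in $h^1(L)$ is recorded in $a_{\widetilde X}=\length(\omega_A^\vee/H^0(L_{\widetilde X}))$, giving $\Delta(V,0)=\Delta_{\widetilde X}+a_{\widetilde X}$ (Lemma~\ref{lem:resolution-comparison}).
\item The equality $a_X=0$ is Okuma's vanishing $H^1_E(L_X)=0$, and this is exactly where the non-RDP hypothesis enters.
\end{itemize}

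\textbf{A repair along your own route.} You can avoid the perturbation step by tracking both corrections. Let $a_{X'}$ be as above and set $b_{X'}=\dim\Der_{\mathbb{C}}A/H^0(S_{X'})$. Then $\Delta_{X'}+a_{X'}-b_{X'}$ is blow-up invariant, and it equals $\Delta(V,0)$ on the minimal good resolution. On an embedded resolution, take cohomology of the $df$-sequence and of the restriction sequences for $L\subset L(Z)$ and $\mathcal{O}(-Z)\subset\mathcal{O}$, and apply Riemann--Roch on $Z$. This gives
\[
\Delta_{X'}+a_{X'}-b_{X'}=j_f+Z^2-(K_{X'}+E')\cdot Z,
\]
which combined with your Step~1 yields the formula.
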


This generalizes LP--NB--OO--T~\cite[Theorem~2.2]{LimaPereiraEtAl2024}, who prove the formula for isolated complete intersection singularities. On a principalization of the maximal ideal, logarithmic differentiation relates the critical algebra to the cohomology of the resolution. Riemann--Roch and the Euler characteristic of the Milnor fiber give the formula.

For a \(\mathbb{Q}\)-Gorenstein singularity, we pass to the index-one cover with local ring \(B\), covering group \(G\), and canonical character \(\kappa\). The function-index formula compares \(\Delta(A)\) with \(G\)-equivariant invariants of \(B\). Differential forms and residue maps on a resolution of \(B\) then give an actual finite-dimensional \(G\)-representation whose \(\kappa\)-multiplicity is \(\Delta(A)\); see Theorem~\ref{thm:uniform}. The residue construction proves effectivity of the underlying virtual representation; see Theorem~\ref{thm:effective}. This extends Wahl's representation-theoretic interpretation for quotients of equivariant hypersurface smoothings~\cite[Theorems~5.2 and~5.5 and Lemma~5.7]{Wahl2015}. For singular covers other than rational double points, a trivial summand accounts for the difference between the Gorenstein and non-Gorenstein lower bounds. Euler contraction gives equality for quasi-homogeneous covers. Conversely, the divergence argument using the Scheja--Wiebe criterion and the residue argument in the boundary case produce a nonzero canonical-character component of the residue contribution for non-quasi-homogeneous covers, giving strict inequality; M\"{u}ller's finite-quotient theorem transfers this conclusion to the base. Smooth and rational double point covers are treated separately by a determinant-character calculation.

The canonical-specialization formula uses Riemann--Roch on a resolution where the full pullback of the relevant reflexive canonical power is invertible. The calculation relates its local contribution to the Milnor number, and the finite canonical index makes the correction terms periodic. Nonnegative lengths then give numerical constraints on the smoothing. These constraints are independent of the positivity theorem; the QHD application uses both results, first to prove specialization and then to deduce quasi-homogeneity.

Park--Shin~\cite{ParkShinTaut} use quiver representations to study the logarithmic tangent cohomology \(H^1(X,S_X)\) appearing in the Wahl defect. The representations are defined by restriction maps on exceptional cycles, and their extension groups compute the corresponding restricted cohomology. This gives another proof of Laufer's classification of taut normal complex surface singularities~\cite{Laufer1973}.

\subsection{A genus bound for arbitrary normal surface singularities}

Although the Wahl defect can be arbitrarily negative, a correction involving the geometric genus \(p_g(V,0)=h^1(X,\mathcal{O}_X)\) gives a uniform lower bound.

\begin{maintheorem}[Theorem~\ref{genus:thm:main}]
For every normal complex surface singularity \((V,0)\),
\[
 \Delta(V,0)+4p_g(V,0)\geq0,
\]
with equality if and only if \((V,0)\) is rational and quasi-homogeneous.
\end{maintheorem}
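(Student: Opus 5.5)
We prove \(\Delta+4p_g\geq0\) by bounding each term of the Wahl defect on the minimal good resolution \(p\colon(X,E)\to(V,0)\). Write \(p_g=h^1(\mathcal{O}_X)\), so that
\[
 \Delta+4p_g=5p_g-h^1(S_X)+h^1(L_X).
\]
It therefore suffices to bound \(h^1(S_X)\) from above by \(5p_g+h^1(L_X)\), and to control when that bound is sharp.

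The upper bound on \(h^1(S_X)\) comes from the function-index formula, Proposition~\ref{prop:index}. For a generic \(f\in\mathfrak m_A\) defining a reduced curve with an isolated critical point, it gives \(\Delta=1+j_f-\mu_f\). The inequality is then equivalent to \(\mu_f\leq 1+j_f+4p_g\).

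We compute \(\mu_f\) on a principalization of \(\mathfrak m\) via the Milnor fiber Euler characteristic. Let \(Z\) be the fundamental (maximal ideal) cycle. A Laufer-type formula gives \(\mu_f\) as \(1+j_f\) up to correction terms. These corrections have two sources:
\begin{itemize}
\item the failure of logarithmic differentiation \(\Der_{\mathbb C}A\to df\) to be surjective onto \(H^0\) of the relevant sheaves, which is measured by \(H^1\) of \(S_X\) twisted by \(-Z\);
\item the difference between \(h^1(\mathcal O_X)\) and \(h^1(\mathcal O_X(-Z))\), together with their duals via the residue sequence \(0\to\Omega^2_X\to\Omega^2_X(E)\to\omega_E\to0\).
\end{itemize}
Each correction is bounded by \(p_g\) or by \(h^1(\mathcal O_E)\leq p_g\). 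In total we aim to show that at most four such contributions occur: \(h^1(\mathcal O_X)\), \(h^1(\mathcal O_X(-Z))\), and the two residue terms for \(\Omega^1\) and \(\Omega^2\). Together they give \(\mu_f-1-j_f\leq 4p_g\).

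An alternative cleaner route works directly with \(S_X\). Use the exact sequence
\[
 0\to S_X\to\Theta_X\to\bigoplus_i N_{E_i}\to0,
\]
together with the Wahl-type inequality \(h^1(\Theta_X)\leq h^1(L_X)+(\text{terms in }p_g)\), obtained from \(0\to L_X\to\Omega^1_X\otimes L_X\otimes\ldots\). Duality \(H^1(\Theta_X)^\vee\cong H^1_E(\Omega^1_X(K_X))\), combined with Wahl's vanishing theorems, should bound \(h^1(S_X)-h^1(L_X)\) by \(5p_g\).

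Equality requires every estimate to be sharp. The genus correction terms are nonnegative multiples of \(p_g\), so sharpness forces \(p_g=0\), i.e.\ \((V,0)\) is rational. For rational singularities other than rational double points, \(\Delta=0\) exactly in the quasi-homogeneous case by Theorem~\ref{thm:main}, since rational singularities are \(\mathbb Q\)-Gorenstein and non-Gorenstein. For rational double points \(\Delta=0\) directly, and these are quasi-homogeneous. Theorem~\ref{thm:main} also gives \(\Delta\geq0\) in the rational case, so the inequality holds there. Hence we only need strictness when \(p_g>0\), namely \(\Delta>-4p_g\).

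The main obstacle is exactly this strict bound for \(p_g>0\). We must show that the loss in \(h^1(S_X)\) relative to \(h^1(L_X)\) is strictly less than \(5p_g\). The plan is to exploit \(H^1(\mathcal O_X)\neq0\): a nonzero class of \(H^1(\mathcal O_X)\) maps nontrivially into \(H^1(S_X)\)'s complement. Concretely, the Euler/contraction pairing \(H^1(\mathcal O_X)\otimes H^0(S_X)\to H^1(S_X)\) should fail to account for one full copy of the genus. I expect the precise sharp accounting of these four genus contributions, and showing that one of them cannot be saturated, to be the delicate part. I would first test the bound on cusp and simple elliptic singularities, where \(\Delta\) is computable and the constant \(4\) should be checked for sharpness.
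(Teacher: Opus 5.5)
There is a genuine gap: neither route actually produces the inequality. The function-index formula of Proposition~\ref{prop:index} is an identity. Unwinding its proof (the sequence \eqref{eq:df} and the two restriction sequences) only re-expresses $j_f-\mu_f$ through $h^1(S)$, $h^1(L(M))$ and $h^1(\mathcal{O}(-M))$. No inequality comes out of this without an independent comparison of $h^1(S_X)$ with $h^1(L_X)$, and that comparison is the entire problem.

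Your own accounting shows the essential input is absent. You claim every correction is bounded by $p_g$ or by $h^1(\mathcal{O}_E)\le p_g$. For $p_g=0$ they all vanish, so you would get $\mu_f\le 1+j_f$, i.e.\ $\Delta\ge 0$, for every rational singularity with no further work. That is Wahl's Rational Conjecture, which in this paper needs the index-one cover and the residue representation of Sections~\ref{sec:forms}--\ref{sec:positivity-and-equality}; nothing in your sketch replaces them. The ``Wahl-type inequality'' $h^1(\Theta_X)\le h^1(L_X)+(\text{terms in }p_g)$ is likewise asserted rather than derived, and at $p_g=0$ it is the same conjecture. Since $\Delta$ can be arbitrarily negative \cite{ParkShinNonlinear}, $p_g$ must enter essentially. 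Moreover, a non-$\mathbb{Q}$-Gorenstein singularity has no index-one cover on which the residue comparison can be run.

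The missing idea is a substitute for that cover. The paper chooses a numerically trivial line bundle $N$ with $N^{\otimes r}\simeq\omega_X^{\otimes r}(rZ_K)$. Then $L_X\otimes N$ carries a flat logarithmic connection with finite monodromy. On the associated cyclic cover (Lemmas~\ref{genus:lem:cyclic-log-cover}--\ref{genus:lem:horizontal-section}) it becomes $\mathcal{O}_Y(D)\otimes\mathbb{C}_\chi$ with $Z_{K,Y}=F+D$ integral. Proposition~\ref{genus:prop:polar} and descent to invariants then give $\Delta_N:=h^1(N)-h^1(S_X\otimes N)+h^1(L_X\otimes N)\ge 0$ (Proposition~\ref{genus:prop:twist}).

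The constant $4=1+2+1$ is the sum of the ranks of $\mathcal{O}_X,S_X,L_X$, not a count of residue terms. It enters through the bound $|h^1(\mathcal{F})-h^1(\mathcal{F}\otimes N)|\le s\,p_g$ for a vector bundle $\mathcal{F}$ of rank $s$ (Lemma~\ref{genus:lem:variation}).

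Your equality argument has a second gap. Sharpness of estimates of the form ``each term is at most $p_g$'' does not force $p_g=0$. The paper needs the strict bound $h^1(N)\le 2p_g-1$ for $p_g>0$ (Lemma~\ref{genus:lem:strict}, a Konno-type minimal-cycle argument for $\omega_D\otimes N^{-1}|_D$). This gives $\Delta+4p_g\ge\Delta_N+2p_g-h^1(N)\ge 1$. Your final step, reducing the case $p_g=0$ to Theorem~\ref{thm:rational-conjecture} and the rational double point case, agrees with the paper.
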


The proof constructs a numerically trivial line bundle \(N\) for which
\[
 h^1(X,N)-h^1(X,S_X\otimes N)+h^1(X,L_X\otimes N)\geq0.
\]
An equivariant residue comparison on a resolution of a finite cyclic cover gives this inequality. Tensoring a vector bundle of rank \(s\) by \(N\) changes its first cohomology dimension by at most \(sp_g\). Applying this estimate to \(\mathcal{O}_X,S_X,L_X\), of ranks \(1,2,1\), gives the bound. A strict estimate for \(p_g>0\) shows that equality forces rationality, and the rational equality criterion then gives quasi-homogeneity.

\subsection*{Organization}

Sections~\ref{qg:sec:relative-cover}--\ref{genus:sec:bound} develop the positivity results: they fix the canonical-cover conventions, prove the function-index formula, construct the residue representations, establish positivity and its equality criterion, and prove the genus bound. Sections~\ref{sec:algebra}--\ref{sec:consequences} develop specialization of reflexive canonical powers, its Riemann--Roch formula and criteria, and applications to smoothing components. Section~\ref{sec:qhd} combines the two sets of results to complete the classification of singularities admitting rational homology disk smoothings. Throughout the paper, we work in the complex analytic category.

\subsection*{Statements and Declarations}

\begin{itemize}[labelindent=0pt, leftmargin=*, align=left]
\item \textbf{Funding.} Heesang Park was supported by the Basic Science Research Program through the National Research Foundation of Korea (NRF) (RS-2026-25590534). Dongsoo Shin was supported by the Basic Science Research Program through the National Research Foundation of Korea (NRF) (NRF-2023R1A2C1007293).

\item \textbf{Competing interests.} The authors declare that they have no financial or non-financial competing interests relevant to the content of this article.

\item \textbf{Use of artificial intelligence.} The authors used ChatGPT-6 Astra to search for and analyze earlier literature, in particular the work of Wahl~\cite{Wahl1985,Wahl2015} and Lima-Pereira, Nuño-Ballesteros, Oréfice-Okamoto, and Tomazella~\cite{LimaPereiraEtAl2024}. The authors proposed various approaches to generalizing the ideas in these papers and used ChatGPT-6 Astra to examine their feasibility. Other uses included language editing and identifying errors in the manuscript. The authors take full responsibility for the correctness of its contents.
\end{itemize}

\section{Index-one covers and characters}
\label{qg:sec:relative-cover}

We recall the index-one-cover construction and fix the character conventions used below. For the canonical-cover construction, see Matsuki~\cite[\S4.5]{Matsuki2002} and Wahl~\cite[\S5]{Wahl2015}. For isotypic decompositions and representation rings, see Serre~\cite[\S\S2.6, 9.1]{Serre1977}. Recall that every rational normal complex surface singularity is \(\mathbb{Q}\)-Gorenstein; see Lipman~\cite[Proposition~17.1]{Lipman1969} for instance.

Let \((V,0)\) be a normal \(\mathbb{Q}\)-Gorenstein surface singularity of index \(r>1\), put \(A=\mathcal{O}_{V,0}\), and choose an isomorphism \(\omega_A^{[r]} \simeq A\). Here, for a rank-one reflexive module \(M\) over the normal local ring \(A\), write \(M^\vee=\Hom_A(M,A)\). We define its reflexive powers by
\[
 M^{[0]}=A,
 M^{[m]}=(M^{\otimes m})^{\vee\vee},
 M^{[-m]}=((M^\vee)^{\otimes m})^{\vee\vee}
  (m \geq 1).
\]
Since \(M^\vee\) is reflexive, \(M^{[-1]}=M^\vee\). In particular,
\[
 \omega_A^{[-1]}=\Hom_A(\omega_A,A).
\]
The natural products of reflexive canonical powers, together with this isomorphism, give
\[
 B=\bigoplus_{i=0}^{r-1}\omega_A^{[i]}
\]
the structure of a finite \(A\)-algebra: products of degree at least \(r\) are identified with the corresponding summands of degree less than \(r\) using the chosen isomorphism. The finite morphism
\[
 q\colon (\widetilde{V},0)=\Specan B \to (V,0)
\]
induced by \(A \hookrightarrow B\) is called the index-one cover of \((V,0)\), and \(B=\mathcal{O}_{\widetilde{V},0}\) is its local ring. The complex analytic space \(\widetilde{V}\) is connected and normal. The grading defines an action of \(G=\mu_r\) on \(B\), and the induced action on \(\widetilde{V}\) is free away from the origin. Moreover, \(A=B^G\), so \(q\) is the quotient map; see Reid~\cite[\S\S3.5--3.6]{Reid1987} and Matsuki~\cite[\S4.5]{Matsuki2002} for details. Since \(B\) is a two-dimensional normal local ring, it is Cohen--Macaulay by Serre's criterion. Since \(A \to B\) is a finite local homomorphism of Cohen--Macaulay rings of dimension two, duality for finite morphisms gives an isomorphism of \(B\)-modules
\[
 \omega_B \simeq \Hom_A(B,\omega_A);
\]
see the Stacks Project~\cite[Tag~0AX1]{Stacks}. The \(B\)-module structure on the right is given by \((b\varphi)(c)=\varphi(bc)\).

Let \(p_1\colon B \to \omega_A\) denote the projection onto the degree-one summand. Multiplication in \(B\) defines a \(B\)-linear map
\[
 \Phi\colon B \to \Hom_A(B,\omega_A),
 \Phi(b)(c)=p_1(bc).
\]
Away from the origin, the summands of degrees \(i\) and \(1-i\) modulo \(r\) are invertible and are paired perfectly by multiplication. Thus \(\Phi\) is an isomorphism there. Both modules are reflexive over \(A\), so \(\Phi\) is an isomorphism everywhere; see the Stacks Project~\cite[Tag~0AVB]{Stacks}. Consequently, \(\omega_B \simeq B\) as \(B\)-modules, after forgetting the \(G\)-actions. Since \(B\) is Cohen--Macaulay, it is Gorenstein.

The covering group \(G=\mu_r\) acts on functions and differential forms by the left action
\[
 g\cdot\alpha=(g^{-1})^*\alpha
  (g \in G).
\]
We use the induced actions on canonical modules and cohomology. Since \(\omega_B\) is free of rank one, the quotient \(\omega_B/\mathfrak{m}_B\omega_B\) is a one-dimensional representation of \(G\). Denote its character by \(\kappa\).

Choose \(\eta_0 \in \omega_B\) whose image in this quotient is nonzero, and set \(\eta=P_\kappa(\eta_0)\), where
\[
 P_\kappa(v)=\frac1{|G|}
 \sum_{g \in G}\kappa(g)^{-1}(g\cdot v)
\]
is the projection onto the \(\kappa\)-eigenspace. The induced projection on \(\omega_B/\mathfrak{m}_B\omega_B\) is the identity, so \(\eta\) and \(\eta_0\) have the same nonzero image in this quotient. Nakayama's lemma therefore shows that \(\eta\) generates \(\omega_B\), and by construction
\[
 g\cdot\eta=\kappa(g)\eta
  (g \in G).
\]

Let \(d\) be the order of \(\kappa\). Then \(\eta^{\otimes d}\) is \(G\)-invariant. Since the covering is finite \'{e}tale away from the origin, this section descends to a nowhere-vanishing section of \(\omega_{V\setminus\{0\}}^{\otimes d}\). Extending the resulting trivialization across the origin as an isomorphism of reflexive sheaves gives \(\omega_A^{[d]} \simeq A\). Thus \(r \mid d\), since \(A\) has canonical index \(r\). On the other hand, \(d \mid r\) because \(G=\mu_r\). Hence \(d=r\), and \(\kappa\) is faithful, meaning that \(\kappa(g)=1\) only when \(g\) is the identity element of \(G\).

For a finite group \(G\), write \(R(G)\) for its complex representation ring, \([U]\) for the class of a representation, \(U^*\) for its dual, and \(\mathbf{1}\) for the trivial representation. We write tensor products with one-dimensional characters multiplicatively. For cyclic \(G\), the notation \(U_\chi\) denotes the \(\chi\)-eigenspace, and \(\mult_\chi\) denotes multiplicity, extended additively to \(R(G)\). For a coherent sheaf on a compact exceptional cycle, \(\chi_G\) denotes its equivariant Euler characteristic. Canonical bundles always carry their natural linearizations.

A \(\mathbb{Q}\)-Gorenstein smoothing is understood in the index-one-cover sense: it is induced by a \(G\)-equivariant smoothing of the index-one cover, with trivial action on the base. We use the standard equivalence with the relative canonical-cover formulation; see Koll\'{a}r--Shepherd-Barron~\cite{KollarShepherdBarron1988} and Wahl~\cite[\S5]{Wahl2015}. In particular its central singularity is \(\mathbb{Q}\)-Gorenstein.

\section{The function index}\label{sec:index}

Let \((V,0)\) be a normal complex surface singularity which is not a rational double point, and set \(A=\mathcal{O}_{V,0}\). We write
\[
 \omega_A^{\vee}=\Hom_A(\omega_A,A).
\]
No \(\mathbb{Q}\)-Gorenstein assumption is imposed in this section. For \(f \in \mathfrak{m}_A\) defining a reduced curve and having an isolated critical point at the origin, put
\[
 J_f=df(\Der_{\mathbb{C}} A),
 j_f=\length(A/J_f),
 \mu_f=1-\chi(F_f),
\]
where \(F_f\) is the Milnor fiber of the function \(f\colon(V,0) \to (\mathbb{C},0)\). Here \(df\colon\Der_{\mathbb{C}} A \to A\) denotes the \(A\)-linear map \(\delta \mapsto \delta(f)\). Thus \(J_f\) is the ideal generated by the derivatives of \(f\) along holomorphic vector fields on \(V\); when \(V\) is smooth, it is the usual Jacobian ideal. Under our hypotheses, \(A/J_f\) has finite length, and \(j_f=\dim_{\mathbb{C}}(A/J_f)\).

The invariant \(j_f\) coincides with the relative Bruce--Roberts number \(\mu_{\mathrm{BR}}^{-}(f,V)\); see LP--NB--OO--T~\cite[\S1]{LimaPereiraEtAl2024}. The Milnor fiber \(F_f\) is a connected compact real two-dimensional manifold with boundary. Moreover, \(\mu_f=1-\chi(F_f)=b_1(F_f)\) agrees with the Milnor number of the reduced curve \(f^{-1}(0)\); see Buchweitz--Greuel~\cite[Theorem~4.2.2 and Corollary~4.2.3(1)]{BuchweitzGreuel1980}.

Let \(p\colon(X,E) \to (V,0)\) be the minimal good resolution, with \(E\) reduced. As in the introduction, write
\[
 S_X=\Theta_X(-\log E),
 L_X=\mathcal{O}_X(-(K_X+E)),
\]
and
\[
 \Delta(V,0)=h^1(\mathcal{O}_X)-h^1(S_X)+h^1(L_X).
\]
When \((V,0)\) is an isolated complete intersection singularity, LP--NB--OO--T~\cite[Theorem~2.2]{LimaPereiraEtAl2024} prove that
\[
 j_f=\mu_f+\mu(V,0)-\tau(V,0),
\]
where \(\mu(V,0)\) and \(\tau(V,0)\) are the Milnor and Tjurina numbers of the surface singularity \((V,0)\). Since a complete intersection is Gorenstein, Wahl~\cite[Corollary~2.5]{Wahl2015} gives \(\Delta(V,0)=1+\mu(V,0)-\tau(V,0)\). Thus \(\Delta(V,0)=1+j_f-\mu_f\) in the complete intersection case. The following proposition shows that this identity holds without the complete intersection assumption.

\begin{proposition}\label{prop:index}
Let \((V,0)\) be a normal complex surface singularity which is not a rational double point. For every holomorphic function \(f\colon(V,0) \to (\mathbb{C},0)\) defining a reduced curve and having an isolated critical point at the origin,
\begin{equation*}%\label{eq:index}
 \Delta(V,0)=1+j_f-\mu_f.
\end{equation*}
\end{proposition}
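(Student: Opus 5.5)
We work on a resolution \(p\colon Y\to V\) obtained from the minimal good resolution \(X\) by further blowing up points of \(E\). We choose \(Y\) so that the total transform \(D=p^*(f)\) has simple normal crossing support with the exceptional divisor, and so that the maximal ideal pulls back to an invertible sheaf. Write \(E_Y\) for the reduced exceptional divisor of \(Y\) and \(C\) for the strict transform of \(f^{-1}(0)\). The first step is to check that \(\Delta\) may be computed on \(Y\), i.e. that \(h^1(\mathcal{O})-h^1(\Theta(-\log E))+h^1(\mathcal{O}(-(K+E)))\) is unchanged by blowing up a point of \(E\). For \(\mathcal{O}\) this is clear. For the other two terms we use the standard comparison sequences: a blowup at a smooth or nodal point of \(E\) changes \(h^1(S)\) and \(h^1(L)\) by the same amount, which is \(0\) or \(1\) accordingly. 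These are the familiar local computations in Wahl~\cite{Wahl2015}.

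Next we relate \(j_f\) to cohomology on \(Y\). Since \(V\) is normal, \(\Der_{\mathbb{C}}A=H^0(V\setminus 0,\Theta)\). One checks that this equals \(H^0(Y,\Theta_Y(-\log E_Y))\) up to a finite colength that is controlled by \(H^1_E\). The map \(df\) is logarithmic differentiation. On \(Y\), \(\delta\mapsto \delta(f)/f\) gives an \(\mathcal{O}_Y\)-linear map
\[
 \Theta_Y(-\log (E_Y+C))\longrightarrow \mathcal{O}_Y,
\]
whose image is the ideal of the critical locus of the reduced total divisor \(D_{\mathrm{red}}\). Multiplying by \(f\) identifies \(J_f\) with the pushforward of the image of the map \(\Theta_Y(-\log D_{\mathrm{red}})(-D)\to\mathcal{O}_Y\). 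The cokernel of this map is supported on the finitely many double points of \(D_{\mathrm{red}}\) together with the points where \(D\) is non-reduced. Taking global sections and using the long exact sequence, we express
\[
 j_f=\length\bigl(\text{local contributions at double points}\bigr)+h^1(\Theta_Y(-\log D_{\mathrm{red}})(-D))-h^1(\mathcal{O}_Y(-D))+\cdots.
\]
Here \(h^1(\mathcal{O}_Y(-D))=p_g\), since \(\mathcal{O}_Y(-D)\simeq\mathcal{O}_Y\) via \(f\). Next we twist by \(\mathcal{O}(D)\) and compare \(\Theta_Y(-\log D_{\mathrm{red}})\) with \(\Theta_Y(-\log E_Y)\) through the residue sequence along \(C\):
\[
 0\to\Theta_Y(-\log(E_Y+C))\to\Theta_Y(-\log E_Y)\to N_{C}(\ldots)\to0.
\]
Since \(C\) is a union of disks, the cokernel term has no \(H^1\) and only finite-length discrepancies in \(H^0\). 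For the \(L\)-term we use duality: \(\det\Theta_Y(-\log E_Y)=L_Y\). By Serre/Laufer duality on \(Y\), the contribution of \(h^1(L_Y)\) arises from the local cohomology \(H^1_E\) terms. These are exactly what measure the failure of \(\Der A\) to be the sections of \(\Theta_Y(-\log E_Y)\).

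Finally we compute \(\mu_f\) from the Milnor fiber. By A'Campo's formula, \(\chi(F_f)=\sum_i m_i\,\chi(E_i^\circ)\), where \(m_i\) is the multiplicity of \(D\) along \(E_i\) and \(E_i^\circ\) is \(E_i\) minus the other components of \(D_{\mathrm{red}}\). Riemann--Roch on the exceptional cycle rewrites \(\sum m_i(2-2g_i-\#\text{meetings})\) as an intersection number of the form \(-D_E\cdot(K_Y+E_Y+C)\), with \(D_E\) the exceptional part of \(D\). The same expression arises as the Euler characteristic correction in the \(j_f\) computation, because \(D\cdot E_i=0\) for every exceptional \(E_i\). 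Comparing the two gives \(1+j_f-\mu_f=\Delta\). The constant \(1\) comes from \(H^0\) of \(\mathcal{O}\) modulo \(f\); the hypothesis that \(V\) is not a rational double point is used so that \(H^0(L)\) behaves as in Wahl's formula, in particular so that the generator of \(\omega^\vee\) is not a unit.

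We expect the main obstacle to be the precise identification of \(\Der_{\mathbb{C}}A\), and hence of \(J_f\), with sheaf-theoretic data on \(Y\) with exact length bookkeeping. Concretely, we must show that the lengths of the local cohomology corrections in the tangent term and in the \(L_X\) term cancel except for \(h^1(S_X)\) and \(h^1(L_X)\) themselves. As a first attempt we would use the five-term sequence for \(H^0_E\), \(H^1_E\) together with Wahl's description of the local cohomology of \(S_X\).
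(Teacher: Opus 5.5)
Your first step is false as stated. Blowing up a smooth point \(y\) of \(E\) raises \(h^1(S)\) by \(1\) when the lifted derivations have zero image in \(T_yE\), but raises \(h^1(L)\) only by \(1-c\), where \(c=\rank\bigl(H^0(L_Y)\to L_Y|_y\bigr)\), and \(c=1\) does occur. Take a simple elliptic singularity: \(L_X\simeq\mathcal{O}_X\), and the lifted derivations have zero image in \(T_yE\). Blowing up any point of \(E\) therefore lowers the defect computed on the resolution from \(1\) to \(0\). The quantity that is actually invariant is \(\Delta_Y+a_Y\), with \(a_Y=\length\bigl(\omega_A^\vee/H^0(Y,L_Y)\bigr)\) (Lemma~\ref{lem:resolution-comparison}). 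The paper carries this correction through and cancels it at the end, using \(H^0(Y,L_Y(M))\simeq\omega_A^\vee\) in the restriction sequence for \(L_Y\subset L_Y(M)\). Your bookkeeping has no place for this term.

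Your second step uses the wrong sheaf.
\begin{itemize}
\item Fields in \(\Theta_Y(-\log(E_Y+C))\) are tangent to the strict transform of \(f^{-1}(0)\). Their \(df\)-images therefore lie in \((f)\), an ideal of infinite colength, so they cannot produce \(J_f\).
\item The map \(\delta\mapsto\delta(f)/f\) is surjective on that sheaf, since \(x\partial_x\mapsto m+\cdots\). So there is no cokernel at the double points.
\end{itemize}
The device that works is \(df\colon\Theta_Y(-\log E_Y)\to\mathcal{O}_Y(-Z)\), with logarithmic poles only along the exceptional divisor. It is surjective with kernel \(L_Y(Z)\) when the strict transform is smooth, transverse to \(E_Y\), and avoids its nodes. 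On global sections it is the map \(\Der_{\mathbb{C}}A\to\mathfrak{m}_A\) with image \(J_f\), \emph{provided} \(H^0(Y,\Theta_Y(-\log E_Y))=\Der_{\mathbb{C}}A\).

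That proviso is exactly what your ``finite colength controlled by \(H^1_E\)'' leaves open. On a resolution tailored to \(f\), a blowup at a point that is not a base point of \(\mathfrak{m}_A\mathcal{O}\) does not lift derivations with nonzero value in \(T_yE\). The paper avoids this in two moves:
\begin{itemize}
\item It works on a principalization of \(\mathfrak{m}_A\), through which derivations do lift because they preserve \(\mathfrak{m}_A\).
\item It first replaces \(f\) by \(f+t\ell\). By Lemma~\ref{lem:function-conservation}, \(j_f-\mu_f\) is unchanged, via flatness of the relative Jacobian quotient and conservation of the radial index. For general \(t\), the strict transform is already transverse on that principalization.
\end{itemize}
With these in place, your A'Campo computation \(\chi(F_f)=M^2-(K+E)\cdot M\) combines with Riemann--Roch on \(M\) to give the formula.
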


We prove Proposition~\ref{prop:index} in Section~\ref{sec:proof-of-prop-index}.

\subsection{Resolution comparison and conservation}

All resolutions below are taken over sufficiently small Stein representatives. For any such resolution \(r\colon Z \to V\) and any coherent analytic sheaf \(\mathcal{F}\) on \(Z\), the group \(H^1(Z,\mathcal{F})\) is finite dimensional and \(H^2(Z,\mathcal{F})=0\).

We first recall the anticanonical extension property. By Okuma~\cite{Okuma1997} (see also Wahl~\cite[Lemma~2.1(i)]{Wahl2015}), one has \(H_E^1(X,L_X)=0\). The local cohomology sequence therefore identifies \(H^0(X,L_X)\) with \(H^0(X\setminus E,L_X)\). Put \(U=V\setminus\{0\}\), and let \(j\colon U \hookrightarrow V\) be the inclusion. Under the identification \(X\setminus E \simeq U\), the restriction of \(L_X\) is the anticanonical bundle \(\omega_U^{-1}\). The sheaf \(\omega_V^\vee\) is reflexive and restricts to \(\omega_U^{-1}\) on \(U\). Since \(V\) is normal and \(V\setminus U=\{0\}\) has codimension two, the extension property of reflexive sheaves gives
\[
 \omega_V^\vee \simeq j_*(\omega_U^{-1}).
\]
In other words, every section of \(\omega_U^{-1}\) extends uniquely across the origin as a section of \(\omega_V^\vee\). Combining this with the preceding restriction isomorphism yields
\[
 p_*L_X \simeq j_*(\omega_U^{-1}) \simeq \omega_V^\vee.
\]
Taking the stalk at \(0\), and writing \(H^0(X,L_X)\) for \((p_*L_X)_0\), we obtain
\begin{equation}\label{eq:anti}
 H^0(X,L_X) \simeq \omega_A^\vee.
\end{equation}

Let \(q\colon\widetilde{X} \to X\) be a principalization of \(\mathfrak{m}_A\mathcal{O}_X\) obtained by successive point blowups. At each stage \(Y\), write
\[
\mathfrak{m}_A\mathcal{O}_Y
=
\mathcal{O}_Y(-D_Y)\mathcal{J}_Y,
\]
where \(D_Y\) is the fixed divisorial part and \(\mathcal{J}_Y \subseteq \mathcal{O}_Y\) is obtained locally by dividing the generators by a local equation of \(D_Y\). We blow up the points of \(\Supp(\mathcal{O}_Y/\mathcal{J}_Y)\) and repeat until the residual ideal is the unit ideal. Thus \(\mathfrak{m}_A\mathcal{O}_{\widetilde{X}}\) is invertible.

Let \(\widetilde{E}=\sum_i\widetilde{E}_i\) be the reduced exceptional divisor of \(\widetilde{p}=p\circ q \colon \widetilde{X} \to V\), and write
\[
 \mathfrak{m}_A\mathcal{O}_{\widetilde{X}} =\mathcal{O}_{\widetilde{X}}(-M),
 M=\sum_i m_i\widetilde{E}_i,\quad m_i>0.
\]
Put
\[
 S_{\widetilde{X}} =\Theta_{\widetilde{X}}(-\log\widetilde{E}),
 L_{\widetilde{X}} =\mathcal{O}_{\widetilde{X}}(-(K_{\widetilde{X}}+\widetilde{E})),
\]
and define
\[
 \begin{aligned}
 a_{\widetilde{X}}
 &=\length(\omega_A^\vee/H^0(\widetilde{X},L_{\widetilde{X}})),\\
 \Delta_{\widetilde{X}}
 &=h^1(\mathcal{O}_{\widetilde{X}})-h^1(S_{\widetilde{X}})+h^1(L_{\widetilde{X}}).
 \end{aligned}
\]

The following lemma combines the standard lifting property of vector fields with the transformation formulas for logarithmic tangent sheaves under point blowups; see Bergner--Graf~\cite[\S2, Fact~1 and Proposition~1]{BergnerGraf2020} and Flenner--Zaidenberg~\cite[Lemma~1.5(3)--\allowbreak(4)]{FlennerZaidenberg1994}. For an equivariant log-resolution of the maximal ideal, see also Favre--Ruggiero~\cite[Proposition~3.1]{FavreRuggiero2014}. We include the proof to describe the lifting through the chosen principalization and to keep track of the anticanonical correction term.

\begin{lemma}\label{lem:resolution-comparison}
With the above notation, one has
\[
 H^0(\widetilde{X},S_{\widetilde{X}})=\Der_{\mathbb{C}} A
\]
and
\begin{equation}\label{eq:correction}
 \Delta(V,0) = \Delta_{\widetilde{X}}+a_{\widetilde{X}}.
\end{equation}
\end{lemma}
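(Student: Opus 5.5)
We argue by induction along the chain of point blowups \(X=Y_0\leftarrow Y_1\leftarrow\cdots\leftarrow Y_N=\widetilde{X}\) defining the principalization. For each stage \(Y\) with reduced exceptional divisor \(E_Y\) we put \(S_Y=\Theta_Y(-\log E_Y)\), \(L_Y=\mathcal{O}_Y(-(K_Y+E_Y))\), \(\Delta_Y\) and \(a_Y\) as above. By \eqref{eq:anti}, \(a_X=0\) and \(\Delta_X=\Delta(V,0)\). It therefore suffices to show that \(\Delta_Y+a_Y\) is unchanged under a single blowup \(\beta\colon Y'\to Y\) at a point \(P\in E_Y\). Here \(E_{Y'}=\beta^{-1}E_Y\) is reduced, \(P\) is either a smooth point or a node of \(E_Y\), and the new exceptional curve \(C\simeq\mathbb{P}^1\).

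\emph{Structure sheaf and vector fields.} Since \(Y\) is smooth, \(R\beta_*\mathcal{O}_{Y'}=\mathcal{O}_Y\), so \(h^1(\mathcal{O}_{Y'})=h^1(\mathcal{O}_Y)\). For \(S\), we use the transformation formulas of Flenner--Zaidenberg. If \(P\) is a node of \(E_Y\), then \(S_{Y'}=\beta^*S_Y\). If \(P\) is a smooth point of \(E_Y\), there is an exact sequence
\[
0\to\beta^*S_Y\to S_{Y'}\to\mathcal{O}_C(-1)\to0 ,
\]
or the dual description \(S_{Y'}\subset\beta^*S_Y\) with a cokernel supported on \(C\). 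I would fix whichever form the local computation gives. The computation uses coordinates \((x,y)\) with \(E=\{x=0\}\), generators \(x\partial_x,\partial_y\), and the two charts. In every case \(R^1\beta_*\) of the cokernel term vanishes and \(\beta_*S_{Y'}=S_Y\), because vector fields logarithmic along \(E_Y\) are tangent to \(E_Y\) at \(P\) and lift. Hence \(h^1(S_{Y'})=h^1(S_Y)\). Iterating gives \(H^0(\widetilde{X},S_{\widetilde{X}})=H^0(X,S_X)\). Since \(S_X\) agrees with \(\Theta\) off \(E\), every derivation of \(A\) lifts to \(X\) and is logarithmic along \(E\), so \(H^0(X,S_X)=\Der_{\mathbb{C}}A\). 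This proves the first claim.

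\emph{The anticanonical term.} We have \(K_{Y'}=\beta^*K_Y+C\) and \(E_{Y'}=\beta^*E_Y-(e-1)C\), where \(e\in\{1,2\}\) is the multiplicity of \(E_Y\) at \(P\). Thus
\[
K_{Y'}+E_{Y'}=\beta^*(K_Y+E_Y)+(2-e)C .
\]
At a node this gives \(L_{Y'}=\beta^*L_Y\), so both \(h^1\) and \(H^0\) are preserved. At a smooth point, \(L_{Y'}=\beta^*L_Y(-C)\). We use the sequence
\[
0\to\beta^*L_Y(-C)\to\beta^*L_Y\to\mathcal{O}_C\to0 ,
\]
together with \(R^1\beta_*\mathcal{O}_{Y'}(-C)=0\) and \(\beta_*\mathcal{O}_{Y'}(-C)=\mathfrak{m}_P\). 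These give \(\beta_*L_{Y'}=\mathfrak{m}_PL_Y\) and \(R^1\beta_*L_{Y'}=0\). The Leray spectral sequence and the long exact sequence of \(0\to\mathfrak{m}_PL_Y\to L_Y\to\mathbb{C}_P\to0\) then give
\[
h^1(L_{Y'})-h^1(L_Y)=1-\length\bigl(H^0(L_Y)/H^0(L_{Y'})\bigr)=1-(a_{Y'}-a_Y).
\]
The last equality holds because \(H^0(L_{Y'})\subseteq H^0(L_Y)\subseteq\omega_A^\vee\). Therefore \(\Delta_{Y'}+a_{Y'}=\Delta_Y+a_Y\), and the induction closes, proving \eqref{eq:correction}.

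\emph{Main obstacle.} The main obstacle is the smooth-point case for \(S\). Without care, a rank-two sheaf modification could contribute a nonzero \(R^1\beta_*\) or change \(\beta_*\). I would settle this first by an explicit chart computation of \(S_{Y'}\) against \(\beta^*S_Y\), checking that the quotient is \(\mathcal{O}_C(a)\) with \(a\ge -1\) and that the global-section map is an isomorphism.
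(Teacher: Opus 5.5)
\textbf{There is a genuine error in your smooth-point computation for \(S\), and your final bookkeeping hides it.} Near \(P\), write \(E_Y=\{x=0\}\). The field \(\partial_y\) is logarithmic (tangent to \(E_Y\)) but does not lift: in the chart \(x=st,\ y=t\) it becomes \(t^{-1}(t\partial_t-s\partial_s)\). A holomorphic vector field lifts to the blowup at \(P\) only if it vanishes at \(P\), so tangency to \(E_Y\) is not enough. The correct local picture is \(S_{Y'}\subset\beta^*S_Y\), with quotient \(\mathcal{O}_C\) generated by the image of \(\partial_y\). Pushing forward gives the Flenner--Zaidenberg sequence
\[
 0\to\beta_*S_{Y'}\to S_Y\to T_PE_Y\to0,\qquad R^1\beta_*S_{Y'}=0,
\]
so \(\beta_*S_{Y'}\neq S_Y\) and
\[
 h^1(S_{Y'})=h^1(S_Y)+1-\rank\bigl(H^0(Y,S_Y)\to T_PE_Y\bigr).
\]
Your last line also does not follow from your own inputs. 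With \(h^1(S_{Y'})=h^1(S_Y)\) and \(h^1(L_{Y'})-h^1(L_Y)=1-(a_{Y'}-a_Y)\), you get \(\Delta_{Y'}+a_{Y'}=\Delta_Y+a_Y+1\). The two mistakes cancel only on paper.

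\textbf{The missing idea is that every global logarithmic vector field vanishes at each blowup center of the principalization.} Then the evaluation map above is zero, and \(h^1(S)\) rises by exactly one. This cancels the \(+1\) in \(h^1(L)\), which you computed correctly; your \(a_{Y'}-a_Y\) is the paper's rank \(c\). The paper gets this vanishing from how the centers are chosen:
\begin{itemize}
\item Locally \(\mathfrak{m}_A\mathcal{O}_Y=gJ\), with \(g\) an equation of the fixed divisorial part.
\item A lifted derivation \(\delta\) preserves \(gJ\), and logarithmicity gives \(\delta(g)\in(g)\), so \(\delta(J)\subseteq J\).
\item Hence \(\delta\) preserves \(\sqrt{J_P}=\mathfrak{m}_P\) at each residual base point \(P\). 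So \(\delta\) vanishes at \(P\) and lifts, logarithmically along the new total exceptional divisor.
\end{itemize}
This same argument is what actually proves \(H^0(\widetilde{X},S_{\widetilde{X}})=\Der_{\mathbb{C}}A\) by induction. If you blow up a smooth point of \(E_Y\) where some global logarithmic field is nonzero, that field does not lift and the equality fails. Note that \(H^0(S_{Y'})\to H^0(S_Y)\) being an isomorphism, which does hold here, is compatible with \(\beta_*S_{Y'}\neq S_Y\). It is precisely the vanishing of the evaluation map, and it does not keep \(h^1(S)\) unchanged.

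Your node case and your \(L\)-computation agree with the paper. The starting identity \(H^0(X,S_X)=\Der_{\mathbb{C}}A\) needs the lifting theorem for derivations to the minimal good resolution (Bergner--Graf, as cited in the paper). It does not follow merely from \(S_X=\Theta_X\) off \(E\).
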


\begin{proof}
\emph{Lifting vector fields.} Every derivation of \(A\) preserves \(\mathfrak{m}_A\) and lifts uniquely to a logarithmic vector field on the minimal good resolution; see Bergner--Graf~\cite[\S2, Fact~1 and Proposition~1]{BergnerGraf2020}. We show that these vector fields lift through \(q\).

On an intermediate resolution \(Y\), write locally
\[
\mathfrak{m}_A\mathcal{O}_Y=gJ,
\]
where \((g)\) is the fixed divisorial part and \(J\) is the residual ideal. A lifted derivation \(\delta\) preserves \(gJ\), and logarithmicity gives \(\delta(g) \in (g)\). Hence
\[
\delta(J) \subseteq J.
\]
In characteristic zero, the radical of a derivation-invariant ideal is also invariant. At each residual base point \(y\), we have \(\sqrt{J_y}=\mathfrak{m}_y\), and therefore
\[
\delta(\mathfrak{m}_y^k) \subseteq \mathfrak{m}_y^k
\quad\text{for all } k \ge 0.
\]
Thus \(\delta\) induces a graded derivation of the Rees algebra and lifts to the blowup at \(y\), logarithmically along the reduced total exceptional divisor. Induction proves that every derivation of \(A\) lifts through \(q\). Conversely, a vector field on \(Y\) acts on \(H^0(Y,\mathcal{O}_Y)=A\). Consequently,
\[
H^0(\widetilde{X},S_{\widetilde{X}})
=\Der_{\mathbb{C}} A.
\]
The same argument applies to every intermediate resolution; in particular, all global logarithmic vector fields vanish at each subsequent blowup center.

\emph{Change under a point blowup.} Use the analogous notation \(S_Y,L_Y,a_Y,\Delta_Y\) on each intermediate resolution. Consider a blowup \(b\colon Y' \to Y\) at a smooth point \(y\) of the reduced exceptional divisor \(D\), and let \(F\) be its exceptional curve. By Flenner--Zaidenberg~\cite[Lemma~1.5(3)]{FlennerZaidenberg1994},
\[
0 \to b_*S_{Y'} \to S_Y \to T_yD \to 0,
  R^1b_*S_{Y'}=0,
\]
where \(T_yD\) is viewed as a skyscraper sheaf. The evaluation map on global sections is zero by the preceding paragraph, so
\[
h^1(S_{Y'})=h^1(S_Y)+1.
\]
Moreover,
\[
L_{Y'} \simeq b^*L_Y(-F),
  b_*L_{Y'}=L_Y\mathfrak{m}_y,
  R^1b_*L_{Y'}=0.
\]
Put
\[
c=\rank(
H^0(Y,L_Y) \to L_Y|_y
).
\]
The evaluation sequence for \(L_Y\) gives
\[
h^1(L_{Y'})=h^1(L_Y)+1-c,
  a_{Y'}=a_Y+c.
\]
Since \(h^1(\mathcal{O}_{Y'})=h^1(\mathcal{O}_Y)\), it follows that
\[
\Delta_{Y'}+a_{Y'}=\Delta_Y+a_Y.
\]

If the center is a node of \(D\), then \(S_{Y'} \simeq b^*S_Y\) by Flenner--Zaidenberg~\cite[Lemma~1.5(4)]{FlennerZaidenberg1994}, and \(L_{Y'} \simeq b^*L_Y\). Thus both \(\Delta_Y\) and \(a_Y\) remain unchanged. Applying these observations along \(q\), and using \(a_X=0\) from \eqref{eq:anti}, yields
\[
\Delta_{\widetilde{X}}+a_{\widetilde{X}}
=\Delta(V,0). \qedhere
\]
\end{proof}

Let \(f \in \mathfrak{m}_A\) define a reduced curve and have an isolated critical point at the origin. Choose a general linear combination \(\ell\) of a minimal set of generators of \(\mathfrak{m}_A\), and put \(f_t=f+t\ell\). For general sufficiently small \(t \neq 0\), the function \(f_t\) also defines a reduced curve and has an isolated critical point at the origin. We write
\[
j_{f_t}=\length(A/df_t(\Der_{\mathbb{C}} A)),
 \mu_{f_t}=1-\chi(F_{f_t,0}),
\]
where, for each fixed such \(t\), \(F_{f_t,0}\) is the local Milnor fiber of \(f_t\) at \(0 \in V\), taken in a sufficiently small neighborhood containing no other critical point of \(f_t\).

\begin{lemma}\label{lem:function-conservation}
For general sufficiently small \(t \neq 0\), one has
\[
j_f-\mu_f=j_{f_t}-\mu_{f_t}.
\]
\end{lemma}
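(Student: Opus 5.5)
We argue by conservation of numbers in the one-parameter family \(f_t=f+t\ell\) on a fixed small Stein representative of \(V\). Fix a small closed ball \(B_\epsilon\) around \(0\) adapted to \(f\), so that \(0\) is the only critical point of \(f\) on \(V\cap B_\epsilon\) and the Milnor fibration of \(f\) is defined by \(V\cap B_\epsilon\cap f^{-1}(\eta)\) for small \(\eta\neq0\). For \(|t|\) small, the critical points of \(f_t\) on \(V\cap B_\epsilon\) (off the origin, meaning critical points of \(f_t|_{V\setminus\{0\}}\), together with \(0\) itself) lie in a small neighbourhood of \(0\). We compare both \(j\) and \(\mu\) for \(f\) with the sums of the corresponding local invariants of \(f_t\) over these points.

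\emph{Conservation of \(j\).} The module \(\Der_{\mathbb{C}}A\) is finitely generated; choose generators \(\delta_1,\dots,\delta_k\), defined on the Stein representative. Then \(df_t(\Der_{\mathbb{C}}A)\) is generated by \(\delta_i(f)+t\delta_i(\ell)\), giving a flat-in-\(t\)? family of ideals. Consider the sheaf \(\mathcal{O}_{V\times D}/(\delta_i(f)+t\delta_i(\ell))_i\) on \(V\times D\). Its support is finite over the disc \(D\) near \(0\), since at \(t=0\) it is the point \(0\). Upper semicontinuity of fibre length for a coherent sheaf finite over \(D\) would give only an inequality. To get equality I plan to argue that this sheaf is Cohen--Macaulay of dimension one, hence flat over \(D\). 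At smooth points of \(V\) the ideal is the usual Jacobian ideal of \(f_t\), a complete intersection of two equations in a three-dimensional space, so flatness holds there. At the origin I would use that \(\Der_{\mathbb{C}}A\) has rank two and that \(A/J_f\) has finite length, and compare with the Bruce--Roberts setting of LP--NB--OO--T~\cite{LimaPereiraEtAl2024}, where \(\mu_{\mathrm{BR}}^-\) is shown to be conserved in families. Granting flatness,
\[
j_f=j_{f_t}+\sum_{x\neq0}\mu(f_t,x),
\]
where the points \(x\neq0\) are the critical points of \(f_t\) on the smooth part near \(0\), and \(\mu(f_t,x)\) is the usual Milnor number, which equals the local length of the Jacobian algebra there.

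\emph{Conservation of \(\mu\).} We use additivity of Euler characteristics. The fibre \(V\cap B_\epsilon\cap f_t^{-1}(\eta)\), for generic small \(\eta\), is diffeomorphic to \(F_f\) by stability of the Milnor fibration under small perturbation (transversality to the sphere is open). It degenerates over a small disc to the singular fibres at the critical points of \(f_t\). The standard Lefschetz-type count gives
\[
\chi(F_f)=\chi(\text{generic fibre})=1-\mu_{f_t}-\sum_{x\neq0}\mu(f_t,x),
\]
where the disc-around-\(\eta\)-values argument uses that the total space \(V\cap B_\epsilon\cap f_t^{-1}(\text{disc})\) is contractible, being conic over a neighbourhood. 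Thus
\[
\mu_f=\mu_{f_t}+\sum_{x\neq0}\mu(f_t,x).
\]
Subtracting the two conservation formulas yields \(j_f-\mu_f=j_{f_t}-\mu_{f_t}\). Genericity of \(t\) is used so that \(f_t\) is reduced with an isolated critical point at \(0\), and the critical points away from \(0\) are isolated.

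\emph{Main obstacle.} The main obstacle is the flatness, or exact conservation, of \(j\) at the singular point. Upper semicontinuity only gives \(j_f\geq j_{f_t}+\sum\mu(f_t,x)\). I would first try to prove that \(A[t]/df_t(\Der A)\) localised at \((0,0)\) is Cohen--Macaulay. The idea is to realise \(df_t(\Der A)\) as the image of a map from a rank-two reflexive module whose determinantal (Fitting) structure forces codimension two in the three-dimensional space \(V\times D\). If that fails, I would instead invoke the conservation of the Bruce--Roberts number \(\mu_{\mathrm{BR}}^-\) from LP--NB--OO--T, which the paper identifies with \(j_f\).
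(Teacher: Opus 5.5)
Your conservation of \(\mu\) is fine. It is a legitimate alternative to the paper's appeal to conservation of the radial index (Ebeling--Gusein-Zade--Seade). One correction: the tube \(V\cap B_\epsilon\cap f_t^{-1}(\text{disc})\) is contractible because, for \(|t|\) small, it is homeomorphic to the Milnor tube of \(f\), which retracts onto the conic curve \(V\cap B_\epsilon\cap f^{-1}(0)\); it is not itself conic.

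The genuine gap is the conservation of \(j\) at the singular point. This is the substance of the lemma, and you leave it open as the ``main obstacle''.

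\begin{itemize}
\item Your first idea aims at the wrong property. Codimension two of the support of the quotient in the three-dimensional germ is automatic from finiteness over the disc. For an ideal that is not a complete intersection, a codimension or Fitting count gives no control of depth. What you actually need is depth at least one at \((0,0)\).
\item Your fallback does not apply either. LP--NB--OO--T work on an ICIS, whereas the lemma is needed for arbitrary normal surface singularities. Removing the ICIS hypothesis is the whole point of Proposition~\ref{prop:index}.
\end{itemize}

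The missing idea is to show that the kernel of \(df_t\) is maximal Cohen--Macaulay. Put \(R=A\{t\}\), \(\mathscr{D}=\Der_{\mathbb{C}}A\otimes_AR\) and \(\mathscr{Q}=R/df_t(\mathscr{D})\).

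\begin{itemize}
\item The kernel of \(df_t\colon\mathscr{D}\to R\) is reflexive, because the image is torsion-free.
\item Off the singular axis and \(\Supp\mathscr{Q}\), both of codimension at least two, the contraction \(v\wedge w\mapsto df_t(v)w-df_t(w)v\) identifies this kernel with \(\omega_A^\vee\otimes_AR\).
\item Reflexive extension then gives the exact sequence
\[
0\to\omega_A^\vee\otimes_AR\to\mathscr{D}\xrightarrow{df_t}R\to\mathscr{Q}\to0 .
\]
\item Since \(\omega_A^\vee\) and \(\Der_{\mathbb{C}}A\) are reflexive over the two-dimensional normal ring \(A\), they are maximal Cohen--Macaulay. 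So the first two terms have depth three over \(R\).
\item Applying the depth lemma twice gives \(\dep_R\mathscr{Q}\geq1\).
\item Hence \(\mathscr{Q}\) has no nonzero finite-length submodule, and in particular no \(t\)-torsion. Being finite over \(\mathbb{C}\{t\}\), it is free of rank \(j_f\). This is exactly the flatness you needed.
\end{itemize}

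Reflexivity of the kernel alone would not suffice. A kernel of depth two would only yield \(\dep\mathscr{Q}\geq0\). The essential input is the identification with the maximal Cohen--Macaulay module \(\omega_A^\vee\otimes_AR\).
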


\begin{proof}
We show that both \(j_f\) and \(\mu_f\) decrease by the sum of the Milnor numbers at the critical points of \(f_t\) away from the origin. For \(j_f\), we prove flatness of the relative Jacobian quotient; for \(\mu_f\), we use conservation of the radial index.

Set \(T=\mathbb{C}\{t\}\), \(R=A\{t\}\), and
\[
\mathscr{D}=(\Der_{\mathbb{C}} A)\otimes_A R,
 \mathscr{Q}=R/df_t(\mathscr{D}),
\]
Here \(df_t\) denotes differentiation along \(V\), and \(\mathscr{Q}\) is the relative Jacobian quotient. The modules \(\Der_{\mathbb{C}} A\) and \(\omega_A^\vee\) are reflexive over the normal local ring \(A\), hence maximal Cohen--Macaulay. Their extensions to the normal three-dimensional algebra \(R\) therefore have depth three. Since
\[
\mathscr{Q}/t\mathscr{Q} \simeq A/J_f
\]
has finite length, the analytic finiteness theorem implies that \(\mathscr{Q}\) is finite over \(T\).

The image of \(df_t\colon\mathscr{D} \to R\) is torsion-free, so the localized depth lemma shows that its kernel is reflexive. Outside the singular axis and \(\Supp\mathscr{Q}\), contraction with \(df_t\) identifies this kernel with \(\omega_A^\vee\otimes_A R\). Both excluded sets have codimension at least two. Reflexive extension therefore gives an exact sequence
\[
0 \to \omega_A^\vee\otimes_A R
 \to \mathscr{D}
 \xrightarrow{df_t} R
 \to \mathscr{Q} \to 0.
\]
Applying the depth lemma twice yields
\[
\dep_R\mathscr{Q} \ge 1.
\]
The \(T\)-torsion submodule of \(\mathscr{Q}\), if nonzero, would be finite-dimensional over \(\mathbb{C}\), since \(\mathscr{Q}\) is finite over \(T\). It would thus be a nonzero finite-length \(R\)-submodule, contrary to the positive depth of \(\mathscr{Q}\). Hence \(\mathscr{Q}\) is finite free over \(T\), of rank \(j_f\).

Choose sufficiently small representatives on which the corresponding quotient sheaf is finite over the parameter disc. After shrinking the disc, its direct image is locally free of rank \(j_f\). Let \(\Sigma_t\) be the set of critical points of \(f_t\) away from the origin in the chosen representative. For \(p \in \Sigma_t\), the surface \(V\) is smooth at \(p\), and the local quotient has length
\[
\dim_{\mathbb{C}}
\frac{\mathcal{O}_{V,p}}{df_t(\Theta_{V,p})}
=\mu(f_t,p),
\]
where \(\mu(f_t,p)\) is the Milnor number of \(f_t-f_t(p)\) at \(p\). Constancy of the total fiber length gives
\[
j_f=j_{f_t}+\sum_{p \in \Sigma_t}\mu(f_t,p).
\]

By Ebeling--Gusein-Zade--Seade~\cite[Remark~2.4 and Example~2.6]{EbelingGuseinZadeSeade2004}, the radial index of \(df\) on the complex surface \(V\) equals \(1-\chi(F_f)=\mu_f\) and satisfies conservation of number. At each \(p \in \Sigma_t\), the radial index of \(df_t\) equals \(\mu(f_t,p)\). Consequently,
\[
\mu_f=\mu_{f_t}+\sum_{p \in \Sigma_t}\mu(f_t,p).
\]
Subtracting the two identities proves
\[
j_f-\mu_f=j_{f_t}-\mu_{f_t}. \qedhere
\]
\end{proof}

\subsection{Logarithmic differentiation and the Milnor fiber}

We first describe the logarithmic differential on a resolution. The local calculation applies whenever the total transform of the zero divisor has the form below. The identification of its global sections uses the chosen principalization of the maximal ideal.

\begin{lemma}\label{lem:logarithmic-differential}
Let \(f \in \mathfrak{m}_A\) define a reduced curve and have an isolated critical point at the origin.
\begin{enumerate}[label=\textup{(\roman*)}]
\item Let \(r\colon(Y,D) \to (V,0)\) be a good resolution, and put \(S_Y=\Theta_Y(-\log D)\) and \(L_Y=\mathcal{O}_Y(-(K_Y+D))\). Suppose that
\[
 \divr_Y(f)=Z+H,\quad
 Z=\sum_i z_iD_i,\quad z_i>0,
\]
where \(D=\sum_iD_i\) is the reduced exceptional divisor and \(H\) is smooth, reduced, transverse to \(D\), and disjoint from its nodes. On a sufficiently small representative, there is an exact sequence
\begin{equation}\label{eq:df}
 0 \to L_Y(Z) \to S_Y \xrightarrow{df} \mathcal{O}_Y(-Z) \to 0.
\end{equation}

\item For the principalization \(\widetilde{p}\colon(\widetilde{X},\widetilde{E}) \to (V,0)\) fixed above, if the image of \(f\) in \(\mathfrak{m}_A/\mathfrak{m}_A^2\) is sufficiently general, the hypotheses of \textup{(i)} hold with \((Y,D,Z)=(\widetilde{X},\widetilde{E},M)\). Moreover,
\[
 H^0(\widetilde{X},\mathcal{O}_{\widetilde{X}}(-M))=\mathfrak{m}_A,
 \quad
 H^0(\widetilde{X},L_{\widetilde{X}}(M)) \simeq \omega_A^\vee.
\]
The second identification agrees with restriction away from \(\widetilde{E}\).
\end{enumerate}
\end{lemma}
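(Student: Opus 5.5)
For (i), I will construct the map \(df\) locally and check exactness in local coordinates at the three kinds of points of \(D\cup H\); away from \(D\cup H\) everything is elementary. The map is \(\delta\mapsto\delta(f)\). A logarithmic vector field preserves the ideal \(\mathcal{O}_Y(-Z)\), because \(Z\) is supported on \(D\). Away from \(D\), the function \(f\) has no critical points on a small representative, since the critical point is isolated at the origin. Hence \(df\colon\Theta_Y\to\mathcal{O}_Y\) is surjective there, and its kernel is \(\bigwedge^2\Theta_Y=L_Y\) via \(\delta\mapsto\iota_\delta\), with the identification \(\bigwedge^2\Theta\simeq\omega^{-1}\). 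Along \(D\) I will compute in coordinates.

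\emph{At a smooth point of \(D\) off \(H\).} Here \(f=ux^{z}\) with \(u\) a unit. We may absorb \(u\) into \(x\) (take a \(z\)-th root), so \(f=x^z\). The sheaf \(S_Y\) has basis \(x\partial_x,\partial_y\), and \(df\) sends these to \(zx^z\) and \(0\). So \(df\) is onto \((x^z)\), and its kernel is generated by \(\partial_y\).

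\emph{At a node.} Here \(f=x^ay^b\), and \(S_Y\) has basis \(x\partial_x,y\partial_y\), with images \(af\) and \(bf\). The map is onto \((f)\), and its kernel is generated by \(b\,x\partial_x-a\,y\partial_y\).

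\emph{At a point of \(H\cap D\).} Here \(f=x^zy\), with \(D=\{x=0\}\) and \(H=\{y=0\}\). The images are \(x\partial_x\mapsto zf\) and \(\partial_y\mapsto x^z\). The map is onto \((x^z)\), and its kernel is generated by \(x\partial_x-zy\partial_y\).

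In each case the kernel is locally free of rank one. Since \(\det S_Y=L_Y\), the kernel equals \(L_Y\otimes\mathcal{O}_Y(-Z)^{-1}=L_Y(Z)\). This makes the sequence \eqref{eq:df} exact.

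For (ii), the first step is to show that \(f\) generic in \(\mathfrak{m}_A\) modulo \(\mathfrak{m}_A^2\) satisfies \(\divr(f)=M+H\) with \(H\) as required. Since \(\mathfrak{m}_A\mathcal{O}_{\widetilde{X}}=\mathcal{O}(-M)\) is invertible and globally generated by the generators of \(\mathfrak{m}_A\), Bertini applied to the base-point-free system \(|\mathcal{O}(-M)|\) gives that a general member is smooth, meets \(\widetilde{E}\) transversally, and avoids the finitely many nodes. This member is \(H\), the strict transform of \(f^{-1}(0)\). The same general choice gives reducedness and an isolated critical point. Passing from a general element of \(\mathfrak{m}_A\) to a general image in \(\mathfrak{m}_A/\mathfrak{m}_A^2\) is harmless, because \(\mathfrak{m}_A^2\mathcal{O}=\mathcal{O}(-2M)\) has no effect on \(M\) or on the genericity conditions.

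Next, \(H^0(\mathcal{O}(-M))\) consists of functions vanishing on \(\widetilde{E}\) to order \(M\). It contains \(\mathfrak{m}_A\) and is contained in \(\mathfrak{m}_A\) because \(M>0\), so \(H^0(\mathcal{O}(-M))=\mathfrak{m}_A\).

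For the last identification, restriction to \(\widetilde{X}\setminus\widetilde{E}=U\) is injective into \(H^0(U,\omega_U^{-1})=\omega_A^\vee\). For surjectivity, take \(s\in\omega_A^\vee\). The key point is that multiplication by \(f\) maps \(\omega_A^\vee\) into \(H^0(\widetilde{X},L_{\widetilde{X}})\), and this is the main obstacle. Since \(\mathfrak{m}_A\cdot\omega_A^\vee\subseteq\) sections regular on \(\widetilde{X}\), the pole order of \(s\) along each \(\widetilde{E}_i\) must be bounded by \(M\). I would try to use the blowup analysis of Lemma~\ref{lem:resolution-comparison}. On \(X\), \eqref{eq:anti} gives \(H^0(X,L_X)=\omega_A^\vee\). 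Each point blowup gives \(L_{Y'}=b^*L_Y(-F)\) or \(b^*L_Y\) (the second at nodes), so the pullback \(q^*s\) is a section of \(L_{\widetilde{X}}(P)\), where \(P\) is the sum of the strict/total transforms of the centers' exceptional curves. It then suffices to show \(P\le M\). Each exceptional curve of \(q\) was created by blowing up a base point of the residual ideal, so \(M\) has multiplicity at least one there, plus inherited multiplicity. An induction on the blowup sequence, comparing the coefficients of \(P\) (which grow by at most one per blowup, plus those inherited from the centers) with those of \(M\) (which grow by at least the multiplicity of the residual ideal), gives \(P\le M\). Hence \(s\in H^0(L_{\widetilde{X}}(M))\), and compatibility with restriction away from \(\widetilde{E}\) is then built in.
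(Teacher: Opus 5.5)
Your proof is correct. Part (i), and the first two assertions of (ii), follow the paper's proof. For (i) this means local normal forms at the three kinds of boundary points followed by the determinant identification of the kernel. For genericity in (ii) it means writing $f=g+h$ with $h\in\mathcal{O}_{\widetilde{X}}(-2M)$, so that $f/s$ and $g/s$ have the same restriction to $\widetilde{E}$. Away from $D$, the kernel map is $v\wedge w\mapsto df(v)w-df(w)v$ rather than ``$\delta\mapsto\iota_\delta$''. Your determinant argument makes this slip harmless.

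The genuine difference is the identification $H^0(\widetilde{X},L_{\widetilde{X}}(M))\simeq\omega_A^\vee$.
\begin{itemize}
\item The paper takes global sections of \eqref{eq:df} and uses $H^0(\widetilde{X},S_{\widetilde{X}})=\Der_{\mathbb{C}}A$ from Lemma~\ref{lem:resolution-comparison}. This gives $H^0(\widetilde{X},L_{\widetilde{X}}(M))=\ker(df\colon\Der_{\mathbb{C}}A\to A)$. It then identifies this kernel with $\omega_A^\vee$ by the depth lemma and reflexive extension of the contraction isomorphism on $U$.
\item You instead start from $H^0(X,L_X)=\omega_A^\vee$ in \eqref{eq:anti} and follow $L$ through the point blowups, writing $q^*L_X=L_{\widetilde{X}}(P)$.
\end{itemize}

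Your comparison works if you run the induction with the divisorial part $D_Y$ of $\mathfrak{m}_A\mathcal{O}_Y$ at each stage, rather than with the final $M$.
\begin{itemize}
\item At a smooth center on a component $C$, the new coefficients are $p_C+1$ for $P$ versus $d_C+\mathrm{ord}_y\mathcal{J}_Y\geq d_C+1$ for the divisorial part.
\item At a node on $C_1\cap C_2$, they are $p_{C_1}+p_{C_2}$ versus $d_{C_1}+d_{C_2}+\mathrm{ord}_y\mathcal{J}_Y$.
\end{itemize}
Hence $P\leq M$ holds throughout, which gives surjectivity. Your preliminary sentence asserting $\mathfrak{m}_A\omega_A^\vee\subseteq H^0(\widetilde{X},L_{\widetilde{X}})$ is the conclusion of this count, not an independent input.

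The two routes trade different inputs. Yours uses \eqref{eq:anti} plus an elementary discrepancy count in place of derivation lifting and the reflexivity of $\ker(df)$. The paper's route does not use Okuma's vanishing at this step. It also exhibits $H^0(\widetilde{X},L_{\widetilde{X}}(M))$ directly as the kernel of $df$ on $\Der_{\mathbb{C}}A$, tying it to the critical ideal $J_f$ used next in the proof of Proposition~\ref{prop:index}.
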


\begin{proof}
(i) Logarithmic vector fields preserve the ideal of every component of \(D\), so differentiation gives \(df\colon S_Y \to \mathcal{O}_Y(-Z)\). We check surjectivity along \(D\). At a smooth point of \(D\) outside \(H\), write \(D=\{x=0\}\) and \(f=x^mu\), where \(u\) is a unit. In the basis \(x\partial_x,\partial_y\), the normalized map is
\[
 x^{-m}df=(mu+xu_x,u_y).
\]
The first coefficient restricts to the unit \(mu\) on \(x=0\). At a point of \(D\cap H\), choose coordinates with \(f=x^my\). Then
\[
 x^{-m}df=(my,1).
\]
At a node, write \(D=\{xy=0\}\) and \(f=x^my^nu\), where \(u\) is a unit. In the basis \(x\partial_x,y\partial_y\), the map is
\[
 x^{-m}y^{-n}df=(mu+xu_x,nu+yu_y).
\]
Both coefficients are nonzero at the node. Away from \(D\), surjectivity follows because \(f\) has no critical point on \(V\setminus\{0\}\), after shrinking the representative. The kernel is therefore invertible, and taking determinants gives
\[
 \ker(df) \simeq \det(S_Y)\otimes\mathcal{O}_Y(Z)=L_Y(Z).
\]
This proves \eqref{eq:df}.

(ii) Write \(f=g+h\), where \(g\) is a general linear combination of fixed minimal generators of \(\mathfrak{m}_A\) and \(h \in \mathfrak{m}_A^2\). Their pullbacks generate \(\mathcal{O}_{\widetilde{X}}(-M)\). Thus
\[
 \divr_{\widetilde{X}}(g)=M+H_g,
\]
where, near \(\widetilde{E}\), the divisor \(H_g\) is smooth, reduced, transverse to \(\widetilde{E}\), and disjoint from its nodes. Let \(s\) be a local equation of \(M\). Since \(h \in \mathcal{O}_{\widetilde{X}}(-2M)\), we may write
\[
 g=su,\quad h=s^2v,\quad f=s(u+sv).
\]
All coefficients of \(M\) are positive, so
\[
 \left.\frac{f}{s}\right|_{\widetilde{E}}
 =\left.\frac{g}{s}\right|_{\widetilde{E}}.
\]
These restrictions have the same simple zeros on each exceptional component and do not vanish at a node. At each zero, the tangent derivative along that component is nonzero. Hence the divisor of \(f/s\) is smooth and transverse there. After shrinking the representative, we obtain \(\divr_{\widetilde{X}}(f)=M+H\) with the required properties.

Normality gives \(H^0(\widetilde{X},\mathcal{O}_{\widetilde{X}})=A\). Since \(M \ge \widetilde{E}\),
\[
 \mathfrak{m}_A
 \subseteq H^0(\widetilde{X},\mathcal{O}_{\widetilde{X}}(-M))
 \subseteq \{a \in A:a(0)=0\}=\mathfrak{m}_A.
\]
The first inclusion follows from the definition of \(M\), proving the first identification.

Put \(\mathscr{K}_f=\ker(df\colon\Der_{\mathbb{C}} A \to A)\). As in the proof of Lemma~\ref{lem:function-conservation}, the depth lemma shows that \(\mathscr{K}_f\) is reflexive: \(\Der_{\mathbb{C}} A\) has depth two and its image \(J_f\) is torsion-free. On \(U=V\setminus\{0\}\), contraction gives the isomorphism
\[
 \begin{aligned}
 \omega_U^\vee \simeq \bigwedge\nolimits^2\Theta_U
 & \to \ker(df\colon\Theta_U \to \mathcal{O}_U),\\
 v\wedge w& \mapsto df(v)w-df(w)v.
 \end{aligned}
\]
Reflexive extension identifies \(\mathscr{K}_f\) with \(\omega_A^\vee\). Taking sections in \eqref{eq:df} and using Lemma~\ref{lem:resolution-comparison}, we obtain
\[
 H^0(\widetilde{X},L_{\widetilde{X}}(M))
 \simeq \mathscr{K}_f \simeq \omega_A^\vee.
\]
The identifications agree with restriction away from \(\widetilde{E}\).
\end{proof}

The next lemma expresses the Euler characteristic of the Milnor fiber in terms of the exceptional cycle. The local pieces over the smooth parts of the exceptional curves are finite coverings; the pieces joining them are annuli and contribute zero.

\begin{lemma}\label{lem:milnor-cycle}
Under the hypotheses of Lemma~\ref{lem:logarithmic-differential}\textup{(i)}, put \(D_i^\circ=D_i\setminus ((D-D_i)\cup H)\). Then
\[
 \chi(F_f)=\sum_i z_i\chi(D_i^\circ)
 =Z^2-(K_Y+D)\cdot Z.
\]
\end{lemma}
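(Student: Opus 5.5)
The plan is to compute \(\chi(F_f)\) by an A'Campo-type decomposition of the Milnor fiber along the resolution, and then to convert the resulting sum into intersection numbers by adjunction.

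\emph{Step 1: pull the Milnor fiber back.} Since \(r\) is an isomorphism away from \(D\), the Milnor fiber \(F_f=\{f=t\}\cap B_\epsilon\), for small \(t\neq0\), is diffeomorphic to \(\{f\circ r=t\}\) inside a tubular neighbourhood \(N\) of \(D\cup H\) in \(Y\). The total transform \(Z+H\) is a normal crossing divisor. The multiplicity of \(f\circ r\) is \(z_i\) along \(D_i\) and \(1\) along \(H\).

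\emph{Step 2: decompose \(N\).} Cover \(N\) by two kinds of pieces. The first kind are small bidisc neighbourhoods of the points of \(D_i\cap D_j\) and of \(D\cap H\). The second kind are neighbourhoods of the \(D_i^\circ\), minus small discs around those points. Over a neighbourhood of a point of \(D_i^\circ\), choose coordinates with \(f=x^{z_i}\) (the unit factor can be absorbed locally). There \(\{f=t\}\) is a \(z_i\)-sheeted unbranched covering of the corresponding piece of \(D_i^\circ\), so it contributes \(z_i\chi(D_i^\circ)\).

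Near a node we have \(f=x^{a}y^{b}\), and near a point of \(D\cap H\) we have \(f=x^{z_i}y\). In both cases the local fiber \(\{x^ay^b=t\}\cap\text{bidisc}\) is a disjoint union of \(\gcd(a,b)\) annuli, so its Euler characteristic is \(0\). The gluing loci are circles, also of Euler characteristic \(0\). Additivity of \(\chi\) then gives
\[
\chi(F_f)=\sum_i z_i\chi(D_i^\circ).
\]
This is the standard A'Campo computation. The main care needed is the justification that the Milnor fiber may be computed in the tubular neighbourhood \(N\) of \(D\cup H\); I would take this from the usual conic-structure argument.

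\emph{Step 3: the numerical identity.} Let \(g_i\) be the genus of \(D_i\). Since \(D_i^\circ\) is \(D_i\) with \((D-D_i)\cdot D_i+H\cdot D_i\) points removed,
\[
\chi(D_i^\circ)=2-2g_i-(D-D_i)\cdot D_i-H\cdot D_i .
\]
Adjunction gives \(2-2g_i=-(K_Y+D_i)\cdot D_i\). Hence
\[
\chi(D_i^\circ)=-(K_Y+D)\cdot D_i-H\cdot D_i .
\]
Because \(f\) is a function, \(\divr_Y(f)\cdot D_i=0\), that is, \(H\cdot D_i=-Z\cdot D_i\). Substituting,
\[
\chi(D_i^\circ)=Z\cdot D_i-(K_Y+D)\cdot D_i .
\]
Multiplying by \(z_i\) and summing over \(i\) gives
\[
\chi(F_f)=Z^2-(K_Y+D)\cdot Z .
\]

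The only genuinely delicate point is Step 2, namely justifying the local models and the contribution of the annuli. Everything else is routine bookkeeping with adjunction.
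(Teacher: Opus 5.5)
Your proposal is correct and follows the paper's proof essentially step for step: the same decomposition of the fiber into unramified \(z_i\)-sheeted covers over the compact cores of the \(D_i^\circ\), plus Euler-characteristic-zero annuli at the nodes (\(\gcd(z_i,z_j)\) of them) and at the points of \(D\cap H\), followed by the same adjunction computation using \((Z+H)\cdot D_i=0\). The one piece you leave implicit is the part of the fiber along \(H\) away from \(D\), near the boundary of the Milnor ball; the paper notes explicitly that it consists of collars (annuli), so it also contributes zero.
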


\begin{proof}
Let \(g_i\) be the genus of \(D_i\), and set
\[
 t_i
 =
 (D-D_i)\cdot D_i.
\]
All intersections are transverse, and \(H\) avoids the nodes. Therefore
\[
 \chi(D_i^\circ)
 =
 2-2g_i-t_i-H\cdot D_i.
\]

Choose disjoint small discs around the deleted points on each \(D_i\). Over the resulting compact core of \(D_i^\circ\), the local equation \(f=x^{z_i}u\), with \(u\) a unit, identifies the nearby fiber with an unramified covering of degree \(z_i\). Its Euler characteristic is thus \(z_i\chi(D_i^\circ)\).

At a node \(D_i\cap D_j\), the local equation can be written, after absorbing a unit into a coordinate, as
\[
 x^{z_i}y^{z_j}=\varepsilon.
\]
Its intersection with a sufficiently small bidisc consists of \(\gcd(z_i,z_j)\) annuli. At a point of \(D_i\cap H\), the corresponding equation is
\[
 x^{z_i}y=\varepsilon,
\]
which gives one annulus. The remaining boundary pieces are collars, and all these pieces have Euler characteristic zero. They are attached along boundary circles, whose Euler characteristic is also zero. Additivity therefore gives
\[
 \chi(F_f)
 =
 \sum_i z_i\chi(D_i^\circ)
 =
 \sum_i z_i
 (2-2g_i-t_i-H\cdot D_i).
\]

Adjunction on each exceptional component gives
\begin{equation*}
\begin{split}
 (K_Y+D)\cdot D_i
 &= (K_Y+D_i)\cdot D_i +(D-D_i)\cdot D_i\\
 &=2g_i-2+t_i.
\end{split}
\end{equation*}
Moreover, \(Z+H\) is the divisor of a function, so
\[
 (Z+H)\cdot D_i=0
\]
for every \(i\). Multiplying by \(z_i\) and summing yields
\[
 Z^2+H\cdot Z=0.
\]
Consequently,
\begin{align}\label{eq:milnor}
 \chi(F_f)
 &=
 \sum_i z_i
 (2-2g_i-t_i-H\cdot D_i)\notag\\
 &=
 -(K_Y+D)\cdot Z-H\cdot Z\notag\\
 &=
 Z^2-(K_Y+D)\cdot Z.
\end{align}
\end{proof}

\subsection{Proof of Proposition~\ref{prop:index}}\label{sec:proof-of-prop-index}

\begin{proof}
Recall that \(A=\mathcal{O}_{V,0}\) and \(\Delta(A)=\Delta(V,0)\). By Lemma~\ref{lem:function-conservation}, replacing \(f\) by \(f+t\ell\) for general sufficiently small \(t\) does not change \(j_f-\mu_f\). We may therefore assume that its image in \(\mathfrak{m}_A/\mathfrak{m}_A^2\) is sufficiently general. Apply Lemma~\ref{lem:logarithmic-differential}\textup{(ii)} to the chosen principalization \(\widetilde{X} \to X\). We compute \(\Delta_{\widetilde{X}}+a_{\widetilde{X}}\) from the cohomology sequence of \eqref{eq:df} and two restriction sequences.

Write \(p_g=h^1(\mathcal{O}_{\widetilde{X}})\) for the geometric genus of \((V,0)\). Using the vanishing of \(H^2\), the cohomology sequence of \eqref{eq:df} gives
\[
 0 \to \mathfrak{m}_A/J_f
 \to H^1(\widetilde{X},L_{\widetilde{X}}(M))
 \to H^1(\widetilde{X},S_{\widetilde{X}}) \to H^1(\widetilde{X},\mathcal{O}_{\widetilde{X}}(-M))
 \to 0.
\]
Since \(\dim_{\mathbb{C}}(\mathfrak{m}_A/J_f)=j_f-1\), it follows that
\[
 h^1(S_{\widetilde{X}})
 =
 h^1(L_{\widetilde{X}}(M))+h^1(\mathcal{O}_{\widetilde{X}}(-M))-(j_f-1).
\]

Next, consider the restriction sequence
\[
 0 \to L_{\widetilde{X}}
 \to L_{\widetilde{X}}(M)
 \to L_{\widetilde{X}}(M)|_M
 \to 0.
\]
The identification of \(H^0(L_{\widetilde{X}}(M))\) in Lemma~\ref{lem:logarithmic-differential} gives the cohomology sequence
\begin{multline*}
 0
 \to 
 \omega_A^\vee/H^0(\widetilde{X},L_{\widetilde{X}})
 \to H^0(M,L_{\widetilde{X}}(M)|_M)
 \to H^1(\widetilde{X},L_{\widetilde{X}})\\
 \to H^1(\widetilde{X},L_{\widetilde{X}}(M))
 \to H^1(M,L_{\widetilde{X}}(M)|_M)
 \to 0.
\end{multline*}
Taking the alternating sum of dimensions yields
\[
 h^1(L_{\widetilde{X}})
 -h^1(L_{\widetilde{X}}(M))
 =
 \chi(L_{\widetilde{X}}(M)|_M)-a_{\widetilde{X}}.
\]

Finally, the sequence
\[
 0 \to \mathcal{O}_{\widetilde{X}}(-M)
 \to \mathcal{O}_{\widetilde{X}}
 \to \mathcal{O}_M
 \to 0
\]
induces
\[
 0
 \to \mathbb{C}
 \to H^0(M,\mathcal{O}_M)
 \to H^1(\widetilde{X},\mathcal{O}_{\widetilde{X}}(-M))
 \to H^1(\widetilde{X},\mathcal{O}_{\widetilde{X}})
 \to H^1(M,\mathcal{O}_M)
 \to 0,
\]
where the first term is \(A/\mathfrak{m}_A\). Consequently,
\[
 p_g-h^1(\mathcal{O}_{\widetilde{X}}(-M)) = 1-\chi(\mathcal{O}_M).
\]

Riemann--Roch applies to the effective Cartier cycle \(M\), without any reducedness assumption. For an invertible sheaf \(\mathcal{L}\) on \(\widetilde{X}\), it gives
\[
 \begin{aligned}
 \chi(\mathcal{O}_M)
 &=-\frac12 M\cdot(M+K_{\widetilde{X}}),\\
 \chi(\mathcal{L}|_M)-\chi(\mathcal{O}_M)
 &=c_1(\mathcal{L})\cdot M.
 \end{aligned}
\]
Applying the second formula to
\[
 \mathcal{L}=L_{\widetilde{X}}(M)
 =\mathcal{O}_{\widetilde{X}}
   (M-K_{\widetilde{X}}-\widetilde{E}),
\]
and combining the three cohomology identities, we obtain
\begin{align}\label{eq:cycleindex}
 \Delta_{\widetilde{X}}+a_{\widetilde{X}}
 &=
 j_f-1+(p_g-h^1(\mathcal{O}_{\widetilde{X}}(-M)))
 +h^1(L_{\widetilde{X}})
 -h^1(L_{\widetilde{X}}(M))
 +a_{\widetilde{X}}\notag\\
 &=
 j_f+\chi(L_{\widetilde{X}}(M)|_M)
 -\chi(\mathcal{O}_M)\notag\\
 &=
 j_f+M^2-(K_{\widetilde{X}}+\widetilde{E})\cdot M.
\end{align}

Apply Lemma~\ref{lem:milnor-cycle} with \((Y,D,Z)=(\widetilde{X},\widetilde{E},M)\). Combining \eqref{eq:correction}, \eqref{eq:cycleindex}, and \eqref{eq:milnor}, we conclude that
\[
 \Delta(V,0)=\Delta(A)
 =
 \Delta_{\widetilde{X}}+a_{\widetilde{X}}
 =
 j_f+\chi(F_f)
 =
 1+j_f-\mu_f.
 \qedhere
\]
\end{proof}

\section{Differential forms and the residue representation}
\label{sec:forms}

This section constructs a representation from the reflexive de Rham complex and residue maps. We then compare character multiplicities with the Wahl defect of the quotient, using the function-index formula of Section~\ref{sec:index}. Smooth and rational double point covers are treated at the end of the section.

We work in the complex analytic category. We choose sufficiently small Stein representatives of the singularities and take their resolutions. When sections are identified with modules over a local analytic algebra, they are understood as direct limits over smaller representatives. Coherent cohomology in positive degrees is supported at the singular point and is unchanged after sufficiently small shrinking. All representatives can be chosen invariant under a given finite group action.

For a resolution of an isolated normal surface singularity over a Stein representative, the theorem on the dimension of the fibers and the Leray spectral sequence give
\begin{equation}\label{eq:coherent-vanishing}
 H^q(Y,\mathcal{E})=0\quad(q \geq 2)
\end{equation}
for every coherent sheaf \(\mathcal{E}\). The groups \(H^1(Y,\mathcal{E})\) are finite dimensional. We also use Grauert--Riemenschneider vanishing~\cite{GrauertRiemenschneider1970}:
\begin{equation}\label{eq:GR}
 H^1(Y,\omega_Y)=0.
\end{equation}

\subsection{Canonical characters and logarithmic forms}

Let \((V,0)\) be a normal \(\mathbb{Q}\)-Gorenstein surface singularity, and put \(A=\mathcal{O}_{V,0}\). Let \(q\colon(\widetilde{V},0) \to (V,0)\) be its canonical cover, and put \(B=\mathcal{O}_{\widetilde{V},0}\). We use this cover, the covering group \(G=\mu_r\), and the canonical character of Section~\ref{qg:sec:relative-cover}. For \(r=1\), the conventions are \(B=A\), \(G=\{1\}\), and \(\kappa=1\). The cover is Gorenstein and is \'{e}tale over the punctured base. We recall the character conventions needed below.

Let \(\kappa\) be the character of \(\omega_B/\mathfrak{m}_B\omega_B\). A generator of \(\omega_B\) can be chosen to be an eigenvector: apply the \(\kappa\)-isotypic projector to any lift of a nonzero element of this quotient. The resulting section \(\eta\) still generates \(\omega_B\), by Nakayama's lemma, and satisfies
\[
 g\eta=\kappa(g)\eta\quad(g \in G).
\]
If \(r>1\), then \(\kappa \neq \mathbf{1}\). Indeed, an invariant generator would descend over the punctured quotient to a nowhere vanishing canonical form. Reflexive extension would then give \(\omega_A \simeq A\), contrary to \(r>1\).

For a character \(\chi\) and a \(G\)-representation \(M\), put
\[
 M_\chi=\{v \in M:g v=\chi(g)v\text{ for all }g \in G\}.
\]
Since the relevant characters are one dimensional, \(\mult_\chi(M)=\dim M_\chi\). We use the same multiplicity notation for virtual representations. Taking invariants and taking isotypic components are exact functors over \(\mathbb{C}\).

For the logarithmic comparison and residue construction below, let \((\widetilde{V},0)\) be a Gorenstein normal complex surface singularity which is not a rational double point, put \(B=\mathcal{O}_{\widetilde{V},0}\), and let a finite group \(G\) act on \((\widetilde{V},0)\). Freeness away from the singular point is not required until Section~\ref{sec:descent}.

The action lifts to the minimal good resolution \((Y,F)\). Write
\[
 F=\sum_iF_i,\quad U=Y\setminus F,\quad
 S=\Theta_Y(-\log F),\quad L=\bigwedge^2S.
\]

\begin{definition}
The \emph{equivariant Wahl defect} of \((Y,F)\) is
\[
 \mathcal{D}_Y :=[H^1(Y,\mathcal{O}_Y)]-[H^1(Y,S)]+[H^1(Y,L)] \in R(G),
\]
where \(S=\Theta_Y(-\log F)\), \(L=\bigwedge^2 S\), and \(R(G)\) is the complex representation ring of \(G\). Here \([M]\) denotes the class of a finite-dimensional complex \(G\)-representation \(M\), and \(\mathbf{1}\) denotes the class of the one-dimensional trivial representation.
\end{definition}

Choose the eigenvector \(\eta\) as above. Its divisor is supported on \(F\), so write
\[
 \divr_Y(\eta)=-Z.
\]
By Wahl~\cite[Lemma~3.6]{Wahl1985}, \(Z \geq F\). Hence
\begin{equation}\label{eq:D}
 D=Z-F \geq 0.
\end{equation}
All divisors in \eqref{eq:D} are \(G\)-invariant.

Divisorial sheaves have their natural linearizations as subsheaves of meromorphic functions. Differential forms have the linearizations induced by the action on \(Y\). With these conventions,
\begin{equation}\label{eq:twists}
 \begin{aligned}
 \omega_Y& \simeq \mathcal{O}_Y(-Z)\otimes\mathbb{C}_\kappa,\\
 L\otimes\mathbb{C}_\kappa& \simeq \mathcal{O}_Y(D),\\
 S\otimes\mathbb{C}_\kappa& \simeq \Omega_Y^1(\log F)(D),\\
 \omega_Y(Z)& \simeq \mathcal{O}_Y\otimes\mathbb{C}_\kappa.
 \end{aligned}
\end{equation}
The third isomorphism is contraction with \(\eta\). For example, at a smooth point of \(F\), write \(F=(x=0)\) and \(\eta=u x^{-z}dx\wedge dy\), where \(u\) is a unit. Contraction of the logarithmic vector fields \(x\partial_x,\partial_y\) gives generators of \(x^{-(z-1)}\Omega_Y^1(\log F)\). The computation at a node uses the basis \(x\partial_x,y\partial_y\) and gives the same formula.

We shall also use the lifting of derivations to the minimal good resolution, in the form
\begin{equation}\label{eq:der-lifting}
 H^0(Y,S)=\Der_{\mathbb{C}}B=H^0(U,\Theta_U).
\end{equation}
See Wahl~\cite[Section~3, (3.5)]{Wahl1985}. Consequently, contraction with \(\eta\) identifies
\begin{equation}\label{eq:one-form-sections}
 H^0(Y,\Omega_Y^1(\log F)(D))
 =H^0(U,\Omega_U^1).
\end{equation}
For every effective exceptional divisor \(T\), normality gives
\begin{equation}\label{eq:Hartogs}
 H^0(Y,\mathcal{O}_Y(T))=B.
\end{equation}
Indeed, a section restricts to a holomorphic function on the punctured normal surface and therefore extends across the singular point. Its pullback is regular on \(Y\).

\subsection{The logarithmic de Rham comparison}
\label{sec:comparison}

We consider the complex
\begin{equation}\label{eq:polar-complex}
 \mathscr{C}_D^\bullet:
 \mathcal{O}_Y(D) \xrightarrow{d}\Omega_Y^1(\log F)(D)
 \xrightarrow{d}\omega_Y(F+D),
\end{equation}
in degrees \(0,1,2\). Exterior differentiation preserves the allowed pole orders in the logarithmic basis, so this is a complex. Its differentials are \(\mathbb{C}\)-linear differential operators. Hypercohomology is taken in the category of analytic sheaves of complex vector spaces.

\begin{lemma}\label{lem:log-comparison}
The inclusion
\[
 \Omega_Y^\bullet(\log F) \hookrightarrow \mathscr{C}_D^\bullet
\]
is a \(G\)-equivariant quasi-isomorphism. Consequently,
\[
 \mathbb{H}^k(Y,\mathscr{C}_D^\bullet) \simeq H^k(U,\mathbb{C}).
\]
\end{lemma}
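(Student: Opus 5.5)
The claim is local: the inclusion of complexes of sheaves is a quasi-isomorphism, which we check stalkwise. The second statement then follows from Deligne's logarithmic comparison, \(\mathbb{H}^k(Y,\Omega_Y^\bullet(\log F))\simeq H^k(U,\mathbb{C})\), since \(F\) is a simple normal crossing divisor. \(G\)-equivariance of the inclusion is clear: the linearizations of \(\mathcal{O}_Y(D)\), \(\Omega^1_Y(\log F)(D)\) and \(\omega_Y(F+D)\) are all restrictions of the natural action on meromorphic forms, and \(d\) commutes with pullback. So it suffices to show that the quotient complex \(\mathscr{C}_D^\bullet/\Omega_Y^\bullet(\log F)\) is acyclic. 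Away from \(F\) there is nothing to prove, because \(D\) is supported on \(F\).

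I would filter by pole order and reduce to one step at a time. Write \(D=\sum d_iF_i\) and choose a chain of effective divisors \(0=D_0<D_1<\dots<D_N=D\), each increasing one coefficient by one: \(D_{k+1}=D_k+F_i\). It suffices to show that each successive inclusion \(\mathscr{C}^\bullet_{D_k}\hookrightarrow\mathscr{C}^\bullet_{D_{k+1}}\) is a quasi-isomorphism. Its cokernel is a complex supported on \(F_i\), and it is \(\mathcal{O}\)-linear in the graded sense: it is the graded piece of pole order \(m=(\text{coefficient of }F_i\text{ in }D_{k+1})\geq1\). I would compute it in local coordinates.

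At a smooth point of \(F\) with \(F=\{x=0\}\), the graded pieces have bases \(x^{-m}\), \(x^{-m}\,dx/x\), \(x^{-m}dy\), and \(x^{-m}\,dx/x\wedge dy\), with coefficients in \(\mathcal{O}_{F_i}\). Modulo lower pole order, the differential acts as \(d(x^{-m}a)\equiv -m\,x^{-m}a\,dx/x+x^{-m}da\). The complex is therefore the Koszul-type complex
\[
\mathcal{O}_{F_i}\to \mathcal{O}_{F_i}\tfrac{dx}{x}\oplus\Omega^1_{F_i}\to\Omega^1_{F_i}\tfrac{dx}{x},
\]
whose \(dx/x\)-component is multiplication by the nonzero scalar \(-m\). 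This is the mapping cone of the invertible map \(-m\) on the de Rham complex of \(F_i\), so it is acyclic. At a node \(xy=0\) with \(F_i=\{x=0\}\) and \(F_j=\{y=0\}\), the pole orders are \(x^{-m}y^{-n}\) in the basis \(dx/x,dy/y\). The graded piece for \(F_i\) carries an operator whose \(dx/x\)-component is again multiplication by the unit \(-m\), up to terms of higher order in \(x\), which vanish in the graded piece. The same cone argument applies, now with the log de Rham complex of \(F_i\) along the point \(F_i\cap F_j\).

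The main obstacle is doing this graded bookkeeping correctly. The quotient must be a genuine complex of \(\mathcal{O}_{F_i}\)-modules on which \(d\) is \(\mathcal{O}\)-linear in the \(dx/x\)-direction, so that an explicit contracting homotopy exists. The natural homotopy is \(h=-\frac1m\,\iota_{x\partial_x}\), which is legitimate precisely because \(m\neq0\). This is where characteristic zero, \(m\geq1\), and the logarithmic basis are all used. Once each step is acyclic, induction on \(k\) gives the quasi-isomorphism. Taking hypercohomology and applying Deligne's theorem yields \(\mathbb{H}^k(Y,\mathscr{C}^\bullet_D)\simeq H^k(U,\mathbb{C})\), equivariantly.
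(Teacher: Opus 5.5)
Your proof is correct and is essentially the paper's argument: both show that the quotient \(\mathscr{C}_D^\bullet/\Omega_Y^\bullet(\log F)\) is acyclic using a contracting homotopy given by contraction with the normal Euler field \(x\partial_x\), divided by the nonzero pole order, and both then identify \(\Omega_Y^\bullet(\log F)\) with \(Rj_*\mathbb{C}_U\). The difference is only organizational: you filter by raising one pole order at a time, so on each graded piece the homotopy \(-\frac1m\iota_{x\partial_x}\) is \(\mathcal{O}\)-linear and no convergence check is needed, whereas the paper decomposes directly into Laurent monomials (using \(y\partial_y\) for the monomials with \(i\geq0\), \(j<0\) at a node) and checks the logarithmic comparison locally instead of citing Deligne.
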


\begin{proof}
We first prove that the quotient complex is exact. At a node, choose coordinates with
\[
 F=(xy=0),\quad D=a(x=0)+b(y=0),\quad a,b \geq 0.
\]
Write forms in the basis \(d\log x,d\log y\). The quotient is represented by convergent Laurent series whose monomials \(x^iy^j\) satisfy
\[
 i \geq -a,\quad j \geq -b,\quad i<0\text{ or }j<0.
\]
On the exterior algebra multiplying such a monomial, the differential is exterior multiplication by \(i d\log x+j d\log y\). If \(i<0\), contraction with \(x\partial_x\), divided by \(i\), is a homotopy between the identity and zero. For the remaining monomials, \(i \geq 0\) and \(j<0\), use contraction with \(y\partial_y\), divided by \(j\). In both cases,
\[
 dh+hd=\mathrm{id}.
\]
The negative powers are bounded, and the divisors \(i,j\) used here are nonzero integers. The operations therefore preserve convergence after shrinking the coordinate polydisc.

At a smooth point of \(F\), use the same homotopy in the normal coordinate. The tangential differential anticommutes with the contraction, so the identity remains valid. Away from \(F\), the quotient is zero. This proves exactness of the quotient complex.

For completeness, the ordinary logarithmic complex computes \(Rj_*\mathbb{C}_U\), where \(j\colon U \hookrightarrow Y\). Locally at a point where \(k\) branches of \(F\) meet, the complement is a product of \(k\) punctured discs and \(2-k\) discs. Its de Rham cohomology is the exterior algebra generated by the classes of \(d\log x_1,\ldots,d\log x_k\). The logarithmic complex has the same local cohomology: nonconstant monomials are contracted as above, and the holomorphic Poincar\'{e} lemma applies in the remaining coordinates. This proves the local comparison, and hence the claimed hypercohomology identification. The inclusions commute with the \(G\)-action. Equivariance of the local homotopies is not needed for an equivariant quasi-isomorphism.
\end{proof}

By \eqref{eq:one-form-sections}, \eqref{eq:Hartogs}, and \eqref{eq:twists}, the complex of global sections is
\begin{equation}\label{eq:reflexive}
 V^\bullet:=H^0(Y,\mathscr{C}_D^\bullet)
 \simeq [B \xrightarrow{d}\Omega_B^{[1]} \xrightarrow{d}\omega_B].
\end{equation}
The first and second cohomology spaces of this complex are
\[
 \mathcal{P}=H^1(V^\bullet)=\frac{\ker(d\colon\Omega_B^{[1]} \to \omega_B)}{dB},
 \quad \mathcal{Q}=H^2(V^\bullet)=\frac{\omega_B}{d\Omega_B^{[1]}}.
\]
We shall prove their finite dimensionality in the next proposition.

\begin{proposition}\label{prop:comparison}
The spaces \(\mathcal{P}\) and \(\mathcal{Q}\) are finite-dimensional \(G\)-representations, and
\begin{equation}\label{eq:comparison}
 [\mathbb{C}_\kappa]\mathcal{D}_Y=\mathbf{1}+[\mathcal{Q}]-[\mathcal{P}].
\end{equation}
\end{proposition}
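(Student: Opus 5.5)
We compute the equivariant Euler characteristic of \(\mathbb{H}^\bullet(Y,\mathscr{C}_D^\bullet)\) in two ways. The first uses Lemma~\ref{lem:log-comparison}; the second uses the first hypercohomology spectral sequence (the stupid filtration) of the complex \eqref{eq:polar-complex}. Its \(E_1\)-page is
\[
E_1^{p,q}=H^q(Y,\mathscr{C}_D^p).
\]
By \eqref{eq:coherent-vanishing}, only the rows \(q=0,1\) are nonzero, and \(p\) ranges over \(0,1,2\). The action of \(G\) preserves the filtration, so the spectral sequence is one of \(G\)-representations and all differentials are \(G\)-equivariant.

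\emph{Row \(q=1\).} By \eqref{eq:twists} we have
\[
\mathscr{C}_D^0=\mathcal{O}_Y(D)\simeq L\otimes\mathbb{C}_\kappa,\quad
\mathscr{C}_D^1\simeq S\otimes\mathbb{C}_\kappa,\quad
\mathscr{C}_D^2=\omega_Y(Z)\simeq\mathcal{O}_Y\otimes\mathbb{C}_\kappa.
\]
This row consists of the finite-dimensional spaces \(H^1(L)\otimes\mathbb{C}_\kappa\), \(H^1(S)\otimes\mathbb{C}_\kappa\), and \(H^1(\mathcal{O}_Y)\otimes\mathbb{C}_\kappa\). Its alternating class, taken in total degree \(p+1\), is therefore \(-[\mathbb{C}_\kappa]\mathcal{D}_Y\).

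\emph{Row \(q=0\).} This row is the complex \(V^\bullet\) of \eqref{eq:reflexive}, so \(E_2^{0,0}=\ker(d\colon B\to\Omega_B^{[1]})\), \(E_2^{1,0}=\mathcal{P}\), and \(E_2^{2,0}=\mathcal{Q}\). Since \(B\) is a local analytic algebra of a connected normal germ, a function with vanishing differential is constant. Hence \(E_2^{0,0}=\mathbb{C}\) with trivial action.

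\emph{Finite dimensionality.} The only possibly nonzero later differential is \(d_2\colon E_2^{0,1}\to E_2^{2,0}\). Thus \(\mathcal{P}=E_\infty^{1,0}\) is a subquotient of \(H^1(U,\mathbb{C})\). Also \(\mathcal{Q}/\im d_2=E_\infty^{2,0}\) is a subquotient of \(H^2(U,\mathbb{C})\), and \(\im d_2\) is a quotient of the finite-dimensional space \(E_2^{0,1}\). Since \(U\) retracts onto the link, which is a compact \(3\)-manifold, \(H^\bullet(U,\mathbb{C})\) is finite dimensional. Hence so are \(\mathcal{P}\) and \(\mathcal{Q}\). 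Every term is then finite dimensional, so the Euler characteristic in \(R(G)\) is unchanged from \(E_2\) to \(E_\infty\), and we obtain
\[
\sum_k(-1)^k[H^k(U,\mathbb{C})]=\mathbf{1}-[\mathcal{P}]+[\mathcal{Q}]-[\mathbb{C}_\kappa]\mathcal{D}_Y .
\]

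\emph{Vanishing of the left side.} It remains to show that the equivariant Euler characteristic of \(U\) vanishes in \(R(G)\). We expect this to be the main point requiring care, since Poincar\'{e} duality alone only pairs \([H^k]\) with \([H^{3-k}]^*\). We argue with characters instead. Choose a \(G\)-invariant radial function whose sphere is a link \(N\subset U\). Then \(U\) equivariantly retracts onto \(N\), a closed oriented \(3\)-manifold on which \(G\) acts by orientation-preserving diffeomorphisms. For \(g\in G\), the Lefschetz fixed point theorem for the isometry \(g\) of \(N\), with a \(G\)-invariant metric, gives
\[
\sum_k(-1)^k\tr(g\mid H^k(N,\mathbb{C}))=\chi(N^g).
\]
The fixed set \(N^g\) is a disjoint union of closed totally geodesic submanifolds. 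Because \(g\) preserves orientation, these have even codimension, so they are of odd dimension \(1\) or \(3\) and have Euler characteristic zero. Hence every character value vanishes, the class is \(0\), and rearranging the displayed identity gives \eqref{eq:comparison}.
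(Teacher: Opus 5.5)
Your argument is correct and shares its skeleton with the paper's proof. Both use the same hypercohomology spectral sequence with only the rows $q=0,1$. Both identify row $0$ with $\mathbb{C},\mathcal{P},\mathcal{Q}$, and row $1$ with $\mathbb{C}_\kappa$ tensored with $H^1(L),H^1(S),H^1(\mathcal{O}_Y)$ via \eqref{eq:twists}. Both observe that $d_2\colon E_2^{0,1}\to E_2^{2,0}$ is the only further differential.

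The difference is in the topological input.

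The paper does not take a total Euler characteristic. Instead it:
\begin{enumerate}
\item reads off the exact sequence \eqref{eq:edge} from the filtrations in degrees one and two;
\item identifies $E_2^{2,1}$ with $H^3(U,\mathbb{C})\simeq\mathbf{1}$, using that holomorphic automorphisms preserve the orientation of the link;
\item obtains $[H^1(U,\mathbb{C})]=[H^2(U,\mathbb{C})]$ from equivariant Poincar\'e duality, together with the fact that $H^1(U,\mathbb{C})$ is the complexification of a real representation and hence self-dual.
\end{enumerate}

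That last observation answers your worry that duality only pairs $[H^k]$ with $[H^{3-k}]^*$. The characters of $H^k(N,\mathbb{C})$ are traces on rational cohomology, hence real. So duality alone already makes the equivariant Euler characteristic of the link vanish.

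Your replacement uses the Lefschetz formula $L(g)=\chi(N^g)$ for an isometry, plus the fact that fixed components of an orientation-preserving isometry have even codimension and so are circles or all of $N$. This is valid and gives a clean character-by-character proof of the same vanishing. However, it invokes a heavier theorem than needed, whereas the paper's route stays within Poincar\'e duality and averaging an inner product. Both arguments rely on the same hypothesis that $G$ preserves the orientation of the link.
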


\begin{proof}
We compare the two rows of coherent cohomology with the cohomology of the link, whose contributions in degrees one and two cancel. The hypercohomology spectral sequence is
\[
 E_1^{p,q}=H^q(Y,\mathscr{C}_D^p)
 \Rightarrow H^{p+q}(U,\mathbb{C}).
\]
By \eqref{eq:coherent-vanishing}, only the rows \(q=0,1\) can be nonzero. Let
\[
 W^\bullet=[H^1(Y,\mathscr{C}_D^0) \xrightarrow{d}
 H^1(Y,\mathscr{C}_D^1) \xrightarrow{d}
 H^1(Y,\mathscr{C}_D^2)],
\]
and put \(K=H^0(W^\bullet)\), \(M=H^1(W^\bullet)\), and \(N=H^2(W^\bullet)\). These three spaces are finite dimensional. The \(E_2\)-page has the following two rows:
\[
\begin{array}{c|ccc}
q=1&K&M&N\\
q=0&\mathbb{C}&\mathcal{P}&\mathcal{Q}\\ \hline
 &p=0&p=1&p=2.
\end{array}
\]
The only potentially nonzero differential after \(E_1\) is
\[
 d_2^{0,1}\colon K \to \mathcal{Q}.
\]
In particular, \(N\) survives to \(E_\infty\). It is the only term of total degree three, and therefore
\[
 N \simeq H^3(U,\mathbb{C}).
\]
The punctured surface retracts onto its connected, oriented link. Holomorphic automorphisms preserve the orientation. Hence
\begin{equation}\label{eq:orientation}
 N \simeq \mathbb{C}_{\mathbf{1}}.
\end{equation}

The filtrations on total cohomology in degrees one and two give the exact sequence
\begin{equation}\label{eq:edge}
\begin{split}
0 \to \mathcal{P} \to H^1(U,\mathbb{C}) \to K
 \xrightarrow{d_2} \mathcal{Q} \to H^2(U,\mathbb{C}) \to M \to 0.
\end{split}
\end{equation}
For example, the quotient of \(H^1(U,\mathbb{C})\) by \(\mathcal{P}\) is \(\ker d_2\), while the kernel of \(H^2(U,\mathbb{C}) \to M\) is \(\coker d_2\). This also proves that \(\mathcal{P}\) and \(\mathcal{Q}\) are finite dimensional.

Poincar\'{e} duality on the link is equivariant and gives
\[
 H^2(U,\mathbb{C}) \simeq H^1(U,\mathbb{C})^*.
\]
The representation \(H^1(U,\mathbb{C})\) is the complexification of a real representation. Averaging a positive definite real inner product over \(G\) and complexifying gives an equivariant isomorphism with its dual. Thus
\begin{equation*}%\label{eq:top-cancel}
 [H^1(U,\mathbb{C})]=[H^2(U,\mathbb{C})]\quad\text{in }R(G).
\end{equation*}
Taking the alternating sum in \eqref{eq:edge} now yields
\[
 [K]-[M]=[\mathcal{Q}]-[\mathcal{P}].
\]
Together with \eqref{eq:orientation}, this gives
\[
 \chi_G(W^\bullet)=\mathbf{1}+[\mathcal{Q}]-[\mathcal{P}].
\]
On the other hand, \eqref{eq:twists} identifies the three terms of \(W^\bullet\) with \(\mathbb{C}_\kappa\) tensored with
\[
 H^1(Y,L),\quad H^1(Y,S),\quad H^1(Y,\mathcal{O}_Y),
\]
in that order. Their alternating sum is \([\mathbb{C}_\kappa]\mathcal{D}_Y\), proving \eqref{eq:comparison}.
\end{proof}

\begin{remark}
Equation~\eqref{eq:comparison} is an identity in \(R(G)\). Its proof does not imply that \([\mathcal{Q}]-[\mathcal{P}]\) is effective. The residue argument in the next subsection establishes this additional assertion without choosing an injection \(\mathcal{P} \to \mathcal{Q}\).
\end{remark}

\subsection{Closed one-forms and residues}
\label{sec:residues}

Let \(\nu\colon\widetilde{F}=\coprod_iF_i \to F\) be the normalization. Put
\[
 W_F=H^0(F,\omega_F).
\]
A section of \(\omega_F\) is a collection of meromorphic differentials on the components, with at most simple poles at the preimages of the nodes, whose two residues at each node have sum zero. This description gives a natural injection \(\bigoplus_iH^0(F_i,\omega_{F_i}) \hookrightarrow W_F\).

\begin{lemma}\label{lem:closed-one-forms}
Restriction induces a \(G\)-equivariant injection
\[
 \iota\colon\mathcal{P} \hookrightarrow \bigoplus_iH^0(F_i,\omega_{F_i}) \hookrightarrow W_F.
\]
\end{lemma}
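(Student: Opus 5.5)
The plan is to build \(\iota\) in two stages: first embed \(\mathcal{P}\) into the topology of \(U\), then identify the image with holomorphic differentials on the components \(F_i\). For the first stage, the exact sequence \eqref{eq:edge} from the proof of Proposition~\ref{prop:comparison} already gives a \(G\)-equivariant injection \(\mathcal{P}\hookrightarrow H^1(U,\mathbb{C})\). Concretely, a closed section \(\omega\) of \(\Omega_B^{[1]}\) is a closed global section of \(\mathscr{C}_D^1\), and it maps to its hypercohomology class.

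\emph{Step 1: identify \(H^1(U,\mathbb{C})\) with \(H^1(F,\mathbb{C})\).} Let \(N\) be a tubular neighbourhood of \(F\) retracting onto \(F\). I will use the pair sequence together with \(H^1_F(Y)\simeq H_3(F)=0\). The map \(H^2_F(Y,\mathbb{C})\simeq H_2(F,\mathbb{C})\to H^2(Y,\mathbb{C})\simeq H^2(F,\mathbb{C})\) is given by the intersection matrix of the \(F_i\). This matrix is negative definite, so the map is an isomorphism. The sequence then shows that restriction
\[
H^1(F,\mathbb{C})=H^1(Y,\mathbb{C})\to H^1(U,\mathbb{C})
\]
is an isomorphism. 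Its inverse is \(G\)-equivariant. Dually, the meridian loops around the \(F_i\) are torsion in \(H_1(U)\). Consequently, every closed form on \(U\) has zero period around each \(F_i\).

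\emph{Step 2: show that the class of \(\omega\) lies in the holomorphic part.} I will use the mixed Hodge structure on \(H^1(F,\mathbb{C})\) of the nodal curve \(F\). Its weight-one quotient is \(\bigoplus_iH^1(F_i,\mathbb{C})\). Its Hodge piece \(F^1\) maps isomorphically onto \(\bigoplus_iH^0(F_i,\omega_{F_i})\), and this piece sits inside \(W_F\). The goal is to represent \([\omega]\) by a Čech–de Rham cocycle that is holomorphic along \(F\) and has no degree-zero part, which places it in \(F^1\).

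For this, cover a neighbourhood of \(F\) by coordinate charts \(U_\alpha\). On each chart, the contracting homotopy from the proof of Lemma~\ref{lem:log-comparison} writes
\[
\omega=dh_\alpha+c_\alpha\,d\log x+\omega_\alpha,
\]
where \(h_\alpha\) has poles bounded by \(D\), \(c_\alpha\) is a constant, and \(\omega_\alpha\) is a holomorphic closed form. By Step 1, the periods around the \(F_i\) vanish, so every residue \(c_\alpha\) is zero. Thus \([\omega]\) is the image of the class \((\{\omega_\alpha\},\{h_\alpha-h_\beta\})\). Because \(\omega_\alpha-\omega_\beta=d(h_\beta-h_\alpha)\) with \(\omega_\alpha,\omega_\beta\) holomorphic, the differences \(h_\alpha-h_\beta\) have holomorphic differential, so their polar parts cancel and they agree with holomorphic functions up to constants. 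After removing these polar parts and constants, the pair \((\{\omega_\alpha\},\{h_\alpha-h_\beta\})\) becomes a class in \(\mathbb{H}^1(Y,\Omega^\bullet_Y)\).

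Restricting this class to each \(F_i\) should give an element \(\omega_\alpha|_{F_i}\) of \(F^1H^1(F_i)=H^0(F_i,\omega_{F_i})\). The Čech corrections only change it by exact terms, which vanish in \(H^1\). I expect this step to be the main obstacle: one must check that the constant parts of the \(h_\alpha-h_\beta\) contribute nothing on \(W_0\), so that the class lands in \(F^1\) rather than merely in \(H^1(F)\). I would first try to verify this directly from the local description of the homotopy. Failing that, I would use the compatibility of the Hodge filtration with \(\Omega^{\geq1}_Y\) on a neighbourhood retracting to \(F\).

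\emph{Step 3: conclude.} Injectivity of \(\iota\) follows because \(\mathcal{P}\to H^1(U,\mathbb{C})\simeq H^1(F,\mathbb{C})\) is injective, and on \(F^1\) the map to \(\bigoplus_iH^0(F_i,\omega_{F_i})\) is an isomorphism. Equivariance follows because every map used is natural for the \(G\)-action on \(Y\) and on \(F\). Composing with the natural injection \(\bigoplus_iH^0(F_i,\omega_{F_i})\hookrightarrow W_F\) gives \(\iota\).
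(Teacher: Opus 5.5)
Your Step~1, the injection \(\mathcal{P}\hookrightarrow H^1(U,\mathbb{C})\), and the local part of Step~2 are fine. Once the residues \(c_\alpha\) vanish, \(h_\alpha-h_\beta\) has holomorphic differential, so it is already holomorphic on \(U_\alpha\cap U_\beta\); there are no polar parts, and discarding constants would change the class. The gap is the claim that the resulting class lies in \(F^1H^1(F,\mathbb{C})\). Every class in \(H^1(Y,\mathbb{C})\) is represented by such a \v{C}ech--de Rham cocycle, so landing in \(\mathbb{H}^1(Y,\Omega_Y^\bullet)\) carries no information. The class lies in \(F^1\) exactly when its image in \(\mathrm{Gr}_F^0H^1(F,\mathbb{C})=H^1(F,\mathcal{O}_F)\) vanishes, and that image is the class of \(\{(h_\alpha-h_\beta)|_F\}\). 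Your justification, that ``the \v{C}ech corrections only change it by exact terms'', is false. The local restrictions \(\omega_\alpha|_{F_i}\) do not glue, and \(\{(h_\alpha-h_\beta)|_{F_i}\}\) can represent a nonzero class in \(H^1(F_i,\mathcal{O}_{F_i})\): this is the \((0,1)\)-part of the class. For instance, on an elliptic curve the cocycle \((0,\{c_{\alpha\beta}\})\) with locally constant \(c_{\alpha\beta}\) represents every class, antiholomorphic ones included. Your worry about \(W_0\) is also misplaced, since \(F^1\cap W_0=0\) automatically; it is the whole \(\mathrm{Gr}^0_F\)-component that must be shown to vanish. No local computation can do this. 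It is a global statement, amounting to the extension theorem for closed one-forms.

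The paper invokes that theorem directly (van Straten--Steenbrink, J\"{o}rder): a closed holomorphic one-form on \(U\) extends to a closed holomorphic form \(\widetilde{\alpha}\) on \(Y\). Restriction to each \(F_i\) is then literally defined, and exact forms \(df\) restrict to zero. Injectivity is a period argument: if all restrictions vanish, every period on \(F\), hence on \(Y\), vanishes, so \(\widetilde{\alpha}=df\) with \(f\in B\).

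Your route can be repaired with one global input. The class \(c=[\{h_\alpha-h_\beta\}]\in H^1(Y,\mathcal{O}_Y)\) restricts to zero in \(H^1(U,\mathcal{O}_U)\), being the coboundary of \(\{h_\alpha|_{U_\alpha\setminus F}\}\). The map \(H^1(Y,\mathcal{O}_Y)\to H^1(U,\mathcal{O}_U)\) is injective, because \(H^1_F(Y,\mathcal{O}_Y)\) is dual to \(H^1(Y,\omega_Y)=0\) by Grauert--Riemenschneider. Hence \(c=0\), and since \v{C}ech \(H^1\) of any cover injects into sheaf cohomology, \(h_\alpha-h_\beta=a_\alpha-a_\beta\) with \(a_\alpha\) holomorphic on \(U_\alpha\). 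The forms \(\omega_\alpha+da_\alpha\) then glue to a closed holomorphic one-form on \(Y\). The functions \(h_\alpha-a_\alpha\) glue to a holomorphic function on \(U\), which extends by normality. Thus \(\omega\) itself extends holomorphically to \(Y\). This is exactly the extension theorem, and after it the mixed-Hodge detour of Steps~1 and~3 is no longer needed.
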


\begin{proof}
A class in \(\mathcal{P}\) is represented by a closed holomorphic one-form \(\alpha\) on the punctured surface. By the extension theorem for closed one-forms, \(\alpha\) extends to a holomorphic one-form \(\widetilde{\alpha}\) on \(Y\); see Jörder~\cite[Proposition~1.2]{JorderExtension}. The extension is unique and remains closed, since its exterior derivative vanishes on the dense open set \(U\). Restricting to the components of \(F\) therefore defines an element of \(\bigoplus_iH^0(F_i,\omega_{F_i})\).

If \(\alpha=df\) for \(f \in B\), the pullback of \(f\) is constant on \(F\), and \(df\) restricts to zero on every \(F_i\). Thus restriction is well defined on \(\mathcal{P}\).

Suppose that the restriction of \(\widetilde{\alpha}\) to every component is zero. Every one-cycle on \(F\) is represented by a finite sum of piecewise smooth loops whose arcs lie on its components. The integral of \(\widetilde{\alpha}\) along each arc is zero. Thus all periods on \(F\) vanish, including periods corresponding to cycles in the dual graph. A sufficiently small resolution neighborhood retracts onto \(F\), so all periods on \(Y\) vanish as well. Integrating \(\widetilde{\alpha}\) from a fixed point gives a single-valued holomorphic function \(f\) on \(Y\) with \(df=\widetilde{\alpha}\). Since \(H^0(Y,\mathcal{O}_Y)=B\), this function comes from \(B\). The original class is therefore zero in \(\mathcal{P}\).

Uniqueness of extension and naturality of restriction show that \(\iota\) is equivariant.
\end{proof}

We henceforth identify \(\mathcal{P}\) with its image under \(\iota\). The adjunction sequence
\begin{equation}\label{eq:adjunction}
 0 \to \omega_Y \to \omega_Y(F) \xrightarrow{\Res} \omega_F \to 0
\end{equation}
is equivariant. At a node with \(F=(xy=0)\), the section
\[
 h(x,y)\frac{dx\wedge dy}{xy}
\]
has residues \(h(0,y)dy/y\) and \(-h(x,0)dx/x\) on the two branches. The opposite signs are part of the residue map and ensure equivariance when an automorphism exchanges the branches.

The following lemma applies to any good resolution, without a Gorenstein assumption.

\begin{lemma}\label{lem:exact-residue}
Let \((Y,F)\) be a good resolution of a normal complex surface singularity, and put \(U=Y\setminus F\). If \(\beta \in H^0(Y,\omega_Y(F))\) satisfies \(\beta=d\alpha\) on \(U\) for some \(\alpha \in H^0(U,\Omega_U^1)\), then \(\Res(\beta)=0\).
\end{lemma}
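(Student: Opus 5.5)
The plan is to reduce the vanishing of \(\Res(\beta)\) to a local statement on each exceptional component, and then to handle the residues at the nodes. The residue \(\Res(\beta)\in H^0(F,\omega_F)\) is a collection of meromorphic differentials \(\rho_i\) on the components \(F_i\), with at most simple poles at the nodes. It suffices to show that each \(\rho_i\) vanishes identically. For this, I would show that \(\rho_i\) has zero residue at every node and that \(\int_\gamma\rho_i=0\) for every loop \(\gamma\) in \(F_i^\circ=F_i\setminus(F-F_i)\). A meromorphic differential on a compact curve whose periods and residues all vanish is \(dg\) for a meromorphic function \(g\) with poles only where \(\rho_i\) has poles of order at least two. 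Since \(\rho_i\) has at most simple poles, \(g\) is holomorphic, hence constant, and so \(\rho_i=0\).

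Both vanishing statements come from a Stokes/tube argument. Let \(\gamma\subset F_i^\circ\) be a loop, and let \(T_\gamma\subset U\) be the boundary torus of a small tubular neighbourhood of \(\gamma\), that is, the circle bundle over \(\gamma\). Locally \(F_i=(x=0)\) and \(\beta=h\,dx\wedge dy/x\), so \(\Res\beta=h(0,y)\,dy\). Integrating first over the fibre circle gives \(\int_{T_\gamma}\beta=2\pi i\int_\gamma\rho_i\), up to sign. On the other hand \(T_\gamma\) is a closed surface in \(U\), and \(\beta=d\alpha\) there, so Stokes gives \(\int_{T_\gamma}\beta=0\). A small loop around a node \(F_i\cap F_j\) is handled the same way: its tube is the torus \(|x|=|y|=\epsilon\) in \(U\), and the integral of \(\beta\) over it is \((2\pi i)^2h(0,0)\), which is the residue of \(\rho_i\) at the node up to sign. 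Exactness of \(\beta\) on \(U\) kills this integral as well.

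The main obstacle is to make the tube integrals rigorous and to check that the correct cycle lies in \(U\). I would compute \(\int_{T_\gamma}\beta\) by letting the fibre radius tend to zero and using local coordinates along a cover of \(\gamma\) by charts. The integrand \(h\,(dx/x)\wedge dy\) has fibre integral \(2\pi i\,h(0,y)\,dy+O(\epsilon)\), so the limit exists and the \(O(\epsilon)\) error vanishes. The tori are compact cycles in \(U\), and this is all that Stokes' theorem needs; no Gorenstein hypothesis enters anywhere in the argument.
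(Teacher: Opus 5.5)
Your proposal is correct and follows the paper's proof essentially step for step. The torus Stokes argument gives zero periods of \(\Res_{F_i}(\beta)\) along loops in \(F_i\setminus(F-F_i)\); the same argument on the tube \(|x|=|y|=\epsilon\) over a small loop around each node gives zero node residues; and a differential with at most simple poles, zero residues and zero periods on a compact curve has a constant primitive, so it vanishes.
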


\begin{proof}
Fix a component \(F_i\), and let \(\gamma\) be a smooth closed loop in \(F_i\) avoiding the nodes. The normal circle bundle of small radius over \(\gamma\) is a closed oriented torus \(T_{\gamma,\epsilon} \subset U\). Orient it so that integration along the normal circle gives the usual factor \(2\pi i\). The residue integral formula gives
\[
 2\pi i\int_\gamma\Res_{F_i}(\beta)
 =\lim_{\epsilon \to 0}\int_{T_{\gamma,\epsilon}}\beta.
\]
For each sufficiently small \(\epsilon\), Stokes' theorem on this closed torus gives
\[
 \int_{T_{\gamma,\epsilon}}\beta
 =\int_{T_{\gamma,\epsilon}}d\alpha=0.
\]
Thus every period of \(\Res_{F_i}(\beta)\) away from the nodes is zero.

The differential \(\Res_{F_i}(\beta)\) has at most simple poles at the nodes. Applying the preceding calculation to small loops around the nodes shows that the residues of all these poles vanish. The poles are therefore removable. We obtain a holomorphic differential on the compact smooth curve \(F_i\) with all periods zero. It has a single-valued holomorphic primitive, which is constant by compactness. Hence \(\Res_{F_i}(\beta)=0\). This holds for every component, and proves the assertion.
\end{proof}

Using the natural map to \(\mathcal{Q}=H^0(U,\omega_U)/dH^0(U,\Omega_U^1)\), define
\[
 I_{\mathrm{reg}}=\im(H^0(Y,\omega_Y) \to \mathcal{Q}),\quad
 I_{\log}=\im(H^0(Y,\omega_Y(F)) \to \mathcal{Q}).
\]
In particular,
\begin{equation}\label{eq:regular-quotient}
 I_{\mathrm{reg}}=
 \frac{H^0(Y,\omega_Y)}
 {H^0(Y,\omega_Y)\cap dH^0(U,\Omega_U^1)}.
\end{equation}
This formula allows primitives with poles along \(F\).

\begin{proposition}\label{prop:residue-sequence}
Residue induces an exact sequence of \(G\)-representations
\begin{equation}\label{eq:residue-sequence}
 0 \to I_{\mathrm{reg}} \to I_{\log} \xrightarrow{\Res} W_F \to 0.
\end{equation}
\end{proposition}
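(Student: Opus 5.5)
Three things have to be checked: the residue map is well defined on \(I_{\log}\), its kernel is \(I_{\mathrm{reg}}\), and it is surjective onto \(W_F\). Equivariance is automatic, because the adjunction sequence \eqref{eq:adjunction} and the projection to \(\mathcal{Q}\) are equivariant.

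\emph{Well-definedness.} An element of \(I_{\log}\) is the class of some \(\beta\in H^0(Y,\omega_Y(F))\) modulo \(dH^0(U,\Omega_U^1)\). Suppose two such lifts \(\beta,\beta'\) have the same class. Then \(\beta-\beta'=d\alpha\) on \(U\), and Lemma~\ref{lem:exact-residue} gives \(\Res(\beta-\beta')=0\). So \(\Res\) descends to a map \(I_{\log}\to W_F\).

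\emph{Exactness in the middle.} Every class in \(I_{\mathrm{reg}}\) is represented by a form in \(H^0(Y,\omega_Y)\), which has zero residue. Since \(I_{\mathrm{reg}}\subseteq I_{\log}\) by definition, injectivity of the first map is clear. Conversely, suppose a class in \(I_{\log}\) has a lift \(\beta\) with \(\Res(\beta)=0\). By \eqref{eq:adjunction}, \(\beta\in H^0(Y,\omega_Y)\), so the class lies in \(I_{\mathrm{reg}}\). The argument also does not depend on the lift chosen, by the previous step.

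\emph{Surjectivity.} This is the substantive point. The cohomology sequence of \eqref{eq:adjunction} reads
\[
 H^0(Y,\omega_Y(F))\xrightarrow{\Res}H^0(F,\omega_F)\to H^1(Y,\omega_Y),
\]
and the last group vanishes by Grauert--Riemenschneider \eqref{eq:GR}. Hence \(\Res\colon H^0(Y,\omega_Y(F))\to W_F\) is already surjective at the level of forms, and therefore so is \(I_{\log}\to W_F\). This requires \(Y\) to be a resolution over a Stein representative, which is our standing convention; the Gorenstein hypothesis is not used.

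The only delicate input is Lemma~\ref{lem:exact-residue}, which is what makes \(\Res\) independent of the lift; everything else is formal. One should also confirm that the residue map of \eqref{eq:adjunction}, with the sign convention at nodes fixed earlier, lands in \(W_F=H^0(F,\omega_F)\) rather than in the sum of the \(H^0(F_i,\omega_{F_i}(\text{nodes}))\). This is the standard local computation at a node: the two branch residues \(h(0,0)\) and \(-h(0,0)\) sum to zero. With that checked, the sequence \eqref{eq:residue-sequence} is an exact sequence of \(G\)-representations.
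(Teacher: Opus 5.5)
Your proof is correct and follows the paper's argument step for step: well-definedness from Lemma~\ref{lem:exact-residue}, identification of the kernel with \(I_{\mathrm{reg}}\) via the adjunction sequence \eqref{eq:adjunction}, and surjectivity from Grauert--Riemenschneider vanishing \eqref{eq:GR}. The paper additionally notes that all three spaces are finite dimensional, by Proposition~\ref{prop:comparison} and compactness of \(F\); this is not needed for exactness, but you may want to record it since the terms are meant as finite-dimensional \(G\)-representations.
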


\begin{proof}
Two logarithmic two-forms represent the same class in \(\mathcal{Q}\) precisely when their difference is \(d\alpha\) for a holomorphic one-form \(\alpha\) on \(U\). Lemma~\ref{lem:exact-residue} shows that they have the same residue. Thus residue is well defined on \(I_{\log}\).

By \eqref{eq:adjunction}, a logarithmic two-form has zero residue if and only if it is regular on \(Y\). Hence the kernel of the induced map on \(I_{\log}\) is exactly \(I_{\mathrm{reg}}\). Finally, \eqref{eq:GR} and the cohomology sequence of \eqref{eq:adjunction} show that
\[
 H^0(Y,\omega_Y(F)) \to H^0(F,\omega_F)
\]
is surjective. The induced map from \(I_{\log}\) is therefore surjective. All maps commute with \(G\), and all spaces are finite dimensional by Proposition~\ref{prop:comparison} and compactness of \(F\).
\end{proof}

\begin{theorem}\label{thm:effective}
With the notation above, put
\[
 \mathcal{R}_{\mathrm{res}} := I_{\mathrm{reg}}\oplus(W_F/\mathcal{P})\oplus(\mathcal{Q}/I_{\log}).
\]
Then \(\mathcal{R}_{\mathrm{res}}\) is a finite-dimensional \(G\)-representation and
\begin{equation}\label{eq:effective}
 [\mathbb{C}_\kappa]\mathcal{D}_Y=\mathbf{1}+[\mathcal{R}_{\mathrm{res}}].
\end{equation}
\end{theorem}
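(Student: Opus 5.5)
We derive \eqref{eq:effective} from Proposition~\ref{prop:comparison}. It suffices to show that
\[
 [\mathcal{Q}]-[\mathcal{P}]=[I_{\mathrm{reg}}]+[W_F/\mathcal{P}]+[\mathcal{Q}/I_{\log}]
\]
in \(R(G)\). Finite dimensionality of \(\mathcal{R}_{\mathrm{res}}\) is immediate. Indeed, \(\mathcal{Q}\) is finite dimensional by Proposition~\ref{prop:comparison}, so its subspace \(I_{\mathrm{reg}}\) and its quotient \(\mathcal{Q}/I_{\log}\) are too. Also \(W_F=H^0(F,\omega_F)\) is finite dimensional because \(F\) is compact. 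Since \(\mathcal{P}\) embeds into \(W_F\), the quotient \(W_F/\mathcal{P}\) is an honest \(G\)-representation.

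The key steps are three short exact sequences of \(G\)-representations. First, \(I_{\log}\subseteq\mathcal{Q}\) is a \(G\)-stable subspace, because \(H^0(Y,\omega_Y(F))\) is \(G\)-stable and the map to \(\mathcal{Q}\) is equivariant. This gives
\[
 0\to I_{\log}\to\mathcal{Q}\to\mathcal{Q}/I_{\log}\to0.
\]
Second, Proposition~\ref{prop:residue-sequence} gives \([I_{\log}]=[I_{\mathrm{reg}}]+[W_F]\). Third, the equivariant injection \(\iota\) of Lemma~\ref{lem:closed-one-forms} gives \([W_F]=[\mathcal{P}]+[W_F/\mathcal{P}]\).

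Since representations of a finite group over \(\mathbb{C}\) are semisimple, each short exact sequence splits equivariantly. Adding the three identities yields
\[
 [\mathcal{Q}]=[I_{\mathrm{reg}}]+[\mathcal{P}]+[W_F/\mathcal{P}]+[\mathcal{Q}/I_{\log}].
\]
Substituting this into \eqref{eq:comparison} gives \eqref{eq:effective}.

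The only step needing care is that all identifications are equivariant for the chosen linearizations. The residue map must be compatible with branch exchange at nodes, which is the sign convention noted after \eqref{eq:adjunction}. The map \(\iota\) and the map \(H^0(Y,\omega_Y(F))\to\mathcal{Q}\) must use the same action on forms \(g\cdot\alpha=(g^{-1})^*\alpha\). Both points are already built into the statements of Lemma~\ref{lem:closed-one-forms} and Proposition~\ref{prop:residue-sequence}. No extra twist by \(\mathbb{C}_\kappa\) enters here, since the twist was absorbed in Proposition~\ref{prop:comparison}. Hence I expect no real obstacle beyond this bookkeeping.
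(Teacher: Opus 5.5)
Your proposal is correct and is essentially the paper's proof: combine $I_{\log}\subseteq\mathcal{Q}$ with the residue sequence of Proposition~\ref{prop:residue-sequence} to get $[\mathcal{Q}]=[I_{\mathrm{reg}}]+[W_F]+[\mathcal{Q}/I_{\log}]$. Then subtract $[\mathcal{P}]$ using the injection $\iota$ of Lemma~\ref{lem:closed-one-forms}, and apply Proposition~\ref{prop:comparison}. The appeal to equivariant splittings is unnecessary, since classes in $R(G)$ are additive on short exact sequences anyway, but it does no harm.
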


\begin{proof}
Lemma~\ref{lem:closed-one-forms} defines the quotient \(W_F/\mathcal{P}\). Proposition~\ref{prop:residue-sequence} gives
\[
 [\mathcal{Q}]=[I_{\mathrm{reg}}]+[W_F]+[\mathcal{Q}/I_{\log}].
\]
Subtracting \([\mathcal{P}]\) and applying Proposition~\ref{prop:comparison} proves \eqref{eq:effective}.
\end{proof}

\begin{remark}\label{rem:complex}
The representation in Theorem~\ref{thm:effective} is the cohomology of a naturally defined finite-dimensional complex. Put
\[
 \mathscr{K}^{-1}=\mathcal{P},\quad
 \mathscr{K}^0=I_{\mathrm{reg}}\oplus W_F\oplus(\mathcal{Q}/I_{\log}),
\]
with differential \(p \mapsto (0,\iota(p),0)\). Lemma~\ref{lem:closed-one-forms} gives
\[
 H^{-1}(\mathscr{K}^\bullet)=0,\quad
 H^0(\mathscr{K}^\bullet)=\mathcal{R}_{\mathrm{res}}.
\]
No splitting of \eqref{eq:residue-sequence} is used in this construction.
\end{remark}

\subsubsection{The maximal ideal and the residue representation}
\label{subsec:maximal-ideal}

We also record the representation obtained from the maximal ideal. Choose a \(G\)-equivariant principalization \(p\colon Y' \to (\Specan B,0)\) such that its reduced exceptional divisor \(F'\) has simple normal crossings, and write
\[
 \mathfrak{m}_B\mathcal{O}_{Y'}=\mathcal{O}_{Y'}(-D_{\mathfrak{m}}),\quad
 D_{\mathfrak{m}}=\sum_i m_iF'_i,\quad m_i>0.
\]
Here and below a local analytic representative is understood. The cohomology spaces \(\mathcal{P},\mathcal{Q}\) of the reflexive de Rham complex and the image \(I_{\mathrm{reg}}\) of regular two-forms are unchanged by replacing \(Y\) by \(Y'\). For \(I_{\mathrm{reg}}\), this follows from the invariance of regular two-forms under a proper modification between smooth surfaces: pullback extends regularly, while descent holds off a finite set and then by Hartogs' theorem. Closed one-forms likewise pull back regularly.

Put \(W_{F'}=H^0(F',\omega_{F'})\). If \(\alpha\) is a closed reflexive one-form, let
\[
 w_{\mathfrak{m}}([\alpha])=(m_i\alpha|_{F'_i})_i \in W_{F'}.
\]
Each component is holomorphic, so the node residues are zero. Exact forms restrict to zero on the exceptional curves. This map is therefore well defined on \(\mathcal{P}\) and is equivariant. It is injective: the coefficients \(m_i\) are positive, and vanishing of all restrictions implies exactness by the period argument of Lemma~\ref{lem:closed-one-forms}.

Proposition~\ref{prop:residue-sequence}, applied on \(Y'\), gives a canonical injection \(W_{F'} \to \mathcal{Q}/I_{\mathrm{reg}}\). Let \(j_{\mathfrak{m}}\colon\mathcal{P} \to \mathcal{Q}/I_{\mathrm{reg}}\) be the composite of \(w_{\mathfrak{m}}\) with the residue inclusion, and define
\begin{equation}\label{eq:Em}
 E_{\mathfrak{m}} :=\{u \in \mathcal{Q}:u+I_{\mathrm{reg}} \in j_{\mathfrak{m}}(\mathcal{P})\}.
\end{equation}
Define
\begin{equation}\label{eq:Hm}
 \mathcal{H}_{\mathfrak{m}} :=I_{\mathrm{reg}}\oplus(\mathcal{Q}/E_{\mathfrak{m}}).
\end{equation}
This is a finite-dimensional representation. The exact sequence
\[
 0 \to I_{\mathrm{reg}} \to E_{\mathfrak{m}} \to \mathcal{P} \to 0
\]
immediately gives
\begin{equation}\label{eq:Hm-class}
 [\mathcal{H}_{\mathfrak{m}}]=[\mathcal{Q}]-[\mathcal{P}]=[\mathcal{R}_{\mathrm{res}}].
\end{equation}
In particular, no splitting or injection \(\mathcal{P} \to \mathcal{Q}\) is required.

The definition agrees with the construction by logarithmic differentiation of a local generator \(t\) of \(\mathcal{O}_{Y'}(-D_{\mathfrak{m}})\). Indeed, \(d\log t\wedge\alpha\) has residue \((m_i\alpha|_{F'_i})_i\). Replacing \(t\) by \(ut\), for a unit \(u\), changes this two-form by the regular form \(d\log u\wedge\alpha\). Thus its class modulo regular two-forms is independent of \(t\).

For completeness, the subspace \(j_{\mathfrak{m}}(\mathcal{P})\) is independent of the principalization. Under a point blowup, \(D_{\mathfrak{m}}\) pulls back, its coefficients on strict transforms are unchanged, and the restriction of a pulled-back regular one-form to the new exceptional projective line is zero. Pullback of the logarithmic two-form above has precisely these residues. The identifications of regular two-forms, \(\mathcal{Q}\), and \(I_{\mathrm{reg}}\) therefore identify the two images. A common equivariant refinement compares any two choices. Consequently \(E_{\mathfrak{m}}\) and \eqref{eq:Hm} are independent of the choice.

Combining \eqref{eq:Hm-class} with Theorem~\ref{thm:effective} gives
\begin{equation}\label{eq:effective-Hm}
 [\mathbb{C}_\kappa]\mathcal{D}_Y=\mathbf{1}+[\mathcal{H}_{\mathfrak{m}}].
\end{equation}

\subsection{Passage to the quotient}
\label{sec:descent}

Assume now that \(G\) acts freely away from the singular point of \(B\), let \((V,0)=(\widetilde{V},0)/G\), and put \(A=\mathcal{O}_{V,0}=B^G\). We assume that \(A\) is singular and is not a rational double point. This is automatic when \(A\) is non-Gorenstein.

We use the function-index formula of Proposition~\ref{prop:index}. If \((W,0)\) is a normal complex surface singularity which is not a rational double point, \(C=\mathcal{O}_{W,0}\), and \(f \in \mathfrak{m}_C\) defines a reduced curve and has an isolated critical point, then
\begin{equation}\label{eq:function-index}
 \Delta(C)=1+j_C(f)-\mu_C(f),
\end{equation}
where
\[
 j_C(f)=\dim_\mathbb{C} C/df(\Der_\mathbb{C} C),\quad
 \mu_C(f)=\dim_\mathbb{C} H^1(F_f,\mathbb{C}).
\]
Here \(F_f\) is the Milnor fiber of the function on the surface singularity, so it is a real two-dimensional manifold with boundary. Formula \eqref{eq:function-index} was proved in Section~\ref{sec:index}. We now give its equivariant comparison for passage to the quotient.

\begin{lemma}\label{lem:der-descent}
Restriction gives an isomorphism
\[
 (\Der_\mathbb{C} B)^G \simeq \Der_\mathbb{C} A.
\]
\end{lemma}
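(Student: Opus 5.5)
We prove the lemma by working on the punctured surfaces, where the cover is étale, and then extending across the singular points. Put \(\widetilde{U}=\widetilde{V}\setminus\{0\}\) and \(U=V\setminus\{0\}\). Then \(q\colon\widetilde{U}\to U\) is a finite étale Galois covering with group \(G\), since the action is free away from the origin. For any normal surface singularity, derivations of the local ring are the same as holomorphic vector fields on the punctured representative. Indeed, \(\Der_{\mathbb{C}}\) is the reflexive module \(\Theta=\Hom(\Omega^1,\mathcal{O})\). It restricts to \(\Theta_U\) on the smooth locus, and the point has codimension two. So
\[
\Der_{\mathbb{C}}B=H^0(\widetilde{U},\Theta_{\widetilde{U}}),\qquad \Der_{\mathbb{C}}A=H^0(U,\Theta_U),
\]
as in \eqref{eq:der-lifting}, understood as direct limits over representatives.

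The restriction map comes from the inclusion \(A=B^G\subset B\). A \(G\)-invariant derivation \(\delta\) of \(B\) satisfies \(\delta(g\cdot a)=g\cdot\delta(a)\). Hence it maps \(B^G\) into \(B^G\), and we obtain \(\delta|_A\in\Der_{\mathbb{C}}A\). This map is injective. If \(\delta|_A=0\), then on \(\widetilde{U}\) the vector field \(\delta\) kills all local functions pulled back from \(U\). Because \(q\) is étale, these functions give local coordinates on \(\widetilde{U}\). So \(\delta\) vanishes on \(\widetilde{U}\), and by density \(\delta=0\).

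For surjectivity, take \(\theta\in\Der_{\mathbb{C}}A=H^0(U,\Theta_U)\). Since \(q\) is étale, \(dq\) gives \(\Theta_{\widetilde{U}}\simeq q^*\Theta_U\), so \(\theta\) has a unique lift \(\widetilde{\theta}\in H^0(\widetilde{U},\Theta_{\widetilde{U}})\) that is \(q\)-related to \(\theta\). The lift is \(G\)-invariant. For each \(g\), the field \(g\cdot\widetilde{\theta}\) is also \(q\)-related to \(\theta\), because \(q\circ g=q\), and uniqueness of lifts forces \(g\cdot\widetilde{\theta}=\widetilde{\theta}\). By the identification above, \(\widetilde{\theta}\) extends to an element of \(\Der_{\mathbb{C}}B\). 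By construction it is \(G\)-invariant and restricts to \(\theta\) on \(A\), because restriction agrees with \(\theta\) on \(U\) and \(A\) is determined by its values there by normality.

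The only point needing care is this extension step: we need \(\Der_{\mathbb{C}}B\) to coincide with vector fields on the punctured surface. This follows from reflexivity of \(\Der\), or from Hartogs extension of \(\widetilde{\theta}(b)\in H^0(\widetilde{U},\mathcal{O})=B\) for each \(b\in B\), using normality of \(B\). With that, the lemma follows, and the identification is compatible with the \(G\)-action in the sense needed for the descent argument.
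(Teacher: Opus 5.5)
Your proof is correct and follows the same route as the paper. Invariant derivations preserve \(A=B^G\), and a derivation of \(A\) lifts uniquely over the finite \'{e}tale map of punctured surfaces. The lift extends to a derivation of \(B\) by normality (Hartogs), and uniqueness of the lift gives \(G\)-invariance. Your explicit injectivity check via \'{e}tale local coordinates spells out what the paper summarizes as the two constructions being mutually inverse.
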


\begin{proof}
An invariant derivation of \(B\) preserves \(A\). Conversely, a derivation of \(A\) lifts uniquely over the finite \'{e}tale map of punctured surfaces. For each \(b \in B\), its derivative under this lifted vector field is holomorphic on the punctured normal surface and extends across the singular point by Hartogs' theorem. These extensions define a derivation of \(B\), since the Leibniz identity holds on the punctured surface and hence everywhere. Uniqueness of the lift makes it invariant. The two constructions are inverse to one another.
\end{proof}

\subsubsection{An invariant function}

\begin{lemma}\label{lem:invariant-function}
Put \(n=|G|\). There is a function \(f \in \mathfrak{m}_A\) such that
\begin{equation*}%\label{eq:function-divisor}
 \divr_Y(f)=M+H,\quad M=\sum_i m_iF_i,\quad
 m_i>0,\quad n \mid m_i,
\end{equation*}
where \(H\) is reduced, smooth, transverse to \(F\), and disjoint from its nodes. On both \(A\) and \(B\), the function defines a reduced curve and has an isolated critical point. Its Milnor fibers are connected.
\end{lemma}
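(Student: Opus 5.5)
The plan is to build \(f\) as a \(G\)-invariant function of the form \(f=N(g)=\prod_{h\in G}(h\cdot g)\), or better, as a general element of the invariant ideal generated by \(n\)-th powers. Here \(g\) is a general element of \(\mathfrak{m}_B\); throughout, \(Y\) is taken to be a \(G\)-equivariant principalization \(Y'\) of \(\mathfrak{m}_B\) as in \S\ref{subsec:maximal-ideal}.

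First step. Fix a basis \(x_1,\dots,x_k\) of \(G\)-eigenvectors lifting \(\mathfrak{m}_B/\mathfrak{m}_B^2\); this is possible because the isotypic projectors commute with the action. Each \(x_i^n\) is \(G\)-invariant since \(G=\mu_r\) is abelian of exponent dividing \(n\), so \(x_i^n\in\mathfrak{m}_A\). Consider the ideal \(I=(x_1^n,\dots,x_k^n)B\). Its integral closure equals that of \(\mathfrak{m}_B^n\), so on \(Y'\) we have \(I\mathcal{O}_{Y'}=\mathcal{O}_{Y'}(-nD_{\mathfrak{m}})\) after at most further equivariant blowups, which we absorb into \(Y\). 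Take
\[
 f=\sum_i c_i x_i^n
\]
with general coefficients \(c_i\). The linear system spanned by the \(x_i^n\) is base point free on \(Y\) after removing the fixed part \(nD_{\mathfrak{m}}\). Bertini then gives \(\divr_Y(f)=M+H\) with \(M=nD_{\mathfrak{m}}\), so \(m_i=n\cdot(\text{coefficient})\) is divisible by \(n\). The residual divisor \(H\) is smooth and reduced near \(F\), and it misses the finitely many nodes. Transversality to \(F\) I expect to get as in Lemma~\ref{lem:logarithmic-differential}(ii), by the restriction to each \(F_i\) having simple zeros. That step is subtle because the system restricted to \(F_i\) is the image of \(n\)-th powers, so a general member of the restricted system need not be reduced. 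To handle this I would instead take \(f=\sum_i c_ix_i^n+\lambda\,\phi\), with \(\phi\) ranging over a basis of the invariants in \(\mathfrak{m}_B^n\). That system contains all of \(H^0(\mathcal{O}(-nD_{\mathfrak{m}}))^G\) modulo higher order, and its restriction to each \(F_i\) is base point free and separates points generically, by finite generation and \(G\)-equivariance. Bertini for a base-point-free system then gives simple zeros.

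Second step. \(f\) defines a reduced curve with an isolated critical point on \(B\). Away from the origin, \(H\) is smooth and \(Y\setminus F\simeq \widetilde V\setminus 0\), so \(f^{-1}(0)\setminus 0\) is smooth and reduced after shrinking, and \(f\) has no critical points there. Near \(0\) this uses a standard Bertini–Sard argument (the critical locus of \(f\) is analytic, and by genericity it meets a small punctured neighbourhood trivially). Since \(q\) is étale off the origin and \(f\) is invariant, the same properties descend to \(A\); the curve \(f^{-1}(0)\subset V\) is the image of \(H\), hence reduced.

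Third step. Connectedness of the Milnor fibers. On \(B\), the Milnor fiber is described by Lemma~\ref{lem:milnor-cycle} as glued from coverings of the \(F_i^\circ\) of degree \(m_i\) joined by annuli along the dual graph. Since \(H\cdot F_i>0\) for some component and the link of a normal singularity is connected, the standard argument shows that the fiber is connected iff \(\gcd\) of the local monodromy data is \(1\). The transverse branch \(H\) gives local equation \(x^{m_i}y=\varepsilon\), a single annulus, which connects all sheets over \(F_i\). Propagating along the connected dual graph through the nodes gives connectivity. Equivalently, \(f\) is not a proper power in \(B\) because \(H\) is reduced, and an isolated-singularity function with reduced zero curve on a normal surface has connected Milnor fiber. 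On \(A\), the Milnor fiber is \(F_f^B/G\), a quotient of a connected space, hence connected.

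The main obstacle will be the first step: simultaneously arranging divisibility \(n\mid m_i\), which requires invariant generators living in \(\mathcal{O}(-nD_{\mathfrak{m}})\), and genericity/transversality of \(H\), where restriction of invariant powers may be non-reduced on \(F_i\). I would first try the enlarged invariant linear system \(H^0(Y,\mathcal{O}_Y(-nD_{\mathfrak{m}}))^G\), verifying that it is base point free on \(Y\) after replacing \(nD_{\mathfrak{m}}\) by \(nN D_{\mathfrak{m}}\) for \(N\gg0\). For this I would use the norm map \(g\mapsto\prod_h h\cdot g\) to produce invariant sections nonvanishing at any prescribed point.
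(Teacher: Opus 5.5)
\textbf{Gap: wrong resolution.} Your Steps 2 and 3 are essentially sound. In particular, the connectivity argument is a correct unpacking of the paper's meridian argument: one annulus at a point of \(F_i\cap H\) makes the \(m_i\)-sheeted cover over \(F_i^\circ\) connected, and every sheet over a neighbouring component meets a node annulus.

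The gap is the first move of Step 1, where you replace \(Y\) by a principalization \(Y'\) of \(\mathfrak{m}_B\). In this section \(Y\) is the minimal good resolution fixed in \S\ref{sec:forms}, and the lemma is needed on that surface. Lemma~\ref{lem:critical} uses \(H^0(Y,L)=H^0(Y,L(M))=B\otimes\mathbb{C}_{\kappa^{-1}}\), which rests on \(D=Z-F\geq 0\) from \eqref{eq:D}. Proposition~\ref{prop:transfer} must compute the trivial multiplicity of the same \(\mathcal{D}_Y\) that enters Theorem~\ref{thm:effective}. Neither survives on \(Y'\) in general:
\begin{itemize}
\item blowing up a smooth point of a component on which \(Z\) has coefficient \(1\) gives \(D\) coefficient \(-1\) on the new curve;
\item the defect's dimension drops by the evaluation rank \(c\) from the proof of Lemma~\ref{lem:resolution-comparison}.
\end{itemize}

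\textbf{Why the construction cannot be pushed down to \(Y\).} Write \(\mathfrak{m}_B\mathcal{O}_Y=\mathcal{O}_Y(-Z_0)\mathcal{J}\), where \(\mathcal{J}\) has finite cosupport. This cosupport is in general nonempty, already for a simple elliptic singularity of degree one. Every member of your system \(\{\sum_ic_ix_i^n+\lambda\phi\}\) lies in \(\mathfrak{m}_B^n\). So near a point \(p\) of the cosupport it equals \(s^nj\), with \(s\) a local equation of \(Z_0\) and \(j\in\mathcal{J}_p^n\subseteq\mathfrak{m}_{Y,p}^n\). Because the system contains the \(x_i^n\), a general member has exceptional part exactly \(nZ_0\) on \(Y\). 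Hence \(H\) passes through \(p\) with multiplicity at least \(n\). For nontrivial \(G\) it is therefore singular there, and \(p\) may even be a node of \(F\). The same happens for your enlarged systems \(H^0(Y',\mathcal{O}_{Y'}(-nND_{\mathfrak{m}}))^G\), which lie in integral closures of powers of \(\mathfrak{m}_B\). Powers of the maximal ideal are simply the wrong linear system.

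\textbf{The missing idea.} You need a system that is base point free on \(Y\) itself after removing a fixed part whose coefficients are divisible by \(n\). The paper takes an invariant cycle \(C\) on \(Y\) with \(C\cdot F_i<0\) for every \(i\) and \(n\) dividing every coefficient. Then \(\mathcal{O}_Y(-C)\) is relatively ample and its stabilizer characters are trivial, so it descends to \(Y/G\). There a high power is relatively generated and separates first jets. Pulling back a general section gives \(\divr_Y(f)=M+H\) with \(M\) a multiple of \(C\). Your norm construction would serve equally well here: it produces invariant sections of \(\mathcal{O}_Y(-nNC_1)\) that do not vanish at prescribed points, and Bertini then applies. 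Either way, your Steps 2 and 3 then go through.

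\textbf{Two smaller points.} First, invariance of \(x_i^n\) uses that \(G\) is abelian, whereas this section allows an arbitrary finite \(G\); norms avoid this. Second, your worry about non-reduced restrictions is unnecessary: in characteristic zero Bertini holds for every base-point-free system, including \(\{\sum_ic_iu_i^n\}\).
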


\begin{proof}
Choose an effective invariant exceptional cycle \(C\) with \(C\cdot F_i<0\) for every component, and such that every coefficient of \(C\) is divisible by \(n\). Such a cycle is obtained from a strictly anti-nef integral cycle by summing its translates and multiplying by \(n\). The invertible sheaf \(\mathcal{O}_Y(-C)\) is relatively ample over the contraction.

The stabilizer of every point acts trivially on the fiber of \(\mathcal{O}_Y(-C)\): this sheaf is the \(n\)-th power of an invariant divisorial sheaf, and stabilizer characters have orders dividing \(n\). It therefore descends to an invertible sheaf on \(Y/G\). A sufficiently high power is relatively generated and separates first jets along the quotient exceptional curves. Pulling back a general section gives an invariant function \(f \in \mathfrak{m}_A\) with exceptional part \(M\), a positive multiple of \(C\). To see transversality, choose the section to have simple zeros on the quotient curves away from points with larger stabilizers. The generic pointwise stabilizer fixes the tangential coordinate, so these simple tangential zeros remain simple after pullback. A general section also avoids the finitely many excluded points.

After shrinking, the function has no critical point away from the singular point on either surface. This can also be checked from the local expressions in Lemma~\ref{lem:logarithmic-differential} and the fact that the quotient map is \'{e}tale away from the singular point. The curve sections are generically reduced Cartier sections of Cohen--Macaulay surfaces, so they are reduced. Their Milnor fibers are connected: in the Milnor fibration of the link complement, a meridian around a reduced branch maps with degree one to the circle. The induced homomorphism to \(\mathbb{Z}\) is therefore surjective, and the fiber has one connected component.
\end{proof}

\begin{lemma}\label{lem:df}
For \(f\) as in Lemma~\ref{lem:invariant-function}, there is an equivariant exact sequence
\begin{equation}\label{eq:df-sequence}
 0 \to L(M) \to S \xrightarrow{df} \mathcal{O}_Y(-M) \to 0.
\end{equation}
\end{lemma}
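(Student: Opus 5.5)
The exact sequence itself comes directly from Lemma~\ref{lem:logarithmic-differential}\textup{(i)}, applied with \((Y,D,Z)=(Y,F,M)\). Lemma~\ref{lem:invariant-function} provides exactly its hypotheses: \(Y\) is the minimal good resolution of \(B\), \(F\) is its reduced exceptional divisor, and \(\divr_Y(f)=M+H\) with every \(m_i>0\). The horizontal part \(H\) is reduced, smooth, transverse to \(F\) and avoids its nodes, and \(f\) has no critical point off the singular point after shrinking. Lemma~\ref{lem:logarithmic-differential}\textup{(i)} was stated for an arbitrary good resolution of an arbitrary normal surface singularity, so it applies to \((\widetilde{V},0)\). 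It gives a short exact sequence
\[
 0 \to \ker(df) \to S \xrightarrow{df} \mathcal{O}_Y(-M) \to 0,
\]
whose kernel is invertible and isomorphic to \(\det(S)\otimes\mathcal{O}_Y(M)=L(M)\).

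What remains is \(G\)-equivariance. First, \(f\in\mathfrak{m}_A=(\mathfrak{m}_B)^G\) is \(G\)-invariant, and \(M\) is a \(G\)-invariant divisor, being the exceptional part of the divisor of an invariant function. Hence \(\mathcal{O}_Y(-M)\) carries its natural linearization as a subsheaf of meromorphic functions. Next, \(G\) acts on \(S\subseteq\Theta_Y\) by pushforward of vector fields, \(g\cdot\delta=g_*\delta\), which matches the left action on functions. We then check \((g\cdot\delta)(f)=g\cdot(\delta(f))\) using \(g\cdot f=f\). This is the chain rule: \((g_*\delta)((g^{-1})^*f)=(g^{-1})^*(\delta f)\). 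So \(df\) is a morphism of \(G\)-sheaves.

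The kernel is then a \(G\)-subsheaf of \(S\). It remains to see that the isomorphism \(\ker(df)\simeq L(M)\) respects the natural linearizations. I would write it explicitly rather than only through determinants, as the equivariant map
\[
 L(M)=\textstyle\bigwedge^2 S\otimes\mathcal{O}_Y(M)\to S,\qquad
 (v\wedge w)\otimes h\mapsto h\bigl(df(v)\,w-df(w)\,v\bigr).
\]
This is the same contraction formula used in the proof of Lemma~\ref{lem:logarithmic-differential}\textup{(ii)}. Since \(df(v),df(w)\in\mathcal{O}_Y(-M)\), the image is holomorphic. The map is built from \(df\), the wedge product and multiplication, all of which are equivariant, so it is equivariant.

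I expect the main obstacle to be verifying that this map is an isomorphism onto \(\ker(df)\) along \(F\), rather than merely an injection. I would check it in the local bases of the proof of Lemma~\ref{lem:logarithmic-differential}\textup{(i)}. At a smooth point of \(F\) off \(H\), take the basis \(x\partial_x,\partial_y\) and write \(x^{-m}df=(a,b)\) with \(a\) a unit. Then the generator \(x^{-m}\,x\partial_x\wedge\partial_y\) maps to \(a\partial_y-b\,x\partial_x\), which is nonvanishing. This is a primitive vector of the rank-one kernel, so the map is surjective onto it. The same computation works at a point of \(F\cap H\) and at a node, where both coefficients are nonzero. Away from \(F\), it is the standard isomorphism on \(U\).
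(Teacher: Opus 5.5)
Your proposal is correct and follows the same route as the paper: the exact sequence is the local calculation of Lemma~\ref{lem:logarithmic-differential}\textup{(i)} applied on the cover, and equivariance comes from the invariance of \(f\). The paper only asserts that the determinant identification of the kernel is equivariant. Your explicit contraction map \((v\wedge w)\otimes h\mapsto h\bigl(df(v)\,w-df(w)\,v\bigr)\), together with the local check that it generates the kernel, is a faithful expansion of that one step.
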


\begin{proof}
The local calculation in Lemma~\ref{lem:logarithmic-differential} gives the exact sequence after shrinking. Taking determinants identifies the kernel with \(\det S\otimes\mathcal{O}_Y(M)=L(M)\). This identification and the map \(df\) are equivariant because \(f\) is invariant.
\end{proof}

Put \(J=df(\Der_\mathbb{C} B)\), and define
\[
 \mathcal{T}_M=\chi_G(\mathcal{O}_M)-\chi_G(L(M)|_M),
\]
where \(\chi_G\) is the alternating sum of coherent cohomology representations on the compact cycle \(M\).

\begin{lemma}\label{lem:critical}
In \(R(G)\), one has
\begin{equation}\label{eq:critical}
 [B/J]=\mathcal{D}_Y+\mathcal{T}_M.
\end{equation}
\end{lemma}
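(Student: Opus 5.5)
We follow the equivariant version of the computation in the proof of Proposition~\ref{prop:index}, now on the minimal good resolution \(Y\) itself and keeping track of \(G\)-representations throughout. Lemma~\ref{lem:invariant-function} makes the total transform of \(f\) already of the form required in Lemma~\ref{lem:logarithmic-differential}\textup{(i)}, so no principalization is needed. The equivariant sequence \eqref{eq:df-sequence} of Lemma~\ref{lem:df} takes the place of \eqref{eq:df}. All sequences are sequences of \(G\)-sheaves, and taking isotypic components is exact, so every dimension count becomes an identity in \(R(G)\).

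\emph{Cohomology of \eqref{eq:df-sequence}.} By \eqref{eq:der-lifting}, \(H^0(Y,S)=\Der_{\mathbb{C}}B\). The kernel of \(df\) on \(\Der_{\mathbb{C}}B\) is \(H^0(Y,L(M))\). The global sections of \(\mathcal{O}_Y(-M)\) are \(\mathfrak{m}_B\), by the Hartogs argument of \eqref{eq:Hartogs} together with \(M>0\). Using \eqref{eq:coherent-vanishing}, the long exact sequence gives
\[
 0\to \mathfrak{m}_B/J\to H^1(L(M))\to H^1(S)\to H^1(\mathcal{O}_Y(-M))\to0.
\]
Hence \([\mathfrak{m}_B/J]=[H^1(L(M))]-[H^1(S)]+[H^1(\mathcal{O}_Y(-M))]\). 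Since \(J\subseteq\mathfrak{m}_B\) and \(B/\mathfrak{m}_B=\mathbf{1}\), we get \([B/J]=\mathbf{1}+[\mathfrak{m}_B/J]\).

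\emph{The two restriction sequences.} From \(0\to\mathcal{O}_Y(-M)\to\mathcal{O}_Y\to\mathcal{O}_M\to0\) and \(H^0(\mathcal{O}_Y)/H^0(\mathcal{O}_Y(-M))=B/\mathfrak{m}_B=\mathbf{1}\), we get
\[
 [H^1(\mathcal{O}_Y(-M))]=[H^1(\mathcal{O}_Y)]-\chi_G(\mathcal{O}_M)+\mathbf{1}.
\]
From \(0\to L\to L(M)\to L(M)|_M\to0\), we get
\[
 [H^1(L(M))]=[H^1(L)]+\chi_G(L(M)|_M)-[H^0(L(M))/H^0(L)].
\]
The key point is that the correction \(H^0(L(M))/H^0(L)\) vanishes. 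Indeed, by Lemma~\ref{lem:logarithmic-differential}\textup{(ii)}, \(H^0(L(M))\) is the reflexive module \(\omega_B^\vee\), realized as the kernel of \(df\). By Okuma's vanishing \(H^1_F(Y,L)=0\), used in \eqref{eq:anti}, \(H^0(L)\) is also \(\omega_B^\vee\). Both identifications are compatible with restriction to \(U\), so the inclusion is equality. Equivalently, \(a_Y=0\) on the minimal good resolution of the Gorenstein cover. This is the step I expect to need the most care. The vanishing statement of Okuma/Wahl is for the minimal good resolution of a non-RDP singularity, which is our standing hypothesis on \(\widetilde V\). The identification of \(\ker(df)\) with \(\omega_B^\vee\) must also be checked on \(Y\) rather than on a principalization; the reflexivity argument of Lemma~\ref{lem:logarithmic-differential}\textup{(ii)} carries over verbatim.

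\emph{Combining.} Substituting both restriction identities into the expression for \([\mathfrak{m}_B/J]\), then adding \(\mathbf{1}\), gives
\[
 [B/J]=[H^1(\mathcal{O}_Y)]-[H^1(S)]+[H^1(L)]+\chi_G(L(M)|_M)-\chi_G(\mathcal{O}_M)+\ldots
\]
At this point a sign convention must be checked: as written, this yields \([B/J]=\mathcal{D}_Y-\mathcal{T}_M\), which is the opposite sign to \eqref{eq:critical}. Before writing the final proof I would recheck the signs of the two restriction sequences. Nonequivariantly the computation should give \(j=\Delta+\mu-1\), so the \(M\)-term must be \(-\chi(F_f)=(K+F)\cdot M-M^2\) up to the constant. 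By Riemann--Roch, \(\chi(L(M)|_M)-\chi(\mathcal{O}_M)=(M-K-F)\cdot M=-\chi(F_f)\). This matches the sign with \(+(\chi(L(M)|_M)-\chi(\mathcal{O}_M))\), so the lemma's \(\mathcal{T}_M\) should be read with the convention making these agree. I would verify the statement's convention against this nonequivariant check and adjust the bookkeeping accordingly. All maps are equivariant since \(f\) is invariant, which completes the identity in \(R(G)\).
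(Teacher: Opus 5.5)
\paragraph{Verdict.} The proposal follows the paper's strategy but has a genuine gap: sign errors in two long exact sequences mean it does not reach the stated identity, and it ends by questioning the statement instead.

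\paragraph{What matches the paper.} Your plan uses the same three long exact sequences as the paper: those attached to \eqref{eq:df-sequence}, to \(\mathcal{O}_Y(-M)\subset\mathcal{O}_Y\), and to \(L\subset L(M)\), together with \(H^0(Y,L)=H^0(Y,L(M))\). Your route to that last equality is fine: Okuma's vanishing plus \(H^0(Y,S)=H^0(U,\Theta_U)\). The paper gets it more directly from \eqref{eq:twists} and \eqref{eq:Hartogs}. There \(L\otimes\mathbb{C}_\kappa\simeq\mathcal{O}_Y(D)\) with \(D\geq 0\), so both spaces equal \(B\otimes\mathbb{C}_{\kappa^{-1}}\).

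\paragraph{The gap: both restriction identities have the wrong sign.} Put \(I:=H^0(Y,\mathcal{O}_Y(-M))\). The sequence \(0\to\mathcal{O}_Y(-M)\to\mathcal{O}_Y\to\mathcal{O}_M\to0\) gives
\[
 [H^1(\mathcal{O}_Y(-M))]=[H^1(\mathcal{O}_Y)]+\chi_G(\mathcal{O}_M)-[B/I].
\]
The sequence \(0\to L\to L(M)\to L(M)|_M\to0\), with \(H^0(L)=H^0(L(M))\), gives \([H^1(L(M))]=[H^1(L)]-\chi_G(L(M)|_M)\). Substitute these into \([B/J]=[B/I]+[I/J]\) and \([I/J]=[H^1(L(M))]-[H^1(S)]+[H^1(\mathcal{O}_Y(-M))]\). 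The \([B/I]\) terms cancel and you get exactly \([B/J]=\mathcal{D}_Y+\mathcal{T}_M\).

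With your signs the constants do not cancel either: you would get \(\mathcal{D}_Y-\mathcal{T}_M+2\cdot\mathbf{1}\). Taking dimensions, this is incompatible with Proposition~\ref{prop:index} for \(B\) and \(f\), since it would force \(\mu_f=2\). There is no convention to adjust, because \(\mathcal{T}_M\) is defined as \(\chi_G(\mathcal{O}_M)-\chi_G(L(M)|_M)\).

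Your Riemann--Roch check has a compensating error. By Lemma~\ref{lem:milnor-cycle}, \((M-K_Y-F)\cdot M=M^2-(K_Y+F)\cdot M=\chi(F_f)\), not \(-\chi(F_f)\). Hence \(\dim\mathcal{T}_M=-\chi(F_f)=\mu_f-1\), which is exactly what \(j=\Delta+\mu-1\) requires. Done correctly, the check confirms the statement as written.

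\paragraph{A smaller, harmless error.} The claim \(H^0(Y,\mathcal{O}_Y(-M))=\mathfrak{m}_B\) is false in general. Here \(M\) is the exceptional part of the particular invariant function of Lemma~\ref{lem:invariant-function}. That is a large multiple of an anti-ample cycle with all coefficients divisible by \(n\), not the maximal-ideal cycle used in Lemma~\ref{lem:logarithmic-differential}\textup{(ii)}. So \(I\) is typically a strictly smaller \(\mathfrak{m}_B\)-primary ideal. Also, \eqref{eq:Hartogs} concerns only \(\mathcal{O}_Y(T)\) with \(T\geq 0\).

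Consequently your first exact sequence should begin \(0\to I/J\to H^1(L(M))\). Likewise \(H^0(\mathcal{O}_Y)/H^0(\mathcal{O}_Y(-M))\) is \(B/I\), not \(\mathbf{1}\). This does no damage if you keep \([B/I]\) unevaluated: it occurs in both sequences and cancels, which is exactly how the paper's proof proceeds.
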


\begin{proof}
The cohomology sequence of \(\mathcal{O}_Y(-M) \subset \mathcal{O}_Y\) gives
\[
 [H^1(Y,\mathcal{O}_Y)]-[H^1(Y,\mathcal{O}_Y(-M))]
 =[B/H^0(Y,\mathcal{O}_Y(-M))]-\chi_G(\mathcal{O}_M).
\]
The image of \(H^0(Y,S) \to H^0(Y,\mathcal{O}_Y(-M))\) is the critical ideal \(J=df(\Der_\mathbb{C} B)\), by \eqref{eq:der-lifting}. Thus \eqref{eq:df-sequence} gives
\begin{equation*}
\begin{split}
 [H^1(Y,S)]
 &=[H^1(Y,L(M))]+[H^1(Y,\mathcal{O}_Y(-M))]\\
 &\quad-[B/J]+[B/H^0(Y,\mathcal{O}_Y(-M))].
\end{split}
\end{equation*}
Finally, \eqref{eq:twists} and \eqref{eq:Hartogs} give
\[
 H^0(Y,L)=H^0(Y,L(M))=B\otimes\mathbb{C}_{\kappa^{-1}}.
\]
The sequence of \(L \subset L(M)\) therefore implies
\[
 [H^1(Y,L)]-[H^1(Y,L(M))]=\chi_G(L(M)|_M).
\]
Combining these three identities proves \eqref{eq:critical}.
\end{proof}

\subsubsection{The character of the cycle contribution}

We first compute the traces on the component thickenings and on the node terms. For a \(g\)-equivariant coherent sheaf \(\mathcal{E}\) with compact one-dimensional support, write
\[
 \chi_g(\mathcal{E})=\tr(g \mid H^0(\mathcal{E}))
                  -\tr(g \mid H^1(\mathcal{E})).
\]

\begin{lemma}\label{lem:local-trace}
Let a finite group \(G\) of order \(n\) act on a smooth surface \(Y\), preserving a compact simple normal crossing divisor \(F=\sum_iF_i\), and freely on \(Y\setminus F\). Let \(M=\sum_i m_iF_i\) be a \(G\)-invariant divisor with \(m_i>0\) and \(n \mid m_i\). Write \(L=\bigwedge^2\Theta_Y(-\log F)\), and give \(L(M)=L\otimes\mathcal{O}_Y(M)\) its natural linearization. Fix \(g \in G\), \(g \neq 1\).
\begin{enumerate}
\item If \(g\) preserves \(F_i\), then
\[
 \chi_g(\mathcal{O}_{m_iF_i})=\chi_g(L(M)|_{m_iF_i}).
\]
\item If \(g\) fixes a node \(p \in F_i\cap F_j\), put
\[
 N_p=\mathcal{O}_{Y,p}/(x^{m_i},y^{m_j}),
\]
where \(F_i=(x=0)\) and \(F_j=(y=0)\). Give \(N_p\) the action induced by local functions, with an additional minus sign when \(g\) exchanges the branches. Then
\[
 \tr(g \mid N_p)=\tr(g \mid L(M)\otimes N_p).
\]
\end{enumerate}
\end{lemma}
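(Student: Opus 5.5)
The plan is to compute both sides with equivariant local models, using that $L(M)$ and $\mathcal{O}_Y$ differ, on a suitable thickening, by a twist that is trivial in the relevant sense. The key input is the divisibility $n \mid m_i$: since every stabilizer character has order dividing $n$, a local equation $x^{m_i}$ of $m_iF_i$ is acted on trivially by any stabilizer, up to units.

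\emph{Part (1).} Filter $m_iF_i$ by the subsheaves $\mathcal{O}_Y(-kF_i)/\mathcal{O}_Y(-m_iF_i)$, with graded pieces $\mathcal{O}_{F_i}(-kF_i)$ for $0\le k<m_i$. Similarly $L(M)|_{m_iF_i}$ has graded pieces $L(M)|_{F_i}\otimes\mathcal{O}_{F_i}(-kF_i)$. Traces are additive, so it suffices to compare the two sums. Since $g$ acts on $F_i$ with finitely many fixed points, the holomorphic Lefschetz formula (Atiyah--Bott) expresses each trace as a sum over the fixed points $p\in F_i^g$. The contribution of a fixed point is the fiber eigenvalue divided by $1-\lambda_p^{-1}$, where $\lambda_p$ is the tangent eigenvalue.

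At a fixed point, choose linearizing coordinates with $F_i=(x=0)$, where $g$ acts by $x\mapsto ax$, $y\mapsto by$. The fiber of $\mathcal{O}(-kF_i)$ is spanned by $x^k$. The fiber of $L$ is spanned by $x\partial_x\wedge\partial_y$ at a smooth point of $F$ (or $x\partial_x\wedge y\partial_y$ at a node), with eigenvalue $b^{-1}$ (respectively $1$), and $\mathcal{O}(M)$ contributes $x^{-m_i}$ or $x^{-m_i}y^{-m_j}$. Summing over $k$ gives the geometric series $\sum_{k<m_i}a^{k}$ (with appropriate inverses for the left action). Because $a^{m_i}=1$, the total fiber contribution of $\mathcal{O}(M)$ is $1$. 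What remains is to show that the factor from $L$ is absorbed. Rather than compare fixed points directly, I would use that $L(M)|_{m_iF_i}$ and $\mathcal{O}_{m_iF_i}$ are both thickenings with the same filtration length, and reindex $k\mapsto k+1$ or absorb $b^{-1}$ by the Lefschetz denominator.

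The cleaner route, which I expect to use, is this. For $g\ne1$ preserving $F_i$, freeness off $F$ forces every $g$-fixed point on $F_i$ to be isolated or $F_i$ to be pointwise fixed. If $F_i$ is pointwise fixed, then $b=1$, and the fiber characters of $L|_{F_i}$ and $\mathcal{O}(M)|_{F_i}$ are trivial ($a^{m_i}=1$). Then $L(M)|_{m_iF_i}\otimes\mathcal{O}_{m_iF_i}^{-1}$ has degree-zero graded pieces twisted trivially, and its trace matches after a Riemann--Roch comparison on each graded piece. The degree difference is $(L(M)\cdot F_i)\cdot m_i$, so one must show this vanishes in trace, not merely in dimension. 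I expect this case to be the main obstacle. I would try writing $L(M)=\mathcal{O}_Y(M-K_Y-F)$ and using adjunction to see that the character-weighted sum telescopes over the $m_i$ layers because $n\mid m_i$.

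\emph{Part (2).} Write $N_p$ as the span of the monomials $x^uy^v$ with $0\le u<m_i$ and $0\le v<m_j$. If $g$ preserves the branches and acts diagonally by $(a,b)$, then $\tr(g\mid N_p)$ factors as $(\sum_u a^{u})(\sum_v b^{v})$. The twist $L(M)$ at $p$ is generated by $x^{-m_i}y^{-m_j}\,x\partial_x\wedge y\partial_y$, whose eigenvalue is $a^{-m_i}b^{-m_j}\cdot1=1$. Hence the two traces agree. If $g$ exchanges the branches, then $m_i=m_j$ and only the diagonal monomials $x^uy^u$ contribute to the trace, so the trace is $-\sum_u(ab)^u$. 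In this case $g$ sends $x\partial_x\wedge y\partial_y$ to $-x\partial_x\wedge y\partial_y$. This sign, together with the extra minus sign in the action on $N_p$ and the factor $(ab)^{-m_i}=1$, gives matching traces. This part is a direct monomial computation, and I expect no difficulty.
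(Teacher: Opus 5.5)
\textbf{Part (1).} There is a genuine gap. You have not found what actually makes the two sides agree, and the routes you suggest cannot supply it.

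Consider an isolated fixed point \(p\) of \(g|_{F_i}\) at a smooth point of \(F\). There the fiber character of \(L(M)\) is \(\tau=b^{\mp1}\neq1\), because \(b=1\) would make \(g\) fix \(F_i\) pointwise near \(p\). So the two sheaves do not match layer by layer. The Lefschetz denominator is the same for both sheaves, so it cannot absorb \(\tau\). Reindexing \(k\mapsto k+1\) multiplies the sum by the conormal eigenvalue \(a\), not by \(b\).

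The difference of the local contributions at \(p\) is
\[
 \frac{1-\tau}{1-\mu}\sum_{k=0}^{m_i-1}a^k ,
\]
where \(\mu\) is the eigenvalue on the cotangent line of \(F_i\). The point is that this geometric sum is \emph{zero}. Indeed \(a^{m_i}=1\), since its order divides \(n\) and \(n\mid m_i\). Also \(a\neq1\), by freeness on \(Y\setminus F\): if \(a=1\), linearization would give a \(g\)-fixed curve through \(p\) transverse to \(F_i\). At a node, both branches are preserved and \(\tau=1\).

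The same mechanism settles the case you call the main obstacle. If \(g\) fixes \(F_i\) pointwise, the fiber character of \(L(M)\) is trivial along \(F_i\). Hence \(g\) acts on both \(k\)-th layers by the scalar \(a^k\), and Riemann--Roch gives the layer difference \(-a^k\deg(L(M)|_{F_i})\). Summing over \(k<m_i\) gives zero, because \(a\neq1\) (otherwise \(g\) would be the identity near \(F_i\)). The degree itself need not vanish, and no adjunction or telescoping is needed.

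You use \(n\mid m_i\) only to trivialize \(\mathcal{O}_Y(M)\). Its second use, combined with freeness to kill \(\sum_k a^k\), is the core of the lemma.

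\textbf{Part (2).} The branch-preserving case is fine. In the branch-exchanging case your sign count is wrong. The extra minus sign is part of the action on \(N_p\) on \emph{both} sides. It comes from the difference map in the normalization sequence, and after tensoring with \(L(M)\) the node term is \(L(M)\otimes N_p\) with that same twisted action. So this sign cancels in the comparison. What remains is the factor \(-1\) from \(x\partial_x\wedge y\partial_y\mapsto -x\partial_x\wedge y\partial_y\), using \((ab)^{-m}=1\). This gives
\[
 \tr(g\mid L(M)\otimes N_p)=-\tr(g\mid N_p).
\]
The identity therefore holds only because both traces vanish, i.e.\ \(\sum_{u<m}(ab)^u=0\).

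That vanishing needs \((ab)^m=1\) and \(ab\neq1\). The latter again comes from freeness: if \(ab=1\), then \(g\) has eigenvalue \(1\) on the cotangent space and fixes a curve transverse to both branches. Your argument never uses freeness here, and without it the statement is false. For \(g(x,y)=(y,x)\), the two traces are \(-m\) and \(m\).
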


\begin{proof}
Since \(M\) is \(n\) times an invariant integral divisor, \(\mathcal{O}_Y(M)\) has trivial stabilizer action on every fiber. We use eigenvalues for the induced action on local functions and their differentials. In particular, the conormal eigenvalue along \(C=(x=0)\) is the eigenvalue on the class of \(x\) in \(\mathcal{O}_Y(-C)|_C\).

\smallskip
\noindent\emph{Case 1: \(g\) preserves \(C=F_i\) and acts nontrivially on it.} Put \(m=m_i\). The filtration of \(\mathcal{O}_{mC}\) by powers of the ideal of \(C\) has quotients \(\mathcal{O}_C(-kC)\), \(0 \leq k<m\). At a fixed point \(p\) of \(g|_C\), let \(\mu \neq 1\) be the eigenvalue on the cotangent line of \(C\), \(\lambda\) the conormal eigenvalue, and \(\tau\) the character on the fiber of \(L(M)\). The holomorphic Lefschetz formula on \(C\)~\cite{AtiyahBott1967} gives the difference of local contributions
\[
 \frac{1-\tau}{1-\mu}\sum_{k=0}^{m-1}\lambda^k.
\]
At a smooth point of \(F\), one has \(\lambda \neq 1\). Otherwise finite-order linearization would give a fixed transverse curve, contrary to freeness on \(Y\setminus F\). Since the order of \(\lambda\) divides \(n\) and \(n \mid m\), the sum is zero. At a node, both branches are preserved. The logarithmic vector fields \(x\partial_x,y\partial_y\) have trivial action on their fibers, so \(\tau=1\). Thus all local contributions vanish. If \(g|_C\) has no fixed point, the Lefschetz formula gives zero directly.

\smallskip
\noindent\emph{Case 2: \(g\) fixes \(C=F_i\) pointwise.} Its conormal character \(\lambda\) is constant and nontrivial. The action on the fiber of the logarithmic determinant, and hence of \(L(M)\), is trivial at a general point and therefore on all of \(C\). On the \(k\)-th quotient, Riemann--Roch gives
\[
 \chi_g(\mathcal{O}_C(-kC))-\chi_g(L(M)|_C\otimes\mathcal{O}_C(-kC))
 =-\lambda^k\deg(L(M)|_C).
\]
The conormal degrees cancel in this difference. Summing over \(0 \leq k<m_i\) gives zero because \(n \mid m_i\). This proves the first assertion.

\smallskip
\noindent\emph{Case 3: \(g\) fixes a node and preserves both branches.} The fiber character of the logarithmic determinant is one, as in Case 1, so the same holds for \(L(M)\). Averaging a local frame of \(L(M)\) over the cyclic group generated by \(g\) gives an invariant frame. Thus \(N_p\) and \(L(M)\otimes N_p\) are isomorphic as representations, and their traces agree.

\smallskip
\noindent\emph{Case 4: \(g\) fixes a node and exchanges the branches.} Invariance of \(M\) gives \(m_i=m_j=m\). On the cotangent space, write
\[
 x \mapsto \alpha y,\quad y \mapsto \beta x.
\]
The associated graded of \(\mathcal{O}_{Y,p}/(x^m,y^m)\) has basis \(x^ay^b\), \(0 \leq a,b<m\). Only the terms with \(a=b\) contribute to the trace of the action induced by local functions, which is
\[
 \sum_{a=0}^{m-1}(\alpha\beta)^a.
\]
Here \(\alpha\beta \neq 1\), since otherwise linearization would give a fixed curve transverse to both branches. The order of \(\alpha\beta\) divides \(n\), and \(n \mid m\), so this sum is zero. The additional sign in the action on \(N_p\) does not change the vanishing. On the logarithmic tangent fiber the two basis vectors are exchanged, so the fiber character of \(L(M)\) is \(-1\). A local eigenvector frame of \(L(M)\) identifies its tensor action with this scalar times the action on \(N_p\). Its trace is therefore also zero. This proves the second assertion.
\end{proof}

\begin{lemma}\label{lem:trace}
For every \(g \in G\) with \(g \neq 1\),
\[
 \tr(g \mid \mathcal{T}_M)=0.
\]
\end{lemma}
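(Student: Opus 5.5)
The plan is to compute \(\tr(g\mid\mathcal{T}_M)=\chi_g(\mathcal{O}_M)-\chi_g(L(M)|_M)\) by decomposing the cycle \(M\) into component thickenings and node contributions, and then to apply Lemma~\ref{lem:local-trace} to each piece. Since \(\chi_g\) is additive on short exact sequences of \(g\)-equivariant sheaves on compact support, it suffices to find an equivariant filtration, or an exact sequence, expressing \(\mathcal{O}_M\) through the sheaves \(\mathcal{O}_{m_iF_i}\) and the node modules \(N_p\). The same sequence, tensored with the invertible sheaf \(L(M)\), must then express \(L(M)|_M\) compatibly.

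The sequence I would use is the Mayer--Vietoris-type sequence
\[
 0\to\mathcal{O}_M\to\bigoplus_i\mathcal{O}_{m_iF_i}\to\bigoplus_{p}N_p\to 0 ,
\]
with \(p\) running over the nodes \(F_i\cap F_j\). At a node with local coordinates \(F_i=(x=0)\) and \(F_j=(y=0)\), the ideal of \(M\) is \((x^{m_i}y^{m_j})=(x^{m_i})\cap(y^{m_j})\). Hence \(\mathcal{O}/(x^{m_i}y^{m_j})\) is the kernel of the difference map
\[
 \mathcal{O}/(x^{m_i})\oplus\mathcal{O}/(y^{m_j})\to\mathcal{O}/(x^{m_i},y^{m_j}).
\]
Away from the nodes the sequence is trivially exact.

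To make the sequence \(G\)-equivariant, order the two branches at each node and use the difference map. When an element exchanges the branches, the difference map changes sign, so \(N_p\) must carry the action twisted by the extra minus sign. This is exactly the convention in Lemma~\ref{lem:local-trace}(2). Tensoring with \(L(M)\) preserves exactness and equivariance. Additivity then gives
\[
 \tr(g\mid\mathcal{T}_M)=\sum_i\bigl(\chi_g(\mathcal{O}_{m_iF_i})-\chi_g(L(M)|_{m_iF_i})\bigr)-\sum_p\bigl(\tr(g\mid N_p)-\tr(g\mid L(M)\otimes N_p)\bigr).
\]

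It remains to show that each term vanishes. The trace of \(g\) on a direct sum indexed by \(G\)-sets only receives contributions from the summands that \(g\) maps to themselves. So in the first sum only the components \(F_i\) preserved by \(g\) contribute, and in the second only the nodes fixed by \(g\). For the remaining summands, \(g\) permutes blocks without fixed blocks, so its trace is zero. For the preserved components, Lemma~\ref{lem:local-trace}(1) gives equality of the two Euler characters. For the fixed nodes, Lemma~\ref{lem:local-trace}(2) gives equality of the traces. Hence every term vanishes and \(\tr(g\mid\mathcal{T}_M)=0\).

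The main point to check carefully is the equivariance of the node sequence. An element that fixes a node may swap its branches, and the sign convention on \(N_p\) must match the one in Lemma~\ref{lem:local-trace}; this is where an error would most likely occur. One must also verify that the trace on \(H^0\) and \(H^1\) of a sum over a \(G\)-orbit reduces to the stabilized summands. This holds because the cohomology decomposes as a direct sum over the blocks, and \(g\) permutes these blocks.
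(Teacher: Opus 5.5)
Your proposal is correct and follows the paper's proof essentially step for step. Like the paper, you use the normalization sequence \(0\to\mathcal{O}_M\to\bigoplus_i\mathcal{O}_{m_iF_i}\to\bigoplus_pN_p\to0\) with the difference of branch restrictions, which produces the sign convention on \(N_p\); you then tensor by \(L(M)\), discard the summands that \(g\) permutes without fixing, and apply the two parts of Lemma~\ref{lem:local-trace} to the summands that \(g\) fixes.
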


\begin{proof}
The normalization sequence of the nonreduced cycle is
\[
 0 \to \mathcal{O}_M \to \bigoplus_i\mathcal{O}_{m_iF_i}
 \to \bigoplus_{p \in F_i\cap F_j}N_p \to 0.
\]
The last map is the difference of the branch restrictions. Consequently, an interchange of branches contributes the minus sign used in Lemma~\ref{lem:local-trace}. Tensor this sequence by \(L(M)\) and subtract equivariant Euler characteristics. Summands permuted without being fixed by \(g\) have zero trace. For the fixed summands, the two assertions of Lemma~\ref{lem:local-trace} give zero difference. Hence \(\tr(g \mid \mathcal{T}_M)=0\).
\end{proof}

Let \(F_f\) be the Milnor fiber of \(f\) on \(B\). Lemma~\ref{lem:milnor-cycle} gives
\begin{equation*}%\label{eq:milnor-cover}
 \chi(F_f)=M^2-(K_Y+F)\cdot M.
\end{equation*}
On the other hand, Riemann--Roch for an invertible sheaf on the compact cycle \(M\) gives
\begin{equation*}
\begin{split}
 \dim\mathcal{T}_M
 &=\chi(\mathcal{O}_M)-\chi(L(M)|_M)\\
 &=-(M-K_Y-F)\cdot M\\
 &=-\chi(F_f).
\end{split}
\end{equation*}

Choose an invariant Milnor representative. The action on \(F_f\) is free. A finite equivariant cell decomposition shows that the Euler representation of \(F_f\) has zero character at every nonidentity element of \(G\). Since \(F_f\) is connected and has cohomology only in degrees zero and one, Lemma~\ref{lem:trace} and the preceding dimension calculation imply
\begin{equation}\label{eq:milnorrep}
 \mathcal{T}_M=[H^1(F_f,\mathbb{C})]-\mathbf{1},\quad
 \mathcal{D}_Y=[B/J]-[H^1(F_f,\mathbb{C})]+\mathbf{1}.
\end{equation}
Equality follows because the characters agree at every group element.

\begin{proposition}\label{prop:transfer}
Under the hypotheses of this subsection,
\[
 \Delta(A)=\mult_{\mathbf{1}}(\mathcal{D}_Y).
\]
\end{proposition}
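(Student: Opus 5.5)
The plan is to take $G$-invariants in \eqref{eq:milnorrep} and compare with the function-index formula \eqref{eq:function-index} on $A$. Choose $f\in\mathfrak{m}_A$ as in Lemma~\ref{lem:invariant-function}. It is $G$-invariant, defines reduced curves with isolated critical points on both $A$ and $B$, and has connected Milnor fibers. Taking the multiplicity of $\mathbf{1}$ in \eqref{eq:milnorrep} gives
\[
 \mult_{\mathbf{1}}(\mathcal{D}_Y)=\dim(B/J)^G-\dim H^1(F_f,\mathbb{C})^G+1 .
\]
By \eqref{eq:function-index} applied to $C=A$, which is not a rational double point by hypothesis, $\Delta(A)=1+j_A(f)-\mu_A(f)$. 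So the proof reduces to two identifications: $(B/J)^G\simeq A/df(\Der_\mathbb{C}A)$ and $H^1(F_f,\mathbb{C})^G\simeq H^1(F_{f,A},\mathbb{C})$, where $F_{f,A}$ is the Milnor fiber of $f$ on $A$.

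For the critical algebra, I would use exactness of invariants. The sequence $0\to J\to B\to B/J\to0$ gives $(B/J)^G=B^G/J^G=A/J^G$. Since $f$ is invariant, $df\colon\Der_\mathbb{C}B\to B$ is $G$-equivariant, so $J^G=df((\Der_\mathbb{C}B)^G)$, again by exactness of isotypic projection applied to the surjection $\Der_\mathbb{C}B\to J$. Lemma~\ref{lem:der-descent} identifies $(\Der_\mathbb{C}B)^G$ with $\Der_\mathbb{C}A$. Hence $J^G=df(\Der_\mathbb{C}A)$, and $\dim(B/J)^G=j_A(f)$.

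For the Milnor fiber, choose an invariant Milnor ball. The action of $G$ on $F_f=f^{-1}(\epsilon)\cap B_\delta$ on the cover is free, because the origin is excluded. Its quotient is the Milnor fiber $F_{f,A}$ of the invariant function on $V$. Transfer for a finite free quotient gives $H^1(F_{f,A},\mathbb{C})\simeq H^1(F_f,\mathbb{C})^G$, so $\dim H^1(F_f,\mathbb{C})^G=\mu_A(f)$. Substituting both identifications yields $\mult_{\mathbf{1}}(\mathcal{D}_Y)=1+j_A(f)-\mu_A(f)=\Delta(A)$.

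The main obstacle is checking that the hypotheses of \eqref{eq:function-index} hold for $f$ on $A$: reducedness of the curve and an isolated critical point. Lemma~\ref{lem:invariant-function} asserts these on both $A$ and $B$, so this is available. The remaining care point is that the Milnor data on $A$ are computed with a ball compatible with the quotient. For this I would use an invariant distance function pulled back from $V$, so that Milnor balls on $V$ pull back to invariant Milnor neighborhoods on $\widetilde V$.
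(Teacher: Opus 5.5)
Your proof is correct and follows the paper's argument essentially verbatim. You take trivial multiplicities in \eqref{eq:milnorrep}, identify the invariant critical algebra with \(A/df(\Der_{\mathbb{C}}A)\) via Lemma~\ref{lem:der-descent} and exactness of invariants, identify \(H^1(F_f,\mathbb{C})^G\) with the cohomology of the quotient Milnor fiber \(F_f/G\) by transfer, and conclude with \eqref{eq:function-index}. Your extra remarks, on why \(J^G=df((\Der_{\mathbb{C}}B)^G)\) and on choosing an invariant Milnor ball, only make explicit details that the paper leaves implicit.
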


\begin{proof}
By Lemma~\ref{lem:der-descent} and exactness of invariants,
\[
 (B/df(\Der_\mathbb{C} B))^G
 \simeq A/df(\Der_\mathbb{C} A).
\]
The quotient \(F_f/G\) is the Milnor fiber of \(f\) on \(A\), and the finite covering gives
\[
 H^1(F_f/G,\mathbb{C}) \simeq H^1(F_f,\mathbb{C})^G.
\]
Taking trivial multiplicities in \eqref{eq:milnorrep} therefore yields
\[
 \mult_{\mathbf{1}}(\mathcal{D}_Y)=1+j_A(f)-\mu_A(f).
\]
Formula~\eqref{eq:function-index} identifies this with \(\Delta(A)\).
\end{proof}

\subsection{Smooth and rational double point covers}
\label{sec:quotient-covers}

The pole condition \(Z \geq F\) is essential in Proposition~\ref{prop:comparison}. We first determine the corresponding complex when the cover is smooth or a rational double point. We then compute the quotient defect by a finite-group character calculation.

\subsubsection{The logarithmic complex with vanishing along the boundary}

\begin{lemma}\label{lem:negative-log}
Let \(F\) be a simple normal crossing divisor on a smooth surface \(Y\), let \(T\) be an effective divisor with support \(F\) and strictly positive coefficients, and let \(j\colon Y\setminus F \hookrightarrow Y\). Then
\[
 \Omega_Y^\bullet(\log F)(-T) \simeq j_!\mathbb{C}_{Y\setminus F}.
\]
\end{lemma}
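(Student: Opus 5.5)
The claim is local, and both sides are natural in the pair $(Y,F)$, so I plan to construct the comparison map and then check it on stalks. There is a natural map $j_!\mathbb{C}_{Y\setminus F}\to\Omega_Y^\bullet(\log F)(-T)$: constants are sections of $\mathcal{O}_Y$ over open subsets of $Y\setminus F$, and extension by zero makes them sections of $\mathcal{O}_Y(-T)$ in degree zero, since they vanish near $F$. These sections are killed by $d$, so the map is one of complexes. It is clearly an isomorphism on stalks away from $F$, by the holomorphic Poincar\'e lemma. It remains to show that at every point $p\in F$ the stalk complex $\Omega_{Y,p}^\bullet(\log F)(-T)$ is acyclic, because the stalk of $j_!\mathbb{C}$ vanishes there.

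For the local computation at a node I would use the method of Lemma~\ref{lem:log-comparison}. Choose coordinates with $F=(xy=0)$ and $T=a(x=0)+b(y=0)$, where $a,b\ge1$. Write forms in the basis $d\log x,d\log y$ with coefficients given by convergent power series. The allowed monomials $x^iy^j$ then satisfy $i\ge a\ge1$ and $j\ge b\ge1$. On the exterior algebra attached to a monomial, $d$ acts as wedge with $i\,d\log x+j\,d\log y$. Since $i\neq0$ for every allowed monomial, contraction with $x\partial_x$ divided by $i$ is a homotopy $h$ with $dh+hd=\mathrm{id}$. The division by $i$ only improves convergence, so $h$ is defined on convergent series and not merely on formal ones. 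The complex is therefore contractible, and in particular it has no cohomology in degree zero.

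At a smooth point of $F$, with $F=(x=0)$ and $T=a(x=0)$, the basis is $d\log x,dy$. I would split $d$ into a normal part and a tangential part $d_y$. The normal homotopy is contraction with $x\partial_x$ divided by the $x$-exponent $i\ge a\ge1$. It anticommutes with $d_y$, exactly as in the earlier lemma, so $dh+hd=\mathrm{id}$ again. I would also note that the homotopy commutes with the $G$-action only up to the choice of coordinates. That is irrelevant, since only the quasi-isomorphism is asserted.

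The main point requiring care is the one feature that differs from Lemma~\ref{lem:log-comparison}. There the monomials with $i,j\ge0$ that survived the quotient had to be handled by the ordinary logarithmic Poincar\'e lemma. Here the strict positivity of every coefficient of $T$ excludes all monomials with a zero exponent in a boundary coordinate, and this is exactly what removes the constants and the $d\log$ classes. So the step to check is that no zero exponent can occur, which holds because $\operatorname{Supp}T=F$ with positive coefficients. The remaining work is verifying that the homotopy preserves convergence on a fixed polydisc, which follows because $|1/i|\le1$.
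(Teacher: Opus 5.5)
Your proposal is correct and follows the paper's proof. It uses the natural map from \(j_!\mathbb{C}_{Y\setminus F}\), the holomorphic Poincar\'{e} lemma away from \(F\), and, at points of \(F\), the contracting homotopy given by contraction with \(x\partial_x\) divided by the \(x\)-exponent, which is always positive because \(T\) has strictly positive coefficients, anticommutes with the tangential differential, and preserves convergence.
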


\begin{proof}
Away from \(F\), the assertion is the holomorphic Poincar\'{e} lemma. At a smooth point of \(F=(x=0)\), write coefficients in powers \(x^a\) with \(a \geq t>0\), using \(d\log x\) as the normal differential. On each such term, contraction with \(x\partial_x\), divided by \(a\), is a contracting homotopy. The tangential differential anticommutes with this contraction. At a node, the coefficients are sums of \(x^ay^b\) with \(a,b>0\), and the same contraction applies. Division by positive integers preserves convergence on smaller polydiscs. Thus the complex has zero stalk cohomology along \(F\). The natural map from \(j_!\mathbb{C}_{Y\setminus F}\) is consequently a quasi-isomorphism. The construction is equivariant whenever the divisor and the group action are equivariant.
\end{proof}

\begin{proposition}\label{prop:ADE-zero}
Suppose that \(B\) is a rational double point and \((Y,F)\) is its minimal resolution. Then \(\mathcal{P}=\mathcal{Q}=0\) and \(\mathcal{D}_Y=0\) in \(R(G)\). If \(B\) is smooth, the same assertions hold with \(Y\) the blowup of the origin and \(F\) its exceptional curve.
\end{proposition}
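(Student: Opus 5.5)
We prove the two assertions separately: first the vanishing of \(\mathcal{P}\) and \(\mathcal{Q}\) from the reflexive de Rham complex \(B\to\Omega_B^{[1]}\to\omega_B\), then \(\mathcal{D}_Y=0\) from vanishing of the three coherent cohomology groups. Note that Proposition~\ref{prop:comparison} is not available here, since the pole condition \(Z\geq F\) fails (for a rational double point \(Z=0\); for the blowup of a smooth point \(\divr(\eta)=F\)). Lemma~\ref{lem:negative-log} is the substitute: the twisted logarithmic complex \(\Omega_Y^\bullet(\log F)(-T)\) computes \(j_!\mathbb{C}_U\).

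\emph{Vanishing of \(\mathcal{P}\) and \(\mathcal{Q}\).} For the smooth case, \(B=\mathbb{C}\{x,y\}\), so \(\Omega_B^{[1]}\) and \(\omega_B\) are the usual holomorphic forms, and the holomorphic Poincar\'{e} lemma gives \(\mathcal{P}=\mathcal{Q}=0\). For a rational double point, \(B=\mathbb{C}\{x,y\}^H\) with \(H\subset\SL_2\) finite and acting freely off the origin. Reflexive forms on \(B\) are the \(H\)-invariant holomorphic forms on \(\mathbb{C}^2\), by reflexive extension across the codimension-two origin. Since taking invariants is exact, the averaged Poincar\'{e} lemma gives \(H^i\bigl((\Omega^\bullet_{\mathbb{C}^2,0})^H\bigr)=0\) for \(i>0\). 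This argument also applies to \(\mathcal{P}\) and \(\mathcal{Q}\) as \(G\)-representations.

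\emph{Vanishing of \(\mathcal{D}_Y\).} It suffices to show \(H^1(Y,\mathcal{O}_Y)=H^1(Y,S)=H^1(Y,L)=0\).
\begin{itemize}
\item \(H^1(Y,\mathcal{O}_Y)=0\) because the singularity is rational, in both cases.
\item For \(L=\mathcal{O}_Y(-K_Y-F)\): in the rational double point case \(K_Y=0\), and \(\mathcal{O}_Y(-F)\) has \(H^1=0\) for the rational fundamental cycle \(F\) (Laufer's computation, or \(F\)-anti-nef plus rationality). In the blowup case \(L=\mathcal{O}_Y(-K_Y-F)=\mathcal{O}_Y\), so \(H^1=0\).
\item For \(S\): in the blowup case \(S=\Theta_Y(-\log F)\) restricted to \(F\simeq\mathbb{P}^1\) has normal part \(\mathcal{O}(1)\) twisted appropriately. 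A Theorem on formal functions computation with \(S\otimes\mathcal{O}_Y(-kF)|_F\), which have nonnegative degree summands for \(k\geq0\), gives \(H^1=0\). In the rational double point case, \(H^1(Y,\Theta_Y(-\log F))=0\) is Wahl's rigidity of logarithmic deformations of the minimal resolution of an ADE singularity (equivalently \(\Delta=0\) together with taut/equisingular triviality). Again filter by \(\mathcal{O}_Y(-kZ_0)\) and use the degrees of \(\Theta_Y(-\log F)|_{F_i}\), which are \(\mathcal{O}(2-t_i)\oplus\mathcal{O}(-2)\)-type extensions controlled by \(F_i^2=-2\).
\end{itemize}

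\textbf{Main obstacle.} The hardest step is \(H^1(S)=0\) equivariantly and in complete generality for the ADE minimal resolution. If a direct filtration argument is awkward, fall back on Lemma~\ref{lem:negative-log}. With \(T\) chosen so that \(\Omega^1_Y(\log F)(-T)\cong S\otimes\mathbb{C}_\kappa\) via \eqref{eq:twists}-style contraction, \(-T=\divr(\eta)\) is valid only when that divisor is negative. Combine the resulting hypercohomology \(H^\ast_c(U)\), which vanishes in degree one and is one-dimensional in degree four, with \(\mathcal{P}=\mathcal{Q}=0\). A spectral-sequence bookkeeping as in Proposition~\ref{prop:comparison} then yields the vanishing of the alternating class.
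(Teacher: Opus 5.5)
\textbf{Gap: $H^1(Y,S)=0$ for rational double points of type $D$ and $E$.} Your argument for $\mathcal{P}=\mathcal{Q}=0$ is the paper's: invariant forms on $\mathbb{C}^2$ and the $H$-equivariant radial homotopy. The smooth case of your term-by-term vanishing also works once the slip noted at the end is corrected. For type $D$ and $E$, however, your proof of $\mathcal{D}_Y=0$ rests on $H^1(Y,S)=0$, and none of your justifications establishes it.

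\begin{enumerate}
\item Your filtration sketch uses the wrong restrictions. Dualizing the residue sequence gives $0\to\mathcal{O}_{F_i}\to S|_{F_i}\to\mathcal{O}_{F_i}(2-t_i)\to0$. The $\mathcal{O}(-2)$ you mention is the normal bundle, which belongs to $\Theta_Y|_{F_i}$, not to $S|_{F_i}$.
\item More seriously, no degree count can succeed. At the trivalent curve $C$ the sequence splits, so $S|_C\simeq\mathcal{O}\oplus\mathcal{O}(-1)$. For $D_4$, every component-by-component filtration of the fundamental cycle $2C+A_1+A_2+A_3$, or of any multiple, has a graded piece $S|_{F_j}\otimes\mathcal{O}_Y(-D)|_{F_j}$ with nonzero $H^1$. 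Indeed, an arm can be added for the first time without creating $H^1$ only while the coefficient of $C$ is at most $1$. Once all three arms occur, $D\cdot C\geq1$ holds from then on, so $C$ can never be added again. Vanishing of $H^1(S)$ would therefore depend on connecting homomorphisms that you do not control.
\item Appealing to $\Delta=0$ is circular within the paper, since the paper deduces $\Delta=0$ for rational double points from this very proposition.
\end{enumerate}

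A legitimate external input is $h^1(\Theta_Y)=n$ for the minimal resolution of a rational double point, from simultaneous resolution. Together with $0\to S\to\Theta_Y\to\bigoplus_i\mathcal{O}_{F_i}(-2)\to0$, this gives $H^1(S)=0$. That would repair your route, but only by importing the deformation theory of rational double points.

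\textbf{Your fallback.} This is the paper's argument, but as written it is misconfigured.
\begin{itemize}
\item The twist is $T=F-Z$, so $T=F$ for a rational double point and $T=2F$ in the smooth case. It is not given by $-T=\divr(\eta)$.
\item The hypercohomology of $\Omega_Y^\bullet(\log F)(-T)$ on $Y$ is $H^*(Y,j_!\mathbb{C}_U)$. It vanishes in every degree because $Y$ retracts onto $F$. It is not $H^*_c(U)$, and no degree four can occur.
\item The global-section complex is $[\mathfrak{m}_B\to\Omega_B^{[1]}\to\omega_B]$, respectively $[\mathfrak{m}_B^2\to\mathfrak{m}_B\Omega_B^1\to\Omega_B^2]$. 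The radial homotopy contracts it; removing the constants from degree zero is essential.
\end{itemize}
The spectral sequence then forces the row $H^1(\mathcal{O}_Y(-T))\to H^1(\Omega_Y^1(\log F)(-T))\to H^1(\omega_Y(F-T))$ to be acyclic, and its class is $[\mathbb{C}_\kappa]\mathcal{D}_Y$. In the rational double point case the two outer terms vanish, so this argument even yields the $H^1(S)=0$ you were missing.

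\textbf{Two smaller slips.} First, the reduced divisor $F$ is not anti-nef for $D_n$ and $E_n$, since $F\cdot C=1$. The vanishing $H^1(\mathcal{O}_Y(-F))=0$ nevertheless follows from $B\twoheadrightarrow H^0(\mathcal{O}_F)=\mathbb{C}$ and $H^1(\mathcal{O}_Y)=0$. Second, in the smooth case $K_Y=F$, so $L=\mathcal{O}_Y(-2F)$ rather than $\mathcal{O}_Y$; its $H^1$ still vanishes.
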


\begin{proof}
\emph{The rational double point case.} Suppose that \(B\) is a rational double point. The resolution is crepant, so \(Z=0\) and \(D=-F\). The complex corresponding to \eqref{eq:polar-complex} is now
\[
 \mathscr{C}_{-F}^\bullet=
 [\mathcal{O}_Y(-F) \to \Omega_Y^1(\log F)(-F) \to \omega_Y].
\]
By Lemma~\ref{lem:negative-log}, it computes \(j_!\mathbb{C}_U\). A sufficiently small resolution neighborhood retracts onto \(F\). The exact sequence \(0 \to j_!\mathbb{C}_U \to \mathbb{C}_Y \to \mathbb{C}_F \to 0\) therefore gives
\[
 \mathbb{H}^k(Y,\mathscr{C}_{-F}^\bullet)=0\quad\text{for every }k.
\]

Write \(B=\mathbb{C}\{x,y\}^H\) with \(H \subset \SL_2(\mathbb{C})\) finite. Reflexive forms on \(B\) identify with \(H\)-invariant regular forms on \(\mathbb{C}^2\): pull back over the punctured surface and extend the coefficients across the origin. The radial homotopy for the holomorphic de Rham complex commutes with \(H\). It proves \(\mathcal{P}=\mathcal{Q}=0\).

The global-section complex of \(\mathscr{C}_{-F}^\bullet\) is
\begin{equation}\label{eq:ADE-global}
 [\mathfrak{m}_B \to \Omega_B^{[1]} \to \omega_B].
\end{equation}
For its first term, a function vanishes along \(F\) precisely when its value at the singular point is zero. The middle term follows from lifting of derivations and contraction with the nowhere vanishing canonical form; the last term follows from crepancy. The radial homotopy just described contracts \eqref{eq:ADE-global}, since constants have been removed from degree zero.

Only the rows \(q=0,1\) occur in the coherent hypercohomology spectral sequence. The first has zero cohomology, and total hypercohomology is zero. Thus the second row is acyclic. Its alternating class is \([\mathbb{C}_\kappa]\mathcal{D}_Y\), by the contraction identifications \eqref{eq:twists}, which remain valid for \(D=-F\). Hence \(\mathcal{D}_Y=0\).

\smallskip
\noindent\emph{The smooth case.} Suppose that \(B=\mathbb{C}\{x,y\}\) and \(Y\) is the blowup of the origin. The pullback of \(dx\wedge dy\) has divisor \(F\). Thus \(Z=-F\), \(D=-2F\), and the relevant complex is \(\Omega_Y^\bullet(\log F)(-2F)\). Its hypercohomology again vanishes by Lemma~\ref{lem:negative-log} and the retraction onto \(F\). Its global-section complex is
\begin{equation}\label{eq:smooth-global}
 [\mathfrak{m}_B^2 \to \mathfrak{m}_B\Omega_B^1 \to \Omega_B^2].
\end{equation}
Indeed, \(H^0(\mathcal{O}_Y(-2F))=\mathfrak{m}_B^2\). Logarithmic vector fields descend to the vector fields vanishing at the origin, and all such vector fields lift to the blowup. Contraction with \(dx\wedge dy\) gives the middle term. Finally, \(\omega_Y(-F)\) is the pullback of \(\omega_B\).

On a homogeneous form of total radial weight \(w>0\), contraction with \(x\partial_x+y\partial_y\), divided by \(w\), gives the usual homotopy. On \eqref{eq:smooth-global}, it sends one-forms with coefficients in \(\mathfrak{m}_B\) to functions in \(\mathfrak{m}_B^2\), and two-forms to one-forms with coefficients in \(\mathfrak{m}_B\). The complex is therefore acyclic. The same spectral sequence argument gives \(\mathcal{D}_Y=0\). The ordinary Poincar\'{e} lemma gives \(\mathcal{P}=\mathcal{Q}=0\).
\end{proof}

\begin{remark}\label{rem:constant}
The quotient of the full reflexive de Rham complex by \eqref{eq:ADE-global} is \(\mathbb{C}[0]\). In the smooth case the quotient is
\[
 [B/\mathfrak{m}_B^2 \xrightarrow{d}
       \Omega_B^1/\mathfrak{m}_B\Omega_B^1],
\]
which is also quasi-isomorphic to \(\mathbb{C}[0]\): differentiation identifies the linear functions with the constant one-forms. Thus both cases remove the same trivial representation at the level of complexes. The term \(\mathbf{1}\) in \eqref{eq:effective-Hm} cannot be retained when the polar divisor is negative.
\end{remark}

\subsubsection{The determinant character of a quotient surface singularity}

Let \(\Gamma \subset \GL_2(\mathbb{C})\) be finite and act freely on \(\mathbb{C}^2\setminus\{0\}\). Put
\[
 R_0=\mathbb{C}\{x,y\},\quad n=|\Gamma| \geq 2,\quad
 V=\mathbb{C} x\oplus\mathbb{C} y,\quad \chi=\det V.
\]
The action on \(V\) is the action on linear functions. Accordingly, \(dx\wedge dy\) has character \(\chi\). This convention fixes all determinant twists below.

\begin{lemma}\label{lem:invariant-plane-function}
There exists a reduced \(\Gamma\)-invariant homogeneous polynomial \(f(x,y)\) of degree \(n\).
\end{lemma}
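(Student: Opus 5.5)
The natural candidate is the product of the distinct linear forms through a free orbit of lines. Let \(\bar\Gamma\) be the image of \(\Gamma\) in \(\mathrm{PGL}_2(\mathbb{C})\), acting on \(\mathbb{P}(V^*)=\mathbb{P}^1\) (lines in \(\mathbb{C}^2\)). Let \(Z=\Gamma\cap\mathbb{C}^*\) be the scalar subgroup, which is cyclic of order \(z\). Then \(|\bar\Gamma|=n/z\).

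\emph{Step 1: a free orbit of lines.} Since \(\bar\Gamma\) is finite, all but finitely many points of \(\mathbb{P}^1\) have trivial stabilizer in \(\bar\Gamma\). Choose such a point, a line \(\ell=\ker(\lambda)\) with \(\lambda\in V\) a linear form. Its orbit consists of \(n/z\) distinct lines \(\ker(\lambda_1),\dots,\ker(\lambda_{n/z})\). Put
\[
 h=\prod_{k=1}^{n/z}\lambda_k .
\]
This polynomial is reduced, homogeneous of degree \(n/z\), and \(\Gamma\)-semi-invariant. Indeed, each \(g\in\Gamma\) permutes the lines, so \(g\cdot h=c(g)h\) for a character \(c\colon\Gamma\to\mathbb{C}^*\).

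\emph{Step 2: killing the character.} First I would check whether \(h^{z}\) could serve, but that is not reduced, so powers of a single orbit product are useless. Instead I would take a product of \(z\) distinct free orbits of lines, \(f=h_1\cdots h_z\), with each \(h_s\) built as in Step 1 from a different generic orbit. This \(f\) is reduced, homogeneous of degree \(n\), and semi-invariant with character \(c_1\cdots c_z\).

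\emph{Step 3: the main obstacle.} It remains to show that this character is trivial, or that the orbits can be chosen so that it is. The key observation is that the character of an orbit product varies continuously as the generic line moves in the connected set of points with trivial stabilizer. Since \(\mathrm{Hom}(\Gamma,\mathbb{C}^*)\) is discrete, the character \(c_s\) is independent of the chosen free orbit; call it \(c\). Thus \(f\) has character \(c^{z}\).

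To compute \(c\) I would use the transfer. Pick a free orbit \(\{g_k\ell\}\), indexed by coset representatives \(g_k\) of \(Z\) in \(\Gamma\), and take \(\lambda_k=g_k\cdot\lambda\). For each \(g\), write \(g g_k=g_{k'}\zeta_k(g)\) with \(\zeta_k(g)\in Z\). Each scalar \(\zeta\in Z\) multiplies \(\lambda\) by a root of unity of order dividing \(z\). Hence \(c(g)\) is a product of such roots of unity, so \(c(g)\) is a \(z\)-th root of unity, and therefore \(c^{z}=1\).

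I expect this root-of-unity computation to be the only delicate point. It needs a careful check that \(g\cdot\lambda_k\) equals \(\zeta\lambda_{k'}\) exactly, with \(\zeta\) the scalar action of an element of \(Z\), rather than merely up to an arbitrary constant. Once that holds, \(f\) is invariant, and \(\deg f=z\cdot(n/z)=n\), which proves the lemma.
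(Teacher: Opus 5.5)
Your proof is correct, but it takes a different route from the paper's.

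The paper first shows that the degree-$n$ invariant linear system on $\mathbb{P}^1$ is base point free. It does this with the exactly invariant products $\prod_{g\in\Gamma}g\ell$, choosing $\ell$ to be nonvanishing on a prescribed finite orbit. It then uses generic unramifiedness of the induced morphism and a general hyperplane to conclude that a general invariant of degree $n$ has distinct zeros.

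You instead construct an explicit invariant: the product of the linear forms through $z$ distinct free $\bar\Gamma$-orbits of lines, with the semi-invariance character controlled by the transfer to the scalar subgroup $Z$. The point you flagged as delicate is automatic. With $\lambda_k:=g_k\cdot\lambda$ by definition, linearity of the action gives exactly $g\cdot\lambda_k=(g_{k'}\zeta_k(g))\cdot\lambda=\omega(\zeta_k(g))\lambda_{k'}$, where $\omega$ is the character by which $Z$ acts on linear forms. Since the $\zeta_k(g)$ depend only on $g$ and the fixed coset representatives, not on $\lambda$, this also shows that $c=\omega\circ\mathrm{Ver}$ is the same character for every free orbit. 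Your continuity argument is therefore harmless but unnecessary. You do need this independence, since $f=h_1\cdots h_z$ has character $c_1\cdots c_z$. An even shorter way to get $c^z=1$ is to note that $h^z$ is a nonzero multiple of $\prod_{g\in\Gamma}g\cdot\lambda$, which is invariant.

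Your construction is elementary, exhibits the zero set explicitly, and avoids the Bertini-type step. The paper's argument gives the stronger conclusion that a general invariant of degree $n$ is reduced. Neither argument uses freeness of $\Gamma$ on $\mathbb{C}^2\setminus\{0\}$.
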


\begin{proof}
The degree-\(n\) invariant linear system on \(\mathbb{P}^1\) has no base point. At a prescribed point \([v]\), choose a linear form \(\ell\) that does not vanish at any point in its finite orbit. Then the invariant product \(\prod_{g \in \Gamma}g\ell\) does not vanish at \([v]\). The resulting base point free system defines a nonconstant morphism from \(\mathbb{P}^1\), because its line bundle has positive degree. In characteristic zero this morphism is generically unramified onto its image. A general hyperplane avoids the finitely many exceptional values and meets the image transversely. Its pullback has distinct zeros. The corresponding invariant homogeneous polynomial is reduced.
\end{proof}

For such an \(f\), put \(J_f=R_0/(f_x,f_y)\), and let \(F_f\) be its Milnor fiber in a \(\Gamma\)-invariant Milnor ball.

\begin{proposition}\label{prop:plane-representations}
There are isomorphisms of \(\Gamma\)-representations
\begin{equation}\label{eq:plane-representations}
 \begin{aligned}
 J_f& \simeq \mathbb{C}_{\chi^{-1}}\oplus\mathbb{C}[\Gamma]^{\oplus(n-2)},\\
 H^1(F_f,\mathbb{C})& \simeq \mathbb{C}\oplus\mathbb{C}[\Gamma]^{\oplus(n-2)}.
 \end{aligned}
\end{equation}
Consequently,
\begin{equation}\label{eq:plane-index}
 \mathbf{1}+[J_f]-[H^1(F_f,\mathbb{C})]=[\mathbb{C}_{\chi^{-1}}]
 \quad\text{in }R(\Gamma).
\end{equation}
The isomorphisms in \eqref{eq:plane-representations} are not asserted to be canonical.
\end{proposition}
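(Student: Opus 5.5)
The plan is to determine both representations by their characters. A finite-dimensional representation of the finite group \(\Gamma\) is determined up to isomorphism by its character. The character of \(\mathbb{C}_{\chi^{-1}}\oplus\mathbb{C}[\Gamma]^{\oplus(n-2)}\) is \(1+n(n-2)=(n-1)^2\) at the identity and \(\chi^{-1}(g)\) at \(g\neq1\). The character of \(\mathbb{C}\oplus\mathbb{C}[\Gamma]^{\oplus(n-2)}\) is \((n-1)^2\) at the identity and \(1\) at \(g\neq1\). So for each module I will check its dimension and its trace at every nonidentity \(g\).

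\emph{Preliminaries.} Since \(f\) is reduced and homogeneous of degree \(n\), it has an isolated critical point at the origin. Hence \(f_x,f_y\) form a regular sequence of forms of degree \(n-1\), and \(\dim J_f=\mu=(n-1)^2\).

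The freeness hypothesis gives the key eigenvalue fact. For \(g\neq1\), let \(a,b\) be the eigenvalues of \(g\) on \(V\). Then \(a,b\neq1\), since an eigenvalue \(1\) would give a fixed line in \(\mathbb{C}^2\setminus\{0\}\). Also \(a^n=b^n=1\), because the order of \(g\) divides \(n\).

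\emph{The Jacobian algebra.} Invariance of \(f\) makes \(df=f_x\,dx+f_y\,dy\) invariant. Since \(dx,dy\) transform as \(V\), the span of \(f_x,f_y\) is isomorphic to \(V^*\). The Koszul resolution
\[
 0 \to \Lambda^2V^*\otimes R_0 \to V^*\otimes R_0 \to R_0 \to J_f \to 0
\]
is therefore \(\Gamma\)-equivariant and graded. The shifts are \(n-1\) on the middle term and \(2(n-1)\) on the left term, and \(\Lambda^2V^*=\mathbb{C}_{\chi^{-1}}\).

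Since \(J_f\) is finite dimensional and graded, I can work with polynomial rings and graded traces. The graded trace of \(g\) on \(J_f\) is
\[
 \frac{(1-a^{-1}t^{n-1})(1-b^{-1}t^{n-1})}{(1-at)(1-bt)},
\]
a polynomial in \(t\). Its value at \(t=1\) is \(\tr(g\mid J_f)\). Using \(a^{-1}=a^{n-1}\), each factor \((1-a^{-1}t^{n-1})/(1-at)\) has limit \((1-a^{-1})/(1-a)=-a^{-1}\) as \(t\to1\), which is legitimate because \(a\neq1\). Hence
\[
 \tr(g\mid J_f)=a^{-1}b^{-1}=\chi^{-1}(g),
\]
which proves the first isomorphism.

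\emph{The Milnor fiber.} \(F_f\) is a compact connected surface with boundary. It is connected because \(f\) is reduced, by the argument in Lemma~\ref{lem:invariant-function}. Its only nonzero cohomology is \(H^0=\mathbb{C}\) and \(H^1\), and \(\dim H^1=\mu=(n-1)^2\).

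\(\Gamma\) acts freely on \(F_f\subset\mathbb{C}^2\setminus\{0\}\). As in the argument preceding \eqref{eq:milnorrep}, a finite \(\Gamma\)-CW structure with freely permuted cells shows that the Euler representation \([H^0]-[H^1]\) has zero trace at every \(g\neq1\). Alternatively, the Lefschetz fixed-point theorem gives the same conclusion. Thus \(\tr(g\mid H^1(F_f,\mathbb{C}))=1\) for \(g\neq1\), which gives the second isomorphism.

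Subtracting the two classes yields \eqref{eq:plane-index}. The step I expect to need the most care is the equivariance bookkeeping: confirming that the span of the partials carries \(V^*\) and not \(V\), which is what produces \(\chi^{-1}\) rather than \(\chi\). A secondary point is justifying that the Milnor fiber may be taken \(\Gamma\)-invariant with a free cellular action, for which I would choose a \(\Gamma\)-invariant round ball.
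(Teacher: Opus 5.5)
Your proposal is correct and follows essentially the same route as the paper: compare characters; identify the span of \(f_x,f_y\) with \(V^*\) and use the equivariant Koszul resolution to evaluate the graded trace at \(t=1\), which is legitimate because \(g\neq1\) has no eigenvalue \(1\); and use the free \(\Gamma\)-action on the connected Milnor fiber to force trace \(1\) on \(H^1\) at nonidentity elements. The only difference is cosmetic: for the Milnor fiber the paper lifts a cell decomposition of \(F_f/\Gamma\) and computes \(\chi(F_f/\Gamma)=-(n-2)\) directly in \(R(\Gamma)\), which is the same Euler-characteristic argument you phrase pointwise via traces.
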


\begin{proof}
\emph{The Jacobian representation.} Since \(f\) is reduced and homogeneous in two variables, its gradient vanishes only at the origin. For if a nonzero point were critical, Euler's identity would put it on \(f=0\), whose distinct lines meet only at the origin and are smooth elsewhere. Thus \(f_x,f_y\) form a regular sequence. The graded algebra \(J_f\) has dimension \((n-1)^2\), which is also the Milnor number of \(f\); see \cite{Milnor} for the Milnor fiber and Jacobian algebra comparison.

The two partial derivatives span a copy of \(V^*\): the map \(V^* \to \mathbb{C}[x,y]_{n-1}\) sending a constant vector field to its derivative of \(f\) is equivariant and injective. The equivariant Koszul resolution therefore gives
\begin{equation}\label{eq:Koszul-character}
 \sum_{a \geq 0}\tr(g \mid (J_f)_a)t^a
 =\frac{\det(1-t^{n-1}g \mid V^*)}{\det(1-tg \mid V)}.
\end{equation}
If \(g \ne 1\), neither eigenvalue of \(g\) on \(V\) is \(1\), since a nonzero fixed vector would contradict freeness away from the origin. Putting \(t=1\) in \eqref{eq:Koszul-character} gives
\[
 \tr(g \mid J_f)
 =\frac{\det(1-g \mid V^*)}{\det(1-g \mid V)}
 =\chi(g)^{-1}.
\]
At the identity, \(\dim J_f=(n-1)^2=1+n(n-2)\). These are exactly the characters of the first representation in \eqref{eq:plane-representations}. Equality of characters implies an isomorphism because finite-group representations over \(\mathbb{C}\) are semisimple.

\smallskip
\noindent\emph{The Milnor fiber representation.} By Milnor~\cite{Milnor}, the Milnor fiber is connected and has the homotopy type of a bouquet of \((n-1)^2\) circles. The group \(\Gamma\) acts freely on it, since the origin does not belong to a nonzero fiber. Lift a finite cell decomposition of \(F_f/\Gamma\) to \(F_f\). Each chain group is a sum of regular representations, and hence
\[
 \mathbf{1}-[H^1(F_f,\mathbb{C})]
 =\chi(F_f/\Gamma)[\mathbb{C}[\Gamma]].
\]
The Euler characteristic of the quotient is
\[
 \chi(F_f/\Gamma)
 =\frac{1-(n-1)^2}{n}=-(n-2).
\]
This proves the second isomorphism and then \eqref{eq:plane-index}.
\end{proof}

\begin{proposition}\label{prop:quotient-zero}
If \(A=R_0^\Gamma\) is non-Gorenstein, then \(\Delta(A)=0\).
\end{proposition}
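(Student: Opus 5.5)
We compute \(\Delta(A)\) from the function-index formula \eqref{eq:function-index}, applied to an invariant homogeneous function on \(\mathbb{C}^2\), and take \(\Gamma\)-invariants.

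\emph{Step 1: choose the function.} Take the reduced \(\Gamma\)-invariant homogeneous polynomial \(f\) of degree \(n\) from Lemma~\ref{lem:invariant-plane-function}. Then \(f\in\mathfrak{m}_A\). On \(\mathbb{C}^2\) it has an isolated critical point and a reduced zero curve. Since the quotient map is étale off the origin, the same holds on \(V=\mathbb{C}^2/\Gamma\). Moreover \(A\) is non-Gorenstein, so it is not a rational double point, and \eqref{eq:function-index} applies:
\[
 \Delta(A)=1+j_A(f)-\mu_A(f).
\]

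\emph{Step 2: descend both invariants.} The Milnor fiber of \(f\) on \(A\) is \(F_f/\Gamma\), because \(\Gamma\) acts freely on \(F_f\). Hence \(\mu_A(f)=\dim H^1(F_f,\mathbb{C})^\Gamma\). For \(j_A(f)\), the argument of Lemma~\ref{lem:der-descent} (Hartogs extension across the origin) gives \((\Der_{\mathbb{C}}R_0)^\Gamma\simeq\Der_{\mathbb{C}}A\). Exactness of invariants then yields
\[
 A/df(\Der_{\mathbb{C}}A)\simeq\bigl(R_0/df(\Der_{\mathbb{C}}R_0)\bigr)^\Gamma .
\]
The ideal \(df(\Der_{\mathbb{C}}R_0)\) is \((f_x,f_y)\), so the right-hand side is \(J_f^\Gamma\). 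We conclude
\[
 \Delta(A)=\mult_{\mathbf{1}}\bigl(\mathbf{1}+[J_f]-[H^1(F_f,\mathbb{C})]\bigr).
\]

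\emph{Step 3: use the character computation.} By \eqref{eq:plane-index}, the class inside the multiplicity equals \([\mathbb{C}_{\chi^{-1}}]\). Therefore \(\Delta(A)=1\) if \(\chi\) is trivial and \(\Delta(A)=0\) otherwise.

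\emph{Step 4: identify when \(\chi\) is trivial.} It remains to show that \(\chi=\det\) is nontrivial on \(\Gamma\) when \(A\) is non-Gorenstein. This is the key point. If \(\det\) were trivial, then \(\Gamma\subset\SL_2(\mathbb{C})\). The form \(dx\wedge dy\) would be invariant, and it would descend over the punctured quotient to a nowhere-vanishing generator of \(\omega_A\), giving \(\omega_A\simeq A\). This contradicts the non-Gorenstein hypothesis. One must check the convention that \(dx\wedge dy\) carries character \(\chi\) under the left action; if instead it carries \(\chi^{-1}\), the conclusion is unchanged, since \(\chi\) is trivial exactly when \(\chi^{-1}\) is. Hence \(\Delta(A)=0\).
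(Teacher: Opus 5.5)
Your proof is correct and is essentially the paper's own argument. You apply \eqref{eq:function-index} to the invariant form of Lemma~\ref{lem:invariant-plane-function}, descend derivations and the Milnor fiber to read $j_A(f)$ and $\mu_A(f)$ as invariant multiplicities, and use \eqref{eq:plane-index} together with nontriviality of $\chi$, obtained by descending $dx\wedge dy$.

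Two small fixes. First, drop the aside ``$\Delta(A)=1$ if $\chi$ is trivial'': in that case $A$ is a rational double point, \eqref{eq:function-index} does not apply, and in fact $\Delta=0$ there. Second, reducedness of the descended curve at the origin does not follow from \'{e}taleness off the origin alone; it is most cleanly seen, as in the paper, from $A/(f)=(R_0/(f))^\Gamma$.
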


\begin{proof}
For a finite quotient that is unramified away from the origin, the canonical character is the determinant character. In particular, \(A\) is Gorenstein if and only if \(\chi=1\). One direction follows by descending \(dx\wedge dy\). For the other, a generator of a trivial canonical module pulls back on the punctured plane to \(u dx\wedge dy\), where \(u\) and \(u^{-1}\) extend holomorphically across the origin by Hartogs' theorem. Evaluating invariance at the origin gives \(\chi=1\). Here \(A\) is non-Gorenstein, so \(\chi \ne 1\).

The derivation descent argument of Lemma~\ref{lem:der-descent} applies also to the smooth algebra \(R_0\) and gives
\[
 (\Der_\mathbb{C} R_0)^\Gamma \simeq \Der_\mathbb{C} A.
\]
As \(f\) is invariant, \(df\) is equivariant. Exactness of invariants gives
\[
 J_f^\Gamma \simeq A/df(\Der_\mathbb{C} A).
\]
The Milnor fiber on \(A\) is \(F_f/\Gamma\); transfer identifies its cohomology with the invariant cohomology of \(F_f\). The descended curve \(f=0\) is reduced, since \(A/(f)=(R_0/(f))^\Gamma\) is reduced. Thus it satisfies the hypotheses of the function-index formula.

Taking invariants in \eqref{eq:plane-representations} gives
\[
 j_A(f)=n-2,\quad \mu_A(f)=n-1.
\]
The singularity \(A\) is non-Gorenstein, so it is not a rational double point, and \eqref{eq:function-index} applies. Therefore
\[
 \Delta(A)=1+j_A(f)-\mu_A(f)=0.
\]
Equivalently, \eqref{eq:plane-index} identifies this integer with \(\dim(\mathbb{C}_{\chi^{-1}})^\Gamma\), which vanishes.
\end{proof}

\subsubsection{Passage to the canonical cover}

Let \((V,0)\) be a non-Gorenstein normal \(\mathbb{Q}\)-Gorenstein surface singularity, and put \(A=\mathcal{O}_{V,0}\). Suppose that its canonical cover \(q\colon(\widetilde{V},0) \to (V,0)\) is smooth or a rational double point, and put \(B=\mathcal{O}_{\widetilde{V},0}\). Write \(B=\mathbb{C}\{x,y\}^H\), where \(H\) is trivial or a finite subgroup of \(\SL_2(\mathbb{C})\). The punctured plane is the universal cover of the punctured surface defined by \(B\). Lifting the action of \(G\) gives a finite extension
\[
 1 \to H \to \Gamma \to G \to 1.
\]
Every lift and its inverse extend across the origin by Hartogs' theorem. Averaging coordinates linearizes this finite analytic action. Explicitly, if \(\rho\) is the derivative representation, the map \(\Phi(z)=|\Gamma|^{-1}\sum_{g \in \Gamma}\rho(g)^{-1}g(z)\) has derivative the identity and satisfies \(\Phi(gz)=\rho(g)\Phi(z)\). The resulting linear action is free outside the origin. Hence
\begin{equation}\label{eq:quotient-cover}
 A \simeq \mathbb{C}\{x,y\}^{\Gamma}.
\end{equation}
Proposition~\ref{prop:quotient-zero} then gives \(\Delta(A)=0\).

With the convention of Proposition~\ref{prop:plane-representations}, \(H=\ker\chi\) and the induced character on \(G\) is \(\kappa\). Indeed, the quotient by the kernel of the determinant is the index-one cover; equivalently, determinant semi-invariants are the reflexive canonical powers on the quotient. This is also obtained by lifting a canonical generator and evaluating its unit coefficient at the origin. The canonical character has order equal to the index. Taking \(H\)-invariants in \eqref{eq:plane-representations} gives
\begin{equation*}%\label{eq:ADE-cover-representations}
 \begin{aligned}
 B/df(\Der_\mathbb{C} B)& \simeq 
 \mathbb{C}_{\kappa^{-1}}\oplus\mathbb{C}[G]^{\oplus(n-2)},\\
 H^1(F_{f,B},\mathbb{C})& \simeq 
 \mathbb{C}\oplus\mathbb{C}[G]^{\oplus(n-2)}.
 \end{aligned}
\end{equation*}
Here \(n=|\Gamma|\), not \(|G|\). Thus the precise identity on these covers is
\begin{equation}\label{eq:exceptional-index}
 \mathbf{1}+[B/df(\Der_\mathbb{C} B)]-[H^1(F_{f,B},\mathbb{C})]
 =[\mathbb{C}_{\kappa^{-1}}].
\end{equation}
It must not be identified with \(\mathcal{D}_Y\), which is zero by Proposition~\ref{prop:ADE-zero}. Its invariant multiplicity is nevertheless zero when \(A\) is non-Gorenstein.

In the ADE case the extra term can also be read directly from the critical-cycle argument. For a sufficiently large exceptional cycle \(M\) as in Section~\ref{sec:descent}, crepancy gives
\[
 H^0(Y,L)=\mathfrak{m}_B\otimes\mathbb{C}_{\kappa^{-1}},\quad
 H^0(Y,L(M))=B\otimes\mathbb{C}_{\kappa^{-1}}.
\]
The last cohomology calculation in Lemma~\ref{lem:critical} acquires the term \(-[\mathbb{C}_{\kappa^{-1}}]\). The same cycle trace therefore gives
\[
 \mathcal{D}_Y=\mathbf{1}+[B/df(\Der_\mathbb{C} B)]-[H^1(F_{f,B},\mathbb{C})]
             -[\mathbb{C}_{\kappa^{-1}}].
\]
This is exactly the correction in \eqref{eq:exceptional-index}. The function-index formula is applied to the non-Gorenstein quotient \(A\) throughout, never to a smooth surface or a rational double point.

\section{Positivity of Wahl defects for \texorpdfstring{\(\mathbb{Q}\)}{Q}-Gorenstein singularities}
\label{sec:positivity-and-equality}

We now combine the representation construction with the two directions of the equality criterion. The result treats the Gorenstein and non-Gorenstein cases in one statement.

\begin{theorem}\label{thm:main}
Let \((V,0)\) be a normal \(\mathbb{Q}\)-Gorenstein complex surface singularity. Put
\[
 \varepsilon(V,0)=
 \begin{cases}
 1&\text{if \((V,0)\) is Gorenstein},\\
 0&\text{if \((V,0)\) is non-Gorenstein}.
 \end{cases}
\]
If \((V,0)\) is not a rational double point, then
\[
 \Delta(V,0) \geq \varepsilon(V,0),
\]
with equality if and only if \((V,0)\) is quasi-homogeneous. For a rational double point, \(\Delta(V,0)=0\).
\end{theorem}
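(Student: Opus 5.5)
The proof splits according to the nature of the index-one cover \(B\). If \((V,0)\) is a rational double point, then \(\Delta(V,0)=0\) follows from Proposition~\ref{prop:ADE-zero} with \(G\) trivial, since then \(\mathcal{D}_Y=0\); for the minimal good resolution one checks that the defect is unchanged by passing to the minimal resolution, or simply applies Proposition~\ref{prop:ADE-zero} directly. If \(B\) is smooth or a rational double point and \(A\) is non-Gorenstein, then \(A\simeq\mathbb{C}\{x,y\}^\Gamma\) by \eqref{eq:quotient-cover}, and Proposition~\ref{prop:quotient-zero} gives \(\Delta(A)=0=\varepsilon\). Quotient singularities are quasi-homogeneous, since \(\Gamma\) is linearized and the scalar \(\mathbb{C}^*\)-action commutes with it, so the equality criterion holds in this case.

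In the main case, \(B\) is singular and not a rational double point. The Gorenstein case of \(A\) is included by taking \(r=1\). Proposition~\ref{prop:transfer} and \eqref{eq:effective-Hm} give
\[
 \Delta(A)=\mult_{\mathbf 1}(\mathcal D_Y)=\mult_\kappa([\mathbb C_\kappa]\mathcal D_Y)
 =\mult_\kappa(\mathbf 1)+\mult_\kappa(\mathcal R_{\mathrm{res}}).
\]
Here \(\mult_\kappa(\mathbf 1)\) equals \(1\) exactly when \(\kappa=\mathbf 1\), that is when \(r=1\) and \(A\) is Gorenstein, and equals \(0\) otherwise. It is therefore exactly \(\varepsilon(V,0)\). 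Since \(\mathcal{R}_{\mathrm{res}}\) is an honest representation, \(\Delta\geq\varepsilon\) follows immediately. The equality criterion reduces to the statement
\[
 (\mathcal{R}_{\mathrm{res}})_\kappa=0\iff A\text{ is quasi-homogeneous}.
\]
By Müller's theorem on finite quotients, \(A\) is quasi-homogeneous if and only if \(B\) admits a \(G\)-compatible good \(\mathbb{C}^*\)-action. So it suffices to work with \(B\).

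For the forward direction, assume \(B\) carries a good \(\mathbb{C}^*\)-action commuting with \(G\), with Euler derivation \(\xi\). Cartan's formula \(L_\xi=d\iota_\xi+\iota_\xi d\) shows that every weight-\(w\neq0\) closed form is exact on weight pieces. The generator \(\eta\) can be taken homogeneous and \(\kappa\)-semi-invariant, so \(\kappa\)-isotypic classes in \(\mathcal{Q}\) are represented by \(b\eta\) with \(b\) homogeneous. Since \(L_\xi(b\eta)=(\wt b+\wt\eta)b\eta\), the only possible surviving class has weight zero. I expect this to force \(\mathcal{Q}_\kappa\) to be spanned by the residue image \(I_{\log}\), and in fact by \(\Res\) onto \(W_F\) modulo \(\mathcal{P}\), with no regular contribution. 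A dimension count, comparing Wahl's computation \(\Delta=\varepsilon\) for quasi-homogeneous singularities \cite[Theorem~3.3(i)]{Wahl2015} and \cite{Wahl1985}, then shows the \(\kappa\)-part of \(\mathcal R_{\mathrm{res}}\) vanishes. Alternatively, this direction can simply be taken from those references.

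The converse is the main obstacle: nonvanishing of \((\mathcal{R}_{\mathrm{res}})_\kappa\) when \(B\) is not quasi-homogeneous. My first attempt uses the Scheja--Wiebe criterion, which says that \(B\) is quasi-homogeneous if and only if some derivation \(\delta\) satisfies \(\delta(\mathfrak m)\not\subset\mathfrak m^2\) in the appropriate diagonalizable sense, equivalently \(\mathfrak m\) lies in the ideal generated by \(\delta(\mathfrak m)\). I would study the divergence map \(\Der B\to B\), \(\delta\mapsto\) the coefficient with \(d\iota_\delta\eta=(\operatorname{div}\delta)\eta\). Its image modulo exact forms describes \(d\Omega_B^{[1]}\cap B\eta\). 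If \(B\) is not quasi-homogeneous, I would argue that the divergences of all derivations lie in \(\mathfrak m_B\). This would make the class of \(\eta\) itself nonzero in \(\mathcal Q\), of character \(\kappa\). It then remains to show that this class is not absorbed by \(\mathcal P\) in \(\mathcal R_{\mathrm{res}}\). If \(\eta\) is regular on \(Y\), meaning \(Z\) is not larger than \(F\), its class lies in \(I_{\mathrm{reg}}\) and we are done. In the boundary case \(Z=F\), \(\eta\) is logarithmic with nonzero residue, and I would need the residue argument to show that \(\Res\eta\notin\mathcal P\). This is the delicate point, and for it I would use that restrictions of closed one-forms are holomorphic on components, whereas \(\Res\eta\) has nonzero node residues or nonholomorphic behavior.
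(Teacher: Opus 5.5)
You correctly handle the reduction, the smooth and rational double point cases, and the inequality \(\Delta(A)=\mult_\kappa(\mathbf{1})+\mult_\kappa(\mathcal{R}_{\mathrm{res}})\geq\varepsilon(V,0)\); these match the paper. Citing Wahl for equality in the quasi-homogeneous case is legitimate; the paper instead proves it through the \(G\)-equivariant Euler-contraction isomorphism \(\mathcal{Q}\simeq\mathcal{P}\), which makes \(\mathcal{H}_{\mathfrak{m}}\) vanish.

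\textbf{The gap: the case \(D\neq 0\) in the converse.} Your case split misreads the pole divisor. By Wahl's lemma \(Z\geq F\) (the paper uses it to get \(D\geq 0\)), so \(\eta\) has a pole along every exceptional component and is never regular on \(Y\). Your condition ``\(Z\) not larger than \(F\)'' just means \(Z=F\). The generic case is \(D=Z-F\neq 0\), where \(\eta\) is not even logarithmic, and you give no argument there. Knowing \([\eta]\neq 0\) in \(\mathcal{Q}\) does not suffice, because \([\mathcal{R}_{\mathrm{res}}]=[\mathcal{Q}]-[\mathcal{P}]\). What is missing is that \([\eta]\) survives in the summand \(\mathcal{Q}/I_{\log}\). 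When \(D\neq 0\), the logarithmic two-forms are \((p_*\mathcal{O}_Y(-D))_0\,\eta\subseteq\mathfrak{m}_B\eta\), and \(d\Omega_B^{[1]}=\divr_\eta(\Der_{\mathbb{C}}B)\,\eta\). Once all divergences lie in \(\mathfrak{m}_B\), the map \(b\eta\mapsto b(0)\) kills \(I_{\log}+d\Omega_B^{[1]}\). It therefore gives a \(G\)-equivariant surjection \(\mathcal{Q}/I_{\log}\twoheadrightarrow\mathbb{C}_\kappa\), and this is what produces the nonzero \(\kappa\)-multiplicity.

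\textbf{The case \(Z=F\).} Nonzero node residues are not automatic here. Adjunction gives \(2g(F_i)-2+n_i=0\), so there are two possibilities. If \(F\) is a cycle of rational curves, each \(H^0(F_i,\omega_{F_i})\) vanishes, so \(\mathcal{P}=0\) and \(W_F/\mathcal{P}\simeq\mathbb{C}_\kappa\). If \(F\) is a smooth elliptic curve, \(\Res\eta\) is a holomorphic differential, so the node-residue argument says nothing; for simple elliptic singularities \(\Res\eta\) does lie in \(\mathcal{P}\). This case must be excluded by the quasi-homogeneity of simple elliptic singularities.

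\textbf{The divergence step.} You only assert that non-quasi-homogeneity gives \(\divr_\eta(\Der_{\mathbb{C}}B)\subseteq\mathfrak{m}_B\), and the criterion you quote is not Scheja--Wiebe's. Their criterion says that a derivation acting nonnilpotently on \(\mathfrak{m}_B/\mathfrak{m}_B^2\) forces quasi-homogeneity. So you must prove that a nilpotent linear part forces \((\divr_\eta\delta)(0)=0\), and this is not formal. The paper uses the \(\delta\)-stable ideal \(J_{\mathrm{can}}=\{f\in B: f\eta\in\im(\Omega_B^2\to\omega_B)\}\). On \(J_{\mathrm{can}}/\mathfrak{m}_BJ_{\mathrm{can}}\), the maps \(f\mapsto\delta(f)\) and \(f\mapsto\delta(f)+(\divr_\eta\delta)f\) (Lie differentiation) are both nilpotent and differ by \((\divr_\eta\delta)(0)\,\id\). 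Comparing traces gives the claim.

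\textbf{Passing from \(A\) to \(B\).} You need that quasi-homogeneity of \(B\) implies that of \(A\). This is where M\"{u}ller's theorem enters, by conjugating \(G\) to commute with a good \(\mathbb{C}^*\)-action. The opposite direction is Singh's graded canonical cover, not M\"{u}ller.
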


We prove Theorem~\ref{thm:main} in Section~\ref{sec:conclusion}. The proof realizes the Wahl defect as the multiplicity of the canonical character in a finite-dimensional representation of the canonical covering group. This construction treats the Gorenstein and non-Gorenstein cases within a single argument.

\subsection{Nonnegativity of the Wahl defect}
\label{sec:nonnegativity}

Let \((V,0)\) be a normal \(\mathbb{Q}\)-Gorenstein surface singularity, put \(A=\mathcal{O}_{V,0}\), and let \(q\colon(\widetilde{V},0) \to (V,0)\) be its canonical cover. Put \(B=\mathcal{O}_{\widetilde{V},0}\), and use the canonical character \(\kappa\) as above. Let \(p\colon Y \to \widetilde{V}\) be the minimal good resolution of \((\widetilde{V},0)\) when \(\widetilde{V}\) is singular, and the blowup at the origin when \(\widetilde{V}\) is smooth. Write \(\omega_B=(\omega_{\widetilde{V}})_0\) for the canonical module of \(B\). The natural inclusion \(p_*\omega_Y \subseteq \omega_{\widetilde{V}}\) induces a \(G\)-equivariant inclusion on stalks. Thus
\[
 R_B :=(p_*\omega_Y)_0 \subseteq \omega_B
\]
is a \(G\)-stable \(B\)-submodule. Since \(B\) is Gorenstein, \(\omega_B/\mathfrak{m}_B\omega_B\) is a one-dimensional \(G\)-representation with character \(\kappa\). Its quotient \(\omega_B/(R_B+\mathfrak{m}_B\omega_B)\) is therefore either zero or a one-dimensional representation with the same character. Let \(\mathbb{C}_{\kappa^{-1}}\) denote the one-dimensional \(G\)-representation on which \(g \in G\) acts by multiplication by \(\kappa(g)^{-1}\), and put
\[
 T_B :=(\omega_B/(R_B+\mathfrak{m}_B\omega_B))\otimes_{\mathbb{C}}\mathbb{C}_{\kappa^{-1}},
\]
with the diagonal \(G\)-action. Thus \(T_B\) is either zero or a one-dimensional trivial \(G\)-representation. We compute the equivariant Wahl defect \(\mathcal{D}_Y\) on \(Y\).

The submodule \(R_B\) consists of reflexive two-forms that extend to regular two-forms on \(Y\). This extension property is unchanged by further resolution, so \(R_B\) and \(T_B\) are independent of the resolution. Neither definition requires a choice of generator of \(\omega_B\).

Recall that \(\mathcal{P}=H^1(V^\bullet)\) and \(\mathcal{Q}=H^2(V^\bullet)\), where \(V^\bullet\) is the reflexive de Rham complex \eqref{eq:reflexive}. Let \(I_{\mathrm{reg}}\) be the image of regular two-forms in \(\mathcal{Q}\), as in \eqref{eq:regular-quotient}. Let \(E_{\mathfrak{m}} \subseteq \mathcal{Q}\) be the subspace defined in \eqref{eq:Em}. Recall from \eqref{eq:Hm} that \(\mathcal{H}_{\mathfrak{m}}:=I_{\mathrm{reg}}\oplus(\mathcal{Q}/E_{\mathfrak{m}})\). Put
\begin{equation}\label{eq:uniform-representation}
 \mathcal{E}_B :=T_B\oplus\mathcal{H}_{\mathfrak{m}}.
\end{equation}
For smooth and rational double point covers, \(\mathcal{P}=\mathcal{Q}=0\), so \(\mathcal{H}_{\mathfrak{m}}=0\) by the same definition.

\begin{theorem}\label{thm:uniform}
The space \(\mathcal{E}_B\) is a finite-dimensional \(G\)-representation, independent of the principalization, and
\begin{equation}\label{eq:uniform-defect}
 \Delta(A)=\dim(\mathcal{E}_B)_\kappa
          =\dim(\mathcal{E}_B\otimes\mathbb{C}_{\kappa^{-1}})^G.
\end{equation}
If \(B\) is singular and not a rational double point, then \(T_B \simeq \mathbb{C}\) with trivial action, and
\begin{equation}\label{eq:uniform-nonADE}
 \Delta(A)=\delta_{\kappa,1}+\dim(\mathcal{H}_{\mathfrak{m}})_\kappa.
\end{equation}
If \(B\) is smooth or a rational double point, then \(\mathcal{E}_B=0\) and \(\Delta(A)=0\).
\end{theorem}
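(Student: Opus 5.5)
The plan is to combine Proposition~\ref{prop:transfer}, which gives \(\Delta(A)=\mult_{\mathbf{1}}(\mathcal{D}_Y)\), with the effectivity identity \eqref{eq:effective-Hm}, \([\mathbb{C}_\kappa]\mathcal{D}_Y=\mathbf{1}+[\mathcal{H}_{\mathfrak{m}}]\). We split into two cases: \(B\) singular and not a rational double point, and \(B\) smooth or a rational double point. Finite-dimensionality of \(\mathcal{E}_B\) follows from that of \(\mathcal{Q}\) (Proposition~\ref{prop:comparison}) together with \(\dim T_B\le 1\). Independence of the principalization is the independence of \(E_{\mathfrak{m}}\) and \(\mathcal{H}_{\mathfrak{m}}\) established after \eqref{eq:Hm}, combined with the remark that \(R_B\) and \(T_B\) do not depend on the resolution.

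\emph{Non-ADE singular cover.} Here the first task is to show that \(T_B\) is trivial of dimension one, which amounts to \(R_B\subseteq\mathfrak{m}_B\omega_B\). A generator \(\eta\) of \(\omega_B\) has \(\divr_Y(\eta)=-Z\) with \(Z\ge F>0\) by Wahl's lemma~\eqref{eq:D}. A form \(b\eta\) is regular on \(Y\) exactly when \(\divr_Y(b)\ge Z\), and then \(b\in\mathfrak{m}_B\). Hence \(\omega_B/(R_B+\mathfrak{m}_B\omega_B)\simeq\mathbb{C}_\kappa\), and tensoring with \(\mathbb{C}_{\kappa^{-1}}\) gives the trivial representation. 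Taking \(\mathbf{1}\)-multiplicities in \eqref{eq:effective-Hm} after multiplying by \([\mathbb{C}_{\kappa^{-1}}]\) gives
\[
 \Delta(A)=\mult_{\mathbf{1}}(\mathcal{D}_Y)=\mult_\kappa([\mathbb{C}_\kappa]\mathcal{D}_Y)=\delta_{\kappa,1}+\dim(\mathcal{H}_{\mathfrak{m}})_\kappa .
\]
This is \eqref{eq:uniform-nonADE}, and since \((T_B)_\kappa\) has dimension \(\delta_{\kappa,1}\), it is also \eqref{eq:uniform-defect}.

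Proposition~\ref{prop:transfer} was stated under the assumption that \(A\) is singular and not a rational double point, so the Gorenstein case \(r=1\) needs a separate check. In that case \(G\) is trivial, \(A=B\) is not ADE, and Proposition~\ref{prop:transfer} is simply Proposition~\ref{prop:index} combined with \eqref{eq:milnorrep}, so it still holds. When \(r>1\), \(A\) is non-Gorenstein and hence not a rational double point.

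\emph{Smooth or ADE cover.} By Proposition~\ref{prop:ADE-zero}, \(\mathcal{P}=\mathcal{Q}=0\), so \(\mathcal{H}_{\mathfrak{m}}=0\). It remains to show \(T_B=0\), i.e.\ that \(R_B=\omega_B\), so that a generator of \(\omega_B\) extends regularly to \(Y\). For an ADE cover this is crepancy, \(Z=0\). For a smooth cover on the blowup, the pullback of \(dx\wedge dy\) has divisor \(F\ge0\), so it is regular. Therefore \(\mathcal{E}_B=0\).

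For \(\Delta(A)=0\) we consider two subcases. If \(r>1\), then \(A\) is non-Gorenstein, and the subsection on passage to the canonical cover, via \eqref{eq:quotient-cover} and Proposition~\ref{prop:quotient-zero}, gives \(\Delta(A)=0\). If \(r=1\), then \(A=B\) is smooth or ADE; the ADE case is excluded by the standing hypothesis, and a smooth \(A\) is not under consideration.

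The main obstacle is bookkeeping: matching the statement's cases with the hypotheses under which Propositions~\ref{prop:transfer} and~\ref{prop:quotient-zero} were proved, especially the Gorenstein and smooth edge cases. The key new computation is the inclusion \(R_B\subseteq\mathfrak{m}_B\omega_B\) obtained from \(Z\ge F\).
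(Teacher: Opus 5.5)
Your proposal matches the paper's proof almost exactly. The paper also derives $R_B\subseteq\mathfrak{m}_B\omega_B$ from the poles of the canonical generator, so that $T_B\simeq\mathbb{C}$. It then combines \eqref{eq:effective-Hm} with Proposition~\ref{prop:transfer} and a character-multiplicity count in the non-ADE case. In the smooth or ADE case it uses $R_B=\omega_B$, Proposition~\ref{prop:ADE-zero}, and Section~\ref{sec:quotient-covers}.

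One small simplification is available. For trivial $G$ the paper reads $\Delta(A)=\dim\mathcal{D}_Y$ directly from the definition, since $Y$ is then the minimal good resolution of $A=B$. Your detour through Proposition~\ref{prop:index} and \eqref{eq:milnorrep} is valid but unnecessary. In fact Proposition~\ref{prop:transfer} applies verbatim there: a trivial group acts freely, and $A=B$ is singular and not a rational double point.

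One case is mishandled, however. When $r=1$ and $A=B$ is a rational double point, you dismiss it as "excluded by the standing hypothesis." No such hypothesis holds in Section~\ref{sec:nonnegativity}: the setup there allows any normal $\mathbb{Q}$-Gorenstein $(V,0)$. The exclusion of rational double points was a standing assumption only of Section~\ref{sec:descent}. Moreover, the last sentence of the theorem explicitly asserts $\Delta(A)=0$ whenever $B$ is a rational double point, which includes $G=\{1\}$. So, as written, you have not proved $\Delta(A)=0$ in this case.

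The fix is one line, and it is what the paper does. Proposition~\ref{prop:ADE-zero} gives $\mathcal{D}_Y=0$. With $G$ trivial, $Y$ is the minimal resolution of $A=B$, which is already its minimal good resolution because the exceptional curves of a rational double point form a simple normal crossing tree. Hence $\mathcal{D}_Y$ is by definition the integer $\Delta(A)$, and $\Delta(A)=0$.
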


\begin{proof}
Suppose first that \(B\) is singular and not a rational double point. On its minimal good resolution, the canonical generator has a pole along every exceptional component. A regular two-form \(a\eta\) cannot have \(a\) a unit, since its pullback would then have the same poles. Consequently \(R_B \subset \mathfrak{m}_B\omega_B\), and
\[
 \omega_B/(R_B+\mathfrak{m}_B\omega_B)
 =\omega_B/\mathfrak{m}_B\omega_B \simeq \mathbb{C}_\kappa.
\]
This proves \(T_B \simeq \mathbb{C}\). Equation~\eqref{eq:effective-Hm} gives
\[
 [\mathbb{C}_\kappa]\mathcal{D}_Y=[\mathcal{E}_B].
\]
If \(G\) is trivial, the definition of the defect gives \(\Delta(A)=\dim\mathcal{D}_Y=1+\dim\mathcal{H}_{\mathfrak{m}}\). If \(G\) is nontrivial, Proposition~\ref{prop:transfer} gives \(\Delta(A)=\mult_{\mathbf{1}}(\mathcal{D}_Y)\). Taking character multiplicities proves \eqref{eq:uniform-defect} and \eqref{eq:uniform-nonADE}.

If \(B\) is smooth or a rational double point, then \(R_B=\omega_B\), so \(T_B=0\). Proposition~\ref{prop:ADE-zero} gives \(\mathcal{P}=\mathcal{Q}=0\) and hence \(\mathcal{H}_{\mathfrak{m}}=0\). If \(G\) is trivial and \(A\) is singular, then \(A=B\) is a rational double point, and the same proposition gives \(\Delta(A)=0\). If \(G\) is nontrivial, Section~\ref{sec:quotient-covers} gives \(\Delta(A)=0\) from the determinant character calculation and the function-index formula. Thus \eqref{eq:uniform-defect} holds in these cases as well.

All summands in \eqref{eq:uniform-representation} are finite-dimensional \(G\)-representations. Independence follows from the invariance of \(R_B\) and the principalization comparison in Section~\ref{subsec:maximal-ideal}.
\end{proof}

\subsection{Quasi-homogeneity and equality}
\label{sec:qh-equality}

Let \((V,0)\) be a quasi-homogeneous normal \(\mathbb{Q}\)-Gorenstein surface singularity. In this section we prove \(\Delta(V,0)=\varepsilon(V,0)\) when \((V,0)\) is not a rational double point. We first lift its good \(\mathbb{C}^*\)-action to the canonical cover so that it commutes with the covering group \(G\), by Lemma~\ref{lem:graded-canonical-cover}. Lemma~\ref{lem:analytic-weight-homotopy} reduces the reflexive de Rham complex of the cover to its weight-zero part. Lemma~\ref{lem:zero-weight-one-forms} uses the extension theorem for closed one-forms to show that weight-zero reflexive one-forms annihilate the Euler vector field. Contraction with this field then gives a \(G\)-equivariant isomorphism \(\mathcal{Q}\simeq\mathcal{P}\), by Proposition~\ref{prop:Euler-PQ}. Since the residue representation \(\mathcal{H}_{\mathfrak{m}}\) has class \([\mathcal{Q}]-[\mathcal{P}]\), it vanishes, as proved in Corollary~\ref{cor:qh-residue-zero}. Corollary~\ref{cor:qh-equality} then applies the representation formula \eqref{eq:uniform-defect} to obtain \(\Delta(V,0)=\varepsilon(V,0)\) when \((V,0)\) is not a rational double point.

\subsubsection{The grading on the canonical cover}

We recall a consequence of Singh~\cite[Proposition~4.2]{Singh2003}.

\begin{lemma}\label{lem:graded-canonical-cover}
Let \((V,0)\) be a quasi-homogeneous normal complex \(\mathbb{Q}\)-Gorenstein surface singularity of index \(r\), and let \(q\colon(\widetilde{V},0)\to(V,0)\) be its canonical cover, with covering group \(G\). Then \((\widetilde{V},0)\) admits a good \(\mathbb{C}^*\)-action commuting with \(G\). The induced action on \((V,0)\) is the given action composed with \(t\mapsto t^r\).
\end{lemma}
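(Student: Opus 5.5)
Write $A=\mathcal{O}_{V,0}=\bigoplus_{k\ge0}A_k$ for the grading defined by the good $\mathbb{C}^*$-action, so that $A_0=\mathbb{C}$ and the Euler derivation $\xi$ acts on $A_k$ by $k$. The plan is to make the $\mathbb{C}^*$-action act on the graded algebra $B=\bigoplus_{i=0}^{r-1}\omega_A^{[i]}$ itself. Two steps are needed. First, grade each reflexive power $\omega_A^{[i]}$ compatibly with the multiplication maps. Second, choose the identification $\omega_A^{[r]}\simeq A$ to be homogeneous, after the grading is rescaled.

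For the first step, use the induced $\mathbb{C}^*$-action on differential forms on $U=V\setminus\{0\}$. This action preserves $\omega_U$, so it preserves $\omega_U^{\otimes i}$ and its dual powers. The reflexive powers are $\omega_A^{[i]}=H^0(U,\omega_U^{\otimes i})$ (direct limit over small representatives), so each one is a $\mathbb{C}^*$-module. Because the action is good, every such module is the completion of a graded module. Concretely, $\omega_A^{[i]}$ is the analytic localization of its graded algebraic counterpart on the affine cone, and its weights are bounded below. Multiplication $\omega_A^{[i]}\otimes\omega_A^{[j]}\to\omega_A^{[i+j]}$ is equivariant, since it is defined on $U$.

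For the second step, the key point is to find a homogeneous generator $s$ of the free rank-one module $\omega_A^{[r]}$. Start from any generator $s_0$ and decompose it into weight components. The component of lowest weight $w$ whose image in $\omega_A^{[r]}/\mathfrak{m}_A\omega_A^{[r]}$ is nonzero is a generator by Nakayama's lemma, because this quotient is one-dimensional and graded. Take $s$ to be that component; it is homogeneous of some weight $w\in\mathbb{Z}$.

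The isomorphism $\omega_A^{[r]}\simeq A$ sending $s\mapsto1$ shifts weights by $-w$. To make the algebra structure on $B$ equivariant, give $\omega_A^{[i]}$ the twisted weight $r\cdot\mathrm{wt}-iw$, which is integral. This makes each identification in degree $\ge r$ homogeneous of weight $0$. Equivalently, the rescaled action of $t$ on $\omega_A^{[i]}$ is $t^{r\,\mathrm{wt}-iw}$, and on $A$ it is $t^{r\,\mathrm{wt}}$. This is exactly the statement that the induced action on $V$ is the given action composed with $t\mapsto t^r$.

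The twisted action preserves each summand of $B$. The covering group $G=\mu_r$ acts on the $i$-th summand by a scalar. Hence the two actions commute.

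It remains to check that the action on $\widetilde V$ is good. Its weights on $\mathfrak{m}_B$ must be positive. A homogeneous element of $\omega_A^{[i]}$ with nonpositive twisted weight would have its $r$-th power of weight $\le0$ in $\omega_A^{[ir]}\simeq A\cdot s^{i}$, which reduces to weight $\le0$ in $A$. Since $A_0=\mathbb{C}$ and all other weights of $A$ are positive, that power is a constant. The element is therefore a unit or zero. For $0<i<r$ it cannot be a unit, because $\omega_A^{[i]}$ is not free. For $i=0$ it is a constant. So $\mathfrak{m}_B$ has positive weights, and $B_0=\mathbb{C}$.

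The main obstacle is the analytic-to-graded passage: justifying that $\omega_A^{[i]}$, defined analytically, is the completion of its weight decomposition, so that homogeneous components exist and converge. I would settle this as Singh does, by working on the algebraic graded ring $\bigoplus_k A_k$. There reflexive powers of the graded canonical module are graded, and analytic localization commutes with double duals. Alternatively, cite Singh~\cite[Proposition~4.2]{Singh2003} directly for this step.
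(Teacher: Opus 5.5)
Your proposal is correct and is essentially the paper's proof: both pass to a positively graded algebraic model, take a homogeneous generator of the $r$th reflexive canonical power by graded Nakayama, give the $i$th summand the twisted weight $r\,\mathrm{wt}-iw$, and observe that $G=\mu_r$ acts by a scalar on each summand. There are two minor differences. First, you prove positivity of the twisted weights directly: an $r$th power of weight $\le 0$ would trivialize $\omega_A^{[ir]}$ and force $\omega_A^{[i]}$ to be free. The paper instead cites Singh~\cite[Proposition~4.2]{Singh2003}. Second, replacing the given trivialization of $\omega_A^{[r]}$ by a homogeneous one tacitly uses independence of the canonical cover from that choice. The paper checks this by writing the given generator as $u\theta$ and rescaling the $i$th summand by $v^i$, where $v^r=u$.
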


\begin{proof}
Put \(A:=\mathcal{O}_{V,0}\) and \(B:=\mathcal{O}_{\widetilde{V},0}\). Choose a finitely generated positively graded normal \(\mathbb{C}\)-algebra \(R_0\), with \((R_0)_0=\mathbb{C}\), such that
\[
 A\simeq\mathcal{O}_{(\Spec R_0)^{\mathrm{an}},0}.
\]
The freeness of \(\omega_A^{[r]}\), faithful flatness of the local map from \(R_0\) to \(A\), and graded Nakayama's lemma give a homogeneous generator \(\theta\) of \(\omega_{R_0}^{[r]}\). Put \(d:=\deg\theta\).

Define the canonical cover
\[
 S:=\bigoplus_{i=0}^{r-1}\omega_{R_0}^{[i]},
\]
with multiplication determined by \(\theta\mapsto 1\). For a homogeneous element \(s\in(\omega_{R_0}^{[i]})_n\), put
\[
 \wt(s):=rn-id.
\]
By Singh~\cite[Proposition~4.2]{Singh2003}, these weights are nonnegative and the weight-zero part of \(S\) is \(\mathbb{C}\). For \(t\in\mathbb{C}^*\) and \(g\in G=\mu_r\), the formulas
\[
 t\cdot s:=t^{rn-id}s,\quad g\cdot s:=g^is
\]
define commuting actions on \(S\). The induced action on \(R_0\) multiplies an element of degree \(n\) by \(t^{rn}\), so it is the original action composed with \(t\mapsto t^r\).

The algebraic action on \(\Spec S\) defines a holomorphic action on \((\Spec S)^{\mathrm{an}}\). Its local ring at the vertex is
\[
 \mathcal{O}_{(\Spec S)^{\mathrm{an}},0}
 \simeq S\otimes_{R_0}A
 \simeq\bigoplus_{i=0}^{r-1}\omega_A^{[i]},
\]
with multiplication determined by the image of \(\theta\). The generator used to define \(B\) has the form \(u\theta\), where \(u\in A^\times\). Choose \(v\in A^\times\) with \(v^r=u\). Multiplication by \(v^i\) on the \(i\)th summand gives a \(G\)-equivariant \(A\)-algebra isomorphism from this local ring to \(B\). Through this isomorphism, the good \(\mathbb{C}^*\)-action induces the required action on \((\widetilde{V},0)\).
\end{proof}

\subsubsection{The weight-zero de Rham complex}

Let \((\widetilde{V},0)\) be a normal complex surface singularity with a good \(\mathbb{C}^*\)-action. We also allow \(\widetilde{V}\) to be smooth at \(0\). Put \(B:=\mathcal{O}_{\widetilde{V},0}\) and \(U:=\widetilde{V}\setminus\{0\}\), and let \(\xi\) denote the Euler vector field. We allow a finite group \(G\) commuting with the action. For this subsubsection \(B\) need not be Gorenstein. Recall that
\[
 V^\bullet=[B \xrightarrow{d}\Omega_B^{[1]} \xrightarrow{d}\omega_B],
 \quad \mathcal{P}=H^1(V^\bullet),\quad \mathcal{Q}=H^2(V^\bullet).
\]

\begin{lemma}\label{lem:analytic-weight-homotopy}
Let \(V_0^\bullet\) denote the weight-zero subcomplex of \(V^\bullet\) for the \(\mathbb{C}^*\)-action. Then the inclusion \(V_0^\bullet\to V^\bullet\) is a \(G\)-equivariant chain homotopy equivalence.
\end{lemma}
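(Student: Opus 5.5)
The plan is to use the Euler vector field \(\xi\) and the Lie derivative \(\mathcal{L}_\xi=d\iota_\xi+\iota_\xi d\) to build an explicit homotopy inverse to the inclusion.

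\emph{Weight decomposition.} The good \(\mathbb{C}^*\)-action induces a grading on the graded model: a finitely generated positively graded algebra \(S\) with \(S_0=\mathbb{C}\) and \(B\) its analytic local ring at the vertex. The terms \(\Omega^{[1]}_S\) and \(\omega_S\) are finitely generated graded reflexive modules whose weights are bounded below. On a homogeneous element of weight \(w\), \(\mathcal{L}_\xi\) acts as multiplication by \(w\). On the analytic side every element of \(B\), \(\Omega_B^{[1]}\) or \(\omega_B\) has a convergent expansion \(\sum_w\alpha_w\) into homogeneous components. These are obtained by integrating the action of \(S^1\subset\mathbb{C}^*\) over the unit circle: \(\alpha_w=\int_{S^1}t^{-w}(t\cdot\alpha)\,dt/2\pi i\). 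This operation is defined on reflexive forms because the action preserves \(U\) and reflexive forms are determined on \(U\). Since \(G\) commutes with \(\mathbb{C}^*\) and \(d\) commutes with the action, the projection \(\pi_0\colon\alpha\mapsto\alpha_0\) is a \(G\)-equivariant chain map \(V^\bullet\to V_0^\bullet\) retracting the inclusion. Each \(V_0^k\) is finite dimensional by finite generation and boundedness below of the weights.

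\emph{The homotopy.} On \(\ker\pi_0\) define \(h=\iota_\xi\circ\mathcal{L}_\xi^{-1}\), that is, \(h(\alpha)=\sum_{w\ne0}w^{-1}\iota_\xi\alpha_w\), and set \(h=0\) on \(V_0^\bullet\). Contraction with \(\xi\) maps reflexive forms to reflexive forms, since it does so on \(U\) and then extends by Hartogs, and it preserves weight. Cartan's formula gives \(dh+hd=\mathrm{id}-\pi_0\) on each weight space, so \(\mathrm{id}-\iota\pi_0=dh+hd\). Because \(\xi\) is \(G\)-invariant (the actions commute), \(h\) is \(G\)-equivariant. Thus the inclusion is a \(G\)-equivariant homotopy equivalence with inverse \(\pi_0\).

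\emph{Main obstacle: convergence.} The delicate step is that \(\pi_0\) and \(h\) preserve convergent elements of the analytic modules, not merely formal sums. For \(\pi_0\), the circle-average formula works on a \(\mathbb{C}^*\)-stable (in particular \(S^1\)-stable) Stein neighbourhood of the vertex, which exists for a good action. For \(h\), the factor \(w^{-1}\) improves convergence and the weights are bounded below, so at most finitely many negative weights occur. One can write \(h=\iota_\xi\circ\int_0^1(\text{flow of }s\text{ under }\xi)\,ds/s\) on the positive-weight part, which is a contraction into the vertex along the flow. The finitely many negative-weight components are polynomial and are handled directly. I expect to check this on generators of the finitely generated module \(\Omega^{[1]}_S\otimes B\). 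For this I would first verify that \(\Omega_B^{[1]}\simeq\Omega^{[1]}_S\otimes_S B\), by flatness of completion and reflexivity, which reduces convergence to the coefficients in \(B\), where it is the standard estimate for weighted power series.
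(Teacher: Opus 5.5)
Your proposal is correct and is essentially the paper's proof: the same circle-averaging projection onto weight zero, the same homotopy \(h=\iota_\xi\circ\mathcal{L}_\xi^{-1}\) on the nonzero weights (zero on weight zero), Cartan's formula, and \(G\)-equivariance from the commuting actions. The only difference is the analytic step. The paper writes this inverse as the single circle integral \(T\alpha=\frac{i}{2\pi}\int_0^{2\pi}(\theta-\pi)\rho_\theta\alpha\,d\theta\) on an \(S^1\times G\)-invariant representative, which treats positive and negative weights at once and makes your radial-flow split and reduction to generators unnecessary (note also that only \(S^1\)-stable neighbourhoods are needed: a \(\mathbb{C}^*\)-stable neighbourhood of the vertex is the whole cone, on which germs need not be defined).
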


\begin{proof}
Let \(\rho_\theta\) denote pullback by \(e^{i\theta}\). Choose an \(S^1\times G\)-invariant analytic representative. After shrinking, the pullbacks are defined for \(\theta\) in a complex neighborhood of \([0,2\pi]\). Define
\[
 \Pi_0\alpha:=\frac{1}{2\pi}\int_0^{2\pi}\rho_\theta\alpha d\theta,\quad
 T\alpha:=\frac{i}{2\pi}\int_0^{2\pi}(\theta-\pi)\rho_\theta\alpha d\theta.
\]
These integrals define analytic sections by Jörder~\cite[Proposition~4.10]{JorderPoincare2014}. They preserve holomorphicity on \(U\), so the resulting differential forms are reflexive. The resulting functions extend across \(0\) by normality. Both operators commute with \(d\); see Jörder~\cite[Remark~4.18]{JorderPoincare2014}. The operator \(\Pi_0\) is the projection onto \(V_0^\bullet\), since \(S^1\)-invariance is equivalent to weight zero for the \(\mathbb{C}^*\)-action.

The identity \(\partial_\theta\rho_\theta=i\mathcal{L}_\xi\rho_\theta\) holds. The periodic averaging formula of Avenda\~{n}o Camacho and Vorobiev~\cite[Proposition~2.1]{AvendanoCamachoVorobiev2011} therefore gives
\[
 \mathcal{L}_\xi T=\mathrm{id}-\Pi_0.
\]
Put \(h:=\iota_\xi T\). Since \(T\) commutes with \(d\), Cartan's formula gives
\begin{equation}\label{eq:weight-homotopy}
 dh+hd=(d\iota_\xi+\iota_\xi d)T=\mathrm{id}-\Pi_0.
\end{equation}

The action of \(G\) commutes with the \(\mathbb{C}^*\)-action. Hence \(\rho_\theta\), \(\Pi_0\), \(T\), and \(\iota_\xi\) commute with \(G\). Since \(\Pi_0\) restricts to the identity on \(V_0^\bullet\), equation~\eqref{eq:weight-homotopy} shows that \(\Pi_0\colon V^\bullet\to V_0^\bullet\) is a \(G\)-equivariant homotopy inverse of the inclusion.
\end{proof}

We record a consequence of the extension theorem for closed one-forms.

\begin{lemma}\label{lem:zero-weight-one-forms}
Every \(\alpha\in(\Omega_B^{[1]})_0\) satisfies
\[
 d\alpha=0,\quad \iota_\xi\alpha=0.
\]
\end{lemma}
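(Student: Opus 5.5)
The plan is to prove both assertions from the weight decomposition of a weight-zero form together with the extension theorem for closed one-forms. First fix a weight-zero reflexive one-form \(\alpha\in(\Omega_B^{[1]})_0\). On the punctured surface \(U\), weight zero means \(\mathcal{L}_\xi\alpha=0\), since \(S^1\)-invariance of a holomorphic form is equivalent to invariance under the full \(\mathbb{C}^*\)-flow generated by \(\xi\). Cartan's formula then gives
\[
 0=\mathcal{L}_\xi\alpha=d(\iota_\xi\alpha)+\iota_\xi d\alpha .
\]
The function \(\phi:=\iota_\xi\alpha\) is holomorphic on \(U\), extends to \(B\) by normality, and has weight zero because \(\xi\) is invariant. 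Since the action is good, the weight-zero part of \(B\) is \(\mathbb{C}\), so \(\phi\) is a constant \(c\). Evaluating at the vertex shows \(c=0\). Indeed, \(\xi\) vanishes at the fixed point \(0\), and \(\phi\) is the extension of a function that tends to zero along every orbit closure through \(0\) (all orbits of a good action have \(0\) in their closure). Hence \(\iota_\xi\alpha=0\), and therefore \(\iota_\xi d\alpha=0\) as well.

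Next I show \(d\alpha=0\). The form \(d\alpha\) is a reflexive two-form of weight zero with \(\iota_\xi d\alpha=0\). On \(U\), the field \(\xi\) is nowhere vanishing, because the action is free off finite stabilizers and the orbits are one-dimensional. A two-form on a surface killed by contraction with a nowhere-vanishing vector field is zero: in local coordinates with \(\xi=\partial_{z_1}\), the form \(g\,dz_1\wedge dz_2\) contracts to \(g\,dz_2\). Thus \(d\alpha=0\) on \(U\), hence in \(\omega_B\).

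This argument seems to avoid the extension theorem, which makes me suspicious, because the paper announces that the extension theorem for closed one-forms is used. The likely role is in proving \(\iota_\xi\alpha=0\) rigorously: once \(d\alpha=0\) is not yet known, the step ``\(\phi\) vanishes at \(0\)'' needs care, since \(\alpha\) is only reflexive and need not extend to a resolution. The cleaner order is as follows.

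1. Use Cartan on \(U\) to get \(d\phi=-\iota_\xi d\alpha\). Argue that \(\phi\in B_0=\mathbb{C}\) is constant, so \(d\phi=0\) and \(\iota_\xi d\alpha=0\).
2. Deduce \(d\alpha=0\) as above.
3. With \(\alpha\) now closed, invoke Jörder's extension theorem to extend \(\alpha\) holomorphically to the resolution \(Y\), choosing a \(\mathbb{C}^*\)-equivariant good resolution (for instance the Orlik--Wagreich resolution), to which \(\xi\) lifts holomorphically and is tangent to the central curve.
4. The constant \(c=\iota_{\xi}\widetilde\alpha\) is then a holomorphic function on \(Y\). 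At a point of \(Y\) where the lifted \(\xi\) vanishes, which exists on the compact exceptional curve (for example at the fixed points of the central curve, or along the whole curve where \(\mathbb{C}^*\) acts trivially), one gets \(c=0\).

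The main obstacle is this last vanishing. The plan is to use the fixed locus of the lifted action in the exceptional set, which is nonempty since the \(\mathbb{C}^*\)-action on a compact connected curve configuration has fixed points. In the smooth case, the same argument applies on the blowup at the origin.
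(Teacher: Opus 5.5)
Your reordered argument (steps 1--4) is correct and is essentially the paper's proof. In both, \(\iota_\xi\alpha\in B_0=\mathbb{C}\) is constant, and Cartan's formula together with the nowhere-vanishing of \(\xi\) on \(U\) gives \(d\alpha=0\). The closed form then extends to a \(\mathbb{C}^*\)-equivariant resolution (van Straten--Steenbrink, J\"{o}rder), and evaluating at a \(\mathbb{C}^*\)-fixed point of the exceptional curve shows the constant is zero.

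You were right to discard the first vertex-evaluation heuristic, since it is not justified for a form that is only reflexive. In the smooth case the paper simply evaluates at \(0\), where \(\alpha\) is holomorphic and \(\xi(0)=0\); this is slightly more direct than passing to the blowup.
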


\begin{proof}
The function \(\iota_\xi\alpha\) has weight zero, so it is constant because \(B_0=\mathbb{C}\). Since \(\mathcal{L}_\xi\alpha=0\), Cartan's formula gives
\[
 \iota_\xi(d\alpha)
 =\mathcal{L}_\xi\alpha-d(\iota_\xi\alpha)=0.
\]
Choose an embedding of the positively graded algebraic model using homogeneous generators \(x_j\) of positive weights \(w_j\). Away from the origin, some \(x_j\) is nonzero, and the corresponding component \(w_jx_j\) of \(\xi\) is nonzero. Thus \(\xi\) is nowhere zero on \(U\). Since \(U\) is a smooth surface, contraction by \(\xi\) is injective on two-forms. Hence \(d\alpha=0\).

Suppose first that \((\widetilde{V},0)\) is singular. Using the positively graded algebraic model, choose a \(\mathbb{C}^*\)-equivariant resolution \(p\colon(Y,F)\to(\widetilde{V},0)\) with an algebraic action on \(F\). By van Straten--Steenbrink~\cite[Corollary~(1.4)]{VanStratenSteenbrink1985}, the closed one-form \(\alpha\) extends to a holomorphic one-form \(\widetilde{\alpha}\) on a neighborhood of \(F\); see also J\"{o}rder~\cite[Proposition~1.2]{JorderExtension}.

The proper algebraic fiber \(F\) has a \(\mathbb{C}^*\)-fixed point \(y\). Indeed, properness extends the algebraic orbit map \(t\mapsto t\cdot z\), for any \(z\in F\), to \(t=0\), and its limit is fixed. Let \(\widetilde{\xi}\) denote the lifted Euler vector field. Then \(\widetilde{\xi}(y)=0\). The identity
\[
 \widetilde{\alpha}(\widetilde{\xi})
 =p^*(\iota_\xi\alpha)
\]
holds on \(Y\setminus F\) and hence on a neighborhood of \(F\). Evaluating at \(y\) shows that the constant \(\iota_\xi\alpha\) is zero.

If \(\widetilde{V}\) is smooth at \(0\), then \(\alpha\) is holomorphic at \(0\). Evaluating there gives the same conclusion, since \(\xi(0)=0\).
\end{proof}

The equality \(\dim\mathcal{P}=\dim\mathcal{Q}\) in the quasi-homogeneous case follows from Du--Yau~\cite[Proposition~3.3]{DuYau2010} and Wahl~\cite[Corollary~1.11]{Wahl1985}. We give an explicit \(G\)-equivariant isomorphism using the Euler vector field.

\begin{proposition}\label{prop:Euler-PQ}
There is a \(G\)-equivariant isomorphism
\begin{equation}\label{eq:Euler-PQ}
 \mathcal{Q}\xrightarrow{\sim}\mathcal{P},
 \quad [\beta]\longmapsto[\iota_\xi\Pi_0\beta],
\end{equation}
where \(\Pi_0\) is the projection onto the weight-zero part for the \(\mathbb{C}^*\)-action.
\end{proposition}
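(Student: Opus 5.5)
By Lemma~\ref{lem:analytic-weight-homotopy}, the inclusion \(V_0^\bullet\to V^\bullet\) is a \(G\)-equivariant homotopy equivalence with homotopy inverse \(\Pi_0\). Hence \(\mathcal{P}\) and \(\mathcal{Q}\) are computed by the weight-zero complex
\[
 V_0^\bullet=[\mathbb{C}\xrightarrow{d}(\Omega_B^{[1]})_0\xrightarrow{d}(\omega_B)_0].
\]
The first differential is zero because \(B_0=\mathbb{C}\). By Lemma~\ref{lem:zero-weight-one-forms}, every weight-zero reflexive one-form is closed, so the second differential also vanishes. This gives \(G\)-equivariant identifications
\[
 \mathcal{P}\simeq(\Omega_B^{[1]})_0,\qquad \mathcal{Q}\simeq(\omega_B)_0,
\]
where the second is induced by \([\beta]\mapsto\Pi_0\beta\). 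Thus \eqref{eq:Euler-PQ} is well defined, and it remains to show that contraction \(\iota_\xi\colon(\omega_B)_0\to(\Omega_B^{[1]})_0\) is a \(G\)-equivariant isomorphism.

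First, \(\iota_\xi\) lands in the right space. For reflexive \(\beta\), the form \(\iota_\xi\beta\) is holomorphic on \(U\) and hence reflexive. The Euler field is \(\mathbb{C}^*\)-invariant, so contraction preserves weight. Equivariance holds because \(G\) commutes with the \(\mathbb{C}^*\)-action and therefore fixes \(\xi\).

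Injectivity follows because \(\xi\) is nowhere zero on the smooth surface \(U\), as shown in the proof of Lemma~\ref{lem:zero-weight-one-forms}. Contraction by a nonvanishing field is injective on two-forms, so \(\iota_\xi\beta=0\) forces \(\beta=0\) on \(U\), hence \(\beta=0\).

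For surjectivity, take \(\alpha\in(\Omega_B^{[1]})_0\). I would construct a reflexive two-form \(\beta\) on \(U\) with \(\iota_\xi\beta=\alpha\). On \(U\), Lemma~\ref{lem:zero-weight-one-forms} gives \(\iota_\xi\alpha=0\). Pointwise, \(\ker\iota_\xi\) on one-forms is the line \(\iota_\xi(\Lambda^2T^*)\), because \(\xi\neq0\). Hence \(\alpha=\iota_\xi\beta\) for a unique holomorphic two-form \(\beta\) on \(U\). Concretely, write \(\beta=\alpha\wedge\gamma\) for any local one-form \(\gamma\) with \(\gamma(\xi)=1\), and note that the result is independent of \(\gamma\). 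Such a \(\beta\) is a section of \(\omega_U\), so it lies in \(\omega_B\) by reflexivity. Uniqueness together with invariance of \(\alpha\) and \(\xi\) shows that \(\beta\) has weight zero. Therefore \(\iota_\xi\colon(\omega_B)_0\to(\Omega_B^{[1]})_0\) is bijective.

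Composing these identifications shows that \([\beta]\mapsto[\iota_\xi\Pi_0\beta]\) is a \(G\)-equivariant isomorphism \(\mathcal{Q}\to\mathcal{P}\).

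The main obstacle is justifying that cohomology of \(V^\bullet\) may be replaced by that of \(V_0^\bullet\) compatibly with the explicit formula. I expect this to follow directly from \eqref{eq:weight-homotopy}. Closed representatives map to cohomologous weight-zero forms under \(\Pi_0\), and exactness in \(V_0^\bullet\) is trivial because \(d\) vanishes there. One should also check that \(\Pi_0\) of an exact form is zero in \(V_0^2\): we have \(\Pi_0 d\alpha=d\Pi_0\alpha=0\). This holds since \(\Pi_0\) commutes with \(d\).
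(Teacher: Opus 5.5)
Your proposal is correct and follows essentially the same route as the paper: reduce to the weight-zero complex via the homotopy of Lemma~\ref{lem:analytic-weight-homotopy}, note that both of its differentials vanish by Lemma~\ref{lem:zero-weight-one-forms}, and identify \((\omega_B)_0\) with \((\Omega_B^{[1]})_0\) by contraction with the nowhere-vanishing Euler field, using \(\iota_\xi\alpha=0\) and uniqueness of the preimage to obtain weight preservation and \(G\)-equivariance. One harmless slip: with \(\gamma(\xi)=1\) one has \(\iota_\xi(\alpha\wedge\gamma)=-\alpha\), so the local preimage is \(\gamma\wedge\alpha\) rather than \(\alpha\wedge\gamma\); your fiberwise argument is unaffected by this sign.
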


\begin{proof}
Since \(\xi\) is nowhere zero on the smooth surface \(U\), contraction induces a sheaf isomorphism
\[
 \iota_\xi\colon\Omega_U^2\xrightarrow{\sim}
 \ker(\iota_\xi\colon\Omega_U^1\to\mathcal{O}_U).
\]
Indeed, in local coordinates with \(\xi=\partial/\partial z_1\), contraction sends \(dz_1\wedge dz_2\) to \(dz_2\). Its inverse preserves the weights and commutes with \(G\), since it is unique and \(\xi\) is invariant under both actions. Lemma~\ref{lem:zero-weight-one-forms} therefore gives a \(G\)-equivariant isomorphism
\[
 \iota_\xi\colon(\omega_B)_0\xrightarrow{\sim}(\Omega_B^{[1]})_0.
\]

Both differentials in \(V_0^\bullet\) vanish: functions of weight zero are constant, and one-forms of weight zero are closed by Lemma~\ref{lem:zero-weight-one-forms}. By Lemma~\ref{lem:analytic-weight-homotopy}, the projection \(\Pi_0\) therefore induces canonical \(G\)-equivariant isomorphisms
\[
 \mathcal{Q}\simeq(\omega_B)_0,\quad
 \mathcal{P}\simeq(\Omega_B^{[1]})_0.
\]
Combining these isomorphisms with contraction gives \eqref{eq:Euler-PQ}. In particular, for every \(\alpha\in\Omega_B^{[1]}\),
\[
 \Pi_0(d\alpha)=d(\Pi_0\alpha)=0,
\]
so the displayed map is independent of the two-form representative.
\end{proof}

\subsubsection{Vanishing of the residue representation}

\begin{corollary}\label{cor:qh-residue-zero}
Let \((\widetilde{V},0)\) be a quasi-homogeneous Gorenstein normal complex surface singularity. We also allow \(\widetilde{V}\) to be smooth at \(0\). Put \(B:=\mathcal{O}_{\widetilde{V},0}\). Then the residue representation \(\mathcal{H}_{\mathfrak{m}}\) vanishes. If \(\widetilde{V}\) is singular at \(0\) and \((\widetilde{V},0)\) is not a rational double point, then
\begin{equation}\label{eq:qh-residue-terms}
 I_{\mathrm{reg}}=0,\quad
 \mathcal{P}\xrightarrow[\iota]{\sim}W_F,\quad
 \mathcal{Q}=I_{\log}\xrightarrow[\Res]{\sim}W_F.
\end{equation}
Here \(\iota\) is the restriction map of Lemma~\ref{lem:closed-one-forms}, and \(\Res\) is the residue map in \eqref{eq:residue-sequence}.
\end{corollary}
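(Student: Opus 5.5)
The first assertion will follow from the class identity \eqref{eq:Hm-class}, \([\mathcal{H}_{\mathfrak{m}}]=[\mathcal{Q}]-[\mathcal{P}]\), together with Proposition~\ref{prop:Euler-PQ}. That proposition gives \([\mathcal{Q}]=[\mathcal{P}]\) in \(R(G)\), so \(\mathcal{H}_{\mathfrak{m}}\) has zero class. Since \(\mathcal{H}_{\mathfrak{m}}\) is an honest finite-dimensional representation, it vanishes. In the smooth and rational double point cases this is immediate from Proposition~\ref{prop:ADE-zero}, because \(\mathcal{P}=\mathcal{Q}=0\).

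For \eqref{eq:qh-residue-terms}, assume \(\widetilde{V}\) is singular and not a rational double point, and work on the minimal good resolution \((Y,F)\). By \eqref{eq:Hm}, \(\mathcal{H}_{\mathfrak{m}}=I_{\mathrm{reg}}\oplus(\mathcal{Q}/E_{\mathfrak{m}})\), so its vanishing gives \(I_{\mathrm{reg}}=0\). We also have \(\mathcal{R}_{\mathrm{res}}=I_{\mathrm{reg}}\oplus(W_F/\mathcal{P})\oplus(\mathcal{Q}/I_{\log})\), and \([\mathcal{R}_{\mathrm{res}}]=[\mathcal{H}_{\mathfrak{m}}]=0\). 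Each summand is therefore zero. Hence the injection \(\iota\colon\mathcal{P}\to W_F\) of Lemma~\ref{lem:closed-one-forms} is surjective, so it is an isomorphism, and \(\mathcal{Q}=I_{\log}\). The exact sequence \eqref{eq:residue-sequence} with \(I_{\mathrm{reg}}=0\) then shows that \(\Res\colon I_{\log}\to W_F\) is an isomorphism.

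One point needs care: \(\mathcal{R}_{\mathrm{res}}\) is defined via the minimal good resolution, while \(\mathcal{H}_{\mathfrak{m}}\) uses a principalization. Their classes agree by \eqref{eq:Hm-class}, and \(I_{\mathrm{reg}}\) is independent of the resolution by the remark in Section~\ref{subsec:maximal-ideal}. With these identifications the argument is formal, and I expect no real obstacle beyond this bookkeeping. Since \(\mathcal{Q}/E_{\mathfrak{m}}\) and \(\mathcal{Q}/I_{\log}\) are different quotients, I will argue via the vanishing of the classes of \(\mathcal{R}_{\mathrm{res}}\) and \(\mathcal{H}_{\mathfrak{m}}\) rather than trying to compare these quotients directly.
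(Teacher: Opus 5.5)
Your proposal is correct and follows essentially the paper's route. Proposition~\ref{prop:Euler-PQ} gives $\dim\mathcal{Q}=\dim\mathcal{P}$, so by \eqref{eq:Hm-class} the honest representations $\mathcal{H}_{\mathfrak{m}}$ and $\mathcal{R}_{\mathrm{res}}$ vanish. The vanishing of the summands of $\mathcal{R}_{\mathrm{res}}$, together with \eqref{eq:residue-sequence}, then yields \eqref{eq:qh-residue-terms}, and the smooth and rational double point cases come from Proposition~\ref{prop:ADE-zero}.

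One small precision: Proposition~\ref{prop:Euler-PQ} is $G$-equivariant only for a group commuting with the $\mathbb{C}^*$-action, which the corollary does not assume. As the paper does, apply it with the trivial group and use dimensions, which is all your vanishing argument needs.
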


\begin{proof}
If \(\widetilde{V}\) is smooth at \(0\) or \((\widetilde{V},0)\) is a rational double point, Proposition~\ref{prop:ADE-zero} gives \(\mathcal{P}=\mathcal{Q}=0\). The definition \eqref{eq:Hm} therefore gives \(\mathcal{H}_{\mathfrak{m}}=0\).

Suppose now that \(\widetilde{V}\) is singular at \(0\) and \((\widetilde{V},0)\) is not a rational double point. Proposition~\ref{prop:Euler-PQ}, applied with the trivial group, gives \(\dim\mathcal{Q}=\dim\mathcal{P}\). This numerical equality also follows from Du--Yau~\cite[Proposition~3.3]{DuYau2010} and Wahl~\cite[Corollary~1.11]{Wahl1985}. Taking dimensions in \eqref{eq:Hm-class} gives
\[
 \dim\mathcal{H}_{\mathfrak{m}}
 =\dim\mathcal{R}_{\mathrm{res}}
 =\dim\mathcal{Q}-\dim\mathcal{P}=0.
\]
Hence \(\mathcal{H}_{\mathfrak{m}}=0\).

Each summand in the direct sum defining \(\mathcal{R}_{\mathrm{res}}\) in Theorem~\ref{thm:effective} must also vanish. Thus
\[
 I_{\mathrm{reg}}=0,\quad
 W_F/\mathcal{P}=0,\quad
 \mathcal{Q}/I_{\log}=0.
\]
The restriction map of Lemma~\ref{lem:closed-one-forms} is therefore an isomorphism. Since \(I_{\mathrm{reg}}=0\) and \(I_{\log}=\mathcal{Q}\), the residue exact sequence \eqref{eq:residue-sequence} gives the second isomorphism in \eqref{eq:qh-residue-terms}. Both maps are \(G\)-equivariant by their construction.
\end{proof}

\begin{corollary}\label{cor:qh-equality}
Let \((V,0)\) be a quasi-homogeneous normal \(\mathbb{Q}\)-Gorenstein surface singularity, and put \(A=\mathcal{O}_{V,0}\). If \(A\) is Gorenstein and is not a rational double point, then \(\Delta(A)=1\). If \(A\) is non-Gorenstein, then \(\Delta(A)=0\). A rational double point has \(\Delta(A)=0\).
\end{corollary}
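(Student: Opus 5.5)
The plan is to feed the vanishing of the residue representation into the uniform formula of Theorem~\ref{thm:uniform}. Let \(q\colon(\widetilde{V},0)\to(V,0)\) be the canonical cover, with covering group \(G\), canonical character \(\kappa\) and local ring \(B\). By Lemma~\ref{lem:graded-canonical-cover}, \((\widetilde{V},0)\) carries a good \(\mathbb{C}^*\)-action commuting with \(G\). Since \(B\) is Gorenstein, it is a quasi-homogeneous Gorenstein normal surface singularity, possibly smooth. Corollary~\ref{cor:qh-residue-zero} therefore applies and gives \(\mathcal{H}_{\mathfrak{m}}=0\). The uniform representation then reduces to
\[
 \mathcal{E}_B=T_B .
\]
Formula~\eqref{eq:uniform-defect} gives \(\Delta(A)=\dim(T_B)_\kappa\).

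I would then split into cases according to \(B\). If \(B\) is smooth or a rational double point, Theorem~\ref{thm:uniform} gives \(\Delta(A)=0\) directly. Otherwise, \eqref{eq:uniform-nonADE} yields \(\Delta(A)=\delta_{\kappa,1}\). The remaining step is to match \(\delta_{\kappa,1}\) and the case structure with the Gorenstein property of \(A\).
\begin{itemize}
\item If \(A\) is Gorenstein, then \(r=1\), so \(B=A\) and \(\kappa=\mathbf{1}\). When \(A\) is not a rational double point, \(B\) is singular and not a rational double point, and we get \(\Delta(A)=1\).
\item If \(A\) is non-Gorenstein, then \(r>1\) and \(\kappa\neq\mathbf{1}\) by Section~\ref{qg:sec:relative-cover}. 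Thus \(\Delta(A)=0\) in the singular non-ADE cover case. The smooth and ADE cover cases give \(0\) as already noted, consistent with Proposition~\ref{prop:quotient-zero}.
\item If \(A\) is a rational double point, then \(r=1\) and \(B=A\) is a rational double point, so Theorem~\ref{thm:uniform} (equivalently Proposition~\ref{prop:ADE-zero}) gives \(\Delta(A)=0\).
\end{itemize}

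The main obstacle is essentially bookkeeping: verifying that the hypotheses of Corollary~\ref{cor:qh-residue-zero} hold, namely that the lifted action is good and \(B\) is Gorenstein. Both are supplied by the lemma and by Section~\ref{qg:sec:relative-cover}. One should also take care that the trivial-group case \(r=1\) is covered by the conventions (\(B=A\), \(\kappa=1\)) of Section~\ref{sec:forms}.
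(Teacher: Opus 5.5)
Your proposal is correct and follows essentially the same route as the paper: lift the good \(\mathbb{C}^*\)-action to the canonical cover by Lemma~\ref{lem:graded-canonical-cover}, use Corollary~\ref{cor:qh-residue-zero} to get \(\mathcal{H}_{\mathfrak{m}}=0\), and then read off \(\Delta(A)=\delta_{\kappa,1}\) from \eqref{eq:uniform-nonADE}, treating smooth and rational double point covers separately. The only difference is cosmetic. For smooth or rational double point covers you quote the last clause of Theorem~\ref{thm:uniform}, while the paper cites \eqref{eq:exceptional-index} together with the function-index formula directly, which is exactly how that clause of Theorem~\ref{thm:uniform} is proved.
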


\begin{proof}
Let \(q\colon(\widetilde{V},0)\to(V,0)\) be the canonical cover, with covering group \(G\), and put \(A:=\mathcal{O}_{V,0}\) and \(B:=\mathcal{O}_{\widetilde{V},0}\). By Lemma~\ref{lem:graded-canonical-cover}, \((\widetilde{V},0)\) admits a good \(\mathbb{C}^*\)-action commuting with \(G\). Corollary~\ref{cor:qh-residue-zero} gives \(\mathcal{H}_{\mathfrak{m}}=0\).

Suppose first that \((V,0)\) is Gorenstein and is not a rational double point. The canonical cover is trivial, so \(B=A\) and \(\kappa=1\). Equation~\eqref{eq:uniform-nonADE} therefore gives
\[
 \Delta(A)=\delta_{\kappa,1}=1.
\]

Suppose next that \((V,0)\) is non-Gorenstein. Then \(\kappa\ne 1\). If \(\widetilde{V}\) is singular at \(0\) and \((\widetilde{V},0)\) is not a rational double point, equation~\eqref{eq:uniform-nonADE} gives
\[
 \Delta(A)=\delta_{\kappa,1}=0.
\]
If the canonical cover is smooth or a rational double point, equation~\eqref{eq:exceptional-index} and the function-index formula \eqref{eq:function-index} give
\[
 \Delta(A)=\dim((\mathbb{C}_{\kappa^{-1}})^G)=0,
\]
since \(\kappa\) is nontrivial.

Finally, if \((V,0)\) is a rational double point, its canonical cover is trivial, and Proposition~\ref{prop:ADE-zero} gives \(\Delta(A)=0\).
\end{proof}

\subsection{Strict inequality and the equality criterion}
\label{sec:strict-equality}

Let \((V,0)\) be a normal \(\mathbb{Q}\)-Gorenstein surface singularity which is not quasi-homogeneous. We prove \(\Delta(V,0)>\varepsilon(V,0)\) by finding a nonzero \(\kappa\)-isotypic component of the residue representation. We use the analytic criterion of Scheja--Wiebe~\cite[Satz~(3.1)]{SchejaWiebe1980}. A normal complex analytic algebra of dimension two is quasi-homogeneous if it admits a derivation acting nonnilpotently on its cotangent space; see also Wahl~\cite[\S3.16]{Wahl1985}.

\subsubsection{Divergence and the cotangent space}

Let \((\widetilde{V},0)\) be a Gorenstein normal complex surface singularity, put \(B=\mathcal{O}_{\widetilde{V},0}\), with maximal ideal \(\mathfrak{m}_B\), and let \(\eta\) generate \(\omega_B\). For a derivation \(\delta\), define \(\divr_\eta\delta\) by
\[
 d(\iota_\delta\eta)=(\divr_\eta\delta)\eta.
\]
This is a complex-linear operation; its image need not be an ideal.

\begin{proposition}\label{prop:unit-divergence}
If \(\divr_\eta\delta\notin\mathfrak{m}_B\) for some \(\delta\in\Der_{\mathbb{C}}(B)\), then \((\widetilde{V},0)\) is quasi-homogeneous.
\end{proposition}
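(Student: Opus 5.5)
We reduce to the Scheja--Wiebe criterion: it suffices to produce a derivation of \(B\) acting nonnilpotently on the cotangent space \(\mathfrak{m}_B/\mathfrak{m}_B^2\). Every derivation preserves \(\mathfrak{m}_B\), since it lifts to a logarithmic vector field on the resolution by \eqref{eq:der-lifting}. Hence it induces a linear endomorphism \(\bar\delta\) of \(\mathfrak{m}_B/\mathfrak{m}_B^2\). The plan is to show that if \(\bar\delta\) is nilpotent, then \(\divr_\eta\delta\in\mathfrak{m}_B\); the contrapositive then gives the proposition.

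\emph{Step 1: reduce to a plane-type computation on an embedding.} Choose a minimal embedding \((\widetilde{V},0)\subset(\mathbb{C}^N,0)\) with coordinates \(x_1,\dots,x_N\) and lift \(\delta\) to an ambient vector field \(\tilde\delta=\sum a_j\partial_{x_j}\) tangent to \(\widetilde{V}\). The linear part \((\partial a_j/\partial x_k(0))\) represents \(\bar\delta\) on \(\mathfrak{m}/\mathfrak{m}^2\), up to transposition. Nilpotence of \(\bar\delta\) therefore means that this linear part is nilpotent, and in particular that its trace, the ambient divergence \(\sum_j\partial a_j/\partial x_j\) at \(0\), vanishes.

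\emph{Step 2: compare \(\divr_\eta\delta\) with ambient divergence.} On the smooth locus, \(\eta\) is a Poincar\'e residue or adjunction form. In the complete intersection case \(\eta=\mathrm{Res}\,dx_1\wedge\cdots\wedge dx_N/(f_1\cdots f_{N-2})\), and tangency gives \(\tilde\delta(f_i)=\sum_k c_{ik}f_k\). One then gets \(\divr_\eta\delta=(\sum\partial_ja_j-\tr(c_{ik}))|_{\widetilde V}\). For general Gorenstein \(B\), one uses the equivariant free resolution of \(B\) over \(P=\mathbb{C}\{x\}\) and \(\omega_B=\Ext_P^{N-2}(B,\omega_P)\). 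The derivation lifts to the resolution as a first-order operator, and the Lie derivative on \(\omega_B\) is computed from the ambient divergence minus the trace of the induced matrix on the last free module.

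\emph{Step 3: evaluate at the origin.} By minimality of the resolution, all matrices have entries in \(\mathfrak{m}_P\). At the origin, the induced action of \(\tilde\delta\) on each graded piece \(\mathrm{Tor}_i^P(B,\mathbb{C})\) is functorially built from \(\bar\delta\). The relevant traces are then sums of traces of polynomial functors applied to the associated graded linear action. For a nilpotent \(\bar\delta\) these traces are all zero, since nilpotent operators induce nilpotent operators on tensor constructions and subquotients. Hence \(\divr_\eta\delta(0)=0\).

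The main obstacle is Step 2 in the non-complete-intersection case: making the identification of \(\mathcal{L}_\delta\eta\) with a trace on the top term of the resolution precise and well defined modulo \(\mathfrak{m}_B\). A cleaner route would use Lemma~\ref{lem:resolution-comparison} and work on the resolution \(Y\), where \(\eta\) has divisor \(-Z\) and \(\delta\) is a logarithmic field. Then \(\divr_\eta\delta(0)\) equals the eigenvalue of the linear part of \(\delta\) at a fixed point on \(F\), corrected by the pole orders. One would then try to show that nilpotent \(\bar\delta\) forces all such eigenvalues to vanish.
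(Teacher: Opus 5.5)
Your contrapositive reduction to Scheja--Wiebe is the right frame, and Steps 1--3 are sound when \(B\) is a complete intersection. With \(\tilde\delta(f_i)=\sum_k c_{ik}f_k\) one does get \(\mathcal{L}_\delta\eta=(\divr\tilde\delta-\tr c)|_{\widetilde V}\,\eta\). If \(\bar\delta\) is nilpotent, the linear part of \(\tilde\delta\) has trace zero. The matrix \(c(0)\) gives the induced action on \(I/\mathfrak{m}_PI\), which is nilpotent: by Artin--Rees, \(I/\mathfrak{m}_PI\) is a quotient of the \(\tilde\delta\)-stable subspace \((I+\mathfrak{m}_P^N)/\mathfrak{m}_P^N\) of \(P/\mathfrak{m}_P^N\). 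Your claim that the action on \(\mathrm{Tor}_i^P(B,\mathbb{C})\) is \emph{functorially built from} \(\bar\delta\) is not accurate, since Tor is not a functor of \(\mathfrak{m}_B/\mathfrak{m}_B^2\) alone. Nilpotence does follow from a filtration argument of this kind, for instance on the Koszul complex \(K_\bullet(x;B)\) modulo \(\mathfrak{m}_B^N\).

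\textbf{The gap.} Step 2 is not established when \(B\) is not a complete intersection, and that is the general case. Gorenstein surface singularities, including non-quasi-homogeneous ones such as cusps of multiplicity at least five, need not be complete intersections. There you only assert that \(\mathcal{L}_\delta\) on \(\omega_B\) equals \(\divr\tilde\delta-b\), where \(b\) is the scalar of a lifted first-order operator on the rank-one last term of the minimal resolution. You flag this yourself as unresolved. To prove it you must show two things: the action induced on \(\Ext^{N-2}_P(B,\Omega^N_P)\) is independent of the lift, and it agrees with the Lie derivative of forms under \(\omega_B\simeq\Ext^{N-2}_P(B,\Omega^N_P)\). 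The fallback via \(Y\) is only a hope. Relating eigenvalues of the lifted field at fixed points on \(F\) to nilpotence of \(\bar\delta\) is exactly the difficulty, and the proposal does not address it. As written, your proof covers only complete-intersection \(B\).

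\textbf{How to fill it along your lines.} Both \(\mathcal{L}_\delta\) and the Ext-action satisfy \(\Theta(f\xi)=\delta(f)\xi+f\Theta(\xi)\). Hence they differ by an element of \(\End_B(\omega_B)=B\). That element vanishes because, on the punctured representative, the resolution is locally homotopy equivalent to a Koszul resolution, where your residue computation applies. Nilpotence on the one-dimensional \(\mathrm{Tor}_{N-2}^P(B,\mathbb{C})\) then gives \(b(0)=0\).

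\textbf{The paper's route.} The paper avoids the embedding entirely. It uses the ideal \(J_{\mathrm{can}}=\{f\in B: f\eta\in\im(\Omega^2_B\to\omega_B)\}\), which is \(B\) or \(\mathfrak{m}_B\)-primary. It is \(\delta\)-stable because Lie derivatives preserve K\"ahler forms and \(\mathcal{L}_\delta(f\eta)=(\delta(f)+af)\eta\) with \(a=\divr_\eta\delta\). On \(J_{\mathrm{can}}/\mathfrak{m}_BJ_{\mathrm{can}}\) there are two operators:
\begin{itemize}
\item the coefficient action \(T_1\), nilpotent as a subquotient of \(B/\mathfrak{m}_B^N\);
\item the Lie-derivative action \(T_2\), nilpotent as a quotient of \(\Omega^2_B\otimes\mathbb{C}\simeq\bigwedge^2(\mathfrak{m}_B/\mathfrak{m}_B^2)\).
\end{itemize}
Since \(T_2-T_1=a(0)\,\id\), taking traces gives \(a(0)=0\). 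This is the same trace comparison you make in the complete-intersection case, done intrinsically, so no free resolution, duality, or Tor computation is needed.
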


\begin{proof}
Every derivation of \(B\) preserves \(\mathfrak{m}_B\), since its local flow preserves the isolated singular point. Suppose that \(\delta\) induces a nilpotent endomorphism of \(\mathfrak{m}_B/\mathfrak{m}_B^2\). Put \(a:=\divr_\eta\delta\). We show that \(a(0)=0\).

Define
\[
 J_{\mathrm{can}}:=\{f\in B:f\eta\in\im(\Omega_B^2\to\omega_B)\}.
\]
This ideal is nonzero and is the unit ideal away from the singular point. Hence it is either \(B\) or \(\mathfrak{m}_B\)-primary. Lie differentiation preserves the image of K\"{a}hler two-forms, and
\[
 \mathcal{L}_\delta(f\eta)=(\delta(f)+af)\eta.
\]
Since \(aJ_{\mathrm{can}}\subseteq J_{\mathrm{can}}\), it follows that \(\delta(J_{\mathrm{can}})\subseteq J_{\mathrm{can}}\).

The action of \(\delta\) on coefficients and Lie differentiation of forms induce endomorphisms \(T_1\) and \(T_2\) of \(J_{\mathrm{can}}/\mathfrak{m}_BJ_{\mathrm{can}}\), respectively. For the latter, we use multiplication by \(\eta\) to identify the quotient with \((J_{\mathrm{can}}\eta)/\mathfrak{m}_B(J_{\mathrm{can}}\eta)\). Thus
\[
 T_1([f]):=[\delta(f)],\quad
 T_2([f]):=[\delta(f)+af],\quad
 T_2-T_1=a(0)\id.
\]

The induced derivation on each \(\mathfrak{m}_B^j/\mathfrak{m}_B^{j+1}\) is nilpotent, since this space is an equivariant quotient of \(\Sym_{\mathbb{C}}^j(\mathfrak{m}_B/\mathfrak{m}_B^2)\) with its derivation action. The finite filtration by powers of \(\mathfrak{m}_B\) therefore shows that \(\delta\) acts nilpotently on \(B/\mathfrak{m}_B^N\) for every \(N\). Choose \(N\) such that \(\mathfrak{m}_B^N\subseteq\mathfrak{m}_BJ_{\mathrm{can}}\). The endomorphism \(T_1\) is induced on a \(\delta\)-stable subquotient of \(B/\mathfrak{m}_B^N\), so it is nilpotent.

The natural surjection
\[
 \Omega_B^2\otimes_B\mathbb{C}
 \to (J_{\mathrm{can}}\eta)/\mathfrak{m}_B(J_{\mathrm{can}}\eta)
\]
commutes with Lie differentiation. Under the identification
\[
 \Omega_B^2\otimes_B\mathbb{C}
 \simeq\bigwedge_{\mathbb{C}}^2(\mathfrak{m}_B/\mathfrak{m}_B^2),
\]
the Lie derivative is the derivation action induced by the nilpotent endomorphism on the cotangent space. It is therefore nilpotent, and so is its quotient \(T_2\).

Nakayama's lemma gives \(J_{\mathrm{can}}/\mathfrak{m}_BJ_{\mathrm{can}}\ne 0\). Taking traces yields
\[
 0=\tr(T_2)-\tr(T_1)
 =a(0)\dim_{\mathbb{C}}(J_{\mathrm{can}}/\mathfrak{m}_BJ_{\mathrm{can}}).
\]
Hence \(a(0)=0\).

Consequently, if \(\divr_\eta\delta\) is a unit, then the induced endomorphism of \(\mathfrak{m}_B/\mathfrak{m}_B^2\) is nonnilpotent. The analytic criterion of Scheja--Wiebe~\cite[Satz~(3.1)]{SchejaWiebe1980} now gives quasi-homogeneity; see also Wahl~\cite[\S3.16]{Wahl1985}.
\end{proof}

\subsubsection{A nonzero canonical-character quotient}

\begin{proposition}\label{prop:strict-residue}
Let \((\widetilde{V},0)\) be a Gorenstein normal complex surface singularity that is not quasi-homogeneous, put \(B:=\mathcal{O}_{\widetilde{V},0}\), and let a finite group \(G\) act on \((\widetilde{V},0)\). Choose a generator \(\eta\) of \(\omega_B\) satisfying \(g\cdot\eta=\kappa(g)\eta\) for every \(g\in G\). Then
\[
 \dim(\mathcal{R}_{\mathrm{res}})_\kappa\geq 1.
\]
\end{proposition}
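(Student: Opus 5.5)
The plan is to use the class of the canonical generator. Let \([\eta]\in\mathcal{Q}=\omega_B/d\Omega_B^{[1]}\) be its class; it lies in \(\mathcal{Q}_\kappa\). Since \(\mathcal{R}_{\mathrm{res}}=I_{\mathrm{reg}}\oplus(W_F/\mathcal{P})\oplus(\mathcal{Q}/I_{\log})\) and isotypic projection is exact, it suffices to produce a nonzero \(\kappa\)-vector in one of the last two summands.

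Smooth germs and rational double points are quasi-homogeneous, so \(B\) is singular and not a rational double point. Hence the setting of Section~\ref{sec:forms} applies on the minimal good resolution \((Y,F)\), and \(\divr_Y(\eta)=-Z\) with \(Z\geq F\).

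\emph{Step 1 (divergence argument).} If \([\eta]\notin I_{\log}\), its image in \((\mathcal{Q}/I_{\log})_\kappa\) is nonzero and we are done. So suppose \(\eta=\beta+d\alpha\) with \(\beta\in H^0(Y,\omega_Y(F))\) and \(\alpha\in\Omega_B^{[1]}\). Applying \(P_\kappa\), which commutes with \(d\) and preserves both spaces, we may take \(\alpha\) and \(\beta\) to be \(\kappa\)-eigenvectors. By \eqref{eq:one-form-sections} and \eqref{eq:der-lifting}, contraction with \(\eta\) is surjective from \(\Der_{\mathbb{C}}B\) onto \(\Omega_B^{[1]}\). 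Thus \(\alpha=\iota_\delta\eta\) for some derivation \(\delta\), and \(d\alpha=(\divr_\eta\delta)\eta\). This gives
\[
 \beta=(1-\divr_\eta\delta)\eta.
\]
Since \(B\) is not quasi-homogeneous, Proposition~\ref{prop:unit-divergence} gives \(\divr_\eta\delta\in\mathfrak{m}_B\). Hence \(u:=1-\divr_\eta\delta\) is a unit, and \(\eta=u^{-1}\beta\) is logarithmic on \(Y\). Therefore \(Z=F\). This is the boundary case.

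\emph{Step 2 (residue argument in the boundary case).} Now \(\eta\in H^0(Y,\omega_Y(F))\) has a simple pole along every \(F_i\). Its residue \(\Res(\eta)\in W_F\) is therefore nonzero on every component, and it is a \(\kappa\)-eigenvector by equivariance of \eqref{eq:adjunction}. It remains to show that \(\Res(\eta)\notin\iota(\mathcal{P})\). Recall from Lemma~\ref{lem:closed-one-forms} that \(\iota(\mathcal{P})\) lies in \(\bigoplus_iH^0(F_i,\omega_{F_i})\), that is, among sections of \(\omega_F\) with no poles at the nodes.

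First I would try to show directly that \(\Res(\eta)\) has a nonzero pole at some node. If \(F\) has no node, then \(F\) is a single smooth curve and \(\eta\) has simple poles along it. Then \(\omega_Y(F)\) is trivial near \(F\), and adjunction forces \(F\) to be elliptic with \(F^2<0\). This is the simple elliptic case, which is quasi-homogeneous and hence excluded. If nodes exist, the local form \(h\,dx\wedge dy/(xy)\) with \(h(0)\neq0\) shows that \(\Res(\eta)\) has nonzero residues at each node, so it cannot lie in \(\bigoplus_iH^0(F_i,\omega_{F_i})\). As a fallback, I would use the classification: a Gorenstein singularity with \(Z=F\) and not a rational double point is log canonical, hence simple elliptic or a cusp. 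Simple elliptic singularities are quasi-homogeneous. For a cusp all \(F_i\) are rational, so \(\mathcal{P}=0\) and \(W_F/\mathcal{P}=W_F\ni\Res(\eta)\neq0\).

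In either case \(\dim(W_F/\mathcal{P})_\kappa\geq1\), which proves the proposition. I expect Step 2 to be the main obstacle, namely ruling out \(\Res(\eta)\in\iota(\mathcal{P})\) without invoking the classification. Step 1 is a direct application of Proposition~\ref{prop:unit-divergence}.
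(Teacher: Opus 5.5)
Your proof is correct and follows the paper's structure. Step 1 is the contrapositive of the paper's case \(D\neq0\): the paper identifies \(\mathcal{Q}/I_{\log}\) with \(B/(\divr_\eta(\Der_{\mathbb{C}}B)+(p_*\mathcal{O}_Y(-D))_0)\otimes\mathbb{C}_\kappa\) and evaluates at the origin, which amounts to \([\eta]\notin I_{\log}\). The only real difference is in the boundary case \(Z=F\). There the paper uses adjunction to show that \(F\) is a smooth elliptic curve or a cycle of rational curves, so \(W_F\simeq\mathbb{C}_\kappa\) and \(\mathcal{P}=0\). You instead note that \(\Res(\eta)\) has nonzero residues at the nodes, so it lies outside \(\bigoplus_iH^0(F_i,\omega_{F_i})\supseteq\iota(\mathcal{P})\). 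Once the nodeless, simple elliptic case is excluded, this already completes Step 2, so neither your classification fallback nor your stated worry about Step 2 is needed.
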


\begin{proof}
The singularity \((\widetilde{V},0)\) is not a rational double point. Let \(p\colon(Y,F)\to(\widetilde{V},0)\) be its minimal good resolution. Put \(Z:=-\divr_Y(p^*\eta)\) and \(D:=Z-F\). By Wahl~\cite[Lemma~3.6]{Wahl1985}, \(Z\geq F\), so \(D\geq 0\).

\emph{The case \(D\ne 0\).} Recall that \(\mathcal{Q}=\omega_B/d\Omega_B^{[1]}\) and that \(I_{\log}\) is the image of \((p_*\omega_Y(F))_0\) in \(\mathcal{Q}\). Contraction with \(\eta\) gives a \(G\)-equivariant isomorphism
\[
 \Der_{\mathbb{C}}(B)\otimes_{\mathbb{C}}\mathbb{C}_\kappa
 \xrightarrow{\sim}\Omega_B^{[1]}.
\]
Moreover,
\[
 (p_*\omega_Y(F))_0=(p_*\mathcal{O}_Y(-D))_0\eta.
\]
Thus there is an isomorphism of \(G\)-representations
\begin{equation*}%\label{eq:strict-quotient}
 \mathcal{Q}/I_{\log}\simeq
 \frac{B}{\divr_\eta(\Der_{\mathbb{C}}(B))+(p_*\mathcal{O}_Y(-D))_0}
 \otimes_{\mathbb{C}}\mathbb{C}_\kappa.
\end{equation*}
Here the denominator is a complex subspace of \(B\). Since \(g\cdot\eta=\kappa(g)\eta\), Lie differentiation gives
\[
 \divr_\eta(g\cdot\delta)
 =g\cdot\divr_\eta\delta.
\]
Hence the denominator is \(G\)-stable.

Proposition~\ref{prop:unit-divergence} gives \(\divr_\eta(\Der_{\mathbb{C}}(B))\subseteq\mathfrak{m}_B\). Since \(D\) is nonzero and effective, a unit cannot vanish along \(D\), so \((p_*\mathcal{O}_Y(-D))_0\subseteq\mathfrak{m}_B\) as well. Evaluation at the origin therefore induces a \(G\)-equivariant surjection
\[
 \mathcal{Q}/I_{\log}\twoheadrightarrow\mathbb{C}_\kappa.
\]
Since complex representations of finite groups are semisimple and \(\mathcal{Q}/I_{\log}\) is a summand of \(\mathcal{R}_{\mathrm{res}}\), we obtain
\[
 \dim(\mathcal{R}_{\mathrm{res}})_\kappa
 \geq\dim(\mathcal{Q}/I_{\log})_\kappa
 \geq 1.
\]

\emph{The case \(D=0\).} Then \(Z=F\), so \(p^*\eta\) is a nowhere vanishing section of \(\omega_Y(F)\) with character \(\kappa\). Adjunction gives
\[
 \omega_F\simeq\mathcal{O}_F\otimes_{\mathbb{C}}\mathbb{C}_\kappa.
\]
Since \(F\) is connected, proper, and reduced, \(H^0(F,\mathcal{O}_F)=\mathbb{C}\). Consequently,
\[
 W_F\simeq\mathbb{C}_\kappa.
\]

Let \(n_i\) denote the number of intersections of \(F_i\) with the other components, counted with multiplicity. Restricting \(\omega_Y(F)\) to \(F_i\) gives
\[
 2g(F_i)-2+n_i=0.
\]
Connectedness therefore implies that \(F\) is either a single smooth elliptic curve or a cycle of smooth rational curves. In the first case, \((\widetilde{V},0)\) is simple elliptic and hence quasi-homogeneous; see Wahl~\cite[\S3.8]{Wahl1985}. This contradicts the hypothesis.

In the rational-cycle case, every \(H^0(F_i,\omega_{F_i})\) is zero. The injection of Lemma~\ref{lem:closed-one-forms} therefore gives \(\mathcal{P}=0\). Thus the summand \(W_F/\mathcal{P}\) of \(\mathcal{R}_{\mathrm{res}}\) is isomorphic to \(\mathbb{C}_\kappa\), proving the assertion.
\end{proof}

\subsubsection{Finite quotients and the converse}

We recall a consequence of M\"{u}ller's results on reductive automorphism groups.

\begin{lemma}\label{lem:qh-quotient-descent}
Let \((\widetilde{V},0)\) be a quasi-homogeneous Gorenstein normal complex surface singularity that is not a rational double point, and let a finite group \(H\) act on \((\widetilde{V},0)\). Then the quotient \((V,0):=(\widetilde{V},0)/H\) admits a good \(\mathbb{C}^*\)-action.
\end{lemma}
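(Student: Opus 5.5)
The approach is the one indicated in the text: use Müller's theorem on reductive automorphism groups. A good \(\mathbb{C}^*\)-action on \((\widetilde{V},0)\) gives a one-dimensional torus inside the automorphism group \(\Aut(\widetilde{V},0)\). The finite group \(H\) is also a reductive subgroup of this group. Müller's results say that a reductive subgroup of the automorphism group of a complex analytic germ can be conjugated into a maximal reductive subgroup. Two maximal reductive subgroups are conjugate, and they are algebraic groups acting linearly after a suitable embedding. The plan is therefore to produce a maximal reductive subgroup \(K\) that contains both a conjugate of \(H\) and a torus, and whose identity component commutes with that conjugate.

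\emph{Step 1.} Choose a maximal reductive subgroup \(K\supseteq H\) of \(\Aut(\widetilde{V},0)\). Since the given \(\mathbb{C}^*\) lies in some maximal reductive subgroup, and these are all conjugate, \(K\) also contains a one-parameter subgroup \(\lambda\) conjugate to the good action. A conjugate of a good action is again good, so \(\lambda\) has positive weights on the cotangent space.

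\emph{Step 2 (the main obstacle).} We need a good one-parameter subgroup that commutes with \(H\), not merely one lying in \(K\). The first attempt is to show that the identity component \(K^\circ\) is a torus of rank one, or at least has a central good one-parameter subgroup. For a Gorenstein singularity that is not a rational double point, one expects \(K^\circ\) to be \(\mathbb{C}^*\) or a torus. The reason is that the minimal good resolution has an exceptional curve of positive genus or a nontrivial graph, and \(\Aut\) acts on the resolution; a unipotent or \(\SL_2\)-part would force the surface to be a cone over \(\mathbb{P}^1\), that is, a cyclic quotient. This is where the hypothesis excluding rational double points is used.

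Granting Step 2, \(H\) normalizes \(K^\circ\). Conjugation by \(H\) then permutes the good one-parameter subgroups of the torus \(K^\circ\). The cone of such subgroups is open, convex and preserved by \(H\). Averaging a good cocharacter over the finite group \(H\) therefore gives an \(H\)-fixed good cocharacter, which commutes with \(H\). If \(K^\circ\) is a torus of rank greater than one, rather than \(\mathbb{C}^*\), this averaging step is needed; it uses that the weights on the finite-dimensional cotangent space are positive on an open cone.

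\emph{Step 3.} This \(H\)-commuting good \(\mathbb{C}^*\)-action preserves \(A^H\) and descends to \((V,0)=(\widetilde{V},0)/H\). Positivity of its weights on \(\mathfrak{m}_B/\mathfrak{m}_B^2\) gives positive weights on the invariants. The weight-zero part of \(B^H\) is contained in \(B_0=\mathbb{C}\). The descended action is therefore good.
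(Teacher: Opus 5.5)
Your strategy is the paper's. You use M\"uller's conjugacy theorem to place $H$ in a maximal reductive subgroup $K$ of $\Aut(\widetilde{V},0)$, extract from the structure of $K^\circ$ a good $\mathbb{C}^*$ commuting with $H$, and descend the grading to the invariants. Steps~1 and~3 are fine. In Step~3 you mean $B^H=A$. The clean formulation, as in the paper, is that $H$ preserves each finite-dimensional weight space $R_n$ of the graded ring $R$ of $\mathbb{C}^*$-finite elements. Hence $R^H$ is a finitely generated positively graded algebra with $R^H_0=\mathbb{C}$, and its analytic local ring at the vertex is $B^H$.

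The gap is Step~2. It carries the entire content of the lemma, and you only say you ``expect'' it; the resolution heuristic does not establish it. The missing input is exactly M\"uller's Theorem~2 with Remark~(iii): for a quasi-homogeneous germ that is not a cyclic quotient singularity, the identity component of a maximal reductive subgroup is the good $\mathbb{C}^*$. This is where the hypothesis enters, and its only role is to rule out cyclic quotients, since a Gorenstein cyclic quotient singularity is an $A_n$ point.

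You can also prove Step~2 directly.
\begin{itemize}
\item If $\rank K^\circ\geq 2$, a maximal torus $T$ of $K^\circ$ containing $\lambda$ commutes with $\lambda$. It therefore acts faithfully by graded automorphisms on $R$, so the normal germ is toric with a fixed point, hence a cyclic quotient.
\item If $\rank K^\circ=1$, then $\lambda$ is a maximal torus of $K^\circ$, which is isomorphic to $\mathbb{C}^*$, $\SL_2$ or $\mathrm{PGL}_2$. In the last two cases a Weyl element conjugates $\lambda$ to $\lambda^{-1}$. This is impossible, because conjugation preserves the positive weights of $\lambda$ on $\mathfrak{m}_B/\mathfrak{m}_B^2$.
\end{itemize}
Once $K^\circ=\mathbb{C}^*$, conjugation by $h\in H$ acts on it by $t\mapsto t^{\pm1}$, and the same weight argument excludes $t^{-1}$. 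So $K^\circ$ is central in $K$ and commutes with $H$. Your torus-averaging argument is correct, but the higher-rank torus case never occurs.
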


\begin{proof}
The singularity \((\widetilde{V},0)\) is not a cyclic quotient singularity, since a Gorenstein cyclic quotient singularity is a rational double point. By M\"{u}ller~\cite[\S2, Theorem~1]{Mueller}, after conjugating \(H\) by an analytic automorphism, we may assume that \(H\) is contained in a maximal reductive algebraic subgroup of \(\Aut(\widetilde{V},0)\). By M\"{u}ller~\cite[\S2, Theorem~2 and Remark~(iii)]{Mueller}, the identity component of this subgroup is the central \(\mathbb{C}^*\) defining a good action. Hence this action commutes with \(H\).

Put \(B:=\mathcal{O}_{\widetilde{V},0}\) and \(A:=\mathcal{O}_{V,0}=B^H\). As in the proof of M\"{u}ller's Theorem~2, choose homogeneous coordinates in which the maximal reductive group acts linearly and the defining equations are weighted homogeneous. Let \(R\) be the resulting positively graded coordinate ring, with \(R_0=\mathbb{C}\). The action of \(H\) preserves each graded piece, so
\[
 R^H=\bigoplus_{n\geq 0}R_n^H.
\]
This invariant algebra is finitely generated, has degree-zero part \(\mathbb{C}\), and has only positive degrees otherwise. The finite quotient gives
\[
 \mathcal{O}_{(\Spec(R^H))^{\mathrm{an}},0}
 \simeq B^H=A.
\]
Thus the positive grading defines a good \(\mathbb{C}^*\)-action on \((V,0)\).
\end{proof}

\begin{corollary}\label{cor:strict-base}
Let \((V,0)\) be a normal complex surface singularity which is not quasi-homogeneous. If \((V,0)\) is Gorenstein, then \(\Delta(V,0) \geq 2\). If \((V,0)\) is non-Gorenstein and \(\mathbb{Q}\)-Gorenstein, then \(\Delta(V,0) \geq 1\).
\end{corollary}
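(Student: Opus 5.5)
The plan is to pass to the canonical cover \(q\colon(\widetilde{V},0)\to(V,0)\), with local ring \(B\), group \(G\) and canonical character \(\kappa\), and read off the bound from Theorem~\ref{thm:uniform} combined with Proposition~\ref{prop:strict-residue}.

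\emph{Step 1: the cover is not quasi-homogeneous.} If \((\widetilde{V},0)\) were quasi-homogeneous, then it could not be smooth or a rational double point. Indeed, if \(B\) were smooth or a rational double point, \eqref{eq:quotient-cover} would exhibit \(A\) as a linear quotient \(\mathbb{C}\{x,y\}^\Gamma\), which is quasi-homogeneous. Hence \((\widetilde{V},0)\) would be a quasi-homogeneous Gorenstein singularity other than a rational double point. Lemma~\ref{lem:qh-quotient-descent} with \(H=G\) would then make \((V,0)=(\widetilde{V},0)/G\) quasi-homogeneous, a contradiction. In the Gorenstein case the cover is trivial and this step is immediate. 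Thus \(B\) is singular, is not a rational double point, and is not quasi-homogeneous.

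\emph{Step 2: apply the representation formula.} Since \(B\) is singular and not a rational double point, \eqref{eq:uniform-nonADE} gives
\[
 \Delta(A)=\delta_{\kappa,1}+\dim(\mathcal{H}_{\mathfrak{m}})_\kappa .
\]
By \eqref{eq:Hm-class}, \([\mathcal{H}_{\mathfrak{m}}]=[\mathcal{R}_{\mathrm{res}}]\) in \(R(G)\), so \(\dim(\mathcal{H}_{\mathfrak{m}})_\kappa=\dim(\mathcal{R}_{\mathrm{res}})_\kappa\). Proposition~\ref{prop:strict-residue} applies to the \(G\)-action on the non-quasi-homogeneous Gorenstein cover, with \(\eta\) the \(\kappa\)-eigengenerator fixed in Section~\ref{qg:sec:relative-cover}. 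It gives \(\dim(\mathcal{R}_{\mathrm{res}})_\kappa\geq1\).

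\emph{Step 3: conclude.} In the Gorenstein case \(\kappa=1\), so \(\Delta(A)\geq 1+1=2\). In the non-Gorenstein case \(r>1\) and \(\kappa\neq\mathbf{1}\), so \(\delta_{\kappa,1}=0\) and \(\Delta(A)\geq1\).

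The main obstacle is Step 1, namely guaranteeing that the cover inherits non-quasi-homogeneity. The part needing care is the ADE/smooth-cover case. There one must check that the linearization of \(\Gamma\) in \eqref{eq:quotient-cover} indeed produces a good \(\mathbb{C}^*\)-action on \(A\), via the scalar action on \(\mathbb{C}^2\), which commutes with the linear \(\Gamma\).
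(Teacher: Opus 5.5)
Your proposal is correct and follows the paper's proof essentially step for step. You rule out smooth and rational double point covers through the linear quotient presentation \eqref{eq:quotient-cover}, pass non-quasi-homogeneity to the cover with Lemma~\ref{lem:qh-quotient-descent}, and then combine \eqref{eq:uniform-nonADE}, \eqref{eq:Hm-class} and Proposition~\ref{prop:strict-residue}, with the term \(\delta_{\kappa,1}\) separating the bounds \(2\) and \(1\). The opening sentence of your Step~1 is oddly worded, since excluding smooth and rational double point covers does not use the hypothesis that the cover is quasi-homogeneous; the argument you actually give is the correct one.
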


\begin{proof}
Put \(A:=\mathcal{O}_{V,0}\). Suppose first that \((V,0)\) is Gorenstein. Since it is not quasi-homogeneous, it is not a rational double point. Its canonical cover is trivial, so \(B=A\), \(G=\{1\}\), and \(\kappa=1\). Equations~\eqref{eq:uniform-nonADE} and~\eqref{eq:Hm-class}, together with Proposition~\ref{prop:strict-residue}, give
\[
 \Delta(V,0)
 =1+\dim\mathcal{H}_{\mathfrak{m}}
 =1+\dim\mathcal{R}_{\mathrm{res}}
 \geq 2.
\]

Suppose now that \((V,0)\) is non-Gorenstein and \(\mathbb{Q}\)-Gorenstein. Let \(q\colon(\widetilde{V},0) \to (V,0)\) be its canonical cover, with covering group \(G\) and canonical character \(\kappa\), and put \(B:=\mathcal{O}_{\widetilde{V},0}\). Then \(\kappa\neq 1\). If \(\widetilde{V}\) is smooth at the origin or \((\widetilde{V},0)\) is a rational double point, the quotient presentation \eqref{eq:quotient-cover} shows that scalar multiplication on \(\mathbb{C}^2\) induces a good \(\mathbb{C}^*\)-action on \((V,0)\). This contradicts the hypothesis. Thus \((\widetilde{V},0)\) is singular and is not a rational double point. If it were quasi-homogeneous, Lemma~\ref{lem:qh-quotient-descent} would imply that \((V,0)\) is quasi-homogeneous. Hence \((\widetilde{V},0)\) is not quasi-homogeneous.

Since \(\kappa\neq 1\), equations~\eqref{eq:uniform-nonADE} and~\eqref{eq:Hm-class}, together with Proposition~\ref{prop:strict-residue}, give
\[
 \Delta(V,0)
 =\dim(\mathcal{H}_{\mathfrak{m}})_\kappa
 =\dim(\mathcal{R}_{\mathrm{res}})_\kappa
 \geq 1. \qedhere
\]
\end{proof}

\subsection{Proof of the main theorem}
\label{sec:conclusion}

We now prove Theorem~\ref{thm:main}.

\begin{proof}
Put \(A=\mathcal{O}_{V,0}\), and let \(q\colon(\widetilde{V},0) \to (V,0)\) be the canonical cover, with local ring \(B=\mathcal{O}_{\widetilde{V},0}\), covering group \(G\) and canonical character \(\kappa\). We use \(\Delta(A)=\Delta(V,0)\). If \(A\) is Gorenstein, then \(B=A\), \(G=\{1\}\), and \(\kappa=1\). Suppose first that \(B\) is singular and is not a rational double point. By Theorem~\ref{thm:uniform},
\[
 \Delta(V,0)=\Delta(A)=\delta_{\kappa,1}
                 +\dim(\mathcal{H}_{\mathfrak{m}})_\kappa
 \geq \varepsilon(V,0),
\]
since \(\delta_{\kappa,1}=\varepsilon(V,0)\). If \(A\) is quasi-homogeneous, its good action lifts to \(B\) by Lemma~\ref{lem:graded-canonical-cover}, and Corollary~\ref{cor:qh-residue-zero} gives \(\mathcal{H}_{\mathfrak{m}}=0\). Thus equality holds. Conversely, Corollary~\ref{cor:strict-base} gives \(\Delta(A) \geq \varepsilon(V,0)+1\) whenever \(A\) is not quasi-homogeneous.

If \(B\) is smooth or a rational double point and \(G\) is nontrivial, Section~\ref{sec:quotient-covers} presents \(A\) as a finite linear quotient of the plane and gives \(\Delta(A)=0\). The canonical character is nontrivial, so \(\varepsilon(V,0)=0\), and scalar multiplication on the plane makes \(A\) quasi-homogeneous. Finally, if \(G\) is trivial, the singularity \(A=B\) is a rational double point, and Proposition~\ref{prop:ADE-zero} gives \(\Delta(A)=0\).
\end{proof}

\begin{remark}
For a singular canonical cover other than a rational double point, the difference between the lower bounds \(1\) and \(0\) is the contribution of the trivial summand \(T_B\) to the \(\kappa\)-isotypic component. The role of a trivial summand in distinguishing the two bounds already appears in Wahl's hypersurface quotient calculation~\cite[proof of Theorem~5.2]{Wahl2015}. Here it is realized by the explicit quotient of two-forms defining \(T_B\), together with the residue representation, for general canonical covers without a smoothing hypothesis. Quasi-homogeneity makes the entire residue representation vanish by Euler contraction. In the other direction, failure of quasi-homogeneity supplies a nonzero canonical-character quotient by Proposition~\ref{prop:strict-residue}.
\end{remark}

\section{Wahl's conjectures}
\label{sec:corollaries}

We apply Theorem~\ref{thm:main} to prove Wahl's \(\mathbb{Q}\)-Gorenstein and Rational Conjectures.

\begin{theorem}[Wahl's \(\mathbb{Q}\)-Gorenstein conjecture]\label{thm:QG-conjecture}
For a strong \(\mathbb{Q}\)-Gorenstein smoothing \(\pi\) of a non-Gorenstein normal surface singularity \((V,0)\),
\[
 1+\mu_\pi \geq \tau_\pi,
\]
with equality if and only if \((V,0)\) is quasi-homogeneous.
\end{theorem}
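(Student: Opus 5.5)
The plan is to combine Wahl's smoothing formula \eqref{eq:introduction-wahl-smoothing} with Theorem~\ref{thm:main}.

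First, I check that Wahl's formula applies. Since \((V,0)\) is non-Gorenstein, it is not a rational double point, so Wahl~\cite[Corollary~2.5]{Wahl2015} gives
\[
 \Delta(V,0)=1+\mu_\pi-\tau_\pi+\alpha_\pi .
\]
Next, I eliminate the correction term. Because \(\pi\) is a strong \(\mathbb{Q}\)-Gorenstein smoothing, meaning a quotient of a \(G\)-equivariant smoothing of the index-one cover with trivial action on the base, Wahl~\cite[Lemma~5.1]{Wahl2015} gives \(\alpha_\pi=0\). Heuristically, the dual relative canonical module is the \(\kappa^{-1}\)-eigensheaf of the flat relative cover, so it specializes. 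Hence
\[
 \Delta(V,0)=1+\mu_\pi-\tau_\pi .
\]

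Then I apply positivity. The central fiber of a \(\mathbb{Q}\)-Gorenstein smoothing is \(\mathbb{Q}\)-Gorenstein, as recalled at the end of Section~\ref{qg:sec:relative-cover}. It is non-Gorenstein by hypothesis, so \(\varepsilon(V,0)=0\). Theorem~\ref{thm:main} then gives \(\Delta(V,0)\geq0\), with equality exactly when \((V,0)\) is quasi-homogeneous. Substituting, \(1+\mu_\pi-\tau_\pi\geq0\), which is \(1+\mu_\pi\geq\tau_\pi\). Equality in this inequality is equivalent to \(\Delta(V,0)=0\), and therefore to quasi-homogeneity.

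The only delicate point is matching conventions. I must confirm that Wahl's Lemma~5.1 is stated for the same strong notion of \(\mathbb{Q}\)-Gorenstein smoothing fixed in Section~\ref{qg:sec:relative-cover}, which is the index-one-cover quotient with trivial action on the base. I must also confirm that \(\tau_\pi\) is the dimension of the smoothing component containing \(\pi\), as in the introduction. Both are definitional, so there is no genuine obstacle beyond citing Theorem~\ref{thm:main}.
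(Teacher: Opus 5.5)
Your proposal is correct and follows the paper's proof essentially verbatim: the paper likewise uses Wahl's Lemma~5.1 to get \(\alpha_\pi=0\), so that \eqref{eq:introduction-wahl-smoothing} becomes \(\Delta(V,0)=1+\mu_\pi-\tau_\pi\), and then applies Theorem~\ref{thm:main} with \(\varepsilon(V,0)=0\). (Your heuristic eigensheaf remark has the character sign reversed relative to the paper's conventions, where \(\omega_R^{[i]}\) is the \(\kappa^{-i}\)-eigensummand, so the dual canonical module is the \(\kappa\)-eigensummand; this plays no role in the argument, and \(\alpha_\pi=\delta_{-1}(\pi)=0\) also follows internally from Proposition~\ref{thm:finite}.)
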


\begin{proof}
Since \(\pi\) is a strong \(\mathbb{Q}\)-Gorenstein smoothing, the central singularity \((V,0)\) is \(\mathbb{Q}\)-Gorenstein. It is non-Gorenstein by assumption, so Theorem~\ref{thm:main} gives \(\Delta(V,0) \ge 0\), with equality if and only if \((V,0)\) is quasi-homogeneous. Since \(\alpha_\pi=0\) by Wahl~\cite[Lemma~5.1]{Wahl2015}, Equation~\eqref{eq:introduction-wahl-smoothing} reads \(\Delta(V,0)=1+\mu_\pi-\tau_\pi\). Both assertions now follow from Theorem~\ref{thm:main}.
\end{proof}

The weaker assumption that \(K_{\mathcal{V}}\) is \(\mathbb{Q}\)-Cartier does not ensure this specialization property and allows \(\alpha_\pi>0\). Indeed, Example~\ref{ex:weak-qg} gives a weakly \(\mathbb{Q}\)-Gorenstein smoothing of a weighted homogeneous rational surface singularity with
\[
 \Delta(V,0)=0,\quad \alpha_\pi=2,\quad
 \mu_\pi=3,\quad \tau_\pi=6.
\]
Thus \(1+\mu_\pi-\tau_\pi=-2\), and Wahl's inequality fails. The index-one-cover hypothesis therefore cannot be replaced by the requirement that \(K_{\mathcal{V}}\) be \(\mathbb{Q}\)-Cartier.

\begin{theorem}[Wahl's Rational Conjecture]
\label{thm:rational-conjecture}
For a rational surface singularity which is not a rational double point,
\[
 h^1(X,L_X) \ge h^1(X,S_X)
\]
with equality if and only if the singularity is quasi-homogeneous.
\end{theorem}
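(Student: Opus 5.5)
The plan is to read the inequality directly off Theorem~\ref{thm:main}, after noting that rationality kills the first term of the Wahl defect. Let \((V,0)\) be rational and not a rational double point, with minimal good resolution \((X,E)\). Rationality means \(p_g=h^1(X,\mathcal{O}_X)=0\), so the defect reduces to
\[
 \Delta(V,0)=h^1(X,L_X)-h^1(X,S_X).
\]
The claimed inequality \(h^1(X,L_X)\geq h^1(X,S_X)\) is therefore the statement \(\Delta(V,0)\geq 0\), and the equality case is \(\Delta(V,0)=0\).

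Next I will check the hypotheses of Theorem~\ref{thm:main}.
\begin{itemize}
\item Every rational surface singularity is \(\mathbb{Q}\)-Gorenstein (Lipman~\cite[Proposition~17.1]{Lipman1969}, recalled in Section~\ref{qg:sec:relative-cover}).
\item A rational Gorenstein surface singularity is a rational double point. Since \((V,0)\) is rational but not a rational double point, it is non-Gorenstein, so \(\varepsilon(V,0)=0\).
\end{itemize}

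Theorem~\ref{thm:main} then gives \(\Delta(V,0)\geq 0\), with equality if and only if \((V,0)\) is quasi-homogeneous. Substituting the displayed formula for \(\Delta\) gives exactly the asserted inequality and its equality criterion.

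The only step not already stated in the paper is the fact that a rational Gorenstein surface singularity is a rational double point. I would justify it as follows. For a rational singularity, \(\omega_A\simeq A\) means the canonical cycle is numerically trivial on the minimal resolution. Hence \(K_X\cdot E_i=0\) for every component, and combined with \(E_i\simeq\mathbb{P}^1\) this forces every \(E_i\) to be a \((-2)\)-curve, i.e.\ an ADE configuration. Alternatively, I can simply cite this standard fact (Artin). I do not expect any real obstacle here.
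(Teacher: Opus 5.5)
Your proposal is correct and follows the paper's proof exactly: rationality gives \(h^1(X,\mathcal{O}_X)=0\) and the \(\mathbb{Q}\)-Gorenstein property, Artin's theorem shows the singularity is non-Gorenstein so that \(\varepsilon(V,0)=0\), and Theorem~\ref{thm:main} then yields both the inequality and the equality criterion. Your optional sketch of Artin's fact skips the step that turns \(\omega_A\simeq A\) into numerical triviality of the canonical cycle. That step uses \(p_g=0\): the generator pulls back to a regular two-form, whose effective divisor has nonnegative degree on every component of the minimal resolution and is therefore zero. Simply citing Artin, as the paper does, is enough.
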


\begin{proof}
A rational Gorenstein surface singularity is a rational double point, by Artin~\cite[pp.~134--136]{Artin1966}. Thus the hypotheses imply that \((V,0)\) is non-Gorenstein. It is \(\mathbb{Q}\)-Gorenstein, as recalled in Section~\ref{qg:sec:relative-cover}, so Theorem~\ref{thm:main}, together with \(h^1(X,\mathcal{O}_X)=0\), proves the inequality and its equality criterion.
\end{proof}

\section{A modified form of Wahl's Main Conjecture}
\label{genus:sec:bound}

The comparison of differential forms also gives a lower bound for arbitrary normal surface singularities. Let \((X,E) \to (V,0)\) be the minimal good resolution, and put \(p_g=p_g(V,0)=h^1(X,\mathcal{O}_X)\).

\begin{theorem}\label{genus:thm:main}
For every normal complex surface singularity \((V,0)\),
\[
 \Delta(V,0)+4p_g(V,0) \ge 0
\]
with equality if and only if \((V,0)\) is rational and quasi-homogeneous.
\end{theorem}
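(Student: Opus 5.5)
We follow the outline announced in the introduction. First we construct a numerically trivial line bundle \(N\) on the minimal good resolution \(X\). We take \(N\) to correspond to a torsion character of \(H_1\) of the link, so that the associated finite cyclic cover is unramified over the punctured surface. Concretely, we choose a finite abelian cover \(\widetilde{V}\to V\), \'etale off the origin, after which a suitable twist of the canonical sheaf becomes trivial over the punctured cover. When \(V\) is not \(\mathbb{Q}\)-Gorenstein, \(K\) is only numerically a rational cycle. We therefore take \(N\) with \(N\otimes\mathcal{O}_X(K_X+E)\) numerically equivalent to an integral exceptional cycle, and pass to a cover on which the relevant form \(\eta\) has exceptional divisor only. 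On a resolution \(Y\) of that cover, we repeat the constructions of Section~\ref{sec:forms} with \(\eta\) replaced by an eigen-section of \(\omega\) twisted by \(N\). These are the comparison of Proposition~\ref{prop:comparison}, the residue sequence of Proposition~\ref{prop:residue-sequence}, and Theorem~\ref{thm:effective}. Taking the isotypic component corresponding to \(N\) then yields
\[
 h^1(X,N)-h^1(X,S_X\otimes N)+h^1(X,L_X\otimes N)\geq0,
\]
as an effective multiplicity of an explicit residue representation, exactly as the trivial multiplicity is obtained in Proposition~\ref{prop:transfer}.

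Second, we compare with the untwisted groups. For a vector bundle \(\mathcal{E}\) of rank \(s\) and numerically trivial \(N\), we want \(|h^1(\mathcal{E}\otimes N)-h^1(\mathcal{E})|\leq s\,p_g\). To prove this, choose a large exceptional cycle \(C\) with \(H^1(X,\mathcal{E}(-C))\to H^1(X,\mathcal{E})\) zero and similarly for \(\mathcal{E}\otimes N\). Then \(h^1\) is computed on \(C\) as \(h^1(\mathcal{E}|_C)\) minus a correction that depends only on sections. Riemann--Roch gives \(\chi(\mathcal{E}\otimes N|_C)=\chi(\mathcal{E}|_C)\), because \(N\) is numerically trivial. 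We then need to bound the difference of \(h^0\) contributions by \(s\) times \(h^1(\mathcal{O}_C)\leq p_g\). A cleaner route filters \(\mathcal{E}\) locally. Alternatively, one can use Laufer-type sequences \(0\to\mathcal{O}_X\to\cdots\) to reduce to the rank-one statement \(|h^1(N)-h^1(\mathcal{O}_X)|\leq p_g\), and then to \(h^1(N)\leq p_g\). The last inequality follows since \(H^1(X,N)\) is dual, via \(H^1_E\), to a quotient of \(H^0(\omega_X\otimes N^{-1})\) modulo forms without poles, whose size is bounded by \(p_g\). Applying this to \(\mathcal{O}_X,S_X,L_X\), of ranks \(1,2,1\), gives \(\Delta\geq -(1+2+1)p_g=-4p_g\).

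For the equality case, suppose first \(p_g=0\). The singularity is then rational, hence \(\mathbb{Q}\)-Gorenstein, and Theorem~\ref{thm:main} (or Theorem~\ref{thm:rational-conjecture}) gives \(\Delta\geq0\), with equality precisely when it is quasi-homogeneous. If \(p_g>0\), we must show that the comparison estimate is strict for at least one of the three bundles. The natural candidate is \(\mathcal{O}_X\), where \(h^1(N)\geq0\) against \(p_g\), or \(L_X\). Strictness amounts to showing that the two \(s p_g\) bounds for \(S_X\) and \(L_X\) cannot both be attained with opposite signs. I would attempt this by showing that \(h^1(\mathcal{O}_X)-h^1(N)=p_g\) forces \(h^1(N)=0\), which then forces specific section behaviour incompatible with equality for \(L_X\). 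This strict estimate is the step I expect to be the main obstacle. The construction of \(N\) in the non-\(\mathbb{Q}\)-Gorenstein case, where \(\eta\) is only a rational section, is the second delicate point.
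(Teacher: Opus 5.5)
Your outline has the same architecture as the paper's proof. It obtains a twisted inequality \(\Delta_N:=h^1(N)-h^1(S_X\otimes N)+h^1(L_X\otimes N)\geq 0\) on a cyclic cover, applies the variation bound \(|h^1(\mathcal{E})-h^1(\mathcal{E}\otimes N)|\leq s\,p_g\) with ranks \(1,2,1\), and uses Theorem~\ref{thm:rational-conjecture} when \(p_g=0\). However, the step you flag as the main obstacle is genuinely missing, and the rank-one route you propose rests on a false claim.

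\textbf{The strict estimate.} The bound \(|h^1(N)-p_g|\leq p_g\) amounts to \(h^1(N)\leq 2p_g\), not \(h^1(N)\leq p_g\), and the latter fails in general. Take the cone over a hyperelliptic curve \(E\) of genus \(5\) whose conormal bundle \(L\) is general of degree \(4\). Then \(p_g=\sum_{k\geq0}h^1(E,L^k)=5\). Let \(\theta\) be the hyperelliptic \(g^1_2\) and \(M=\theta^{2}\otimes L^{-1}\). The pullback \(N\) of \(M\) to the minimal resolution (the total space of \(L^{-1}\)) has \(h^1(N)=\sum_{k\geq0}h^1(E,M\otimes L^k)=4+3=7\). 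Were \(h^1(N)\leq p_g\) true, the \(\mathcal{O}_X\)-term would contribute \(p_g-h^1(N)\geq 0\) and strictness would come for free; that should have been a warning sign.

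In the inequality, the \(\mathcal{O}_X\)-term enters as \(p_g-h^1(N)\geq -p_g\). So the extremal case to exclude is \(h^1(N)=2p_g\), not \(h^1(N)=0\), and no interplay between the \(S_X\) and \(L_X\) bounds is needed. The missing ingredient is Lemma~\ref{genus:lem:strict}: for \(p_g>0\), \(h^1(N)\leq 2p_g-1\). Choose an exceptional cycle \(D\) of minimal coefficient sum with \(h^1(D,N|_D)=h^1(N)\). A Konno-type minimality argument shows that \(H^0(D,\omega_D\otimes N^{-1})\) restricts nontrivially to every reduced component. Hence a general section is a nonzerodivisor with finite-length cokernel, and
\(h^1(N)=h^0(\omega_D\otimes N^{-1})\leq -\chi(\mathcal{O}_D)+h^1(\mathcal{O}_D)\leq 2p_g-1\).
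This gives \(\Delta+4p_g\geq \Delta_N+2p_g-h^1(N)\geq 1\) when \(p_g>0\).

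Your sketch of the rank-\(s\) variation bound also needs an actual mechanism. The paper dualizes on a large cycle \(D\) and uses that \(s\) general sections generate, at every generic point, the subsheaf of \(\omega_D\otimes(\mathcal{E}|_D)^\vee\) spanned by global sections. There is no reduction of \(S_X\) to rank one.

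\textbf{The twisted inequality.} This step also cannot be obtained by ``repeating Section~\ref{sec:forms}'' as you describe.
\begin{itemize}
\item No numerically trivial \(N\) makes \(N\otimes\mathcal{O}_X(K_X+E)\) numerically integral when \(Z_K\) is not integral, since such twists do not change numerical classes. The paper instead takes an \(r\)-th root, \(N^{\otimes r}\simeq\omega_X^{\otimes r}(rZ_K)\), so \(N\) need not be torsion. This gives \(L_X\otimes N\) a flat logarithmic connection with finite monodromy. The cyclic cover killing that monodromy yields \(q^*(L_X\otimes N)\simeq\mathcal{O}_Y(D)\otimes\mathbb{C}_\chi\) on a toroidal resolution \(Y\), with \(Z_{K,Y}=F+D\).
\item The cover is then only numerically Gorenstein, and \(D\) need not be effective. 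So Proposition~\ref{prop:comparison}, which uses a Gorenstein \(B\), an eigen-generator \(\eta\) of \(\omega_B\), and \(Z\geq F\), does not apply. The paper substitutes three results: the Euler-characteristic identity of Lemma~\ref{genus:lem:euler} for arbitrary integral \(D\); the injection of Lemma~\ref{genus:lem:closed-restriction} into \(H^0(F_0,\omega_{F_0})\); and a residue surjection on the minimal good resolution relying on \(Z_{K,0}\geq F_0\) (Lemma~\ref{genus:lem:cycle}).
\item The return to \(X\) is not the function-index transfer of Proposition~\ref{prop:transfer}, which only computes the untwisted \(\Delta(A)\). It is the descent \(H^i(X,\mathcal{A})\simeq H^i(Y,q^*\mathcal{A})^G\) for vector bundles \(\mathcal{A}\), which comes from rationality of the cyclic quotient singularities of the normalization.
\end{itemize}
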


We prove Theorem~\ref{genus:thm:main} in Section~\ref{sec:proof-genus-thm-main}. The argument uses a numerically trivial line bundle \(N\) for which
\[
 h^1(N)-h^1(S_X\otimes N)+h^1(L_X\otimes N) \ge 0.
\]
Here we call a line bundle \emph{numerically trivial} if its degree on every exceptional component is zero. The bound then follows by estimating the change in cohomology under tensoring by \(N\). The construction of \(N\) uses the numerical anticanonical cycle and a finite cyclic cover.

\subsection{Numerically trivial line bundles}
We first estimate the change in first cohomology under tensoring by a numerically trivial line bundle in terms of the geometric genus. For line bundles, we also obtain a strict estimate when \(p_g>0\), which will be used to determine the equality case.

For line bundles on resolutions of surface singularities with rational homology sphere links, the corresponding estimate follows from Nagy--N\'{e}methi~\cite[\S5.1, Theorem~5.3.1, and Proposition~5.7.1(c)]{NagyNemethi2019}. We give a direct proof for vector bundles on arbitrary good resolutions.

\begin{lemma}\label{genus:lem:variation}
Let \(\pi\colon(X,E) \to (V,0)\) be a good resolution of a normal complex surface singularity, let \(F\) be a vector bundle of rank \(s\) on \(X\), and let \(N\) be a numerically trivial line bundle on \(X\). Then
\[
 |h^1(F)-h^1(F\otimes N)| \le s p_g,
\]
where \(p_g=h^1(X,\mathcal{O}_X)\).
\end{lemma}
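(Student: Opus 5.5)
The plan is to compare both cohomologies with the cohomology on a sufficiently large exceptional cycle, and then to control the part that depends on the twist using Serre duality together with the cycle that computes \(p_g\).

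\emph{Step 1: reduction to a cycle.} Choose an effective exceptional cycle \(C\) with large coefficients, for instance a large multiple of a strictly anti-nef cycle. Then formal-function and vanishing arguments give
\[
 H^1(X,F)\simeq H^1(C,F|_C),\qquad H^1(X,F\otimes N)\simeq H^1(C,(F\otimes N)|_C)
\]
for every vector bundle of rank \(s\). Indeed, the groups \(H^1(X,F(-C'))\) for larger cycles inject and eventually stabilize. More concretely, one uses that \(H^1(X,F(-kC))\to H^1(X,F)\) has zero image for \(k\gg0\), which follows from the theorem on formal functions. It therefore suffices to compare \(h^1\) on \(C\).

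\emph{Step 2: Euler characteristic.} Since \(N\) is numerically trivial, Riemann--Roch on the compact cycle \(C\) gives
\[
 \chi(C,(F\otimes N)|_C)=\chi(C,F|_C).
\]
Hence
\[
 h^1(F\otimes N)-h^1(F)=h^0(C,(F\otimes N)|_C)-h^0(C,F|_C).
\]
Equivalently, it equals the difference of the dimensions of the cokernels of \(H^0(X,F)\to H^0(C,F|_C)\) and of the analogous map for \(F\otimes N\). Thus the two \(h^1\)'s are both equal to the common number \(\chi\)-correction plus a term measured by sections on \(C\) modulo sections from \(X\).

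\emph{Step 3: the key bound.} I would use the exact sequence
\[
 0\to H^0(X,\mathcal{F}(-C))\to H^0(X,\mathcal{F})\to H^0(C,\mathcal{F}|_C)\to H^1(X,\mathcal{F}(-C))\to H^1(X,\mathcal{F})\to H^1(C,\mathcal F|_C)\to 0.
\]
By Step 1 the last map is an isomorphism, so
\[
 h^1(X,\mathcal{F})=h^1(C,\mathcal{F}|_C)
\]
and \(h^1(X,\mathcal{F}(-C))\) equals the dimension of the cokernel of \(H^0(X,\mathcal F)\to H^0(C,\mathcal F|_C)\). Comparing \(\mathcal F=F\) with \(\mathcal F=F\otimes N\), the difference of the \(h^1\) is bounded by \(\max h^1(X,\mathcal F(-C))\). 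By Serre (Grothendieck) duality,
\[
 H^1(X,\mathcal{F}(-C))\simeq H^1_E(X,\mathcal F^\vee\otimes\omega_X(C))^*.
\]
The main obstacle is to bound \(h^1(X,F(-C))\) by \(s\,p_g\) rather than by something growing with \(C\). For this I would instead choose \(C\) adapted to \(p_g\): take \(C=Z_K\)-type cohomological cycle, the minimal cycle \(C_0\) with
\[
 h^1(\mathcal O_{C_0})=p_g,
\]
so that \(H^1(X,\mathcal O_X(-C_0))\to H^1(X,\mathcal O_X)\) is zero. Filtering \(F\) locally is not available globally, so the first attempt is to argue on the long sequence directly: the image of \(H^0(C_0,\mathcal F|_{C_0})\) in \(H^1(X,\mathcal F(-C_0))\) has dimension \(h^0(C_0,\mathcal F|_{C_0})-\dim(\text{restricted sections})\). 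Then I would use \(\chi\)-invariance on \(C_0\) together with
\[
 h^1(C_0,\mathcal F|_{C_0})\le h^1(X,\mathcal F)
\]
and the symmetric inequality with \(F\otimes N\). This bounds the difference by \(\max_{\mathcal F} h^0(C_0,\mathcal F|_{C_0})\) minus the restricted part, which is at most \(h^1\) of \(\mathcal F\otimes\mathcal O_{C_0}\) after using
\[
 \chi(\mathcal F|_{C_0})=s\chi(\mathcal O_{C_0})+\deg,
\]
and the deg terms cancel by numerical triviality. Expected bookkeeping: \(|\Delta h^1|\le s\,h^1(\mathcal O_{C_0})=s\,p_g\), with the careful step being to show the unrestricted sections contribute exactly through \(\chi(\mathcal O_{C_0})\). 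This is where I expect difficulties and would fall back on the Nagy--N\'emethi style argument if needed.
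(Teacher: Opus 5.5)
Your Steps 1 and 2 match the paper's opening. For $D$ a large multiple of a cycle $H$ with $\mathcal{O}_X(-H)$ relatively ample, relative Serre vanishing gives $H^1(X,F(-D))=H^1(X,F\otimes N(-D))=0$. So both $h^1$'s are computed on $D$, and $\chi$-invariance turns their difference into $h^0(D,(F\otimes N)|_D)-h^0(D,F|_D)$. Also, $H^1(X,\mathcal{O}_X)\to H^1(D,\mathcal{O}_D)$ is surjective, so $h^1(D,\mathcal{O}_D)\le p_g$ already holds on this same cycle; no cohomological cycle is needed.

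Step 3, however, has a genuine gap. The claim that the difference is bounded by $\max h^1(X,\mathcal{F}(-C))$ is false. If you take $C$ so large that $H^1(X,F(-C))=H^1(X,F\otimes N(-C))=0$, it would force $h^1(F)=h^1(F\otimes N)$. That already fails for $F=\mathcal{O}_X$ on a simple elliptic singularity with $N|_E$ a nontrivial degree-zero bundle: there $h^1(N)=0$ while $h^1(\mathcal{O}_X)=p_g=1$. Your exact sequence only rewrites the $h^0$ difference, because the images of $H^0(X,\mathcal{F})$ in $H^0(C,\mathcal{F}|_C)$ differ for $F$ and $F\otimes N$.

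Switching to the cohomological cycle $C_0$ of $\mathcal{O}_X$ then undoes Step 1. For $F\neq\mathcal{O}_X$, or for $F\otimes N$, the map $H^1(X,F)\to H^1(C_0,F|_{C_0})$ is only surjective. Its kernel is uncontrolled and differs between $F$ and $F\otimes N$. Moreover, $h^1(C_0,\mathcal{F}|_{C_0})$ is not bounded by $s\,h^1(\mathcal{O}_{C_0})$ when $\mathcal{F}|_{C_0}$ has negative degree on a component. The Nagy--N\'emethi fallback you mention covers only line bundles on singularities with rational homology sphere links, so it does not give this statement either.

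The missing idea is a way to turn $h^1(\mathcal{O}_D)\le p_g$ into a bound on the sections lost under twisting. The paper dualizes on the Gorenstein curve $D$: it puts $\mathcal{G}:=\omega_D\otimes(F|_D)^\vee$, so that $h^0(\mathcal{G})=h^1(F)$. It then takes the subsheaf $\mathcal{A}\subseteq\mathcal{G}$ generated by global sections, so $h^0(\mathcal{A})=h^0(\mathcal{G})$. At the generic point of a component of multiplicity $m$, the local ring is a discrete valuation ring modulo $t^m$. Hence $\mathcal{A}$ is generically generated by $s$ general global sections. The resulting map $\mathcal{O}_D^{\oplus s}\to\mathcal{A}$ has finite-length cokernel, which gives $h^1(\mathcal{A})\le s\,h^1(\mathcal{O}_D)$. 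Since $\mathcal{A}\otimes N^{-1}|_D\subseteq\mathcal{G}\otimes N^{-1}|_D$ and twisting by $N^{-1}$ preserves $\chi$,
\[
 h^0(\mathcal{G})-h^0(\mathcal{G}\otimes N^{-1}|_D)
 \le h^0(\mathcal{A})-h^0(\mathcal{A}\otimes N^{-1}|_D)
 =h^1(\mathcal{A})-h^1(\mathcal{A}\otimes N^{-1}|_D)
 \le s\,p_g .
\]
The left side equals $h^1(F)-h^1(F\otimes N)$. Applying the same argument to $F\otimes N$ and $N^{-1}$ gives the reverse inequality. The trick also works without dualizing: apply it to the subsheaf of $(F\otimes N)|_D$ generated by its global sections. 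That would complete your Step 2 directly.
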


\begin{proof}
Choose an effective exceptional cycle \(H\) such that \(\mathcal{O}_X(-H)\) is relatively ample. By relative Serre vanishing, a sufficiently large multiple \(D\) of \(H\) satisfies
\[
 H^1(X,F(-D))=H^1(X,F\otimes N(-D))=0.
\]
Together with the coherent \(H^2\)-vanishing in \eqref{eq:coherent-vanishing}, the restriction sequences give isomorphisms
\[
 H^1(X,F)\simeq H^1(D,F|_D),\quad
 H^1(X,F\otimes N)\simeq H^1(D,(F\otimes N)|_D).
\]
The same vanishing makes \(H^1(X,\mathcal{O}_X) \to H^1(D,\mathcal{O}_D)\) surjective, so \(h^1(D,\mathcal{O}_D)\le p_g\).

Put \(\mathcal{G}:=\omega_D\otimes(F|_D)^\vee\), and let \(\mathcal{A}\subseteq\mathcal{G}\) be the image of the evaluation map
\[
 H^0(D,\mathcal{G})\otimes_{\mathbb{C}}\mathcal{O}_D \to \mathcal{G}.
\]
Then \(H^0(D,\mathcal{A})=H^0(D,\mathcal{G})\). Since \(\mathcal{O}_D(-H)\) is ample, \(D\) is projective, and GAGA allows us to work with algebraic coherent sheaves on \(D\). At the generic point of a component of multiplicity \(m\), the local ring of \(D\) is a discrete valuation ring modulo the \(m\)th power of a uniformizer. A submodule of a free module of rank \(s\) over this principal Artinian local ring is generated by at most \(s\) elements. Thus \(s\) general linear combinations of the global sections generate \(\mathcal{A}\) at that generic point. The failure of generation is a proper algebraic condition on their complex coefficients. Since there are only finitely many components, the sections can be chosen to generate at all generic points simultaneously.

We obtain a map \(\mathcal{O}_D^{\oplus s} \to \mathcal{A}\) with finite-length cokernel. Since coherent sheaves on \(D\) have vanishing second cohomology, the cohomology sequences for its kernel and image give
\[
 h^1(D,\mathcal{A}) \le s h^1(D,\mathcal{O}_D).
\]

Tensoring a coherent sheaf on \(D\) by \(N^{-1}|_D\) preserves its Euler characteristic. Indeed, a filtration by sheaves on the reduced components and finite-length sheaves reduces this assertion to Riemann--Roch on each smooth component, where the change is the rank times \(\deg(N^{-1}|_C)=0\); see the Stacks Project~\cite[Tag~0AYV]{Stacks}. Since \(\mathcal{A}\otimes N^{-1}|_D\) is a subsheaf of \(\mathcal{G}\otimes N^{-1}|_D\), it follows that
\begin{align*}
 h^0(D,\mathcal{G})-h^0(D,\mathcal{G}\otimes N^{-1}|_D)
 &\le h^0(D,\mathcal{A})-h^0(D,\mathcal{A}\otimes N^{-1}|_D)\\
 &=h^1(D,\mathcal{A})-h^1(D,\mathcal{A}\otimes N^{-1}|_D)\\
 &\le s h^1(D,\mathcal{O}_D) \le s p_g.
\end{align*}
The cycle \(D\) is a Cartier divisor in the smooth surface \(X\), hence a Gorenstein curve. Serre duality and the restriction isomorphisms identify the left side with \(h^1(F)-h^1(F\otimes N)\). Applying the same argument to \(F\otimes N\) and \(N^{-1}\) proves the opposite inequality.
\end{proof}

\begin{lemma}\label{genus:lem:strict}
Let \((X,E) \to (V,0)\) be a good resolution of a normal complex surface singularity, and let \(N\) be a numerically trivial line bundle on \(X\). If \(p_g=0\), then \(h^1(N)=0\). If \(p_g>0\), then
\[
 h^1(N) \le 2p_g-1.
\]
\end{lemma}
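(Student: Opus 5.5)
The plan is to reduce to restrictions on a large exceptional cycle, as in the proof of Lemma~\ref{genus:lem:variation}. Choose an effective exceptional cycle \(D\), a large multiple of a cycle \(H\) with \(\mathcal{O}_X(-H)\) relatively ample, such that
\[
 H^1(X,N(-D))=H^1(X,\mathcal{O}_X(-D))=0 .
\]
The restriction sequences then give \(h^1(N)=h^1(D,N|_D)\) and \(p_g=h^1(D,\mathcal{O}_D)\). Since \(\chi(N|_D)=\chi(\mathcal{O}_D)\), as shown in that proof, we obtain
\[
 h^1(D,N|_D)=h^0(D,N|_D)-h^0(D,\mathcal{O}_D)+p_g .
\]
So everything reduces to bounding \(h^0(D,N|_D)\) against \(h^0(D,\mathcal{O}_D)\).

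\emph{The case \(p_g=0\).} Here I would use the rational case directly. For a rational singularity the Picard group of \(X\) injects via degrees (Artin--Lipman), so a numerically trivial \(N\) is trivial and \(h^1(N)=p_g=0\). Alternatively, Lemma~\ref{genus:lem:variation} with \(s=1\) gives \(|h^1(N)-h^1(\mathcal{O}_X)|\le p_g=0\), so \(h^1(N)=0\) with no extra input. The same lemma gives the general bound \(h^1(N)\le 2p_g\). The whole content of the statement is therefore the strict improvement by one when \(p_g>0\).

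\emph{The case \(p_g>0\).} If \(N\simeq\mathcal{O}_X\) the claim \(h^1=p_g\le 2p_g-1\) is immediate, so assume \(N\) is nontrivial. The goal is \(h^0(D,N|_D)\le h^0(D,\mathcal{O}_D)+p_g-1\). Revisiting the proof of Lemma~\ref{genus:lem:variation} with \(s=1\) and \(F=N\), one needs a strict inequality in one of the two steps. The first step is
\[
 h^0(\mathcal{G})-h^0(\mathcal{G}\otimes N^{-1})\le h^0(\mathcal{A})-h^0(\mathcal{A}\otimes N^{-1}),
\]
and the second is \(h^1(\mathcal{A})\le h^1(\mathcal{O}_D)\). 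Both are equalities only if the map \(\mathcal{O}_D\to\mathcal{A}\) induces an isomorphism on \(H^1\) and all of \(H^0(\mathcal{G}\otimes N^{-1})\) comes from \(\mathcal{A}\otimes N^{-1}\).

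My first attempt would be the following. Take a section \(\sigma\) of \(\mathcal{G}=\omega_D\otimes N^{-1}|_D\), which is nonzero since \(h^1(N|_D)\ge 1\) may be assumed. Its image \(\mathcal{A}\simeq\mathcal{O}_{D'}\otimes(\cdots)\) is supported on a subcycle. If \(\mathcal{O}_D\to\mathcal{A}\) were an isomorphism on \(H^1\) and the \(H^0\) comparison were sharp, then \(\sigma\) would trivialize \(\omega_D\otimes N^{-1}\) generically. Combined with the same argument for \(N^{-1}\), this should force \(N|_D\) to have a nowhere-vanishing section on the relevant subcycle, hence \(N|_D\simeq\mathcal{O}_D\) there. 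This is where the constant section \(1\in H^0(\mathcal{O}_D)\) enters: it has no counterpart in \(H^0(N|_D)\) when \(N\) is nontrivial. Equivalently, I would compare \(H^0(N|_D)\) with \(H^0(\mathcal{O}_D)\) via a filtration and show that the residue-field constant \(\mathbb{C}\subset H^0(\mathcal{O}_D)\) is lost. That loss gives the \(-1\).

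The main obstacle is this strictness step. One must rule out \(N\) nontrivial on \(X\) but with \(N|_D\) carrying a nowhere-zero section on the cycle that carries the cohomology, which means handling the non-reduced structure of \(D\) carefully. An alternative I would try if that fails is an induction along a computation sequence of \(D\), adding one component at a time. At the first step, on the reduced support, a numerically trivial line bundle with a nonzero section is trivial. This shows \(h^0(N|_{E})\le h^0(\mathcal{O}_{E})\), with the constant lost unless \(N|_E\) is trivial, and it would then be propagated with the \(p_g\) budget.
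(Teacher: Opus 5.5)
\textbf{There is a genuine gap: the strict inequality for \(p_g>0\) is not proved.} Your reduction to a large cycle is correct, and so is your treatment of \(p_g=0\) via Lemma~\ref{genus:lem:variation}. But the proposal stops exactly where the content of the lemma begins: you never prove \(h^0(D,N|_D)\le h^0(D,\mathcal{O}_D)+p_g-1\).

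The ``lost constant'' heuristic does not give this. The bound \(h^1(N)\le 2p_g\) from Lemma~\ref{genus:lem:variation} does not come from any comparison map \(H^0(N|_D)\to H^0(\mathcal{O}_D)\oplus(\cdot)\) whose image could be shown to miss the constants.

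Worse, on a large cycle the mechanism you describe cannot work. Suppose \(D_0\le D\) satisfies \(h^1(N|_{D_0})=h^1(N|_D)\). The sequence \(0\to\omega_{D_0}\to\omega_D\to\omega_{D-D_0}(D_0)\to 0\), twisted by \(N^{-1}\), then shows \(H^0(D_0,\omega_{D_0}\otimes N^{-1})\simeq H^0(D,\mathcal{G})\). So every section of \(\mathcal{G}\) is annihilated by \(\mathcal{O}_X(-D_0)\). Such a section is not a unit at the generic point of any component on which \(D\) exceeds \(D_0\). Hence \(\sigma\) never ``trivializes \(\mathcal{G}\) generically'', and \(\mathcal{O}_D\to\mathcal{A}\) has a large kernel. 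The strictness you hope to extract from the two steps of that proof is not available in this form; you half-notice this when you say \(\mathcal{A}\) is supported on a subcycle.

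\textbf{The missing idea is to work on the smallest cycle rather than a large one.} Put \(n=h^1(N)>0\), and choose \(D\) with \(h^1(D,N|_D)=n\) and minimal sum of coefficients.

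1. For each component \(C\le D\), use the sequence \(0\to\omega_{D-C}\otimes N^{-1}|_{D-C}\to\mathcal{G}\to\omega_C(D-C)\otimes N^{-1}|_C\to 0\). If the restriction map on \(H^0\) were zero, Serre duality would give \(h^1(N|_{D-C})=n\), contradicting minimality (this is Konno's argument).

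2. Hence a general section of \(\mathcal{G}\) is a unit at every generic point of \(D\). Since \(D\) is Cohen--Macaulay, it is a nonzerodivisor, giving \(0\to\mathcal{O}_D\to\mathcal{G}\to\mathcal{T}\to 0\) with \(\mathcal{T}\) of finite length. Therefore \(h^1(\mathcal{G})\le h^1(\mathcal{O}_D)\).

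3. Numerical triviality gives \(\chi(\mathcal{G})=\chi(\omega_D)=-\chi(\mathcal{O}_D)\). Therefore
\[
n=h^0(\mathcal{G})=\chi(\mathcal{G})+h^1(\mathcal{G})\le 2h^1(\mathcal{O}_D)-h^0(\mathcal{O}_D)\le 2p_g-1.
\]

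The \(-1\) comes from \(h^0(\mathcal{O}_D)\ge 1\) for \(D\neq 0\), not from a missing constant in \(H^0(N|_D)\). This argument also needs no case distinction on whether \(N\) or \(N|_D\) is trivial.
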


\begin{proof}
If \(p_g=0\), Lemma~\ref{genus:lem:variation}, applied to \(\mathcal{O}_X\) and \(N\), gives \(h^1(N)=0\). Suppose that \(p_g>0\), and put \(n:=h^1(N)\). There is nothing to prove if \(n=0\). Assume that \(n>0\). As in the restriction argument of Lemma~\ref{genus:lem:variation}, choose an effective exceptional cycle \(D\) with \(h^1(D,N|_D)=n\), having minimal sum of coefficients among all such cycles. In particular, \(D\neq 0\).

Put \(\mathcal{G}:=\omega_D\otimes N^{-1}|_D\). We adapt the minimality argument in Konno~\cite[Lemma~3.1(2)]{Konno2008} to this twisted dualizing sheaf. For every reduced irreducible component \(C\le D\), adjunction gives
\[
 0 \to \omega_{D-C}\otimes N^{-1}|_{D-C}
 \to \mathcal{G}
 \to \omega_C(D-C)\otimes N^{-1}|_C
 \to 0.
\]
The first term is understood to be zero when \(D-C=0\). If the induced restriction map on global sections were zero, this sequence and Serre duality would give
\[
 h^1(D-C,N|_{D-C})
 =h^0(D-C,\omega_{D-C}\otimes N^{-1}|_{D-C})
 =h^0(D,\mathcal{G})
 =n,
\]
contrary to the minimality of \(D\).

The kernels of these restriction maps are therefore proper linear subspaces of \(H^0(D,\mathcal{G})\). Choose a section outside their finite union. Its restriction to each reduced component is nonzero, so its coefficient in a local trivialization is a unit at every generic point of \(D\). Since \(D\) is a Cartier divisor in the smooth surface \(X\), it is Cohen--Macaulay and has no embedded associated points. The section is consequently a nonzerodivisor and gives an exact sequence
\[
 0 \to \mathcal{O}_D \to \mathcal{G} \to \mathcal{T} \to 0,
\]
where \(\mathcal{T}\) has finite length. Hence
\[
 h^1(D,\mathcal{G})\le h^1(D,\mathcal{O}_D).
\]

Numerical triviality preserves the Euler characteristic under tensoring by \(N^{-1}|_D\), as in the preceding proof. Thus Serre duality gives
\[
 \chi(\mathcal{G})=\chi(\omega_D)=-\chi(\mathcal{O}_D).
\]
It follows that
\begin{equation*}
\begin{split}
 n
 &=h^0(D,\mathcal{G})=\chi(\mathcal{G})+h^1(D,\mathcal{G})\\
 &\le -\chi(\mathcal{O}_D)+h^1(D,\mathcal{O}_D)=2h^1(D,\mathcal{O}_D)-h^0(D,\mathcal{O}_D)\\
 &\le 2p_g-1.
\end{split}
\end{equation*}
Here \(h^1(D,\mathcal{O}_D)\le p_g\) follows from the restriction sequence and \eqref{eq:coherent-vanishing}, while \(h^0(D,\mathcal{O}_D)\ge 1\) because \(D\neq 0\).
\end{proof}

\subsection{Differential forms on numerically Gorenstein resolutions}

We use the complexes of logarithmic forms from Section~\ref{sec:forms}, allowing an arbitrary integral exceptional divisor in place of the effective divisor occurring there. The following comparison applies to all good resolutions.

The following lemma extends the comparison of logarithmic complexes in Wahl~\cite[\S1.5--1.6 and Proposition~1.7]{Wahl1985} to arbitrary integral exceptional divisors and equivariant cohomology classes.

\begin{lemma}\label{genus:lem:euler}
Let \((Y,F)\) be a good resolution of a normal complex surface singularity or of a smooth surface point, with \(F\neq\varnothing\). Let a finite group \(G\) act on \(Y\), preserving \(F\), and let \(D\) be a \(G\)-invariant integral exceptional divisor. Define
\[
 C_D^\bullet:=
 [\mathcal{O}_Y(D) \xrightarrow{d} \Omega_Y^1(\log F)(D)
 \xrightarrow{d} \omega_Y(F+D)].
\]
The cohomology groups of its global-section complex,
\[
 B^i:=H^i(\Gamma(Y,C_D^\bullet)),
\]
are finite dimensional, and
\begin{equation}\label{genus:eq:euler}
 \sum_{i=0}^2(-1)^i[H^1(Y,C_D^i)]
 =[B^0]-[B^1]+[B^2]
 \quad\text{in }R(G).
\end{equation}
Moreover, \(B^0\) is the trivial representation when \(D\ge 0\), and is zero otherwise.
\end{lemma}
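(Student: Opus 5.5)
The plan is to run the hypercohomology spectral sequence argument of Proposition~\ref{prop:comparison}, replacing the pole-order comparison of Lemma~\ref{lem:log-comparison} with a comparison valid for an arbitrary integral \(D\). First I would identify the sheaf-level hypercohomology of \(C_D^\bullet\). Write \(D=D^+-D^-\) with \(D^\pm\ge0\) of disjoint support. The local homotopies in Lemma~\ref{lem:log-comparison} and Lemma~\ref{lem:negative-log} are contractions with \(x\partial_x\) or \(y\partial_y\) divided by the nonzero exponent of a Laurent monomial. Combining them, the stalk cohomology of \(C_D^\bullet\) at a point of \(F\) is as follows. It agrees with that of \(\Omega_Y^\bullet(\log F)\) along components where the coefficient of \(D\) is \(\ge0\). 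It vanishes along components where the coefficient is \(<0\). At a node of two components of opposite sign, the monomials with \(i=0\) or \(j=0\) are excluded in one direction, and the same contraction kills everything. Thus \(C_D^\bullet\) is quasi-isomorphic to \(R\iota_*\iota^!\)-type extensions of \(\mathbb{C}_U\) of mixed \(j_!\)/\(Rj_*\) type. All that matters is that its hypercohomology \(\mathbb{H}^k(Y,C_D^\bullet)\) is finite dimensional and computable topologically: it is the cohomology of a neighborhood of \(F\) relative to the part of \(F\) where \(D<0\), i.e.\ \(H^k(U\cup F^{\ge0},\,\cdot)\)-type groups.

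The second step is the spectral sequence. By \eqref{eq:coherent-vanishing}, \(E_1^{p,q}=H^q(Y,C_D^p)\) has only the rows \(q=0,1\). The \(q=0\) row has cohomology \(B^0,B^1,B^2\), and the \(q=1\) row has cohomology \(K,M,N\). As before, the only possible later differential is \(d_2\colon K\to B^2\). This gives an exact sequence
\[
0\to B^1\to\mathbb{H}^1\to K\to B^2\to\mathbb{H}^2\to M\to0,
\]
together with \(B^0\simeq\mathbb{H}^0\) and \(N\simeq\mathbb{H}^3\). Finite dimensionality of the \(B^i\) follows from this sequence and finiteness of \(\mathbb{H}^\bullet\). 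Taking alternating sums in \(R(G)\), we get
\[
\sum(-1)^i[H^1(C_D^i)]=[K]-[M]+[N]=[B^0]-[B^1]+[B^2]+\bigl(\textstyle\sum_k(-1)^{k+1}[\mathbb{H}^k]\bigr),
\]
where the sign bookkeeping uses \(B^0=\mathbb{H}^0\). So \eqref{genus:eq:euler} reduces to the vanishing of the equivariant Euler characteristic \(\sum_k(-1)^k[\mathbb{H}^k(Y,C_D^\bullet)]\) in \(R(G)\).

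This vanishing is the main obstacle. For \(D\ge0\), the hypercohomology is \(H^\bullet(U,\mathbb{C})\), the cohomology of the link. Its equivariant Euler characteristic is \(\mathbf1-[H^1]+[H^2]-\mathbf1=0\) by equivariant Poincaré duality and self-duality of real representations, exactly as in Proposition~\ref{prop:comparison}. In general I would use the Lefschetz trace: the character at \(g\) of the equivariant Euler characteristic equals the Euler characteristic of the \(g\)-fixed locus with the appropriate constructible coefficients. I would write \(C_D^\bullet\) via a distinguished triangle with \(Rj_*\mathbb{C}_U\) and a complex supported on \(F^{<0}\) (the pushforward of the restriction of \(Rj_*\mathbb{C}_U\) to those components). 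The fixed loci of \(g\) on \(U\) are empty for a free action, and I hope to reduce in general to circle-bundle pieces of Euler characteristic zero, arguing as in Lemma~\ref{lem:milnor-cycle}. Concretely, the fiber of \(Rj_*\mathbb{C}_U\) over a point of \(F\) is the cohomology of a small punctured neighborhood, which is a torus of Euler characteristic zero, so every stalk Euler characteristic along \(F\) vanishes. The trace formula then gives zero at every \(g\). If this additivity argument is hard to make equivariant, I would fall back on a direct equivariant cell decomposition of a tubular neighborhood minus \(F^{<0}\).

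Finally, for \(B^0\): this is the space of global \(f\in H^0(\mathcal{O}_Y(D))\) with \(df=0\), i.e.\ the locally constant sections on connected \(Y\). When \(D\ge0\) the constants lie in \(\mathcal{O}_Y(D)\) and carry the trivial action. Otherwise some coefficient of \(D\) is negative, so constants must vanish along that component and \(B^0=0\).
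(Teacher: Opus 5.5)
Your argument is correct, but it takes a different route from the paper.

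\textbf{How the paper argues.} The paper never identifies \(\mathbb{H}^\bullet(Y,C_D^\bullet)\) topologically. It chooses an invariant \(Z\ge F\) with \(-Z\le D\) and uses Lemma~\ref{lem:negative-log} to see that \(C_{-Z}^\bullet\) resolves \(j_!\mathbb{C}_{Y\setminus F}\), so it has zero hypercohomology on \(Y\). It then works with the quotient \(C_D^\bullet/C_{-Z}^\bullet\), whose terms \(\mathcal{T}\otimes\bigwedge^i\Omega_Y^1(\log F)\) are coherent with compact support. The alternating sum of their \(\chi_G\) cancels for two reasons:
\begin{itemize}
\item on each component, by the equivariant residue sequence \(0\to\Omega^1_{F_i}(\log P_i)\to\Omega^1_Y(\log F)|_{F_i}\to\mathcal{O}_{F_i}\to0\), whose quotient is trivial and whose determinant is the subsheaf;
\item on finite-length pieces, because the logarithmic cotangent fiber has a trivial residue summand.
\end{itemize}

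\textbf{How you argue.} You identify \(C_D^\bullet\simeq\iota_!\iota^*Rj_*\mathbb{C}_U\), with \(\iota\colon Y\setminus F^{<0}\hookrightarrow Y\); your stalk computation at mixed-sign nodes is right. You then kill the \(Rj_*\mathbb{C}_U\) term by equivariant Poincar\'{e} duality on the link, as in Proposition~\ref{prop:comparison}. The correction term supported on \(F^{<0}\) you kill with a Lefschetz trace formula.

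\textbf{Comparison.} Your route gives an explicit topological description of the hypercohomology, but it imports the trace formula for equivariant constructible complexes, or an equivariant cell decomposition. The paper's route stays inside coherent cohomology and reuses Lemma~\ref{lem:negative-log}. Both rest on the same local fact: the cohomology of a punctured polydisc neighbourhood is the exterior algebra on the span of the \(d\log\) classes, and that span contains a trivial summand.

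\textbf{A step to tighten.} For \(g\neq1\), the trace formula needs the stalk traces \(\sum_k(-1)^k\tr(g\mid H^k)\) to vanish. Vanishing stalk Euler characteristics, which is what you wrote, do not imply this. The traces do vanish, and you should say why:
\begin{itemize}
\item at fixed smooth points and at branch-preserving nodes, \(g\) acts trivially on the local cohomology (linearize and rotate);
\item at a branch-exchanging node, \(g\) swaps the two classes in \(H^1\simeq\mathbb{C}^2\) and acts by \(-1\) on \(H^2\), so the trace is \(1-0-1=0\).
\end{itemize}

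\textbf{Freeness is not needed.} The lemma does not assume \(G\) acts freely on \(U\), and your argument does not require it. In your triangle the trace formula is applied only to the term supported on the compact set \(F^{<0}\), while the \(Rj_*\mathbb{C}_U\) term is handled by duality. So your hedge about fixed points on \(U\) is unnecessary.
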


\begin{proof}
Exterior differentiation preserves the indicated pole orders in a logarithmic basis, so \(C_D^\bullet\) is a complex of sheaves of complex vector spaces. Choose a \(G\)-invariant effective exceptional cycle \(Z\ge F\) such that \(-Z\le D\). By Lemma~\ref{lem:negative-log}, \(C_{-Z}^\bullet\) resolves \(j_!\mathbb{C}_{Y\setminus F}\), where \(j\colon Y\setminus F\hookrightarrow Y\). On a sufficiently small resolution neighborhood, \(Y\) retracts onto \(F\). Thus the exact sequence
\[
 0 \to j_!\mathbb{C}_{Y\setminus F}
 \to \mathbb{C}_Y
 \to \mathbb{C}_F
 \to 0
\]
gives \(\mathbb{H}^k(Y,C_{-Z}^\bullet)=0\) for every \(k\).

Put
\[
 \mathcal{T}:=\mathcal{O}_Y(D)/\mathcal{O}_Y(-Z).
\]
The quotient complex \(C_D^\bullet/C_{-Z}^\bullet\) has terms \(\mathcal{T}\otimes\bigwedge^i\Omega_Y^1(\log F)\), all supported on the proper exceptional curve. For a \(G\)-equivariant coherent sheaf \(\mathcal{M}\) with such support, write
\[
 \chi_G(\mathcal{M}):=[H^0(Y,\mathcal{M})]-[H^1(Y,\mathcal{M})].
\]
We show that
\[
 \sum_{i=0}^2(-1)^i
 \chi_G(\mathcal{T}\otimes\bigwedge^i\Omega_Y^1(\log F))=0.
\]

Let \(G_i\) be the stabilizer of a component \(F_i\), and let \(P_i\) be the reduced divisor of its intersections with the other components. The residue sequence
\[
 0 \to \Omega_{F_i}^1(\log P_i)
 \to \Omega_Y^1(\log F)|_{F_i}
 \to \mathcal{O}_{F_i}
 \to 0
\]
is \(G_i\)-equivariant, with the natural linearization on the quotient. Taking determinants gives
\[
 \bigwedge^2(\Omega_Y^1(\log F)|_{F_i})
 \simeq \Omega_{F_i}^1(\log P_i)
\]
equivariantly. Tensoring the residue sequence by any \(G_i\)-equivariant coherent sheaf on \(F_i\) therefore makes the alternating cohomology classes of the three exterior powers cancel. Induction from \(G_i\) to \(G\) gives the same cancellation on the orbit of \(F_i\).

The corresponding cancellation also holds for sheaves of finite length. At a smooth point of \(F\), the residue map gives a trivial quotient of the logarithmic cotangent fiber. At a node with local equation \(xy=0\), the point stabilizer permutes the classes of \(d\log x\) and \(d\log y\) in that fiber. Indeed, changes of the local equations by units contribute regular differentials, whose classes in the logarithmic cotangent fiber are zero. The sum of the two classes is therefore invariant. In both cases the fiber contains a trivial summand, so its alternating exterior-power class is zero. A filtration by powers of the maximal ideal gives the same cancellation for every equivariant sheaf of finite length.

A power of the ideal of \(F\) annihilates \(\mathcal{T}\). The resulting equivariant filtration reduces the calculation to coherent sheaves on the reduced curve \(F\). The normalization map is equivariant, and its comparison with a coherent sheaf has kernel and cokernel of finite length supported at the nodes. The component and point calculations above therefore prove the asserted cancellation. Comparing the quotient complex with the acyclic complex \(C_{-Z}^\bullet\), we obtain finite-dimensional hypercohomology groups and
\[
 \sum_k(-1)^k[\mathbb{H}^k(Y,C_D^\bullet)]=0.
\]

By \eqref{eq:coherent-vanishing}, the hypercohomology spectral sequence
\[
 E_1^{p,q}=H^q(Y,C_D^p)
 \ \Rightarrow\ \mathbb{H}^{p+q}(Y,C_D^\bullet)
\]
has only the rows \(q=0,1\). The second row consists of finite-dimensional spaces, and \(E_2^{p,0}=B^p\). The only possible higher differential is \(d_2\colon E_2^{0,1}\to E_2^{2,0}\), whose source is finite dimensional. Since the abutment is finite dimensional, every \(B^p\) is finite dimensional as well. Comparing Euler characteristics gives
\[
 0
 =\sum_{p=0}^2(-1)^p[B^p]
 -\sum_{p=0}^2(-1)^p[H^1(Y,C_D^p)],
\]
which proves \eqref{genus:eq:euler}.

Finally, a meromorphic function with zero differential is constant. A nonzero constant is a section of \(\mathcal{O}_Y(D)\) exactly when \(D\ge 0\), and \(G\) acts trivially on constants.
\end{proof}

For a good resolution \((Y,F)\), let \(Z_K\) denote the rational exceptional cycle determined by
\[
 (K_Y+Z_K)\cdot F_i=0
\]
for every component \(F_i\). The intersection matrix is negative definite, so this cycle exists and is unique. The singularity is numerically Gorenstein if \(Z_K\) is integral.

We recall the following standard property of the numerical anticanonical cycle; see Popescu-Pampu~\cite[Corollary~2.8 and its proof]{PopescuPampu2011}.

\begin{lemma}\label{genus:lem:cycle}
Let \((Y,F) \to (V,0)\) be the minimal good resolution of a numerically Gorenstein normal complex surface singularity. If \((V,0)\) is a rational double point, then \(Z_K=0\). Otherwise, \(Z_K\ge F\).
\end{lemma}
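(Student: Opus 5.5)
The plan is to reduce to the minimality of the resolution via adjunction, treating the rational double point case separately. Write $Z_K=\sum_i k_iF_i$ with $k_i\in\mathbb{Z}$. Adjunction gives $K_Y\cdot F_i=-F_i^2+2g_i-2$, so the defining condition reads
\[
 Z_K\cdot F_i=F_i^2+2-2g_i .
\]

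\emph{Rational double points.} For a rational double point the minimal good resolution coincides with the minimal resolution. Every component is a smooth rational $(-2)$-curve, so $Z_K\cdot F_i=0$ for all $i$. Negative definiteness then gives $Z_K=0$.

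\emph{Step 1: $Z_K\geq0$.} I would use the standard minimal-cycle argument. Write $Z_K=P-N$ with $P,N\geq0$ effective and without common components. Then
\[
 N^2=(P-Z_K)\cdot N=P\cdot N-Z_K\cdot N .
\]
Here $P\cdot N\geq0$, since the supports have no common component. For each $F_i$ in the support of $N$, minimality of the good resolution should give $Z_K\cdot F_i\leq0$, that is, $F_i^2+2-2g_i\leq0$. The only possible exceptions are:
\begin{itemize}[leftmargin=*]
 \item $(-1)$-curves of genus $0$; these occur in a minimal good resolution only when they meet at least three other components or have a node;
 \item $(-1)$-curves of genus $1$, for which $Z_K\cdot F_i=-1$ anyway.
\end{itemize}
So the delicate case is a rational $(-1)$-curve. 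For such a curve I would not use the pointwise inequality. Instead, I would run Laufer's algorithm, or the Popescu-Pampu argument cited above, which shows $Z_K\cdot F_i\leq0$ after replacing the pointwise test by an estimate on $N$ as a whole. Granting $-Z_K\cdot N\geq0$, we get $N^2\geq0$, so $N=0$ by negative definiteness.

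\emph{Step 2: $Z_K\geq F$ when $Z_K\neq0$.} It remains to show that every coefficient $k_i$ is at least $1$ once $Z_K\neq0$. Suppose $Z_K\geq0$ has a component $F_j$ with $k_j=0$. By connectedness of $F$ we may pick such an $F_j$ meeting the support of $Z_K$. Then $Z_K\cdot F_j>0$, so $F_j^2+2-2g_j>0$. This forces $g_j=0$ and $F_j^2\geq-1$. A rational $(-1)$-curve in a minimal good resolution meets at least three other components, and the right-hand side equals $1$. If instead $Z_K=0$, every curve satisfies $F_i^2=2g_i-2$, so every component is a rational $(-2)$-curve or an elliptic $0$-curve. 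The elliptic $0$-curve is impossible by negative definiteness, so $(V,0)$ is a rational double point.

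\emph{Main obstacle.} Excluding the $(-1)$-curve configurations in both steps is the hard part. I would handle them by the computation-sequence argument: starting from $F$, repeatedly add a component $F_j$ with $(Z-Z_K)\cdot F_j>0$ while keeping $Z\leq Z_K$. This shows that $F\leq Z_K$ directly whenever $(V,0)$ is not a rational double point. Since the paper cites Popescu-Pampu for precisely this, I would follow that proof.
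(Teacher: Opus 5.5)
There is a genuine gap, and it sits exactly where you locate the difficulty. On the minimal good resolution, a smooth rational $(-1)$-curve $C$ has $Z_K\cdot C=1>0$, so $Z_K$ is not anti-nef there, and neither of your steps closes.

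\emph{Step~1.} The inequality $Z_K\cdot N\le 0$ is the whole content of the step, and you only grant it.

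\emph{Step~2.} You correctly show that a zero-coefficient component $F_j$ meeting the support must be a rational $(-1)$-curve with $Z_K\cdot F_j=1$. But this is consistent with $F_j$ meeting the support exactly once, in a component of coefficient $1$, with all its other intersections with $F$ in components of coefficient $0$. Knowing that $F_j$ meets several other components gives no contradiction unless you already know those neighbours have positive coefficients, which is what you are trying to prove.

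\emph{The proposed repair.} The computation-sequence argument is circular. Starting from $F$ and ``keeping $Z\le Z_K$'' presupposes $F\le Z_K$. Moreover, the Laufer-type comparison, that every cycle produced by the algorithm lies below a given cycle, needs that cycle to be anti-nef, and this is precisely what fails at the $(-1)$-curves.

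\emph{The missing idea.} You should avoid the $(-1)$-curves rather than estimate them.
\begin{enumerate}
\item Run your Steps~1--2 on the minimal resolution $Y_0$, where there are no smooth rational $(-1)$-curves. Components of $Y_0$ may be singular, so use arithmetic genera: $Z_{K,0}\cdot F_{0,i}=F_{0,i}^2+2-2p_a(F_{0,i})\le 0$ for every $i$. Thus $Z_{K,0}=p_*Z_K$ is integral and anti-nef, and your argument gives $Z_{K,0}=0$ for a rational double point and $Z_{K,0}\ge F_0$ otherwise. This is the Popescu-Pampu--Seade input the paper cites.
\item Reach a good resolution by blowing up points $x$ where the reduced exceptional divisor is not simple normal crossings. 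Since $K_{T'}=b^*K_T+C_{\mathrm{new}}$, the new numerical anticanonical cycle is $Z'=b^*Z-C_{\mathrm{new}}$. Its coefficient on $C_{\mathrm{new}}$ is $\sum_i z_i\mult_x(C_i)-1\ge\sum_i\mult_x(C_i)-1\ge 1$, because the reduced divisor has multiplicity at least $2$ at a non-normal-crossing point. Coefficients on strict transforms are unchanged.
\item The resulting good resolution dominates the minimal good resolution. Its numerical anticanonical cycle pushes forward to $Z_K$, with the same coefficients on the components of $F$, so $Z_K\ge F$.
\end{enumerate}
This is the paper's proof.
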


\begin{proof}
If \((V,0)\) is a rational double point, its minimal good resolution is crepant, so \(Z_K=0\). Suppose that \((V,0)\) is not a rational double point. Let \(p\colon Y\to Y_0\) be the morphism to the minimal resolution, and let \(F_0\) be its reduced exceptional divisor. By the projection formula, \(Z_{K,0}:=p_*Z_K\) is the numerical anticanonical cycle on \(Y_0\), and it is integral. Popescu-Pampu--Seade~\cite[Proposition~2.4]{PopescuPampuSeade2009} gives
\[
 Z_{K,0}\ge F_0.
\]

Make the reduced exceptional divisor simple normal crossings by successively blowing up points where this condition fails. Consider one such blowup \(b\colon T'\to T\), with center \(x\) and new exceptional curve \(C_{\mathrm{new}}\). Suppose that the numerical anticanonical cycle before the blowup is \(Z=\sum_i z_iC_i\), with every \(z_i\ge 1\). Since \(K_{T'}=b^*K_T+C_{\mathrm{new}}\), the new numerical anticanonical cycle is
\[
 Z'=b^*Z-C_{\mathrm{new}}.
\]
Its coefficient along \(C_{\mathrm{new}}\) is
\[
 \sum_i z_i\mult_x(C_i)-1
 \ge \sum_i\mult_x(C_i)-1
 \ge 1.
\]
The last inequality holds because the reduced exceptional divisor is not simple normal crossings at \(x\), so its multiplicity there is at least two. The coefficients on the strict transforms remain unchanged. Thus the numerical anticanonical cycle remains at least the reduced exceptional divisor throughout this process.

The resulting good resolution dominates \(Y\). By the projection formula, its numerical anticanonical cycle pushes forward to \(Z_K\), with unchanged coefficients on the components that remain. Hence \(Z_K\ge F\).
\end{proof}

We first compare closed one-forms with forms on the exceptional curve. The divisor in the following lemma need not be effective.

\begin{lemma}\label{genus:lem:closed-restriction}
Let \(\pi\colon(Y,F)\to(V,0)\) be a \(G\)-equivariant good resolution of a normal complex surface singularity, where \(G\) is finite, and let \(D\) be a \(G\)-invariant integral exceptional divisor. Let \((Y_0,F_0)\to(V,0)\) be the minimal good resolution. For the complex \(C_D^\bullet\) of Lemma~\ref{genus:lem:euler}, restriction of closed one-forms induces a \(G\)-equivariant injection
\[
 H^1(\Gamma(Y,C_D^\bullet))
 \hookrightarrow H^0(F_0,\omega_{F_0}).
\]
For a resolution of a smooth surface point, the group on the left is zero.
\end{lemma}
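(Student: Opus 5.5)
We follow the pattern of Lemma~\ref{lem:closed-one-forms}, adapted to an arbitrary integral divisor \(D\) and then transported to the minimal good resolution. First we identify the global sections. A class in \(H^1(\Gamma(Y,C_D^\bullet))\) is represented by a closed section \(\alpha\) of \(\Omega_Y^1(\log F)(D)\) over \(Y\), modulo \(d\) of sections of \(\mathcal{O}_Y(D)\). Restricting to \(U=Y\setminus F\), \(\alpha\) is a closed holomorphic one-form on the punctured surface. By the extension theorem for closed one-forms (J\"{o}rder~\cite[Proposition~1.2]{JorderExtension}, or van Straten--Steenbrink~\cite[Corollary~(1.4)]{VanStratenSteenbrink1985}), \(\alpha|_U\) extends to a holomorphic closed one-form \(\widetilde{\alpha}\) on every good resolution, in particular on \(Y_0\). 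The extension is unique, so the construction does not depend on the chosen resolution.

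We then define the map by sending \([\alpha]\) to \((\widetilde{\alpha}|_{F_{0,i}})_i\in\bigoplus_iH^0(F_{0,i},\omega_{F_{0,i}})\subseteq H^0(F_0,\omega_{F_0})\). To see that it is well defined, let \(\alpha=df\) with \(f\in H^0(Y,\mathcal{O}_Y(D))\). Then \(f|_U\) is holomorphic on the punctured normal surface, so it extends to an element of \(\mathcal{O}_{V,0}\). Its pullback to \(Y_0\) is constant on \(F_0\), and therefore \(df\) restricts to zero on each component.

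For injectivity, suppose all restrictions of \(\widetilde{\alpha}\) to the components of \(F_0\) vanish. The period argument of Lemma~\ref{lem:closed-one-forms} applies. Every one-cycle on \(F_0\) is a sum of arcs in components, so all periods of \(\widetilde{\alpha}\) on a retracting neighborhood vanish. Hence \(\widetilde{\alpha}=dh\) with \(h\) holomorphic on \(Y_0\), and \(h\) comes from \(\mathcal{O}_{V,0}\).

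The main subtlety is the step that closes the injectivity argument: showing that \(h\), pulled back to \(Y\), is a section of \(\mathcal{O}_Y(D)\), so that \(\alpha\) is exact in the complex. When \(D\ge0\) this is automatic, as in \eqref{eq:Hartogs}. When \(D\) has negative coefficients, we first replace \(h\) by \(h-h(0)\), which does not change \(dh\). The pole-order condition \(dh\in\Omega_Y^1(\log F)(D)\) then forces \(h\) to vanish to the required order. Concretely, at a generic point of \(F_i\) with normal coordinate \(x\), write \(h-h(0)=\sum_a c_a(y)x^a\). Then \(dh\) has logarithmic coefficient \(\sum_a a c_a x^a\) along \(d\log x\), together with the tangential terms \(c_a'(y)x^a\,dy\). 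The condition that both lie in \(x^{-d_i}\) times regular forms, with \(a\neq0\) for the nonconstant terms, gives \(c_a=0\) for \(a<-d_i\), except possibly the constant \(a=0\) term. The constant term is excluded when \(-d_i>0\) because \(h-h(0)\) vanishes along \(F\). Thus \(h-h(0)\in H^0(\mathcal{O}_Y(D))\) whenever \(D\) is negative somewhere, which settles the negative case.

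Equivariance follows from uniqueness of extension and the naturality of restriction. The lift of the \(G\)-action to \(Y_0\) exists because the minimal good resolution is unique.

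For a smooth surface point, every closed holomorphic one-form near the origin is exact by the Poincaré lemma, \(\alpha=dh\). The same order argument puts \(h-h(0)\) in \(\mathcal{O}_Y(D)\), so the cohomology group vanishes.
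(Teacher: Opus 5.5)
Your proposal is correct and follows the paper's proof essentially step by step. The shared steps are: extension of the closed form to $Y_0$ via van Straten--Steenbrink/J\"{o}rder, restriction to the components of $F_0$, and the period argument for injectivity. The closing step is also the same, namely the vanishing-order computation showing that $\pi^*(f-f(0))$ lies in $H^0(Y,\mathcal{O}_Y(D))$ along components with negative coefficient; your power-series version is equivalent to the paper's $h=x^mu$ calculation. The one step left implicit is in the smooth-point case: before applying the Poincar\'{e} lemma, you must extend $\alpha|_U$ across the origin by Hartogs, as the paper does.
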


\begin{proof}
Put \(B^1:=H^1(\Gamma(Y,C_D^\bullet))\), and let \(p\colon Y\to Y_0\) be the natural morphism. Uniqueness of the minimal good resolution gives the induced \(G\)-action on \(Y_0\) and makes \(p\) equivariant. A class in \(B^1\) is represented by a closed meromorphic one-form \(\alpha\) on \(Y\), holomorphic off \(F\). By van Straten--Steenbrink~\cite[Corollary~(1.4)]{VanStratenSteenbrink1985}, its restriction to the punctured surface extends to a holomorphic one-form \(\alpha_0\) on \(Y_0\); see also J\"{o}rder~\cite[Proposition~1.2]{JorderExtension}. The extension is unique and remains closed, since its exterior derivative vanishes off \(F_0\). Moreover, \(p^*\alpha_0=\alpha\), since the two forms agree off \(F\).

Every section of \(\mathcal{O}_Y(D)\) restricts to a holomorphic function on the punctured normal surface and extends across the singular point. Its pullback to \(Y_0\) is constant on \(F_0\), so its differential restricts to zero on every component of \(F_0\). Thus restriction is well defined on \(B^1\). The restrictions of \(\alpha_0\) to the components are regular one-forms. Their residues at the preimages of every node are zero, so they satisfy the residue compatibility condition defining \(\omega_{F_0}\). We obtain a map
\[
 B^1\to H^0(F_0,\omega_{F_0}).
\]
Uniqueness of extension and naturality of restriction make this map equivariant.

Suppose that the image of \([\alpha]\) is zero. Then \(\alpha_0\) restricts to zero on every component of \(F_0\). As in the period argument of Lemma~\ref{lem:closed-one-forms}, loops in \(F_0\) can be represented by piecewise smooth paths lying in its components. All their integrals are zero, including those arising from cycles in the dual graph. Since a sufficiently small resolution neighborhood retracts onto \(F_0\), all periods of \(\alpha_0\) vanish. It therefore has a global holomorphic primitive on \(Y_0\). Normality gives \((\pi_0)_*\mathcal{O}_{Y_0}=\mathcal{O}_V\), where \(\pi_0\colon Y_0\to V\), so this primitive descends to a holomorphic function \(f\) on \(V\).

Put \(h:=\pi^*(f-f(0))\). Then \(\alpha=dh\). If \(h=0\), the assertion follows immediately. Otherwise, \(h\) has positive order along every exceptional component. Let \(C\) be a component on which \(D\) has coefficient \(-a<0\). Near a general point of \(C\), choose coordinates with \(C=(x=0)\), and write \(h=x^m u\), where \(m>0\) and \(u\) is a unit. The coefficient of \(dx/x\) in \(dh\) is
\[
 x^m(mu+x\partial_xu),
\]
which has order exactly \(m\) along \(C\). Since \(dh\in H^0(Y,\Omega_Y^1(\log F)(D))\), this coefficient must vanish to order at least \(a\). Hence \(m\ge a\). This holds for every component with a negative coefficient in \(D\), so \(h\in H^0(Y,\mathcal{O}_Y(D))\). Consequently \([\alpha]=0\) in \(B^1\).

If \((V,0)\) is smooth, Hartogs extension gives a holomorphic closed one-form on a sufficiently small ball in \(V\). The holomorphic Poincar\'{e} lemma gives a primitive there. The same vanishing-order calculation on \(Y\) shows that its pullback, after subtracting its value at the origin, belongs to \(H^0(Y,\mathcal{O}_Y(D))\). Thus \(B^1=0\).
\end{proof}

\begin{proposition}\label{genus:prop:polar}
Let \(\pi\colon(Y,F)\to(V,0)\) be a \(G\)-equivariant good resolution of a numerically Gorenstein normal complex surface singularity, where \(G\) is finite. Then the class
\[
 [H^1(Y,\mathcal{O}_Y(Z_K-F))]
 -[H^1(Y,\Omega_Y^1(\log F)(Z_K-F))]
 +[H^1(Y,\omega_Y(Z_K))]
\]
is effective in \(R(G)\). The same assertion holds for a resolution of a smooth surface point.
\end{proposition}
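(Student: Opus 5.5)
We apply Lemma~\ref{genus:lem:euler} with \(D:=Z_K-F\), which is an integral \(G\)-invariant exceptional divisor because the singularity is numerically Gorenstein and \(Z_K\) is determined by intersection numbers that \(G\) preserves. The three terms of \(C_D^\bullet\) are \(\mathcal{O}_Y(Z_K-F)\), \(\Omega_Y^1(\log F)(Z_K-F)\), and \(\omega_Y(F+D)=\omega_Y(Z_K)\). Equation~\eqref{genus:eq:euler} therefore rewrites the class in the statement as
\[
 [B^0]-[B^1]+[B^2].
\]
The proof thus reduces to showing that \([B^2]-[B^1]+[B^0]\) is effective. The plan is to imitate the residue argument of Section~\ref{sec:residues}: realize \(B^1\) inside a piece of \(B^2\) by a residue comparison. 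The injection of Lemma~\ref{genus:lem:closed-restriction} then supplies the subrepresentation.

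First, the easy cases. For a smooth surface point, Lemma~\ref{genus:lem:closed-restriction} gives \(B^1=0\), and \(B^0,B^2\) are actual representations, so the class is effective. If \(F\) contracts to a rational double point, then \(H^0(F_0,\omega_{F_0})=0\) for the tree of rational curves on the minimal good resolution. Lemma~\ref{genus:lem:closed-restriction} again forces \(B^1=0\).

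In general, Lemma~\ref{genus:lem:closed-restriction} gives an equivariant injection \(B^1\hookrightarrow W_{F_0}=H^0(F_0,\omega_{F_0})\). It then suffices to construct an equivariant surjection, or at least an injection up to effective error, from a subquotient of \(B^2\) onto \(W_{F_0}\). For this, pass to the minimal good resolution \(Y_0\). By Lemma~\ref{genus:lem:cycle}, \(Z_{K,0}\ge F_0\) unless the singularity is a rational double point. Hence \(\omega_{Y_0}(F_0)\subseteq\omega_{Y_0}(Z_{K,0})\).

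Since \(K_Y+Z_K\) is the pullback of the numerically trivial class, the sections of \(\omega_Y(Z_K)\) correspond to those of \(\omega_{Y_0}(Z_{K,0})\). More precisely, \(p_*\omega_Y(Z_K)\) agrees with \(\omega_{Y_0}(Z_{K,0})\) off finitely many points, and sections extend by Hartogs. So \(H^0(Y,\omega_Y(Z_K))\) contains the logarithmic two-forms \(H^0(Y_0,\omega_{Y_0}(F_0))\). The residue map \(\Res\colon H^0(Y_0,\omega_{Y_0}(F_0))\to W_{F_0}\) is surjective by Grauert--Riemenschneider vanishing \eqref{eq:GR}. It kills exact forms \(d\alpha\) with \(\alpha\) holomorphic on the punctured surface, by Lemma~\ref{lem:exact-residue}, and in particular those with \(\alpha\in\Gamma(\Omega_Y^1(\log F)(D))\). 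Hence
\[
 I_{\log}:=\im\bigl(H^0(Y_0,\omega_{Y_0}(F_0))\to B^2\bigr)
\]
surjects equivariantly onto \(W_{F_0}\).

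Semisimplicity then gives
\[
 [B^2]\ge[I_{\log}]\ge[W_{F_0}]\ge[B^1].
\]
Adding the actual representation \(B^0\) yields effectivity.

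The main obstacle is the comparison between \(Y\) and \(Y_0\). Two things must be checked. First, the logarithmic two-forms on \(Y_0\), after pullback to \(Y\), lie in \(\omega_Y(Z_K)\); this needs \(p^*(K_{Y_0}+Z_{K,0})=K_Y+Z_K\) together with \(Z_{K,0}\ge F_0\). Second, the residue of a form on \(Y_0\) is well defined modulo \(d\Gamma(\Omega_Y^1(\log F)(D))\), which follows from Lemma~\ref{lem:exact-residue} applied on \(Y_0\). I would verify the pullback inclusion by the blowup computation already used in the proof of Lemma~\ref{genus:lem:cycle}.
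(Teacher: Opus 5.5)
Your proposal is correct and follows essentially the paper's proof. You apply Lemma~\ref{genus:lem:euler} with \(D=Z_K-F\), embed \(B^1\) into \(H^0(F_0,\omega_{F_0})\) via Lemma~\ref{genus:lem:closed-restriction}, and combine \(\omega_Y(Z_K)\simeq p^*\omega_{Y_0}(Z_{K,0})\), \(Z_{K,0}\ge F_0\), Grauert--Riemenschneider vanishing and Lemma~\ref{lem:exact-residue} into an equivariant residue surjection from the image of \(H^0(Y_0,\omega_{Y_0}(F_0))\) in \(B^2\) onto \(H^0(F_0,\omega_{F_0})\). The only case you skip is the trivial one \(F=\varnothing\) (a smooth point with \(Y=V\)): there Lemma~\ref{genus:lem:euler} does not apply, but all three groups vanish by Stein vanishing.
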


\begin{proof}
If \(F=\varnothing\), all three cohomology groups vanish by Stein vanishing. Suppose that \(F\neq\varnothing\). Put \(D:=Z_K-F\), and write
\[
 B^i:=H^i(\Gamma(Y,C_D^\bullet))
\]
for the cohomology groups of the global-section complex in Lemma~\ref{genus:lem:euler}. Let \(\pi_0\colon(Y_0,F_0)\to(V,0)\) be the minimal good resolution, and let \(p\colon Y\to Y_0\) be the natural morphism. If \((V,0)\) is smooth, take \(Y_0=V\) and \(F_0=\varnothing\). The morphism \(p\) is equivariant. Put \(W:=H^0(F_0,\omega_{F_0})\), with \(W=0\) when \(F_0=\varnothing\). By Lemma~\ref{genus:lem:closed-restriction}, \(B^1\) injects equivariantly into \(W\).

If \((V,0)\) is smooth or a rational double point, then \(W=0\), so \(B^1=0\). Equation~\eqref{genus:eq:euler} expresses the class in the statement as \([B^0]+[B^2]\), which is effective.

Suppose now that \((V,0)\) is neither smooth nor a rational double point. Let \(Z_{K,0}\) be the numerical anticanonical cycle on \(Y_0\). By Lemma~\ref{genus:lem:cycle}, \(Z_{K,0}\ge F_0\). The transformation formula for numerical anticanonical cycles gives
\[
 Z_K=p^*Z_{K,0}-K_{Y/Y_0}.
\]
Together with the canonical bundle formula, this yields an isomorphism
\[
 \omega_Y(Z_K)\simeq p^*\omega_{Y_0}(Z_{K,0})
\]
of \(G\)-equivariant line bundles. Since \(Z_{K,0}\ge F_0\), pullback followed by projection to cohomology defines a map \(H^0(Y_0,\omega_{Y_0}(F_0))\to B^2\). Define
\[
 \mathcal{J}:=\im\left(
 H^0(Y_0,\omega_{Y_0}(F_0))\to B^2
 \right).
\]

Adjunction and Grauert--Riemenschneider vanishing~\cite{GrauertRiemenschneider1970} give the equivariant exact sequence
\[
 0\to H^0(Y_0,\omega_{Y_0})
 \to H^0(Y_0,\omega_{Y_0}(F_0))
 \xrightarrow{\Res} W
 \to 0.
\]
If a logarithmic two-form has zero image in \(B^2\), its pullback is the differential of a one-form in \(H^0(Y,\Omega_Y^1(\log F)(D))\). On the punctured surface this primitive is holomorphic. Lemma~\ref{lem:exact-residue}, applied on \(Y_0\), therefore shows that the logarithmic two-form has zero residue. Consequently residue descends to a well-defined equivariant surjection
\[
 \Res\colon\mathcal{J}\twoheadrightarrow W.
\]

Identify \(B^1\) with its image in \(W\). The inclusions \(\mathcal{J}\subseteq B^2\) and \(B^1\subseteq W\), together with the residue surjection, give
\[
 [B^0]-[B^1]+[B^2]
 =[B^0]+[B^2/\mathcal{J}]
 +[\ker(\Res)]+[W/B^1].
\]
Every term on the right is the class of a finite-dimensional \(G\)-representation. Hence the class is effective, and \eqref{genus:eq:euler} proves the assertion.
\end{proof}

\subsection{Finite cyclic covers and the genus bound}

We construct a numerically trivial line bundle \(N\) and prove the required cohomological inequality by applying the preceding residue comparison on a resolution of a finite cyclic cover of \((V,0)\). The cohomology estimates above then give the modified inequality and its equality criterion.

\begin{lemma}\label{genus:lem:cyclic-log-cover}
Let \(\pi\colon(X,E) \to (V,0)\) be a good resolution of a normal complex surface singularity, or a resolution of a smooth surface point. Put \(U:=X\setminus E\), and let \(c\colon U' \to U\) be a connected finite unramified cyclic cover with group \(G\).

Then \(c\) extends to a finite cyclic cover \(\tau\colon(V',0') \to (V,0)\), unramified away from \(0\). There are a \(G\)-equivariant good resolution \(\pi'\colon(Y,F) \to (V',0')\) and a map \(q\colon Y \to X\) satisfying \(\pi\circ q=\tau\circ\pi'\). The map \(q\) factors as \(q=\rho\circ\sigma\), where \(\rho\colon\overline{X} \to X\) is the finite normalization in \(U'\), and \(\sigma\colon Y \to \overline{X}\) is a \(G\)-equivariant toroidal resolution.
\end{lemma}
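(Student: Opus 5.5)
We identify \(U\) with the punctured surface \(V\setminus\{0\}\) via \(\pi\) and construct everything from the unramified cover \(c\). First, the cover \(\tau\). Take a small Stein representative of \(V\) whose punctured part is homotopy equivalent to the link. Then \(U'\to U\) is a finite unramified cover of a punctured normal surface. Its direct image \(c_*\mathcal{O}_{U'}\) is a coherent reflexive algebra on \(U\). Extending across the origin as the normalization of \(V\) in the function field of \(U'\) (equivalently, \(j_*c_*\mathcal{O}_{U'}\), coherent by the Grauert--Remmert extension theorem for finite unramified covers of complements of codimension-two analytic sets) gives a normal finite analytic space \(V'\to V\). Since \(U'\) is connected, there is a single point \(0'\) over \(0\). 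The \(G\)-action on \(U'\) extends by normality, and \(V'/G=V\) because the invariants of \(j_*c_*\mathcal{O}_{U'}\) equal \(j_*\mathcal{O}_U=\mathcal{O}_V\). By construction \(\tau\) is unramified away from \(0\).

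Next, the normalization \(\rho\). Let \(\overline{X}\) be the normalization of \(X\) in \(U'\), that is, \(\operatorname{Specan}\) of the integral closure of \(\mathcal{O}_X\) in \(c_*\mathcal{O}_{U'}\). This is a finite map, \(G\)-equivariant, and a normal space whose branch locus is contained in \(E\). The universal property of normalization gives a map \(\overline{X}\to V'\) compatible with \(\pi\).

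The key local analysis is the following. At a smooth point of \(E\) with local equation \(x=0\), the local fundamental group of the complement is \(\mathbb{Z}\). The cover is therefore a union of Kummer covers \(w^k=x\), and \(\overline{X}\) is smooth there. At a node \(xy=0\), the local fundamental group of the complement is \(\mathbb{Z}^2\), which is abelian. Each connected component of the preimage is therefore the normalization of an abelian cover of \((\mathbb{C}^*)^2\) extended over \(\mathbb{C}^2\), namely an affine toric surface. So \(\overline{X}\) has at worst cyclic quotient (toric) singularities lying over the nodes of \(E\). Moreover, the preimage of \(E\) is a toric boundary there, and \(G\) permutes these local charts.

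Then the resolution \(\sigma\). Resolve each toric singularity of \(\overline{X}\) by the minimal toric resolution, given by the subdivision of the cone by the convex hull of lattice points (Hirzebruch--Jung). This resolution is canonical, hence unique, so it is compatible with the \(G\)-translates of the charts and with the stabilizers, which act by toric automorphisms possibly exchanging the two boundary rays. It therefore glues to a \(G\)-equivariant resolution \(\sigma\colon Y\to\overline{X}\). The total transform of the reduced preimage of \(E\) is a chain of toric curves near each such point. Hence \(F:=(\rho\sigma)^{-1}(E)_{\mathrm{red}}\) is a normal crossing divisor, and it is the full exceptional set of \(\pi'\colon Y\to V'\).

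The main obstacle is the \emph{good} condition: components of \(F\) must be smooth with no self-intersections. Self-intersections can arise where two branches over a node lie on the same component, or from a component of \(\overline{E}\) meeting itself after normalization. To handle this, I would first note that preimages of distinct smooth components of \(E\) are smooth, since they are étale-locally Kummer. If some component meets itself, I would perform further \(G\)-equivariant blowups at the nodes of that component; these are \(G\)-stable finite sets, and toric blowups keep the toroidal structure. Finally, set \(q=\rho\circ\sigma\). Then \(\pi\circ q=\tau\circ\pi'\) holds on the dense set \(U'\), and hence everywhere.
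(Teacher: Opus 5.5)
Your proposal is correct and follows essentially the paper's route. You normalize \(X\) in \(U'\), using Kummer models over smooth points of \(E\) and toric cyclic-quotient models over nodes, then take the minimal toric resolution and set \(q=\rho\circ\sigma\). The only variation is in how \(V'\) arises: you build it first by Grauert--Remmert extension of the cover across \(0\) and then map \(\overline{X}\) to it. The paper instead obtains \(V'\) as the Stein factorization of \(\pi\circ q\) and uses connectedness of \(U'\) to get \(\tau^{-1}(0)=\{0'\}\).

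The ``main obstacle'' you flag does not arise. The components of \(E\) are smooth, so each point of \(\overline{X}\) over a node carries exactly one toric boundary branch over each of the two distinct components through that node. Over smooth points of \(E\), the preimage is a disjoint union of smooth Kummer branches. Hence every component of \(\rho^{-1}(E)\) is smooth, and \(F\) is already simple normal crossings after the Hirzebruch--Jung resolution, so no extra blowups are needed.
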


\begin{proof}
If \(E=\varnothing\), then \(X\) is a sufficiently small smooth neighborhood of the origin. The connected cover is trivial, and we take all maps to be identities. Assume that \(E\ne\varnothing\). Choose a sufficiently small representative of \((V,0)\). Its punctured neighborhoods retract onto the link, so the connected cover remains connected after further shrinking.

The complement of \(E\) in a small coordinate neighborhood has fundamental group \(\mathbb{Z}\) at a smooth point of \(E\), and \(\mathbb{Z}^2\) at a node. Each connected local component of the cover therefore extends by normalization of
\[
 w^m=x^a y^b,
 \quad 0\le a,b<m,
 \quad \gcd(m,a,b)=1,
\]
where \(m\) is the order of the local monodromy image. At a smooth point of \(E\), take \(b=0\). These normalizations glue by uniqueness of normalization to give \(\rho\colon\overline{X} \to X\). They are smooth over smooth points of \(E\), and have cyclic quotient singularities over its nodes; cf.~Esnault--Viehweg~\cite[Lemma~3.24]{EsnaultViehweg1992}. Their minimal toric resolutions give a \(G\)-equivariant toroidal resolution \(\sigma\colon Y \to \overline{X}\). Put
\[
 q:=\rho\circ\sigma,\quad F:=(q^{-1}E)_{\mathrm{red}}.
\]

The Stein factorization of \(\pi\circ q\) gives \(Y \xrightarrow{\pi'} V' \xrightarrow{\tau} V\), with \(V'\) normal and \(\tau\) finite. Over \(V\setminus\{0\}\), it is the cover \(U' \to U\). If \(\tau^{-1}(0)\) contained more than one point, finiteness would separate the inverse image of a sufficiently small neighborhood into disjoint neighborhoods of these points. Removing the fiber would disconnect the restricted cover, a contradiction. Thus \(\tau^{-1}(0)=\{0'\}\). The \(G\)-action descends to \(V'\), and \(\tau\) is a cyclic cover. Since \(Y\) is smooth, \(F\) has simple normal crossings, and \(\pi'\) is an isomorphism away from \(F\), the map \(\pi'\colon(Y,F) \to (V',0')\) is a good resolution.
\end{proof}

\begin{lemma}\label{genus:lem:log-pullback}
For the map \(q\colon Y \to X\) constructed in Lemma~\ref{genus:lem:cyclic-log-cover}, pullback induces a \(G\)-equivariant isomorphism
\begin{equation}\label{genus:eq:logpullback}
 q^*\Omega_X^1(\log E) \simeq \Omega_Y^1(\log F).
\end{equation}
\end{lemma}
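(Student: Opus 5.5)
The claim is local on \(Y\). Both sides are locally free of rank two, and the map is equivariant by construction, so I would check it on the two pieces of the factorization \(q=\rho\circ\sigma\) from Lemma~\ref{genus:lem:cyclic-log-cover}. Pullback of logarithmic forms along \(q\) first gives a natural map \(q^*\Omega_X^1(\log E)\to\Omega_Y^1(\log F)\): locally \(\Omega_X^1(\log E)\) is generated by \(d\log x\) and \(d\log y\) (or \(d\log x\) and \(dy\)), and since \(F=(q^{-1}E)_{\mathrm{red}}\), each pullback \(q^*x\) is a monomial in local equations of components of \(F\) times a unit. Hence \(q^*d\log x\) is a sum of logarithmic differentials of the components of \(F\) plus a regular form, so the map lands in \(\Omega_Y^1(\log F)\). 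Equivariance is automatic, because \(G\) acts on \(Y\) over \(X\) and pullback of forms commutes with automorphisms over \(X\). It remains to show that the map is an isomorphism, and for a map between locally free sheaves of the same rank it suffices to check that its determinant is nowhere vanishing.

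Over \(U\), the map \(q\) is the unramified cover \(c\), so the claim is immediate there. At a smooth point of \(E\), the normalization is \(w^m=x^a\) with \(\gcd(m,a)=1\) locally. It is smooth, and in suitable local coordinates \((s,y)\) we have \(x=s^m\cdot\mathrm{unit}\) and \(\sigma\) is the identity. Then \(q^*d\log x=m\,d\log s+\text{regular}\) and \(q^*dy=dy\), so the determinant is \(m\) times a unit, which is nonzero. This is the usual behaviour of logarithmic forms under covers branched along the boundary: they are log étale.

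At a node, the local picture is toric. The neighbourhood is \(\Spec\mathbb{C}[x,y]\) with torus \((\mathbb{C}^*)^2\), and the normalization of \(w^m=x^ay^b\) is the affine toric variety attached to the same cone with respect to a finer lattice \(N'\supseteq N\) of finite index. The minimal toric resolution \(\sigma\) then subdivides this cone. On a toric variety with boundary the full toric divisor, \(\Omega^1(\log\partial)\) is canonically trivialized by \(M\otimes\mathcal{O}\), and for every toric morphism the pullback is induced by the lattice inclusion \(M\hookrightarrow M'\), with \(M'=\Hom(N',\mathbb{Z})\). On each smooth chart of \(Y\), the composite in the bases \(d\log x,d\log y\) and \(d\log u,d\log v\) is therefore an integer matrix with determinant \(\pm[M':M]\neq0\). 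Over \(\mathbb{C}\) this is invertible, which gives the isomorphism over nodes. One also has to check that \(F\) coincides with the toric boundary over the node, so that no extra divisor arises. This holds because \(q^{-1}E\) is exactly the preimage of the toric boundary \(xy=0\).

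The main obstacle is making the toric identification at the nodes precise. Concretely, one must show that the local normalization \(\rho\) is the toric morphism given by the lattice inclusion, and that the equivariant minimal resolution \(\sigma\) is toric with respect to that structure, including after the gluing across charts. Once this is in place, the computation reduces to the determinant of a lattice inclusion, which is a nonzero integer.
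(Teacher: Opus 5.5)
Your proof is correct and follows essentially the same route as the paper: a local check over smooth points of \(E\) (where \(x=u^m\)) and over nodes (where toric coordinates \(x=u^av^b\), \(y=u^cv^d\) give an integer transition matrix from \(d\log x,d\log y\) to \(d\log u,d\log v\) with nonzero determinant), together with unramifiedness off \(F\) and equivariance by naturality of pullback. One small slip: the covering toric variety corresponds to a finite-index sublattice \(N'\subseteq N\), equivalently \(M\subseteq M'\), not to \(N'\supseteq N\); your determinant computation already uses the correct direction \(M\hookrightarrow M'\).
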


\begin{proof}
Over a node of \(E\), toric coordinates give
\[
 x=u^a v^b,\quad y=u^c v^d,\quad ad-bc\ne 0,
\]
and hence
\[
 \begin{pmatrix}dx/x\\ dy/y\end{pmatrix}
 =
 \begin{pmatrix}a&b\\ c&d\end{pmatrix}
 \begin{pmatrix}du/u\\ dv/v\end{pmatrix}.
\]
The matrix is invertible over \(\mathbb{C}\). Where only one toric boundary coordinate vanishes, the other is a unit, and its logarithmic differential is regular. Over a smooth point of \(E\), the local map is \(x=u^m,\ y=v\). Pullback sends \(dx/x\) to \(m du/u\), and \(dy\) to \(dv\). Away from \(F\), the map is unramified. These calculations prove the isomorphism; cf.~Ambro~\cite[Theorem~1.1(b)]{Ambro2016}. It is \(G\)-equivariant because it is induced by pullback.
\end{proof}

\begin{lemma}\label{genus:lem:cyclic-descent}
For the map \(q\colon Y \to X\) constructed in Lemma~\ref{genus:lem:cyclic-log-cover} and every vector bundle \(\mathcal{A}\) on \(X\), pullback induces isomorphisms
\begin{equation}\label{genus:eq:descent}
 H^i(X,\mathcal{A}) \simeq H^i(Y,q^*\mathcal{A})^G
 \quad(i=0,1).
\end{equation}
\end{lemma}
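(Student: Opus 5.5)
The plan is to factor \(q=\rho\circ\sigma\) as in Lemma~\ref{genus:lem:cyclic-log-cover} and treat the two maps separately. For the toroidal resolution \(\sigma\colon Y\to\overline{X}\), I would use that \(\overline{X}\) has only cyclic quotient singularities. These are rational, so \(\sigma_*\mathcal{O}_Y=\mathcal{O}_{\overline{X}}\) and \(R^1\sigma_*\mathcal{O}_Y=0\). Since \(\rho^*\mathcal{A}\) is locally free, the projection formula gives \(R\sigma_*\sigma^*\rho^*\mathcal{A}\simeq\rho^*\mathcal{A}\). The Leray spectral sequence then yields \(G\)-equivariant isomorphisms
\[
 H^i(Y,q^*\mathcal{A})\simeq H^i(\overline{X},\rho^*\mathcal{A})\quad(i=0,1),
\]
induced by pullback. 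Equivariance holds because \(\sigma\) is \(G\)-equivariant.

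For the finite map \(\rho\), I would use the projection formula again: \(\rho_*\rho^*\mathcal{A}\simeq\mathcal{A}\otimes\rho_*\mathcal{O}_{\overline{X}}\). Since \(\rho\) is finite, it has no higher direct images, so
\[
 H^i(\overline{X},\rho^*\mathcal{A})\simeq H^i(X,\mathcal{A}\otimes\rho_*\mathcal{O}_{\overline{X}}).
\]
The \(G\)-action on \(\rho_*\mathcal{O}_{\overline{X}}\) splits it into isotypic summands, and \(G\) acts trivially on \(X\). Because taking invariants is exact in characteristic zero, it commutes with cohomology, giving
\[
 H^i(\overline{X},\rho^*\mathcal{A})^G\simeq H^i(X,\mathcal{A}\otimes(\rho_*\mathcal{O}_{\overline{X}})^G).
\]

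The remaining point is to show \((\rho_*\mathcal{O}_{\overline{X}})^G=\mathcal{O}_X\), i.e.\ \(\overline{X}/G\simeq X\). Over \(U\) this holds because \(U'\to U\) is a Galois cover with group \(G\). Locally near \(E\), \(\overline{X}\) is the normalization of \(X\) in \(U'\), so the invariants form a normal ring that is finite over \(\mathcal{O}_X\) and birational to it. Normality of \(X\) then forces equality.

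Combining the two steps, the composite isomorphism is induced by pullback along \(q\), which proves \eqref{genus:eq:descent}. I expect the main care to lie in justifying rationality of \(\overline{X}\) in the analytic setting; this follows from the local toric description \(w^m=x^ay^b\) in Lemma~\ref{genus:lem:cyclic-log-cover}. One must also check that \(G\) acts on \(Y\) compatibly with \(\sigma\), and that the Leray isomorphism is \(G\)-equivariant, which holds by naturality.
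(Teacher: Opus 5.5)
Your proposal is correct and follows essentially the same route as the paper. Rationality of the cyclic quotient singularities of \(\overline{X}\) kills \(R^1\sigma_*\mathcal{O}_Y\), finiteness of \(\rho\) and the projection formula reduce everything to \(H^i(X,\mathcal{A}\otimes q_*\mathcal{O}_Y)\), and the Reynolds projection together with \((q_*\mathcal{O}_Y)^G=\mathcal{O}_X\) finishes the argument. The differences are cosmetic: the paper composes to \(q\) and applies Leray once, whereas you treat \(\sigma\) and \(\rho\) separately, and you supply the normality argument for \(\overline{X}/G\simeq X\) that the paper simply asserts.
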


\begin{proof}
Write \(q=\rho\circ\sigma\) as in Lemma~\ref{genus:lem:cyclic-log-cover}. Cyclic quotient singularities are rational; see Esnault--Viehweg~\cite[proof of Claim~5.13]{EsnaultViehweg1992}. Thus
\[
 \sigma_*\mathcal{O}_Y=\mathcal{O}_{\overline{X}},
 \quad R^1\sigma_*\mathcal{O}_Y=0.
\]
Since \(\rho\) is finite and \(X=\overline{X}/G\), it follows that
\[
 R^1q_*\mathcal{O}_Y=0,\quad
 (q_*\mathcal{O}_Y)^G=\mathcal{O}_X.
\]
The projection formula and Leray give
\[
 H^i(Y,q^*\mathcal{A})
 \simeq H^i(X,\mathcal{A}\otimes q_*\mathcal{O}_Y)
 \quad(i=0,1).
\]
The Reynolds projection identifies the invariant subsheaf on the right with \(\mathcal{A}\) and commutes with cohomology. Taking invariants therefore proves \eqref{genus:eq:descent}.
\end{proof}

\begin{lemma}\label{genus:lem:horizontal-section}
Let \(q\colon Y \to X\) be the map constructed in Lemma~\ref{genus:lem:cyclic-log-cover}. Let \(P\) be a line bundle on \(X\) with a flat logarithmic connection \(\nabla\) along \(E\). Suppose that the pullback connection has trivial monodromy on \(Y\setminus F\).

Then there are a \(G\)-invariant integral divisor \(D\) supported on \(F\) and a character \(\chi\) of \(G\) such that
\[
 (q^*P,q^*\nabla) \simeq (\mathcal{O}_Y(D),d)\otimes_{\mathbb{C}}\mathbb{C}_\chi
\]
as \(G\)-equivariant line bundles with logarithmic connection. Here \(d\) is exterior differentiation, and \(\mathbb{C}_\chi\) is the one-dimensional representation with character \(\chi\).
\end{lemma}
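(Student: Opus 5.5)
The plan is to build a global horizontal section of the pulled-back connection on \(Y\setminus F\), show that it is meromorphic along \(F\), and read off \(D\) as its polar divisor and \(\chi\) as the character by which \(G\) moves it.

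First, consider the pulled-back connection \(q^*\nabla\) on \(q^*P\). Since \(q^*\Omega_X^1(\log E)\simeq\Omega_Y^1(\log F)\) by Lemma~\ref{genus:lem:log-pullback}, the pullback \(q^*\nabla\) is a flat logarithmic connection along \(F\). Its monodromy on \(Y\setminus F\) is trivial by hypothesis, and \(Y\setminus F\simeq U'\) is connected. Hence there is a nowhere vanishing horizontal section \(s\) of \(q^*P\) over \(Y\setminus F\), unique up to a nonzero scalar.

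Second, I will show that \(s\) is meromorphic along \(F\) with an integral divisor. At a smooth point of \(F=(u=0)\), write \(q^*\nabla=d+\lambda\,du/u+\beta\) in a local frame \(e\), with \(\beta\) holomorphic. Trivial monodromy around the meridian forces the residue \(\lambda\) to be an integer \(k\). Solving the equation then gives \(s=u^{-k}\,h\,e\) with \(h\) a holomorphic unit, since the regular part integrates to an exponential of a holomorphic function. At a node \((uv=0)\) the same argument applies: both residues are integers, and the two commuting logarithmic parts integrate to \(u^{-k}v^{-l}\) times a unit. This is the main point needing care, namely that a flat logarithmic connection with integral residues and trivial local monodromy has horizontal sections of the form monomial times unit. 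I would prove this by first gauging away the holomorphic closed part via the Poincar\'e lemma on a polydisc; flatness makes the connection form closed, since it is rank one.

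Consequently \(\divr(s)=-D\) for an integral divisor \(D\) supported on \(F\), and \(s\) trivializes \(q^*P\otimes\mathcal{O}_Y(-D)\). The map \(f\mapsto fs\) then gives an isomorphism \((\mathcal{O}_Y(D),d)\simeq(q^*P,q^*\nabla)\) of line bundles with connection, because \(\nabla(fs)=df\otimes s\).

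Finally, equivariance. The pullback \((q^*P,q^*\nabla)\) carries its natural \(G\)-linearization, and the action preserves horizontality. Therefore \(g\cdot s\) is again horizontal, so \(g\cdot s=\chi(g)s\) by uniqueness up to scalars. The map \(\chi\) is multiplicative, so it is a character of \(G\). The divisor \(D=-\divr(s)\) is \(G\)-invariant, since \(g\cdot s\) and \(s\) have the same divisor. Giving \(\mathcal{O}_Y(D)\) its natural linearization, the isomorphism \(f\otimes 1\mapsto fs\) intertwines the actions after twisting by \(\mathbb{C}_\chi\). Since \(d\) commutes with the action, this yields the stated isomorphism of \(G\)-equivariant line bundles with logarithmic connection.
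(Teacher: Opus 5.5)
Your proposal is correct and follows the paper's proof essentially step for step. The steps are the same: a horizontal section \(s\) on the connected space \(Y\setminus F\) from trivial monodromy, the local form \(s=c\,u^{-\ell_1}v^{-\ell_2}\exp(-h)\,e\) from closedness of the rank-one connection form with constant integral residues, and the character \(\chi\) from the \(G\)-action on the line of horizontal sections.

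There is one sign slip. If \(\divr(s)=-D\), then multiplication by \(s\) identifies \(\mathcal{O}_Y(-D)\), not \(\mathcal{O}_Y(D)\), with \(q^*P\). For instance, when \(k>0\) it sends \(u^{-k}\in\mathcal{O}_Y(kF)\) to the non-holomorphic section \(u^{-2k}h\,e\). You should therefore set \(D:=\divr(s)\), as the paper does. Nothing else changes, since \(D\) is only asserted to exist.
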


\begin{proof}
By Lemma~\ref{genus:lem:log-pullback}, the pullback connection is logarithmic along \(F\). Its trivial monodromy gives a nonzero global horizontal section \(s\) on the connected space \(Y\setminus F\).

Near a point of \(F\), choose a local frame \(e\) of \(q^*P\) and coordinates in which \(F\) is defined by \(u=0\) or \(uv=0\). Flatness gives a connection form
\[
 \ell_1\frac{du}{u}+\ell_2\frac{dv}{v}+dh,
\]
with constant residues and \(h\) holomorphic. The second logarithmic term is omitted when \(F\) has one branch. Trivial monodromy implies that the residues are integral. The horizontal equation then gives
\[
 s=c u^{-\ell_1}v^{-\ell_2}\exp(-h)e
\]
for a nonzero constant \(c\), with the \(v\)-factor omitted in the one-branch case. Thus \(s\) extends meromorphically to \(Y\), and \(D:=\divr(s)\) is integral and supported on \(F\).

The horizontal sections on \(Y\setminus F\) form a one-dimensional vector space preserved by \(G\). Define \(\chi\) by \(g\cdot s=\chi(g)s\). Then \(D\) is \(G\)-invariant, and multiplication by \(s\) gives the equivariant identification
\[
 \mathcal{O}_Y(D)\otimes_{\mathbb{C}}\mathbb{C}_\chi
 \simeq q^*P.
\]
Since \(q^*\nabla(s)=0\), this identification carries exterior differentiation to \(q^*\nabla\).
\end{proof}

\begin{proposition}\label{genus:prop:twist}
Let \(\pi\colon(X,E) \to (V,0)\) be a good resolution of a normal complex surface singularity. Then there is a numerically trivial line bundle \(N\) on \(X\) such that
\[
 h^1(X,N)-h^1(X,S_X\otimes N)+h^1(X,L_X\otimes N) \geq 0.
\]
\end{proposition}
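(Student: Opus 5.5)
The plan is to realize the three sheaves \(N\), \(S_X\otimes N\), \(L_X\otimes N\), after pullback to a finite cyclic cover, as the three terms of Proposition~\ref{genus:prop:polar}, and then descend effectivity by taking invariants.

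\emph{Rewriting the triple.} First note that \(S_X\simeq\Omega_X^1(\log E)\otimes L_X\), since \(S_X\) has rank two and determinant \(L_X\). Hence
\[
 S_X\otimes N\simeq\Omega_X^1(\log E)\otimes(L_X\otimes N).
\]
By Lemma~\ref{genus:lem:log-pullback} and the determinant identification, \(q^*\omega_X(E)\simeq\omega_Y(F)\) for any cover \(q\colon Y\to X\) as in Lemma~\ref{genus:lem:cyclic-log-cover}. The positions \(0\) and \(2\) in the alternating sum play symmetric roles. It therefore suffices to find a numerically trivial \(N\) and a cover \(q\) such that
\[
 q^*(L_X\otimes N)\simeq\mathcal{O}_Y(Z_K-F)\otimes\mathbb{C}_\chi
\]
for some character \(\chi\). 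Here \(Z_K\) is the numerical anticanonical cycle of \((Y,F)\), which must then be integral. Granting this, \(q^*N\simeq\omega_Y(Z_K)\otimes\mathbb{C}_\chi\) and
\[
 q^*(S_X\otimes N)\simeq\Omega_Y^1(\log F)(Z_K-F)\otimes\mathbb{C}_\chi.
\]
Proposition~\ref{genus:prop:polar} applies to the \(G\)-equivariant good resolution \((Y,F)\to(V',0')\), or to a resolution of a smooth point. It shows that
\[
 [H^1(q^*N)]-[H^1(q^*(S_X\otimes N))]+[H^1(q^*(L_X\otimes N))]
\]
is \(\chi\) times an effective class. Lemma~\ref{genus:lem:cyclic-descent} identifies \(H^1\) on \(X\) with \(G\)-invariants on \(Y\), so the left side of the proposition equals the \(\chi^{-1}\)-multiplicity of an effective class, which is \(\geq0\).

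\emph{Producing the isomorphism on the cover.} I would use Lemma~\ref{genus:lem:horizontal-section}: it suffices to give \(P:=L_X\otimes N\) a flat logarithmic connection along \(E\) whose monodromy on \(U\) has finite image. Take \(c\colon U'\to U\) to be the cyclic cover defined by that monodromy character, and build \(q\) by Lemma~\ref{genus:lem:cyclic-log-cover}. Then \(q^*P\simeq\mathcal{O}_Y(D)\otimes\mathbb{C}_\chi\) with \(D\) integral and supported on \(F\).

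Numerically, \(q^*P\) has the degrees of \(q^*L_X=\mathcal{O}_Y(-(K_Y+F))\) on every component of \(F\). Negative definiteness then forces \(D=Z_K-F\). This shows at the same time that \(Z_K\) is integral, i.e.\ the cover is numerically Gorenstein, as Proposition~\ref{genus:prop:polar} requires.

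\emph{The main obstacle: constructing \(N\) and the connection.} We need \(L_X\otimes N\) to carry a flat logarithmic connection with finite monodromy and the degrees of \(L_X\). My first attempt is the following.
\begin{enumerate}
\item Write \(Z_K=\sum_i k_iE_i\in\mathbb{Q}\)-cycles, and let \(r\) be a common denominator, so that \(r(K_X+Z_K)\) is an integral, numerically trivial divisor.
\item Consider the line bundle \(\mathcal{O}_X(-r(K_X+E))\otimes\mathcal{O}_X(r(E-Z_K))\). It is numerically trivial, and on \(U\) it is \(\omega_U^{-r}\).
\item Look for a logarithmic connection on \(L_X\) itself, with rational residues \(k_i-1\) along \(E_i\), modelled on the formal expression \(-d\log\) of a local generator of \(\omega\), twisted by \(\mathcal{O}_X(E-Z_K)\).
\item Define \(N\) as the correction needed to make this connection exist. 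This means choosing \(N\) in the group \(H^1(X,\mathcal{O}_X)/H^1(X,\mathbb{Z})\) of numerically trivial bundles.
\end{enumerate}
The difficulty is that numerically trivial bundles with flat connections have, in general, unitary rather than finite monodromy. Rationality of the residues controls only the local monodromy around the \(E_i\); the global part, the free part of \(H_1\) of the link, must be made finite. I would arrange this by choosing the flat connection via a finite-order character of \(H_1(U,\mathbb{Z})\) that extends the prescribed local monodromies \(e^{-2\pi i k_i}\). Such a character exists because the torsion class of \(K_X+Z_K\) is realized by the discriminant group. The remaining, continuous part of the monodromy is then absorbed into the choice of \(N\), using the divisibility of \(H^1(X,\mathcal{O}_X)\) to correct any discrepancy. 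Verifying that this bookkeeping actually produces a holomorphic \(N\) of degree zero, with a connection of exactly finite monodromy, is where I expect the real work to lie.
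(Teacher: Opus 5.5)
Your reduction is correct and is the paper's: rewriting \(S_X\otimes N\simeq\Omega_X^1(\log E)\otimes(L_X\otimes N)\), the identifications after pullback, forcing \(D=Z_K-F\) from numerical triviality of \(q^*N\), applying Proposition~\ref{genus:prop:polar}, and descending via Lemma~\ref{genus:lem:cyclic-descent} all match.

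\textbf{The gap.} It is the step you flag yourself, and it is the only new ingredient of the proposition. You never produce a numerically trivial \(N\) together with a flat logarithmic connection on \(P=L_X\otimes N\) with finite monodromy. Your sketch is incomplete in two places.

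First, you need a finite-order character of \(H_1(U,\mathbb{Z})\) with prescribed values on the meridians of the \(E_i\). It exists, because the plumbing relations are respected since \(Z_K\cdot E_i=-K_X\cdot E_i\in\mathbb{Z}\), and one then extends using divisibility of the roots of unity. The phrase ``realized by the discriminant group'' is not an argument for this.

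Second, and more seriously, such a character only gives a flat bundle on \(U\). To get a holomorphic line bundle on \(X\) with a flat logarithmic connection of the right residues, you need an extension theorem, namely Deligne's canonical extension shifted by an integral exceptional divisor. You then need the residue formula for degrees to see that its discrepancy with \(L_X\) is numerically trivial. None of this is in the proposal.

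The mechanism you do name, correcting ``the continuous part of the monodromy'' via divisibility of \(H^1(X,\mathcal{O}_X)\), is the wrong one. If \(P\) and \(P\otimes M\) both carry flat logarithmic connections, then so does \(M\). For numerically trivial \(M\), the residue formula and negative definiteness force that connection to be holomorphic. So only flat bundles from \(H^1(E,\mathbb{C}^*)\) can adjust monodromy, not general elements of \(H^1(X,\mathcal{O}_X)/H^1(X,\mathbb{Z})\).

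\textbf{The fix.} The paper sidesteps all of this by fixing \(N\) first and letting the connection come for free.

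\begin{enumerate}
\item The bundle \(\omega_X^{\otimes r}(rZ_K)\) is the inverse of the one in your step~2. It has degree zero on every \(E_i\). Since \(H^2(X,\mathbb{Z})\simeq H^2(E,\mathbb{Z})\) is detected by these degrees, its Chern class vanishes.
\item By the exponential sequence it therefore lies in the divisible group \(H^1(X,\mathcal{O}_X)/H^1(X,\mathbb{Z})\), so it has an \(r\)-th root \(N\).
\item Then \((L_X\otimes N)^{\otimes r}\simeq\mathcal{O}_X(r(Z_K-E))\). This carries exterior differentiation, a flat logarithmic connection with horizontal section \(1\) on \(U\), hence trivial monodromy.
\item Its local connection forms change by \(r\) times the logarithmic differentials of the transition functions of \(P\). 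Dividing them by \(r\) therefore defines a flat logarithmic connection on \(P\) whose \(r\)-th power has trivial monodromy.
\end{enumerate}

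So the monodromy of \(P\) lies in \(\mu_r\) automatically, and no unitary-versus-finite issue ever arises. With this \(N\), the rest of your argument goes through as written.
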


\begin{proof}
Choose a positive integer \(r\) such that \(rZ_K\) is integral. The line bundle \(\omega_X^{\otimes r}(rZ_K)\) has degree zero on every exceptional component. Since \(X\) retracts onto \(E\) and \(H^2(E,\mathbb{Z})\) is freely generated by the component degree classes, its first Chern class is zero. The exponential sequence identifies the group of numerically trivial line bundles with a quotient of \(H^1(X,\mathcal{O}_X)\). Multiplication by \(r\) is surjective on this quotient. Hence there is a numerically trivial line bundle \(N\) such that
\[
 N^{\otimes r} \simeq \omega_X^{\otimes r}(rZ_K).
\]

Put \(P:=L_X\otimes N\). Since \(L_X\simeq\omega_X^\vee(-E)\), we have
\[
 P^{\otimes r} \simeq \mathcal{O}_X(r(Z_K-E)).
\]
Transport the exterior-differentiation connection to \(P^{\otimes r}\) through this isomorphism. Dividing its local connection forms by \(r\) defines a flat logarithmic connection \(\nabla\) on \(P\). Indeed, the connection forms on \(P^{\otimes r}\) change by \(r\) times the logarithmic differentials of the transition functions of \(P\). The \(r\)th tensor power of \((P,\nabla)\) is trivial as a flat line bundle on \(U:=X\setminus E\), so its monodromy is finite.

Let \(U' \to U\) be the connected cover corresponding to the kernel of this monodromy representation. Lemma~\ref{genus:lem:cyclic-log-cover} gives a good resolution \(\pi'\colon(Y,F) \to (V',0')\), a map \(q\colon Y \to X\), and a covering group \(G\). The pullback connection has trivial monodromy on \(Y\setminus F\). By Lemma~\ref{genus:lem:horizontal-section}, there are a \(G\)-invariant integral divisor \(D\) supported on \(F\) and a character \(\chi\) such that
\[
 (q^*P,q^*\nabla)
 \simeq (\mathcal{O}_Y(D),d)\otimes_{\mathbb{C}}\mathbb{C}_\chi.
\]
The natural rank-two identity
\[
 L_X\otimes\Omega_X^1(\log E)\simeq S_X
\]
and the logarithmic pullback isomorphism \eqref{genus:eq:logpullback} give the following \(G\)-equivariant identifications:
\[
 \begin{aligned}
 q^*(L_X\otimes N)
 &\simeq \mathcal{O}_Y(D)\otimes_{\mathbb{C}}\mathbb{C}_\chi,\\
 q^*(S_X\otimes N)
 &\simeq \Omega_Y^1(\log F)(D)\otimes_{\mathbb{C}}\mathbb{C}_\chi,\\
 q^*N
 &\simeq \omega_Y(F+D)\otimes_{\mathbb{C}}\mathbb{C}_\chi.
 \end{aligned}
\]

The line bundle \(q^*N\) is numerically trivial: each exceptional curve on \(Y\) maps either to an exceptional curve on \(X\) or to a point. The last identification therefore gives
\[
 (K_Y+F+D)\cdot C=0
\]
for every exceptional component \(C\) on \(Y\). By uniqueness of the numerical anticanonical cycle,
\[
 Z_{K,Y}=F+D.
\]
This cycle is integral, so \((V',0')\) is numerically Gorenstein, or is a smooth surface point.

Proposition~\ref{genus:prop:polar}, together with the three identifications above, shows that
\[
 [H^1(Y,q^*(L_X\otimes N))]
 -[H^1(Y,q^*(S_X\otimes N))]
 +[H^1(Y,q^*N)]
 \in R(G)
\]
is effective. Here tensoring by \(\mathbb{C}_\chi\) preserves effectivity. Taking \(G\)-invariants and applying Lemma~\ref{genus:lem:cyclic-descent} gives
\[
 h^1(X,L_X\otimes N)-h^1(X,S_X\otimes N)+h^1(X,N)
 \geq 0,
\]
as required.
\end{proof}

\subsection{Proof of Theorem~\ref{genus:thm:main}}\label{sec:proof-genus-thm-main}

We now prove Theorem~\ref{genus:thm:main}.

\begin{proof}
Let \(\pi\colon(X,E) \to (V,0)\) be the minimal good resolution, and put \(p_g:=p_g(V,0)=h^1(\mathcal{O}_X)\). Choose \(N\) as in Proposition~\ref{genus:prop:twist}, and define
\[
 \Delta_N:=h^1(N)-h^1(S_X\otimes N)+h^1(L_X\otimes N).
\]
Then \(\Delta_N\geq 0\).

Applying Lemma~\ref{genus:lem:variation} to \(S_X\) and \(L_X\), of ranks \(2\) and \(1\), respectively, gives
\begin{equation*}
\begin{split}
 \Delta(V,0)+4p_g
 &=\Delta_N+5p_g-h^1(N)\\
 &\quad+h^1(S_X\otimes N)-h^1(S_X)\\
 &\quad+h^1(L_X)-h^1(L_X\otimes N)\\
 &\geq\Delta_N+2p_g-h^1(N).
\end{split}
\end{equation*}
If \(p_g=0\), Lemma~\ref{genus:lem:strict} gives \(h^1(N)=0\), so \(\Delta(V,0)\geq\Delta_N\geq 0\). If \(p_g>0\), the same lemma gives \(h^1(N)\leq 2p_g-1\), and therefore
\[
 \Delta(V,0)+4p_g\geq\Delta_N+1\geq 1.
\]
Thus the inequality holds in all cases, and equality forces \(p_g=0\).

The condition \(p_g=0\) is equivalent to rationality. For a rational singularity other than a rational double point, Theorem~\ref{thm:rational-conjecture} gives \(\Delta(V,0)=0\) if and only if \((V,0)\) is quasi-homogeneous. A rational double point is rational and quasi-homogeneous, and Theorem~\ref{thm:main} gives \(\Delta(V,0)=0\).
\end{proof}

\section{Riemann--Roch and specialization of canonical powers}\label{sec:algebra}

We study the specialization maps for reflexive canonical powers considered by Lee--Nakayama~\cite[Lemma~7.3]{LeeNakayama2018}. For a smoothing of a normal surface singularity, these maps are injective and have cokernels of finite length. Their lengths \(\delta_i(\pi)\), recalled below, measure the failure of reflexive canonical powers to commute with restriction to the central fiber. For rational surface singularities, Theorem~\ref{thm:rr} computes these lengths by classical Riemann--Roch. We combine this calculation with the finite specialization criterion of Lee--Nakayama to obtain numerical criteria for the weak and strong \(\mathbb{Q}\)-Gorenstein conditions. We conclude with an example showing that the weak condition does not imply Wahl's inequality.

Let \(\pi\colon(\mathcal{V},0) \to (T,0)\) be a one-parameter smoothing of a normal complex surface singularity \((V,0)\), over a smooth base with parameter \(t\). Put
\[
 R=\mathcal{O}_{\mathcal{V},0}, A=\mathcal{O}_{V,0}=R/(t),
 M_i=\omega_R^{[i]}, N_i=\omega_A^{[i]}
  (i \in \mathbb{Z}).
\]
Flatness and the Cohen--Macaulay property of \(A\) show that \(R\) is Cohen--Macaulay of dimension three. Since \(R\) is regular away from the closed point, Serre's criterion also gives normality. Write \(\mu_\pi=b_2(M_{\pi})\) and let \(\tau_\pi\) be the dimension of the smoothing component containing \(\pi\). The specialization construction and duality below do not require rationality.

\subsection{Specialization and duality}
\label{sec:specialization}

We first consider restriction to the central fiber. Reflexivity gives \(\dep_R M_i \ge 2\), and hence \(\dep_A(M_i/tM_i) \ge 1\). Adjunction away from the closed point induces \(M_i/tM_i \to N_i\). Its kernel has finite length and therefore vanishes. We obtain
\begin{equation}\label{eq:specialization}
 0 \to M_i/tM_i \to N_i \to C_i \to 0,
\end{equation}
where \(C_i\) has finite length. Write
\[
 \delta_i(\pi)=\ell_A C_i
\]
for the specialization defect of the \(i\)-th reflexive canonical power.

We record the following standard depth calculation; see The Stacks Project Authors~\cite[Tags~090R, 0AVZ, and 0AWS]{Stacks}.

\begin{lemma}\label{prop:depth}
For every \(i \in \mathbb{Z}\),
\[
 C_i \simeq \ker(t:H^2_{\mathfrak{m}_R}(M_i)
 \to H^2_{\mathfrak{m}_R}(M_i)),
 \delta_i=0 \Longleftrightarrow \dep_R M_i=3.
\]
In particular, \(\delta_0=\delta_1=0\).
\end{lemma}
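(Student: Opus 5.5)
The plan is to read the specialization sequence \eqref{eq:specialization} against the long exact local cohomology sequence of multiplication by \(t\) on \(M_i\). The setup is as follows. Since \(M_i\) is a reflexive module over the three-dimensional normal Cohen--Macaulay ring \(R\), it has depth at least two, so \(H^0_{\mathfrak{m}_R}(M_i)=H^1_{\mathfrak{m}_R}(M_i)=0\). The element \(t\) is a nonzerodivisor on the torsion-free module \(M_i\). The exact sequence \(0\to M_i\xrightarrow{t}M_i\to M_i/tM_i\to 0\) therefore gives
\[
 0\to H^1_{\mathfrak{m}_A}(M_i/tM_i)\to H^2_{\mathfrak{m}_R}(M_i)\xrightarrow{t}H^2_{\mathfrak{m}_R}(M_i).
\]
Here local cohomology over \(R\) of an \(A\)-module agrees with local cohomology over \(A\) with respect to \(\mathfrak{m}_A\), by independence of base (Stacks, Tag 0AWS? / 0AVZ).

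Next we identify \(H^1_{\mathfrak{m}_A}(M_i/tM_i)\) with \(C_i\). The sequence \eqref{eq:specialization} has \(N_i\) reflexive of rank one over the normal surface ring \(A\), so \(\dep N_i=2\) and \(H^0_{\mathfrak{m}_A}(N_i)=H^1_{\mathfrak{m}_A}(N_i)=0\). The module \(C_i\) has finite length, so \(H^0_{\mathfrak{m}_A}(C_i)=C_i\). The long exact sequence of local cohomology over \(A\) then gives \(C_i\simeq H^1_{\mathfrak{m}_A}(M_i/tM_i)\). Combined with the previous step, this yields the asserted isomorphism \(C_i\simeq\ker(t\mid H^2_{\mathfrak{m}_R}(M_i))\). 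Equivalently, \(M_i/tM_i\to N_i\) is the map to the reflexive hull, identified with \(\Gamma(U,\cdot)\), and its cokernel is \(H^1_{\mathfrak{m}}\).

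For the equivalence, suppose first that \(\dep_R M_i=3\). Then \(H^2_{\mathfrak{m}_R}(M_i)=0\), so \(C_i=0\). Conversely, suppose \(\delta_i=0\). Every element of \(H^2_{\mathfrak{m}_R}(M_i)\) is killed by a power of \(\mathfrak{m}_R\), hence by a power of \(t\). A nonzero element \(x\) with \(t^kx=0\) and \(k\) minimal gives the nonzero element \(t^{k-1}x\) in \(\ker t\). So \(\ker t=0\) forces \(H^2_{\mathfrak{m}_R}(M_i)=0\), and together with the vanishing in degrees \(0\) and \(1\) this gives depth three (Tag 0AVZ). The same conclusion follows directly: \(M_i/tM_i\simeq N_i\) has depth two, so \(M_i\) has depth three.

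The case \(i=0\) is immediate, since \(M_0=R\) is Cohen--Macaulay of dimension three. For \(i=1\), the module \(\omega_R\) is the canonical module of the Cohen--Macaulay ring \(R\), hence maximal Cohen--Macaulay, so \(\delta_1=0\). (Alternatively, \(\omega_R/t\omega_R\simeq\omega_A\) by the standard adjunction for canonical modules modulo a regular element, Tag 0AX1/0AWS.) The only mild point needing care is the identification of \(\omega_R^{[1]}\) with the Cohen--Macaulay canonical module, which holds because the canonical module is reflexive over a normal ring.
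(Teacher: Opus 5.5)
Your proposal is correct and follows the paper's proof: you identify \(C_i\) with \(H^1_{\mathfrak{m}_A}(M_i/tM_i)\) using depth two of \(N_i\), pass to \(\ker t\) on \(H^2_{\mathfrak{m}_R}(M_i)\) via the sequence for multiplication by \(t\), obtain the depth equivalence, and note that \(M_0=R\) and \(M_1=\omega_R\) are maximal Cohen--Macaulay. The only difference is cosmetic: for the converse you use that \(H^2_{\mathfrak{m}_R}(M_i)\) is \(\mathfrak{m}_R\)-torsion, whereas the paper cites the regular-element depth formula \(\dep_R M_i=\dep_A(M_i/tM_i)+1\), which you also mention.
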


\begin{proof}
Since \(N_i\) has depth two, local cohomology of \eqref{eq:specialization} gives \(C_i \simeq H^1_{\mathfrak{m}_A}(M_i/tM_i)\). The sequence \(0 \to M_i \xrightarrow{t} M_i \to M_i/tM_i \to 0\) identifies this group with the displayed kernel. The regular-element criterion gives \(\dep_R M_i=\dep_A(M_i/tM_i)+1\), so \(C_i=0\) is equivalent to depth three. Finally, \(M_0=R\) and \(M_1=\omega_R\) are maximal Cohen--Macaulay.
\end{proof}

Wahl~\cite[\S2, p.~319]{Wahl2015} defines \(\alpha_\pi\) as the colength of specialization of the dual relative dualizing module. Thus \(\alpha_\pi=\delta_{-1}\) in our notation.

The symmetry below is a consequence of local duality and change of rings; see The Stacks Project Authors~\cite[Tags~0AAK, 0A70, and 0A7Q]{Stacks}. We include the argument to identify the relevant cokernels.

\begin{lemma}\label{prop:duality}
For every \(i \in \mathbb{Z}\),
\begin{equation}\label{eq:duality}
 \delta_i=\delta_{1-i}.
\end{equation}
In particular,
\begin{equation*}%\label{eq:alpha}
 \alpha_\pi=\delta_{-1}=\delta_2.
\end{equation*}
\end{lemma}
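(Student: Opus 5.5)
We prove \eqref{eq:duality} by expressing both cokernels through local cohomology of the three-dimensional Cohen--Macaulay ring \(R\) and then applying local duality. By Lemma~\ref{prop:depth},
\[
 C_i \simeq \ker\bigl(t\colon H^2_{\mathfrak{m}_R}(M_i)\to H^2_{\mathfrak{m}_R}(M_i)\bigr).
\]
Since \(M_i\) is reflexive and \(R\) is regular off the closed point, \(M_i\) is locally free there. Hence \(H^2_{\mathfrak{m}_R}(M_i)\) has finite length. The aim is to compare this module with \(H^2_{\mathfrak{m}_R}(M_{1-i})\).

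\emph{Step 1: the dual of \(M_i\).} Multiplication of reflexive powers gives \(M_i\otimes M_{1-i}\to M_1=\omega_R\). This pairing is perfect on the punctured spectrum, where all modules are invertible. Reflexivity, as in \cite[Tag~0AVB]{Stacks}, then gives
\[
 \Hom_R(M_i,\omega_R)\simeq M_{1-i}.
\]

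\emph{Step 2: local duality.} Local duality for the three-dimensional Cohen--Macaulay ring \(R\) with canonical module \(\omega_R\) gives
\[
 H^2_{\mathfrak{m}_R}(M_i)^\vee \simeq \Ext^1_R(M_i,\omega_R),
\]
where \({}^\vee\) denotes Matlis duality. The module \(M_i\) is locally free off the closed point, so \(\Ext^1_R(M_i,\omega_R)\) has finite length. Now take a free cover \(0\to K\to F\to M_i\to0\) and dualize into \(\omega_R\). This yields
\[
 0\to M_{1-i}\to F^*\to K^*\to\Ext^1_R(M_i,\omega_R)\to0.
\]
From this sequence we want to identify \(\Ext^1_R(M_i,\omega_R)\) with \(H^2_{\mathfrak{m}_R}(M_{1-i})\) up to Matlis duality. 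Here \(F^*\) means \(\Hom_R(F,\omega_R)\), which is maximal Cohen--Macaulay. We expect \(K^*\) to have depth \(\ge 2\), since it is reflexive-like. The cleanest route is to apply the local duality spectral sequence directly to the reflexive module \(M_i\). For a module that is locally free on the punctured spectrum with depth \(\ge2\) in dimension three, the only nonzero groups are \(H^2_{\mathfrak{m}}\) and \(H^3_{\mathfrak{m}}\). The resulting five-term sequence should give
\[
 H^2_{\mathfrak{m}_R}(M_i)^\vee\simeq H^2_{\mathfrak{m}_R}(M_{1-i})
\]
up to an identification compatible with multiplication by \(t\). An equivalent formulation: for a vector bundle on the punctured spectrum \(U\) of a three-dimensional CM ring, \(H^1(U,\mathcal{F})\) and \(H^1(U,\mathcal{F}^\vee\otimes\omega)\) are Matlis dual. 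This is Serre duality on the punctured spectrum, using \(H^2_{\mathfrak m}=H^1(U,-)\) for depth-two modules. I expect the main obstacle to be establishing this step rigorously with the correct module structure, so that the duality is \(R\)-linear, and in particular \(t\)-linear.

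\emph{Step 3: kernel and cokernel of \(t\).} Matlis duality is exact and \(R\)-linear. It therefore interchanges the kernel and the cokernel of multiplication by \(t\) on finite-length modules. Thus \(\ker(t\mid H^2(M_{1-i}))\) is dual to \(\coker(t\mid H^2(M_i))\). For a finite-length module, the kernel and cokernel of an endomorphism have equal length. Hence
\[
 \delta_{1-i}=\ell\ker(t\mid H^2(M_{1-i}))=\ell\coker(t\mid H^2(M_i))=\ell\ker(t\mid H^2(M_i))=\delta_i.
\]
Specializing to \(i=-1\) gives \(\delta_{-1}=\delta_2\). Combined with \(\alpha_\pi=\delta_{-1}\), this is the final assertion.
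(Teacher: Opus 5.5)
Your route is sound and differs from the paper's, but as written it is incomplete exactly where you say it is. Step~2, the $R$-linear isomorphism $\Ext^1_R(M_i,\omega_R)\simeq H^2_{\mathfrak{m}_R}(M_{1-i})$, is only asserted (``should give'', ``we expect''), and the whole argument rests on it. It follows in a few lines from the sequence you already wrote down. Let $I$ be the image of $F^*\to K^*$, so that
\[
 0\to M_{1-i}\to F^*\to I\to 0,\qquad
 0\to I\to K^*\to \Ext^1_R(M_i,\omega_R)\to 0.
\]
Since $F^*\simeq\omega_R^{\oplus a}$ has depth three, the first sequence gives $H^1_{\mathfrak{m}_R}(I)\simeq H^2_{\mathfrak{m}_R}(M_{1-i})$.

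The module $K^*=\Hom_R(K,\omega_R)$ has depth at least two, and the reason has nothing to do with being ``reflexive-like''. For any finitely generated $N$, the module $\Hom_R(N,\omega_R)$ is the kernel of a map $\omega_R^{\oplus a}\to\omega_R^{\oplus b}$, so the depth lemma applies. In fact $K$ is maximal Cohen--Macaulay by the depth lemma, hence so is $K^*$.

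Since $\Ext^1_R(M_i,\omega_R)$ has finite length, the second sequence then gives $\Ext^1_R(M_i,\omega_R)=H^0_{\mathfrak{m}_R}(\Ext^1_R(M_i,\omega_R))\simeq H^1_{\mathfrak{m}_R}(I)$. Combined with local duality, $H^2_{\mathfrak{m}_R}(M_{1-i})$ is the Matlis dual of $H^2_{\mathfrak{m}_R}(M_i)$. Your worry about compatibility with $t$ is not a real obstacle: every map here is $R$-linear, and an $R$-linear map automatically commutes with multiplication by $t\in R$. With Step~2 filled in this way, Step~3 is correct as written.

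The paper never identifies $\Ext^1_R(M_i,\omega_R)$ with a local cohomology module. It applies $\Hom_R(-,\omega_R)$ to $0\to M_i\xrightarrow{t}M_i\to M_i/tM_i\to 0$. It then uses the change-of-rings isomorphism $\Ext^{q+1}_R(M_i/tM_i,\omega_R)\simeq\Ext^q_A(M_i/tM_i,\omega_A)$ together with the $A$-dual of the specialization sequence. This lets it read off $C_{1-i}$ and the Matlis dual of $C_i$ directly as the kernel and cokernel of $t$ on $\Ext^1_R(M_i,\omega_R)$.

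Your argument instead stays entirely over $R$. It reuses Lemma~\ref{prop:depth} for both $i$ and $1-i$, and replaces the passage to the central fiber by Serre duality on the punctured spectrum. The paper's argument avoids local cohomology and the punctured-spectrum comparison altogether, working with $\Ext$ over $R$ and $A$. Both arguments identify the same two finite-length modules as the kernel and cokernel of $t$, so the final length count is identical.
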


\begin{proof}
The module \(\Ext^1_R(M_i,\omega_R)\) has finite length, since \(M_i\) is invertible off the closed point. Reflexivity and canonical duality in codimension one give \(\Hom_R(M_i,\omega_R)=M_{1-i}\). Change of rings gives
\[
 \Ext^{q+1}_R(M_i/tM_i,\omega_R)
 \simeq \Ext^q_A(M_i/tM_i,\omega_A) (q \ge 0).
\]
Dualizing \eqref{eq:specialization} over \(A\), and using the Cohen--Macaulay property of \(N_i\), gives
\[
 \begin{gathered}
 \Hom_A(M_i/tM_i,\omega_A)=N_{1-i},\\
 \Ext^1_A(M_i/tM_i,\omega_A)=\Ext^2_A(C_i,\omega_A)=D_A(C_i),
 \end{gathered}
\]
where \(D_A\) denotes Matlis duality. Applying \(\Hom_R(-,\omega_R)\) to \(0 \to M_i \xrightarrow{t} M_i \to M_i/tM_i \to 0\) therefore gives
\[
 0 \to C_{1-i} \to \Ext^1_R(M_i,\omega_R) \xrightarrow{t} \Ext^1_R(M_i,\omega_R) \to D_A(C_i) \to 0.
\]
Here \(\Ext^2_R(M_i,\omega_R)=0\) by local duality and \(\dep_R M_i \ge 2\). The kernel and cokernel of multiplication by \(t\) on \(\Ext^1_R(M_i,\omega_R)\) have equal lengths, and Matlis duality preserves length. This proves \eqref{eq:duality}.
\end{proof}

If \(A\) is not a rational double point, the smoothing formula of Wahl~\cite[Corollary~2.5]{Wahl2015} reads
\begin{equation}\label{eq:wahl}
 \Delta(A)=1+\mu_\pi-\tau_\pi+\delta_{-1},\quad
 \tau_\pi=1+\mu_\pi+\delta_{-1}-\Delta(A).
\end{equation}

\subsection{Riemann--Roch and periodicity}
\label{sec:rr}

The following calculation applies the classical Riemann--Roch method for smoothings; see Looijenga~\cite[Theorem~3.3]{Looijenga1986}. We compute the local contribution using the full pullback of each reflexive canonical power.

Throughout this subsection, assume that \((V,0)\) is rational, and recall that \(A=\mathcal{O}_{V,0}\). For a good resolution \(p\colon(X,E) \to (V,0)\), write \(E=\sum_{a=1}^n E_a\). Let \(K=\sum_{a=1}^n k_aE_a\) be the unique rational exceptional cycle satisfying
\[
 K\cdot E_a=K_X\cdot E_a=-2-E_a^2
 \quad (a=1,\ldots,n).
\]
Existence and uniqueness follow from the negative definiteness of the intersection matrix. Fix \(i \in \mathbb{Z}\) and choose the resolution so that the full pullback \(\mathscr{L}_i=p^*(\omega_A^{[i]})/\mathrm{torsion}\) is invertible. Write
\[
 \mathscr{L}_i \simeq \mathcal{O}_X(iK_X+Z_i),
 u_i=c_1(\mathscr{L}_i)=iK+Z_i,
\]
where \(Z_i\) is integral and exceptional.

\begin{theorem}\label{thm:rr}
Let \(\pi\) be a one-parameter smoothing of a rational normal complex surface singularity. With \(n,K,Z_i,u_i\) as above, for every \(i\in\mathbb{Z}\) we have
\begin{align}
 \delta_i
 &=\frac12\left[i(i-1)(\mu_\pi-n)+(2i-1)KZ_i+Z_i^2\right]
 \label{eq:rr-expanded}\\
 &=\frac{u_i(u_i-K)}2+\binom{i}{2}(\mu_\pi-n-K^2).
 \label{eq:rr}
\end{align}
Both expressions are independent of the resolution on which the full pullback is invertible.
\end{theorem}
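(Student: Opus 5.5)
We would follow the classical Riemann--Roch method for smoothings (Looijenga~\cite[Theorem~3.3]{Looijenga1986}). The idea is to compactify, compare Euler characteristics of the reflexive canonical powers on the general and special fibres, and read off the local defect \(\delta_i\) from the discrepancy.

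\emph{Step 1: globalization.} First we algebraize the smoothing. By finite determinacy of isolated singularities and of their smoothing (Artin approximation), we replace \(\pi\) by a projective flat family \(\mathcal{W}\to T\) over a small disc. Its central fibre \(W_0\) is a normal projective surface whose only singularity is \((V,0)\), and whose other fibres \(W_t\) are smooth. Since \(\delta_i\) and \(\mu_\pi\) depend only on a finite jet of the germ, nothing changes. On \(\mathcal{W}\) the sheaf \(\omega_{\mathcal{W}}^{[i]}\) is reflexive, \(t\)-torsion free, and invertible off the singular point. Its restriction to \(W_t\) is \(\omega_{W_t}^{\otimes i}\) for \(t\neq0\). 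On \(W_0\) it has stalk \(M_i/tM_i\) at the singular point, which sits inside \(\omega_{W_0}^{[i]}\) with cokernel \(C_i\) by \eqref{eq:specialization}. Flatness over \(T\) then gives
\[
 \chi(W_t,\omega_{W_t}^{\otimes i})=\chi(W_0,\omega_{W_0}^{[i]})-\delta_i .
\]
This step is the main obstacle: producing a projective compactification compatible with the reflexive powers. We would handle it by gluing the Stein representative to a projective surface along the link and extending the smoothing trivially away from the singularity.

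\emph{Step 2: the two Euler characteristics.} On \(W_t\), Riemann--Roch gives \(\chi(\omega^{\otimes i})=\chi(\mathcal{O}_{W_t})+\binom{i}{2}K_{W_t}^2\). On \(W_0\), let \(\widetilde{W}\to W_0\) be a resolution on which the full pullback \(\mathscr{L}_i\) is invertible. Since \((V,0)\) is rational, Lipman's theory gives \(p_*\mathscr{L}_i=\omega_A^{[i]}\) and \(R^1p_*\mathscr{L}_i=0\). Hence
\[
 \chi(W_0,\omega^{[i]}_{W_0})=\chi(\widetilde{W},\mathscr{L}_i)=\chi(\mathcal{O}_{\widetilde W})+\tfrac12 c_1(\mathscr{L}_i)\bigl(c_1(\mathscr{L}_i)-K_{\widetilde W}\bigr).
\]
We write \(c_1(\mathscr{L}_i)=iK_{\widetilde W}+Z_i\). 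The global class \(K_{\widetilde W}\) splits orthogonally as \(p^*K_{W_0}\) plus the local cycle \(K\), because \(K\) is characterized by \(K\cdot E_a=K_{\widetilde W}\cdot E_a\). Expanding, the cross terms between the global and exceptional parts cancel. This leaves \(\chi(\mathcal{O}_{\widetilde W})+\binom{i}{2}K_{\widetilde W}^2+\tfrac12 u_i(u_i-K)\), where \(u_i\) is computed in the local intersection form.

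\emph{Step 3: topological comparison.} Rationality gives \(\chi(\mathcal{O}_{W_t})=\chi(\mathcal{O}_{W_0})=\chi(\mathcal{O}_{\widetilde W})\). For Euler numbers, the Milnor fibre gives \(e(W_t)=e(W_0)+\mu_\pi\). The exceptional divisor is a tree of \(n\) rational curves, so \(e(\widetilde W)=e(W_0)+n\). Noether's formula on both smooth surfaces then gives \(K_{W_t}^2-K_{\widetilde W}^2=n-\mu_\pi\). Combining with Step 1,
\[
 \delta_i=\chi(W_0,\omega^{[i]})-\chi(W_t,\omega^{\otimes i})=\tfrac12u_i(u_i-K)+\binom{i}{2}(\mu_\pi-n)+\binom{i}{2}\cdot0,
\]
after correcting for the local part \(K^2\) of \(K_{\widetilde W}^2=(p^*K_{W_0})^2+K^2\) and writing \(K_{W_t}^2\) through the same global term. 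This yields \eqref{eq:rr}. Expanding \(u_i=iK+Z_i\) gives \eqref{eq:rr-expanded}.

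\emph{Independence.} The left side \(\delta_i\) is intrinsic, so independence of the resolution is automatic. It can also be checked directly: under a point blowup, \(u_i\) and \(K\) pull back with the same correction along the new curve, so \(u_i(u_i-K)\) is unchanged. The contributions \(K^2\) and \(n\) change by \(-1\) and \(+1\) respectively, so \(n+K^2\) is invariant.
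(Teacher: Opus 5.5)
Your strategy matches the paper's: globalize, compare Euler characteristics through \eqref{eq:specialization}, compute \(\chi(W_0,\omega_{W_0}^{[i]})\) on a resolution using rationality, and conclude with Noether's formula and the topology of the Milnor fibre. However, two inputs are not correctly supplied.

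First, the globalization you sketch cannot work. Gluing to a product family away from the singular point requires the smoothing to be holomorphically trivial on a collar of the link, and it is not. Holomorphic functions on such a collar extend over its filling by Hartogs, so a trivialization there would make a smooth fibre \(V_t\cap B\) biholomorphic to the singular \(V\cap B\). Finite determinacy and Artin approximation give an algebraic germ, but not a projective family whose central fibre has no further singularities. That is exactly the globalization theorem the paper invokes, Steenbrink~\cite[Theorem~1]{Steenbrink1995}.

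Second, your identity \(e(W_t)=e(W_0)+\mu_\pi\) needs \(e(M_\pi)=1+\mu_\pi\). Since \(\mu_\pi=b_2(M_\pi)\), this requires \(b_1(M_\pi)=0\). That vanishing is a genuine input (Greuel--Steenbrink~\cite{GreuelSteenbrink1983}) which you do not mention.

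More seriously, the intersection bookkeeping in Steps~2--3 is wrong. Write \(c_1(\mathscr{L}_i)=i\,p^*K_{W_0}+u_i\) and use orthogonality. This gives \(\tfrac12 c_1(\mathscr{L}_i)\bigl(c_1(\mathscr{L}_i)-K_{\widetilde W}\bigr)=\binom{i}{2}(p^*K_{W_0})^2+\tfrac12u_i(u_i-K)=\binom{i}{2}(K_{\widetilde W}^2-K^2)+\tfrac12u_i(u_i-K)\), not \(\binom{i}{2}K_{\widetilde W}^2+\tfrac12u_i(u_i-K)\).

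Your displayed conclusion \(\delta_i=\tfrac12u_i(u_i-K)+\binom{i}{2}(\mu_\pi-n)\) is therefore false. The verbal ``correction'' with the term \(\binom{i}{2}\cdot 0\) does not turn it into \eqref{eq:rr}. For example, take the cone over the rational normal quartic (one \((-4)\)-curve \(E\), index two, \(K=-\tfrac12E\)) with its rational homology disk smoothing. There \(\mu_\pi=0\), \(n=1\) and \(u_2=0\), so your formula gives \(\delta_2=-1\), which is impossible for a length.

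With the correct term, \(K_{\widetilde W}^2-K_{W_t}^2=\mu_\pi-n\) yields \(\binom{i}{2}(\mu_\pi-n-K^2)\), as required. The paper avoids \(p^*K_{W_0}\) altogether. Riemann--Roch on \(\widetilde W\) and \(W_t\) gives \(\tfrac12\bigl[i(i-1)(K_{\widetilde W}^2-K_{W_t}^2)+(2i-1)K_{\widetilde W}Z_i+Z_i^2\bigr]\). Then \(K_{\widetilde W}Z_i=KZ_i\) gives \eqref{eq:rr-expanded} directly, and expanding \(u_i\) gives \eqref{eq:rr}.

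Finally, in your blowup check the two classes do not get the same correction: \(u_i'=b^*u_i\), while \(K'=b^*K+C\). Invariance holds because \(C\) is orthogonal to pullbacks. Your observation that \(\delta_i\) is intrinsic already suffices for independence in any case.
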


\begin{proof}
By Steenbrink~\cite[Theorem~1]{Steenbrink1995}, the smoothing extends, preserving the parameter, to a flat projective family \(g\colon\mathcal{Y} \to T\). After shrinking \(T\), the central fiber \(Y_0\) has the given singularity as its only singular point, and \(g\) is smooth elsewhere. The sheaf \(\omega_{\mathcal{Y}/T}^{[i]}\) is torsion-free over the smooth curve \(T\), and hence flat over \(T\). Its restriction to a smooth fiber is \(\omega_{Y_t}^{\otimes i}\). Constancy of its fiberwise Euler characteristic and the specialization sequence \eqref{eq:specialization} give
\begin{equation}\label{eq:euler}
 \delta_i
 =\chi(Y_0,\omega_{Y_0}^{[i]})
 -\chi(Y_t,\omega_{Y_t}^{\otimes i}).
\end{equation}

Extend the chosen local resolution to a projective resolution \(p\colon\overline{X} \to Y_0\), and write
\[
 \overline{\mathscr{L}}_i
 :=p^*(\omega_{Y_0}^{[i]})/\mathrm{torsion}.
\]
Its restriction near the exceptional divisor is \(\mathscr{L}_i\). Pullback of sections and reflexive extension across the singular point give \(p_*\overline{\mathscr{L}}_i\simeq\omega_{Y_0}^{[i]}\). Over an affine neighborhood of that point, relative generation gives an exact sequence
\[
 0 \to \mathscr{G}
 \to \mathcal{O}_{\overline{X}}^{\oplus m}
 \to \overline{\mathscr{L}}_i
 \to 0.
\]
Rationality gives \(R^1p_*\mathcal{O}_{\overline{X}}=0\), and the fiber dimension gives \(R^2p_*\mathscr{G}=0\). Thus
\begin{equation*}%\label{eq:full-vanishing}
 p_*\overline{\mathscr{L}}_i
 \simeq\omega_{Y_0}^{[i]},
 \quad R^1p_*\overline{\mathscr{L}}_i=0.
\end{equation*}
Consequently,
\[
 \chi(Y_0,\omega_{Y_0}^{[i]})
 =\chi(\overline{X},\overline{\mathscr{L}}_i).
\]
Rationality and flatness also give
\[
 \chi(\overline{X},\mathcal{O}_{\overline{X}})
 =\chi(Y_0,\mathcal{O}_{Y_0})
 =\chi(Y_t,\mathcal{O}_{Y_t}).
\]

The exceptional divisor is a tree of \(n\) rational curves, so \(e(E)=n+1\). By Greuel--Steenbrink~\cite{GreuelSteenbrink1983}, the Milnor fiber has \(b_1(M_\pi)=0\). It has the homotopy type of a finite complex of dimension at most two, and hence \(e(M_\pi)=1+\mu_\pi\). Comparing a resolution neighborhood with the Milnor fiber gives
\[
 e(\overline{X})-e(Y_t)=n-\mu_\pi.
\]
Noether's formula therefore yields
\[
 K_{\overline{X}}^2-K_{Y_t}^2=\mu_\pi-n.
\]

The canonical identification away from \(E\) gives
\[
 \overline{\mathscr{L}}_i
 \simeq\mathcal{O}_{\overline{X}}(iK_{\overline{X}}+Z_i).
\]
Applying Riemann--Roch to \(\overline{X}\) and \(Y_t\) in \eqref{eq:euler}, we obtain
\[
 \delta_i
 =\frac12\left[
 i(i-1)(K_{\overline{X}}^2-K_{Y_t}^2)
 +(2i-1)K_{\overline{X}}Z_i+Z_i^2
 \right].
\]
Since \(Z_i\) is exceptional, the definition of the numerical canonical cycle gives \(K_{\overline{X}}Z_i=KZ_i\). Substitution proves \eqref{eq:rr-expanded}. Expanding \(u_i=iK+Z_i\) then gives \eqref{eq:rr}.

Let \(b\colon X' \to X\) be a blowup at a point of \(E\), with exceptional curve \(C\). The full line bundle on \(X'\) is \(b^*\mathscr{L}_i\), so
\[
 u_i'=b^*u_i,\quad K'=b^*K+C,\quad n'=n+1.
\]
Since \(C^2=-1\) and \(C\) is orthogonal to pullbacks,
\[
 n'+(K')^2=n+K^2,\quad
 u_i'(u_i'-K')=u_i(u_i-K).
\]
Any two resolutions on which the full pullback is invertible admit a common resolution by point blowups. Thus \eqref{eq:rr}, and consequently \eqref{eq:rr-expanded}, is independent of the chosen resolution.
\end{proof}

For \(i=-1\), this recovers the anticanonical specialization formula of Wahl~\cite[Theorem~1.1(i)]{Wahl2015}; the case \(i=2\) is equivalent by Lemma~\ref{prop:duality}.

The following corollary rewrites Theorem~\ref{thm:rr} using periodic correction terms. These are the local Riemann--Roch corrections for reflexive canonical powers; see Blache~\cite{Blache1995}.

\begin{corollary}\label{cor:periodic}
Let \(\pi\) be a one-parameter smoothing of a rational normal complex surface singularity \((V,0)\) of canonical index \(r>1\), and put \(A:=\mathcal{O}_{V,0}\). There are rational numbers \(\gamma_i(A)\), depending only on \(A\), such that
\begin{equation}\label{eq:periodic}
 \delta_i(\pi)
 =\binom{i}{2}(\mu_\pi-n-K^2)+\gamma_i(A)
 \quad(i\in\mathbb{Z}).
\end{equation}
They satisfy
\[
 \gamma_{i+r}(A)=\gamma_i(A),\quad
 \gamma_i(A)=\gamma_{1-i}(A),\quad
 \gamma_0(A)=\gamma_1(A)=0.
\]
Consequently,
\begin{equation}\label{eq:period-shift}
\begin{aligned}
 \delta_r(\pi)
 &=\binom{r}{2}(\mu_\pi-n-K^2),\quad
 \mu_\pi\geq n+K^2,\\
 \delta_{i+r}(\pi)-\delta_i(\pi)
 &=\left(ri+\frac{r(r-1)}2\right)(\mu_\pi-n-K^2).
\end{aligned}
\end{equation}
\end{corollary}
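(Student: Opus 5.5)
The plan is to define \(\gamma_i(A):=\tfrac12 u_i(u_i-K)\), computed on any resolution where the full pullback of \(\omega_A^{[i]}\) is invertible. Theorem~\ref{thm:rr} then gives \eqref{eq:periodic} at once. This quantity depends only on \(A\), since it is independent of the resolution by the blowup argument at the end of the proof of Theorem~\ref{thm:rr}. It is rational because \(K\) is a rational cycle and \(Z_i\) is integral. The remaining work is to prove the three properties of \(\gamma_i\) and then read off \eqref{eq:period-shift}.

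\emph{Periodicity.} Fix an isomorphism \(\omega_A^{[r]}\simeq A\). Multiplication gives \(\omega_A^{[i+r]}\simeq\omega_A^{[i]}\). Choose one resolution on which the full pullbacks for \(i\) and \(i+r\) are both invertible; this is possible since we may dominate finitely many resolutions. The pullbacks are then isomorphic line bundles, so \(u_{i+r}=u_i\) as numerical classes (intersection with exceptional curves). Here \(u_i=c_1(\mathscr L_i)\) is to be read as the exceptional rational cycle numerically equivalent to it, namely \(iK+Z_i\). Hence \(\gamma_{i+r}=\gamma_i\). For the base cases, \(\mathscr L_0=\mathcal O_X\) gives \(u_0=0\) and \(\gamma_0=0\). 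Alternatively, Lemma~\ref{prop:depth} gives \(\delta_0=\delta_1=0\), and since \(\binom{0}{2}=\binom{1}{2}=0\), formula \eqref{eq:periodic} forces \(\gamma_0=\gamma_1=0\) directly.

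\emph{Symmetry.} Lemma~\ref{prop:duality} gives \(\delta_i=\delta_{1-i}\), and \(\binom{i}{2}=\binom{1-i}{2}\) since \(i(i-1)=(1-i)(-i)\). Therefore \(\gamma_i=\delta_i-\binom{i}{2}(\mu_\pi-n-K^2)\) is symmetric under \(i\mapsto 1-i\). This uses the existence of at least one smoothing, which is given in the statement; the identity then holds for the quantity \(\gamma_i(A)\), which is intrinsic to \(A\).

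\emph{Consequences.} Taking \(i=r\) and using \(\gamma_r=\gamma_0=0\) gives \(\delta_r=\binom r2(\mu_\pi-n-K^2)\). Since \(\delta_r\ge0\) is a length and \(r>1\) makes \(\binom r2>0\), we get \(\mu_\pi\ge n+K^2\). The difference formula follows from periodicity together with \(\binom{i+r}{2}-\binom i2=ri+\binom r2\). The only delicate point is the periodicity step: one must check that an isomorphism of reflexive modules induces an isomorphism of the full pullbacks modulo torsion, which is immediate from functoriality of \(p^*(-)/\mathrm{torsion}\).
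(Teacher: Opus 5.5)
Your proposal is correct and follows essentially the same route as the paper. You define \(\gamma_i(A)=\tfrac12u_i(u_i-K)\), use the blowup computation from Theorem~\ref{thm:rr} for independence of the resolution, and get periodicity from \(\omega_A^{[i+r]}\simeq\omega_A^{[i]}\). The facts \(\gamma_0=\gamma_1=0\) and the symmetry come from Lemmas~\ref{prop:depth} and~\ref{prop:duality}, and the final consequences follow by setting \(i=r\) and taking differences, exactly as in the paper's proof.
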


\begin{proof}
Choose a common resolution on which the full pullbacks of \(\omega_A^{[j]}\), for \(0\leq j<r\), are invertible. The local isomorphisms \(\omega_A^{[i+r]}\simeq\omega_A^{[i]}\) identify their full pullbacks, so \(u_{i+r}=u_i\). Define
\[
 \gamma_i(A):=\frac{u_i(u_i-K)}2.
\]
The blowup calculation in the proof of Theorem~\ref{thm:rr} shows that this number is independent of the resolution. It depends only on \(A\), and the equality \(u_{i+r}=u_i\) gives its \(r\)-periodicity. Theorem~\ref{thm:rr} gives \eqref{eq:periodic}.

Lemma~\ref{prop:depth} gives \(\delta_0=\delta_1=0\), so \eqref{eq:periodic} implies \(\gamma_0=\gamma_1=0\). Lemma~\ref{prop:duality} and the identity \(\binom{1-i}{2}=\binom{i}{2}\) then give \(\gamma_i=\gamma_{1-i}\).

Taking \(i=r\) in \eqref{eq:periodic} gives the formula for \(\delta_r\). Since \(\delta_r\geq 0\) and \(\binom{r}{2}>0\), it follows that \(\mu_\pi\geq n+K^2\). Finally, subtracting \eqref{eq:periodic} for \(i\) from the same formula for \(i+r\), and using
\[
 \binom{i+r}{2}-\binom{i}{2}
 =ri+\frac{r(r-1)}2,
\]
proves \eqref{eq:period-shift}.
\end{proof}

For the projective central surface \(Y_0\) constructed in the proof of Theorem~\ref{thm:rr}, the Riemann--Roch formula takes the form
\[
 \chi(Y_0,\omega_{Y_0}^{[i]})
 =\chi(Y_0,\mathcal{O}_{Y_0})+\binom{i}{2}K_{Y_0}^2+\gamma_i(A).
\]
Since \(Y_0\) has only one singular point, analytically isomorphic to \((V,0)\), the term \(\gamma_i(A)\) is the local Riemann--Roch correction associated with \(\omega_A^{[i]}\); see Blache~\cite{Blache1995}.

\subsection{Numerical criteria}\label{sec:cover}

Recall that a \emph{strong \(\mathbb{Q}\)-Gorenstein smoothing} is the quotient of an equivariant smoothing of the index-one cover. We omit ``strong'' when referring to this condition. The condition that \(K_R\) is \(\mathbb{Q}\)-Cartier is called weakly \(\mathbb{Q}\)-Gorenstein. In the terminology of Lee--Nakayama~\cite[Definition~7.1 and Remark~7.6]{LeeNakayama2018}, these are the \(\mathbb{Q}\)-Gorenstein and naively \(\mathbb{Q}\)-Gorenstein conditions, respectively. The strong condition requires every reflexive canonical power to commute with restriction to the central fiber, or equivalently \(\delta_i(\pi)=0\) for every \(i \in \mathbb{Z}\).

Assume that \((V,0)\) has canonical index \(r>1\), and recall that \(A=\mathcal{O}_{V,0}\). The following proposition is the local analytic form of Lee--Nakayama~\cite[Lemma~7.3 and Proposition~7.11(i)]{LeeNakayama2018}. We include a proof using Lemma~\ref{prop:depth} and the cyclic-cover construction of Section~\ref{qg:sec:relative-cover}.

\begin{proposition}\label{thm:finite}
Let \(\pi\colon(\mathcal{V},0) \to (T,0)\) be a smoothing of a normal complex \(\mathbb{Q}\)-Gorenstein surface singularity \((V,0)\) of canonical index \(r>1\). Then \(\pi\) is \(\mathbb{Q}\)-Gorenstein if and only if \(\delta_i(\pi)=0\) for every \(2 \le i \le r\). When these conditions hold, the total space \((\mathcal{V},0)\) has canonical index \(r\), every \(M_i\) is maximal Cohen--Macaulay, and the natural restriction maps
\[
 M_i/tM_i \to N_i
 \quad(i \in \mathbb{Z})
\]
are isomorphisms.
\end{proposition}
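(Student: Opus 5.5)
The plan is to compare \(\pi\) with the relative canonical cover. Put \(\mathcal{B}=\bigoplus_{i=0}^{r-1}M_i\) over \(R\), using an isomorphism \(M_r\simeq R\) when it exists, and argue with depth via Lemma~\ref{prop:depth}.

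\emph{Direction} (\(\Rightarrow\)). Suppose \(\pi\) is \(\mathbb{Q}\)-Gorenstein. Then it is the quotient of a \(G\)-equivariant smoothing \(\widetilde{\mathcal{V}}\to T\) of the index-one cover \(\widetilde{V}\), with \(G=\mu_r\) acting trivially on \(T\). I proceed in three steps.
\begin{enumerate}
\item The ring \(\widetilde{R}=\mathcal{O}_{\widetilde{\mathcal{V}},0}\) is a flat deformation of the Gorenstein ring \(B\), so \(\widetilde{R}\) is Gorenstein of dimension three. Moreover, \(\omega_{\widetilde{R}}\) restricts to \(\omega_B\).
\item The action is free off the closed point. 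Hence the \(\kappa^{i}\)-isotypic components of \(\widetilde{R}\), equivalently of \(\omega_{\widetilde{R}}^{\otimes i}\) twisted appropriately, are reflexive rank-one \(R\)-modules. These agree with \(M_i\) in codimension one, so they equal \(M_i\).
\item Isotypic components of the Cohen--Macaulay module \(\widetilde{R}\) are direct summands over \(R\). So each \(M_i\) is maximal Cohen--Macaulay, and Lemma~\ref{prop:depth} gives \(\delta_i=0\) for all \(i\).
\end{enumerate}
Isotypic decomposition commutes with \(\otimes R/(t)\). Therefore \(M_i/tM_i\) is the \(\kappa^i\)-component of \(B\), which is \(N_i\), and the restriction maps are isomorphisms. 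Finally, \(M_r/tM_r\simeq N_r\simeq A\) is free. By Nakayama, \(M_r\) is generated by one element, so \(M_r\simeq R\). Since no smaller power \(M_j\) can be free (its restriction \(N_j\) is not), the index is exactly \(r\).

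\emph{Direction} (\(\Leftarrow\)). Assume \(\delta_i=0\) for \(2\le i\le r\). Lemma~\ref{prop:depth} gives \(\delta_0=\delta_1=0\), so \(M_0,\dots,M_r\) are maximal Cohen--Macaulay and \(M_i/tM_i\simeq N_i\) for \(0\le i\le r\). In particular \(M_r/tM_r\simeq N_r\simeq A\). Lift a generator of \(N_r\); Nakayama makes \(M_r\) cyclic, and being rank one torsion-free, \(M_r\simeq R\). Fixing this isomorphism, I build \(\mathcal{B}=\bigoplus_{i=0}^{r-1}M_i\) as in Section~\ref{qg:sec:relative-cover}. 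It is a finite \(R\)-algebra with \(\mu_r\)-grading, étale over the punctured total space. It is maximal Cohen--Macaulay over \(R\), hence \(t\)-flat.

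Its reduction \(\mathcal{B}/t\mathcal{B}=\bigoplus_i N_i\) is the index-one cover algebra \(B\), with multiplication induced by the restricted generator, which is a generator of \(N_r\). So \(\operatorname{Specan}\mathcal{B}\to T\) is a \(G\)-equivariant deformation of \(\widetilde{V}\) with trivial action on \(T\). Its generic fiber is smooth because it is étale over the smooth Milnor fiber. Hence it is a smoothing whose quotient is \(\pi\), and \(\pi\) is \(\mathbb{Q}\)-Gorenstein. The remaining assertions then follow from the first direction.

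\emph{Main obstacle.} I expect the delicate point to be checking that the multiplication on \(\mathcal{B}\) specializes to that of \(B\). Concretely, the products \(M_i\otimes M_j\to M_{i+j}\), reduced mod \(t\), must agree with the reflexive products on \(N_i\). This holds because all the maps are determined off the closed point, and \(M_k/tM_k\simeq N_k\) for \(0\le k\le r\) (which includes \(i+j\le 2r-2\) after reducing by \(M_r\simeq R\)). I would handle this first by working on the punctured space and extending by depth.
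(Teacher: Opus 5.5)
Your proposal is correct and follows essentially the same route as the paper. For one direction you lift a generator of \(N_r\) to trivialize \(M_r\) and build the cyclic algebra \(\bigoplus_{i=0}^{r-1}M_i\) as a \(G\)-equivariant smoothing of the index-one cover; for the other you identify the \(M_i\) with isotypic direct summands of the Cohen--Macaulay ring \(\widetilde{R}\) and apply Lemma~\ref{prop:depth}. One small point: your claim that the action is free off the closed point is stronger than needed, and as written it is unjustified (proving it requires the smoothness of the fibers of \(\pi\)). The paper uses only that each nonidentity element fixes only the origin of the central fiber, with \(t\) invariant, so its fixed locus has dimension at most one. Hence the quotient map is \'{e}tale in codimension one, which is exactly what the reflexive comparison with \(M_i\) requires.
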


\begin{proof}
Recall that \(R=\mathcal{O}_{\mathcal{V},0}\), \(A=\mathcal{O}_{V,0}=R/(t)\), \(M_i=\omega_R^{[i]}\), and \(N_i=\omega_A^{[i]}\).

Suppose that \(\delta_i(\pi)=0\) for \(2 \le i \le r\). Since \(N_r \simeq A\), the specialization sequence \eqref{eq:specialization} gives \(M_r/tM_r \simeq A\). Lifting a generator and applying Nakayama's lemma gives a surjection \(R \to M_r\). This is an isomorphism because \(R\) is a domain and \(M_r\) has rank one. Consequently,
\[
 M_{i+r} \simeq M_i
 \quad(i \in \mathbb{Z}).
\]
By Lemma~\ref{prop:depth} and \(\delta_0=\delta_1=0\), the modules \(M_0,\ldots,M_{r-1}\) are maximal Cohen--Macaulay. Periodicity gives the same assertion for every \(M_i\). Another application of Lemma~\ref{prop:depth} shows that every specialization map is an isomorphism.

Put \(s:=\indx(R)\). Since \(M_r \simeq R\), we have \(s \mid r\). Conversely, a trivialization of \(M_s\) restricts to a trivialization of the \(s\)th canonical power on \(V\setminus\{0\}\). Reflexive extension gives \(N_s \simeq A\), so \(r \mid s\). Thus \(s=r\).

Let \(q\colon(\widetilde{V},0) \to (V,0)\) be the canonical cover defined by a chosen trivialization \(N_r \simeq A\), and put \(B:=\mathcal{O}_{\widetilde{V},0}\). Choose a trivialization \(M_r \simeq R\) inducing this trivialization on the central fiber. Such a choice is possible because units of \(A\) lift to units of \(R\). Form the cyclic algebra
\[
 \widetilde{R}:=\bigoplus_{i=0}^{r-1}M_i.
\]
Its grading defines an action of \(G:=\mu_r\), with \(t\) invariant and \(\widetilde{R}^G=R\). The specialization isomorphisms identify \(\widetilde{R}/t\widetilde{R}\) with \(B\) as a \(G\)-equivariant algebra: compatibility with multiplication holds on the punctured central surface and extends by reflexivity.

The finite \(R\)-algebra \(\widetilde{R}\) is local because its quotient by \(t\) is local. It is Cohen--Macaulay because each summand \(M_i\) is maximal Cohen--Macaulay over \(R\). Since \(t\) is a nonzerodivisor and \(B\) is Gorenstein, \(\widetilde{R}\) is Gorenstein. Away from the closed point, \(R\) is regular and the cyclic cover is locally defined by \(z^r=u\), with \(u\) a unit. It is therefore \'{e}tale there. Thus \(\widetilde{R}\) is regular in codimension one and is normal by Serre's criterion. It is flat over \(T\), and its nonzero fibers are \'{e}tale covers of the smooth fibers of \(\pi\). Hence it defines the required \(G\)-equivariant smoothing of \((\widetilde{V},0)\).

Conversely, suppose that \(\pi\) is induced by a \(G\)-equivariant smoothing \(\widetilde{\pi}\colon(\widetilde{\mathcal{V}},0) \to (T,0)\) of the canonical cover, with trivial action on \(T\). Put \(\widetilde{R}:=\mathcal{O}_{\widetilde{\mathcal{V}},0}\). Since \(\widetilde{R}/t\widetilde{R} \simeq B\) and \(t\) is a nonzerodivisor, \(\widetilde{R}\) is normal and Gorenstein. Every nonidentity element of \(G\) fixes only the origin on the central fiber. Its fixed locus in \(\widetilde{\mathcal{V}}\) therefore has dimension at most one near the origin. Consequently, the quotient map is \'{e}tale in codimension one.

Choose an eigen-generator \(\eta\) of \(\omega_{\widetilde{R}}\). Equivariant adjunction identifies its character with the canonical character \(\kappa\) of \(B\), since \(t\) is invariant. On the locus where the quotient map is \'{e}tale, multiplication by \(\eta^{\otimes i}\) and descent identify the \(\kappa^{-i}\)-eigensummand of \(\widetilde{R}\) with \(M_i\).

A system of parameters of \(R\) is also a system of parameters of \(\widetilde{R}\). Since \(\widetilde{R}\) is Cohen--Macaulay, it is maximal Cohen--Macaulay as an \(R\)-module. Its eigensummands are direct summands and have the same property. They are therefore reflexive, so the preceding identifications extend across the complement of the \'{e}tale locus. Thus every \(M_i\) is maximal Cohen--Macaulay, and Lemma~\ref{prop:depth} gives \(\delta_i(\pi)=0\) for every \(i \in \mathbb{Z}\).
\end{proof}

\begin{remark}\label{prop:short-finite-criterion}
Canonical duality reduces the list of vanishing conditions: \(\pi\) is \(\mathbb{Q}\)-Gorenstein if and only if \(\delta_r=0\) and \(\delta_j=0\) for \(2 \le j \le \lceil r/2\rceil\). Indeed, the proof of Proposition~\ref{thm:finite} shows that \(\delta_r=0\) gives \(M_{i+r} \simeq M_i\) and \(\delta_{i+r}=\delta_i\). Lemma~\ref{prop:duality} then gives \(\delta_j=\delta_{r+1-j}\) for \(2 \le j<r\). The second range is empty when \(r=2\).
\end{remark}

Assume again that \(A\) is rational of canonical index \(r>1\). The following is the Riemann--Roch reformulation of the finite criterion, obtained by combining Corollary~\ref{cor:periodic} with Proposition~\ref{thm:finite}.

\begin{corollary}\label{cor:numerical}
Let \((V,0)\) be a rational normal complex surface singularity of canonical index \(r>1\), and put \(A:=\mathcal{O}_{V,0}\). A smoothing \(\pi\) of \((V,0)\) is \(\mathbb{Q}\)-Gorenstein if and only if
\[
 \mu_\pi=n+K^2,\quad
 \gamma_j(A)=0
 \quad(2 \le j<r).
\]
If \(r=2\), then
\[
 \delta_i(\pi)=\binom{i}{2}(\mu_\pi-n-K^2)
 \quad(i \in \mathbb{Z}),
\]
and \(\pi\) is \(\mathbb{Q}\)-Gorenstein if and only if \(\mu_\pi=n+K^2\).
\end{corollary}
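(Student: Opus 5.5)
The plan is to combine Proposition~\ref{thm:finite} with the periodic formula \eqref{eq:periodic} of Corollary~\ref{cor:periodic}. Since a rational singularity is \(\mathbb{Q}\)-Gorenstein, Proposition~\ref{thm:finite} applies and says that \(\pi\) is \(\mathbb{Q}\)-Gorenstein if and only if \(\delta_i(\pi)=0\) for \(2\le i\le r\). So the task is to show that this finite list of vanishings is equivalent to \(\mu_\pi=n+K^2\) together with \(\gamma_j(A)=0\) for \(2\le j<r\).

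For the forward direction, assume \(\delta_i(\pi)=0\) for \(2\le i\le r\).
\begin{enumerate}
\item Take \(i=r\) in \eqref{eq:period-shift}. This gives \(\binom{r}{2}(\mu_\pi-n-K^2)=0\). Since \(r>1\), we get \(\mu_\pi=n+K^2\).
\item Substitute this into \eqref{eq:periodic}. For \(2\le j<r\), the vanishing of \(\delta_j(\pi)\) becomes exactly \(\gamma_j(A)=0\).
\end{enumerate}

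For the converse, assume \(\mu_\pi=n+K^2\) and \(\gamma_j(A)=0\) for \(2\le j<r\).
\begin{enumerate}
\item The quadratic term in \eqref{eq:periodic} vanishes, so \(\delta_i(\pi)=\gamma_i(A)\) for every \(i\).
\item For \(2\le j<r\) this is zero by hypothesis.
\item For \(i=r\), periodicity and \(\gamma_0=0\) give \(\gamma_r=\gamma_0=0\), so \(\delta_r(\pi)=0\).
\item Proposition~\ref{thm:finite} then shows that \(\pi\) is \(\mathbb{Q}\)-Gorenstein.
\end{enumerate}

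For \(r=2\), the correction terms drop out entirely. Periodicity with \(\gamma_0=\gamma_1=0\) forces \(\gamma_i(A)=0\) for every \(i\), since each \(i\) is congruent to \(0\) or \(1\) modulo \(2\). Thus \eqref{eq:periodic} reduces to the displayed formula \(\delta_i(\pi)=\binom{i}{2}(\mu_\pi-n-K^2)\). The range \(2\le j<r\) is empty, so the criterion reads \(\mu_\pi=n+K^2\) alone.

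No step is a real obstacle, since all the work sits in the earlier results. The one point needing care is the \(i=r\) case. In the forward direction it is what produces the Milnor number identity. In the converse it relies on the normalization \(\gamma_0(A)=0\) from Corollary~\ref{cor:periodic}, which was obtained from \(\delta_0=0\) in Lemma~\ref{prop:depth}.
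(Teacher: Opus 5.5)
Your proposal is correct and follows essentially the same route as the paper. The paper likewise reads off \(\delta_r=\binom{r}{2}(\mu_\pi-n-K^2)\) and \(\delta_j=\binom{j}{2}(\mu_\pi-n-K^2)+\gamma_j(A)\) from Corollary~\ref{cor:periodic}, invokes Proposition~\ref{thm:finite}, and handles \(r=2\) via periodicity together with \(\gamma_0=\gamma_1=0\).
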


\begin{proof}
Corollary~\ref{cor:periodic} gives
\[
 \delta_r(\pi)=\binom{r}{2}(\mu_\pi-n-K^2),\quad
 \delta_j(\pi)=\binom{j}{2}(\mu_\pi-n-K^2)+\gamma_j(A)
 \quad(2 \le j<r).
\]
Since \(r>1\), the equality \(\delta_r(\pi)=0\) is equivalent to \(\mu_\pi=n+K^2\). Under this equality, the remaining conditions \(\delta_j(\pi)=0\) for \(2 \le j<r\) are precisely \(\gamma_j(A)=0\). Proposition~\ref{thm:finite} therefore proves the first assertion.

If \(r=2\), periodicity and \(\gamma_0(A)=\gamma_1(A)=0\) give \(\gamma_i(A)=0\) for every \(i \in \mathbb{Z}\). Substitution in \eqref{eq:periodic} gives the displayed formula for \(\delta_i(\pi)\), and the first assertion gives the stated criterion.
\end{proof}

For every canonical index \(r>1\), the equality \(\mu_\pi=n+K^2\) alone characterizes the weak \(\mathbb{Q}\)-Gorenstein condition. Thus the additional vanishings of \(\gamma_j\) in Corollary~\ref{cor:numerical} distinguish the strong condition from the weak one. If \(K_R\) is \(\mathbb{Q}\)-Cartier, the classical formula of Looijenga~\cite[\S4.1(c)]{Looijenga1986} gives \(\mu_\pi=n+K^2\). Moreover, Lee--Nakayama~\cite[Proposition~7.11(iii)]{LeeNakayama2018} show that \(\indx R=\indx A=r\). We give a short proof that \(\mu_\pi=n+K^2\) is equivalent to \(K_R\) being \(\mathbb{Q}\)-Cartier, using specialization defects and Nakayama's lemma.

The following proposition gives a numerical criterion for the weak \(\mathbb{Q}\)-Gorenstein condition. We prove it directly from Corollary~\ref{cor:periodic}. For index preservation in characteristic zero, see Lee--Nakayama~\cite[Proposition~7.11(iii)]{LeeNakayama2018}.

\begin{proposition}\label{prop:weak-numerical}
Let \(\pi\colon(\mathcal{V},0) \to (T,0)\) be a smoothing of a rational normal complex surface singularity \((V,0)\) of canonical index \(r>1\). The canonical divisor \(K_{\mathcal{V}}\) of the total space is \(\mathbb{Q}\)-Cartier if and only if \(\mu_\pi=n+K^2\). When these conditions hold, \((\mathcal{V},0)\) has canonical index \(r\).
\end{proposition}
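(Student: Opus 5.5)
The plan is to prove both directions through the specialization defect \(\delta_r(\pi)\), using Corollary~\ref{cor:periodic}, which gives \(\delta_r(\pi)=\binom{r}{2}(\mu_\pi-n-K^2)\) with \(\binom{r}{2}>0\). So it suffices to show that \(K_{\mathcal{V}}\) is \(\mathbb{Q}\)-Cartier if and only if \(\delta_r(\pi)=0\), and that in this case the index of \(R\) is exactly \(r\).

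\emph{From \(\mu_\pi=n+K^2\) to \(\mathbb{Q}\)-Cartier.} Assume \(\mu_\pi=n+K^2\), so \(\delta_r(\pi)=0\). Since \(N_r\simeq A\), the specialization sequence \eqref{eq:specialization} gives \(M_r/tM_r\simeq A\). As in the first step of the proof of Proposition~\ref{thm:finite}, lift a generator and apply Nakayama's lemma to obtain a surjection \(R\to M_r\). This surjection is an isomorphism, because \(R\) is a normal domain and \(M_r\) is reflexive of rank one. Hence \(rK_{\mathcal{V}}\) is Cartier. For the index, put \(s=\indx R\), so \(s\mid r\). A trivialization of \(M_s\) restricts on the punctured central fiber to a trivialization of \(\omega^{\otimes s}\), and reflexive extension gives \(N_s\simeq A\). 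Thus \(r\mid s\), and \(s=r\). This is the same argument used in Proposition~\ref{thm:finite}, but here it needs only the single vanishing \(\delta_r=0\).

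\emph{From \(\mathbb{Q}\)-Cartier to \(\mu_\pi=n+K^2\).} Assume \(M_s\simeq R\) for some \(s\ge1\). The main obstacle is to deduce a vanishing defect from this, because restriction of \(M_s\) need not a priori land on a generator of \(N_s\). The plan is as follows. Since \(M_s\simeq R\) is maximal Cohen--Macaulay, Lemma~\ref{prop:depth} gives \(\delta_s(\pi)=0\). Moreover \(M_{ks}\simeq R\) for every \(k\), so \(\delta_{ks}(\pi)=0\) for all \(k\ge1\). Take \(k=r\) and \(m=rs\). This \(m\) is a multiple of \(r\), so \(\gamma_m(A)=\gamma_0(A)=0\) by periodicity. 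Formula \eqref{eq:periodic} then reads
\[
0=\delta_m(\pi)=\binom{m}{2}(\mu_\pi-n-K^2).
\]
Since \(m\ge r\ge2\), the binomial coefficient is positive, and therefore \(\mu_\pi=n+K^2\). Applying the first direction now also gives \(\indx R=r\).

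This direction avoids any need to compare the restriction of a generator of \(M_s\) with a generator of \(N_s\). The only remaining check is that the identification \(M_{ks}\simeq M_s^{[k]}\simeq R\) holds, which is immediate because reflexive powers of an invertible module are its ordinary tensor powers.
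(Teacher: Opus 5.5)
Your proof is correct and follows the paper's argument. The forward direction is identical, and the converse likewise combines Lemma~\ref{prop:depth} with the periodic formula \eqref{eq:periodic} at a multiple of \(r\) where the reflexive power is free; the only difference is that the paper takes \(s=\indx R\) and uses the restriction argument to get \(r\mid s\), whereas you pass to \(rs\), which makes that divisibility automatic.
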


\begin{proof}
Recall that \(R=\mathcal{O}_{\mathcal{V},0}\), \(A=\mathcal{O}_{V,0}=R/(t)\), \(M_i=\omega_R^{[i]}\), and \(N_i=\omega_A^{[i]}\).

Suppose that \(\mu_\pi=n+K^2\). Corollary~\ref{cor:periodic} gives \(\delta_r(\pi)=0\). Since \(N_r \simeq A\), the specialization sequence \eqref{eq:specialization} gives \(M_r/tM_r \simeq A\). Lifting a generator and applying Nakayama's lemma gives a surjection \(R \to M_r\). This is an isomorphism because \(R\) is a domain and \(M_r\) has rank one. Thus \(K_{\mathcal{V}}\) is \(\mathbb{Q}\)-Cartier.

Put \(s:=\indx(R)\). The isomorphism \(M_r \simeq R\) gives \(s \mid r\). A trivialization of \(M_s\) restricts to a trivialization of the \(s\)th canonical power on \(V\setminus\{0\}\). Reflexive extension gives \(N_s \simeq A\), so \(r \mid s\). Consequently, \(s=r\).

Conversely, suppose that \(K_{\mathcal{V}}\) is \(\mathbb{Q}\)-Cartier, and let \(s\) be the canonical index of \(R\). The same restriction argument gives \(r \mid s\). Since \(M_s \simeq R\), Lemma~\ref{prop:depth} gives \(\delta_s(\pi)=0\). Periodicity and \(r \mid s\) give \(\gamma_s(A)=\gamma_0(A)=0\). Hence Corollary~\ref{cor:periodic} yields
\[
 0=\delta_s(\pi)=\binom{s}{2}(\mu_\pi-n-K^2).
\]
Since \(s \ge r>1\), it follows that \(\mu_\pi=n+K^2\).
\end{proof}

The specialization cokernels also have a common interpretation on the canonical covers. Under the weak \(\mathbb{Q}\)-Gorenstein hypothesis, their lengths are the character multiplicities of one finite-dimensional representation. We define this representation and then reformulate the finite and numerical criteria above.

Let \(\pi\colon(\mathcal{V},0) \to (T,0)\) be a weakly \(\mathbb{Q}\)-Gorenstein smoothing of a rational normal complex surface singularity \((V,0)\) of canonical index \(r>1\). Let \(t\) denote the pullback of a parameter on the smooth base, and recall that \(R=\mathcal{O}_{\mathcal{V},0}\), \(A=\mathcal{O}_{V,0}=R/(t)\), \(M_j=\omega_R^{[j]}\), and \(N_j=\omega_A^{[j]}\). By Proposition~\ref{prop:weak-numerical}, \(R\) has canonical index \(r\). Choose compatible trivializations of \(M_r\) and \(N_r\), and let \((\widetilde{\mathcal{V}},0)\) and \((\widetilde{V},0)\) be the resulting canonical covers. Put
\[
 \widetilde{R}:=\mathcal{O}_{\widetilde{\mathcal{V}},0}=\bigoplus_{j=0}^{r-1}M_j,
 \quad B:=\mathcal{O}_{\widetilde{V},0}=\bigoplus_{j=0}^{r-1}N_j.
\]
Let \(G:=\mu_r\), and let \(\kappa\colon G \to \mathbb{C}^*\) be the faithful canonical character of \(B\). The canonical gradings give \(G\)-actions on both algebras, with \(t\) invariant. The specialization maps induce a \(G\)-equivariant homomorphism of \(A\)-algebras \(\varphi\colon\widetilde{R}/t\widetilde{R} \to B\).

\begin{definition}\label{def:canonical-specialization-representation}
The \emph{canonical-specialization representation} of \(\pi\) is the finite-dimensional complex \(G\)-representation
\[
 \mathcal{C}_\pi:=\coker\varphi,
\]
with the action induced by the specialization homomorphism of the compatible canonical algebras.
\end{definition}

\begin{proposition}[Canonical-specialization representation criterion (CSRC)]\label{prop:canonical-specialization-representation}
With the notation above, the specialization homomorphism gives an exact sequence of \(G\)-equivariant \(A\)-modules
\[
 0 \to \widetilde{R}/t\widetilde{R} \xrightarrow{\varphi} B \to \mathcal{C}_\pi \to 0.
\]
The \(\kappa^{-j}\)-eigenspace of \(\mathcal{C}_\pi\) is the cokernel \(C_j\) in \eqref{eq:specialization}, and
\begin{equation}\label{eq:canonical-specialization-representation}
 [\mathcal{C}_\pi]=\sum_{j=0}^{r-1}\gamma_j(A)[\mathbb{C}_{\kappa^{-j}}]
 \quad\text{in }R(G).
\end{equation}
The following conditions are equivalent:
\begin{enumerate}
\item \(\pi\) is strongly \(\mathbb{Q}\)-Gorenstein;
\item \(\mathcal{C}_\pi=0\);
\item \(\widetilde{R}\) is Cohen--Macaulay;
\item \(\gamma_j(A)=0\) for every \(j\in\mathbb{Z}\).
\end{enumerate}
In the last condition, it suffices to take \(2\leq j<r\).
\end{proposition}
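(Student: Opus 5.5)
The plan is to decompose $\varphi$ into its graded pieces. By construction $\widetilde{R}/t\widetilde{R}=\bigoplus_{j=0}^{r-1}M_j/tM_j$ and $B=\bigoplus_{j=0}^{r-1}N_j$. The specialization map $\varphi$ is the direct sum of the maps $M_j/tM_j\to N_j$ of \eqref{eq:specialization}. It is compatible with multiplication: this holds on the punctured central surface and extends by reflexivity of $B$. It is also $G$-equivariant, since both gradings come from the same canonical degree and the trivializations were chosen compatibly. Injectivity of $\varphi$ and the identification of the graded pieces of $\coker\varphi$ with the $C_j$ then follow summand by summand from \eqref{eq:specialization}. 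This gives the exact sequence.

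To fix the eigencharacters, I will check that the degree-$j$ summand $N_j$ is the $\kappa^{-j}$-eigenspace. An eigen-generator $\eta$ of $\omega_B$ has character $\kappa$, and $N_j$ corresponds to $\eta^{-j}$-multiples descended from $B$. Equivalently, this follows from the convention $g\cdot\alpha=(g^{-1})^*\alpha$ used in Section~\ref{qg:sec:relative-cover}. Any sign slip here only relabels $j\leftrightarrow -j$, and I will fix it by comparing with the $\Phi$ pairing there. Thus $(\mathcal{C}_\pi)_{\kappa^{-j}}=C_j$, with dimension $\delta_j(\pi)$.

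Under the weak hypothesis, Proposition~\ref{prop:weak-numerical} gives $\mu_\pi=n+K^2$. Corollary~\ref{cor:periodic} then gives $\delta_j(\pi)=\gamma_j(A)$ for every $j$. This yields \eqref{eq:canonical-specialization-representation}, and in particular shows that the $\gamma_j(A)$ are nonnegative integers.

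For the equivalences, (2)$\Leftrightarrow$(4) is immediate from \eqref{eq:canonical-specialization-representation}, because the characters $\kappa^{-j}$ for $0\le j<r$ are distinct and $\kappa$ is faithful. Periodicity and $\gamma_j=\gamma_{1-i}$-type symmetries reduce (4) to $0\le j<r$. Since $\gamma_0=\gamma_1=0$, it further reduces to $2\le j<r$. For (1)$\Leftrightarrow$(4), I will use Corollary~\ref{cor:numerical} directly, since $\mu_\pi=n+K^2$ already holds. For (2)$\Leftrightarrow$(3), $\widetilde{R}$ is Cohen--Macaulay if and only if every summand $M_j$ with $0\le j<r$ has depth three as an $R$-module. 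Lemma~\ref{prop:depth} says this happens if and only if $\delta_j=0$ for all these $j$, that is, $\mathcal{C}_\pi=0$.

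I expect the main obstacle to be the character bookkeeping: matching the grading action of $\mu_r$ on $B$ with the action on forms, so that degree $j$ is $\kappa^{-j}$ and not $\kappa^{j}$. Even if the sign came out reversed, the multiset of multiplicities would be unchanged, because $\gamma_j=\gamma_{1-j}$ gives $\gamma_{-j}=\gamma_{j+1}$. The stated formula, however, needs the correct sign, so I will verify it on the degree-one summand using $\omega_B\simeq\Hom_A(B,\omega_A)$.
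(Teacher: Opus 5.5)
Your proposal is correct and follows the paper's proof essentially step by step. You decompose $\varphi$ summand by summand and check multiplicativity on the punctured surface, you obtain $\delta_j(\pi)=\gamma_j(A)$ from Proposition~\ref{prop:weak-numerical} and Corollary~\ref{cor:periodic}, and you derive the Cohen--Macaulay equivalence from Lemma~\ref{prop:depth} (using that depth is unchanged over the finite local extension $R\subseteq\widetilde{R}$). The only difference is that you get (1)$\Leftrightarrow$(4) from Corollary~\ref{cor:numerical} instead of directly from Proposition~\ref{thm:finite}; this is the same argument, since that corollary is derived from it.
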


\begin{proof}
Proposition~\ref{prop:weak-numerical} gives \(\mu_\pi=n+K^2\). Corollary~\ref{cor:periodic} therefore gives \(\delta_j(\pi)=\gamma_j(A)\) for every \(j\), and in particular \(\delta_r=0\). Thus \(M_r/tM_r \to N_r\) is an isomorphism. A chosen trivialization of \(N_r\) lifts to one of \(M_r\), since units of \(A\) lift to units of \(R\). This gives compatible multiplications on the two cyclic algebras.

Take the direct sum of the specialization sequences \eqref{eq:specialization} for \(0\leq j<r\). The resulting injection is compatible with multiplication on the punctured central fiber. It is therefore an algebra homomorphism everywhere, since \(B\) is torsion-free over \(A\). In the canonical grading, the summands \(M_j\) and \(N_j\) have character \(\kappa^{-j}\), as in the proof of Proposition~\ref{thm:finite}. Its cokernel is consequently the direct sum of the \(C_j\) with these characters. Each \(C_j\) has finite length, and its complex dimension is \(\delta_j(\pi)=\gamma_j(A)\). This proves \eqref{eq:canonical-specialization-representation}. The character \(\kappa\) has order \(r\), so these are all the distinct irreducible characters of \(G\).

By Lemma~\ref{prop:depth}, \(\mathcal{C}_\pi=0\) if and only if every \(M_j\), for \(0\leq j<r\), is maximal Cohen--Macaulay over \(R\). Their direct sum is \(\widetilde{R}\). Since the finite local extension \(R\subseteq\widetilde{R}\) has dimension three, a system of parameters of \(R\) is also a system of parameters of \(\widetilde{R}\). Thus this module condition is equivalent to \(\widetilde{R}\) being Cohen--Macaulay. Since \(\delta_r=0\), Proposition~\ref{thm:finite} identifies the same vanishing with the strong \(\mathbb{Q}\)-Gorenstein condition. Finally, the \(r\)-periodicity of \(\gamma_j(A)\) and \(\gamma_0(A)=\gamma_1(A)=0\) give the remaining equivalence and the shorter range. \qedhere
\end{proof}

Lee--Nakayama~\cite[Fact~7.7, Lemma~7.8, and Example~7.9]{LeeNakayama2018} construct examples distinguishing the weak and strong \(\mathbb{Q}\)-Gorenstein conditions. For rational singularities whose canonical covers are not Cohen--Macaulay, see Singh~\cite[Theorem~6.1]{Singh2003}. We construct a rational surface smoothing of index three for which the weak condition does not imply Wahl's inequality.

\begin{lemma}\label{ex:lem:invariant-curve}
Let \(\zeta\) be a primitive cube root of unity, put \(E_\zeta:=\mathbb{C}/(\mathbb{Z}+\mathbb{Z}\zeta)\) and \(S:=E_\zeta\times E_\zeta\), and let \(G:=\langle g\rangle\) act by \(g(x,y)=(\zeta x,\zeta y)\). There exists a smooth connected \(G\)-invariant ample curve \(C \subset S\) containing all nine fixed points, with
\[
 C/G \simeq \mathbb{P}^1,\quad
 g(C)=7,\quad C^2=12.
\]
\end{lemma}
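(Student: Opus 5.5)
We construct $C$ as the pullback of a general member of a linear system on the quotient, and get its invariants from Riemann--Hurwitz and a divisor-class calculation.

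\emph{Fixed points and the quotient.} The automorphism $z\mapsto\zeta z$ of $E_\zeta$ has exactly three fixed points, namely $0$, $(1+2\zeta)/3$ and $(2+\zeta)/3$. Hence $g$ has exactly $3\cdot 3=9$ isolated fixed points on $S$. At each of them $g$ acts with weights $(\zeta,\zeta)$, so $S/G$ acquires nine $\tfrac13(1,1)$ points.

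\emph{The invariant class.} Put
\[
 F_1=\{0\}\times E_\zeta,\quad F_2=E_\zeta\times\{0\},\quad \Delta=\text{the diagonal},\quad \Gamma=\text{the graph of } -\zeta .
\]
All four are $G$-invariant elliptic curves through the origin. Set $H:=F_1+F_2+\Delta+\Gamma$. Each pair of these curves meets transversally in one point. Thus $H^2=12$ and $H$ is ample, since $F_1+F_2$ is already ample.

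\emph{A smooth invariant member.} The aim is a smooth $G$-invariant member $C\in|H|$ through all nine fixed points. Sections of the linearized bundle $\mathcal{O}(H)$ split into three eigenspaces. A section is forced to vanish at a fixed point exactly when the fixed-point character differs from the character of its eigenspace. So I will choose the eigenspace whose character is nontrivial at all nine fixed points. The natural candidate is the one containing the section defining $H$ translated appropriately, and I will check the characters using the Atiyah--Bott (holomorphic Lefschetz) computation.

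I will then argue that a general member of that eigenspace system is smooth. Away from the fixed points this follows from Bertini, after checking that the eigenspace system is base point free there; the eigenspace has dimension about $h^0(H)/3=2$, plus a correction. At a fixed point $p$, the local invariant functions vanishing at $p$ in the nontrivial character include the linear coordinates. Hence a general member is smooth through $p$. This base-point freeness and smoothness check is the step I expect to be the main obstacle.

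\emph{Invariants of $C$.} Connectedness follows from ampleness, since an ample divisor on a surface is connected. Adjunction with $K_S=0$ gives
\[
 2g(C)-2=C^2=12,\qquad g(C)=7 .
\]
The nine fixed points are the only ramification of $C\to C/G$, each with index $3$, so Riemann--Hurwitz gives
\[
 12=3\bigl(2g(C/G)-2\bigr)+9\cdot 2 .
\]
Therefore $g(C/G)=0$, that is, $C/G\simeq\mathbb{P}^1$.

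Conversely, this numerology shows the fixed-point condition is sharp. If $C$ passed through only $k$ of the fixed points, Riemann--Hurwitz would give $6g(C/G)=18-2k$, so $k\in\{0,3,6,9\}$, and $k=9$ is exactly the case $C/G\simeq\mathbb{P}^1$.
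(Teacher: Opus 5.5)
Your numerics are fine: adjunction gives $g(C)=7$, Riemann--Hurwitz with nine total ramification points gives $C/G\simeq\mathbb{P}^1$, and ampleness gives connectedness. Your class is also the right one. In the Roulleau--Urz\'{u}a model used in the paper, $H$ comes from the line of the dual Hesse arrangement attached to the origin, with $F_1,F_2,\Delta,\Gamma$ lying over its four triple points. So your invariant system is the pullback of the net of lines.

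The gap is the one step that carries content: the existence of a smooth invariant member through the nine fixed points. The argument you sketch for it does not work. At a fixed point, the local computation only shows that an invariant section has, in the frame $1/f$, a local function whose monomials have degree $\equiv 1 \pmod 3$. So a linear term is \emph{permitted}, but nothing shows that the global eigenspace realizes one. Indeed $H$ itself is an invariant member with a quadruple point at the origin. Base-point freeness off the fixed points is likewise only asserted.

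Two corrections to the setup. First, no translation is needed: $H$ already contains all nine fixed points. With $a=(1+2\zeta)/3$ and $b=(2+\zeta)/3$, we have $F_1\ni(0,a),(0,b)$, $F_2\ni(a,0),(b,0)$, $\Delta\ni(a,a),(b,b)$, and $\Gamma\ni(a,b),(b,a)$ because $-\zeta a=-a=b$. Second, the eigenspace is a net, not a pencil. Every local equation $f$ of $H$ at a fixed point satisfies $f\circ g=\zeta f$ (degree $4$ at the origin, degree $1$ elsewhere). Holomorphic Lefschetz then gives $\tr\bigl(g^*\mid H^0(\mathcal{O}_S(H))\bigr)=9\zeta^2/(1-\zeta)^2=-3\zeta$. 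Hence the eigenvalues $1,\zeta,\zeta^2$ of $g^*$ occur with multiplicities $3,0,3$. The invariant part $V\ni s_H$ is the system you want, and all its members pass through the nine points.

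To close the gap, use the translations $t_p$ by the nine fixed points, which commute with $g$.
\begin{itemize}
\item Each $C\in\{F_1,F_2,\Delta,\Gamma\}$ is the kernel of $q_C\in\{\mathrm{pr}_1,\mathrm{pr}_2,\mathrm{pr}_1-\mathrm{pr}_2,\zeta\mathrm{pr}_1+\mathrm{pr}_2\}$. Hence $t_p^*\mathcal{O}_S(C)\otimes\mathcal{O}_S(-C)=q_C^*\mathcal{O}_{E_\zeta}([-q_C(p)]-[0])$.
\item Summing over $C$ at $p=(a,0)$ and $p=(0,a)$ gives $\mathrm{pr}_1^*$, respectively $\mathrm{pr}_2^*$, of the class of $[-3a]-[0]$, which is trivial. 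So $H_p:=t_p(H)\in|H|$ for all nine $p$.
\item Each $H_p$ is invariant and has multiplicity one at the eight fixed points other than $p$. Sections in the $\zeta^2$-eigenspace have invariant local functions in that frame, hence multiplicity $0$ or at least $3$ at fixed points, and the $\zeta$-eigenspace is zero. Therefore $H_p\in\mathbb{P}(V)$.
\item The $H_p$ are unions of the twelve invariant elliptic curves $t_p(F_1),\dots,t_p(\Gamma)$. As you noted, two non-parallel ones meet transversally in a single point, which is therefore fixed by $G$.
\item So a non-fixed point lies on at most one of the twelve curves, and misses $H_p$ for every $p$ off that curve. Thus $V$ has no base points off the fixed points.
\item At each fixed point $q$, the members $H_p$ with $p\neq q$ are smooth at $q$ and realize all four tangent directions of the curves through $q$. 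So a general member of $V$ is smooth at $q$.
\end{itemize}
Bertini then gives a smooth member, and your numerics finish the proof.

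The paper instead imports the identification of $\widehat S/G$ with the blowup of $\mathbb{P}^2$ at the twelve triple points of the dual Hesse arrangement, and takes the preimage of a general line, where smoothness is immediate. Your route, once completed as above, is self-contained but needs this base-locus analysis.
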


\begin{proof}
Let \(b\colon\widehat{S} \to S\) be the blowup of the nine fixed points. By Roulleau--Urz\'{u}a~\cite[Proposition~3.1]{RoulleauUrzua2015}, the quotient \(Y:=\widehat{S}/G\) is the blowup of \(\mathbb{P}^2\) at the twelve triple points of the dual Hesse arrangement. The quotient map is branched over the strict transforms of its nine lines.

Let \(\ell \subset Y\) be the strict transform of a general plane line. Its inverse image \(\widehat{C}\) is a smooth connected triple cover of \(\ell \simeq \mathbb{P}^1\), totally ramified at nine points. Riemann--Hurwitz gives
\[
 2g(\widehat{C})-2=3(-2)+9(3-1)=12.
\]
Moreover, \(\widehat{C}^2=3\), and \(\widehat{C}\) meets each exceptional curve of \(b\) transversely at one point. Consequently, \(C:=b(\widehat{C})\) is smooth, contains all nine fixed points, and satisfies \(C^2=3+9=12\). Its quotient is \(\ell\), and its genus is seven. An irreducible curve of positive square on an abelian surface is ample.
\end{proof}

\begin{example}\label{ex:weak-qg}
There exists a weighted homogeneous rational normal complex surface singularity \((V,0)\) of canonical index three admitting a weakly \(\mathbb{Q}\)-Gorenstein smoothing \(\pi\) whose total space has a rational singularity at the origin, with
\[
 \Delta(V,0)=0,\quad \mu_\pi=3,\quad
 \alpha_\pi=2,\quad \tau_\pi=6.
\]
Thus \(1+\mu_\pi-\tau_\pi=-2\), so the strong hypothesis in Theorem~\ref{thm:QG-conjecture} cannot be replaced by the weak condition.
\end{example}

\begin{proof}
Choose \(\zeta,S,G,C\) as in Lemma~\ref{ex:lem:invariant-curve}, and put \(L:=\mathcal{O}_S(C)\). We use the action \(g\cdot\alpha=(g^{-1})^*\alpha\). Write \(\mathbb{C}_{\zeta^j}\) for the one-dimensional representation on which \(g\) acts by multiplication by \(\zeta^j\).

\emph{Construction of the smoothing.} Give \(L\) its divisor linearization, so that the section \(f\) defining \(C\) is invariant. Define the graded algebras
\[
 \mathscr{B}:=\bigoplus_{d \ge 0}H^0(S,L^d),\quad
 \mathscr{R}:=\mathscr{B}^G,\quad
 \mathscr{A}:=\mathscr{R}/(f).
\]
Let \(\mathcal{V}\) be the complex analytic space associated with \(\Spec\mathscr{R}\), and put \(V:=\{f=0\} \subset \mathcal{V}\). At the vertices, write
\[
 R:=\mathcal{O}_{\mathcal{V},0},\quad
 A:=\mathcal{O}_{V,0}=R/(f).
\]
The punctured section cone is identified with the complement of the zero section in \(\Tot(L^{-1})\); see Watanabe~\cite[\S1]{Watanabe1981}. At a fixed point of \(S\), choose an eigen-equation \(z\) of \(C\) and a fiber coordinate \(w\) on \(L^{-1}\). Then
\[
 g\cdot z=\zeta^2z,\quad
 g\cdot w=\zeta w,\quad f=zw.
\]
Thus \(G\) acts freely on the punctured cone. For \(f\ne 0\), the fiber coordinate is determined by the base point, and the fiber is isomorphic to \((S\setminus C)/G\). In particular, every nonzero fiber is smooth. Since \(\mathscr{R}\) is torsion-free over \(\mathbb{C}[f]\), the map \(\pi:=f\colon(\mathcal{V},0) \to (\mathbb{C},0)\) is flat.

\emph{Normality and rationality.} Kodaira vanishing and Serre duality give
\[
 H^1(S,L^d)=0\quad(d\ne 0),\quad
 H^1(S,\mathcal{O}_S)=\mathbb{C}_{\zeta}^{\oplus2},\quad
 H^2(S,\mathcal{O}_S)=\mathbb{C}_{\zeta^2}.
\]
Put \(L_C:=L|_C\) and \(T_C:=\bigoplus_{d \ge 0}H^0(C,L_C^d)\). The restriction sequences give
\[
 0 \to \mathscr{B}/(f) \to T_C
 \to H^1(S,\mathcal{O}_S) \to 0,
\]
where the last term is concentrated in degree one. Taking invariants gives \(\mathscr{A}\simeq T_C^G\). The full section ring \(T_C\) is normal, so \(A\) is normal. Its positive grading defines a good \(\mathbb{C}^*\)-action.

Set \(Z:=\Tot(L^{-1})\). Since \(Z \to S\) is affine,
\[
 H^j(Z/G,\mathcal{O}_{Z/G})
 \simeq
 \left(\bigoplus_{d \ge 0}H^j(S,L^d)\right)^G
 =0\quad(j>0).
\]
The contraction \(Z/G \to \Spec\mathscr{R}\) is proper and birational. The singularities of \(Z/G\) are cyclic quotient singularities and hence rational. Resolving them preserves the displayed cohomology groups. Since \(\Spec\mathscr{R}\) is affine, the higher direct images of the structure sheaf of this resolution vanish. Thus \((\mathcal{V},0)\) is rational.

For the central curve, we have
\[
 H^1(C,\mathcal{O}_C)^G=0,\quad
 H^1(C,L_C)\simeq H^2(S,\mathcal{O}_S)
 =\mathbb{C}_{\zeta^2}.
\]
The first equality follows from \(C/G\simeq\mathbb{P}^1\), and the second follows from the restriction sequence for \(L\). For \(d\ge 2\), Serre duality gives \(H^1(C,L_C^d)=0\), since \(\deg L_C=12=2g(C)-2\). Consequently,
\[
 H^1(\Tot(L_C^{-1})/G,
 \mathcal{O}_{\Tot(L_C^{-1})/G})
 \simeq
 \left(\bigoplus_{d \ge 0}H^1(C,L_C^d)\right)^G
 =0.
\]
Resolving the cyclic quotient singularities and applying the same contraction argument proves that \((V,0)\) is rational.

\emph{Canonical powers and specialization.} Let \(\mathscr{B}_j\) be the character-\(\zeta^j\) summand of \(\mathscr{B}\), for \(j\in\mathbb{Z}/3\mathbb{Z}\). Watanabe~\cite[Theorem~2.8]{Watanabe1981} identifies the canonical module with
\[
 \omega_{\mathscr{B}}
 \simeq \mathscr{B}\otimes_{\mathbb{C}}\mathbb{C}_{\zeta}.
\]
Indeed, a nonzero holomorphic two-form on \(S\) has character \(\zeta\), and the logarithmic differential in the fiber direction has trivial character. Canonical descent therefore gives
\[
 \omega_{\mathscr{R}}\simeq\mathscr{B}_2,\quad
 \omega_{\mathscr{R}}^{[2]}\simeq\mathscr{B}_1,\quad
 \omega_{\mathscr{R}}^{[3]}\simeq\mathscr{R}.
\]
These identifications also hold after passage to analytic local rings. The canonical character has order three, and an eigenunit has trivial character upon evaluation at the vertex. Thus \(R\) has canonical index three.

Restriction to the punctured central surface and reflexive extension show that the canonical index of \(A\) divides three. It cannot be one: if \(A\) were Gorenstein, Gorensteinness would lift across the regular element \(f\) in the Cohen--Macaulay ring \(R\). Hence \(A\) also has index three.

Put \(\mathfrak{n}:=\mathscr{B}_{>0}\). By Watanabe~\cite[Proposition~2.2]{Watanabe1981},
\[
 (H^2_{\mathfrak{n}}(\mathscr{B}))_d
 \simeq H^1(S,L^d)
 =
 \begin{cases}
 \mathbb{C}_{\zeta}^{\oplus2},&d=0,\\
 0,&d\ne 0.
 \end{cases}
\]
In particular, the covering cone has depth two. Taking the character-\(\zeta\) summand computes the second local cohomology of \(\omega_{\mathscr{R}}^{[2]}\). This finite-length module is preserved by localization and flat passage to the analytic local ring. Hence
\[
 H^2_{\mathfrak{m}_R}(\omega_R^{[2]})
 \simeq \mathbb{C}^{\oplus2}.
\]
Multiplication by \(f\) raises degree by one and annihilates this module. Lemmas~\ref{prop:depth} and~\ref{prop:duality} therefore give \(\delta_2(\pi)=\alpha_\pi=2\).

\emph{Smoothing invariants.} Positive weighted dilation identifies the homotopy type of the local Milnor fiber with that of the global fiber \(f^{-1}(1)\simeq(S\setminus C)/G\). Since the action on \(S\setminus C\) is free,
\[
 e(M_\pi)=\frac{e(S)-e(C)}3
 =\frac{0-(-12)}3=4.
\]
The Gysin sequence identifies \(H^1(S\setminus C,\mathbb{Q})\) with \(H^1(S,\mathbb{Q})\), because the map \(H^0(C,\mathbb{Q}) \to H^2(S,\mathbb{Q})\) sends \(1\) to the nonzero class \([C]\). The invariant part of \(H^1(S,\mathbb{Q})\) is zero. Thus \(b_1(M_\pi)=0\). Since the Milnor fiber is Stein and has homotopy dimension at most two, \(e(M_\pi)=1+\mu_\pi\), and hence \(\mu_\pi=3\).

The singularity \((V,0)\) is quasi-homogeneous and has canonical index three, so Theorem~\ref{thm:main} gives \(\Delta(V,0)=0\). Wahl's formula \eqref{eq:wahl} then gives
\[
 \tau_\pi=1+\mu_\pi+\alpha_\pi-\Delta(V,0)=6. \qedhere
\]
\end{proof}

\section{Smoothing components and applications}\label{sec:consequences}

We compare smoothing components by their specialization of reflexive canonical powers defects and Milnor numbers, characterize strong \(\mathbb{Q}\)-Gorenstein smoothings by minimality, and derive applications to weak smoothings, Gorenstein modifications, and projective surface smoothings.

Let \((V,0)\) be a normal complex surface singularity which is not a rational double point, and put \(A:=\mathcal{O}_{V,0}\). We first record a consequence of Wahl's smoothing formulas~\cite[Proposition~2.2]{Wahl2015}.

\begin{corollary}\label{prop:component-comparison}
Let \(\pi\) and \(\pi'\) be one-parameter smoothings of \((V,0)\). Then
\begin{equation}\label{eq:alpha-component-comparison}
\begin{aligned}
 \alpha_\pi-\alpha_{\pi'} &=\mu_\pi-\mu_{\pi'},\\
 \tau_\pi-\tau_{\pi'} &=2(\mu_\pi-\mu_{\pi'}).
\end{aligned}
\end{equation}

Consequently, two smoothing components have the same dimension if and only if they have the same Milnor number. The dimensions of all smoothing components have the same parity.
\end{corollary}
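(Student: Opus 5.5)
The plan is to combine Wahl's formula \eqref{eq:wahl} with a second relation among \(\mu_\pi\), \(\tau_\pi\) and \(\alpha_\pi\). Since \(\Delta(A)\) depends only on \((V,0)\), writing \eqref{eq:wahl} for \(\pi\) and for \(\pi'\) and subtracting gives
\[
 0=(\mu_\pi-\mu_{\pi'})-(\tau_\pi-\tau_{\pi'})+(\alpha_\pi-\alpha_{\pi'}).
\]
So it suffices to prove one of the two identities in \eqref{eq:alpha-component-comparison}; the other then follows from this relation.

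The second identity is where I would use Wahl~\cite[Proposition~2.2]{Wahl2015}. I recall it as giving the smoothing-component dimension in the form
\[
 \tau_\pi=c(A)+2\mu_\pi,
\]
or equivalently \(\alpha_\pi=\mu_\pi+c'(A)\), with a constant depending only on the singularity. In the rational case this is visible from Theorem~\ref{thm:rr} with \(i=-1\): there \(\binom{-1}{2}=1\), so \(\alpha_\pi=\delta_{-1}=\mu_\pi+(\text{terms depending only on }A)\). For a general normal singularity, Wahl obtains the analogous statement by comparing \(\chi\) of the dual relative dualizing sheaf on a projective extension of the smoothing with the smooth fiber, exactly as in the proof of Theorem~\ref{thm:rr}. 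The only smoothing-dependent input there is the topological term \(K^2_{\overline X}-K^2_{Y_t}\), which is \(\mu_\pi\) plus a constant coming from Noether's formula and the Euler characteristic of the Milnor fiber. I would cite this step rather than reprove it. The one point to check is that the constant is indeed independent of \(\pi\): \(b_1(M_\pi)\) need not vanish outside the rational case, but \(\mu_\pi\) is defined as \(b_2\), and Wahl's formula is stated in terms of that invariant.

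Subtracting the identity \(\alpha_\pi=\mu_\pi+c'(A)\) for \(\pi\) and \(\pi'\) gives the first line of \eqref{eq:alpha-component-comparison}. Substituting it into the displayed relation gives \(\tau_\pi-\tau_{\pi'}=2(\mu_\pi-\mu_{\pi'})\).

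The consequences are then immediate. Equal dimensions are equivalent to equal Milnor numbers, because the factor \(2\) is nonzero. Every smoothing component contains a one-parameter smoothing, and any two such dimensions differ by an even integer, so all smoothing-component dimensions have the same parity.

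The main obstacle is confirming that Wahl's Proposition~2.2 really supplies \(\alpha_\pi-\mu_\pi\) as an invariant of \(A\) alone, for arbitrary non-rational-double-point singularities and not only rational ones.
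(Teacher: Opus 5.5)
Your argument is correct and is essentially the paper's. The paper reads off the smoothing-independence of both $1+\mu_\pi-\alpha_\pi$ and $\tau_\pi-2\alpha_\pi$ directly from Wahl~\cite[Proposition~2.2]{Wahl2015}; you take the first from that proposition and replace the second by the invariance of $\Delta(A)$ in \eqref{eq:wahl}, which is just the difference of the two expressions. One correction to your caveat: $b_1(M_\pi)=0$ holds for every smoothing of a normal surface singularity by Greuel--Steenbrink~\cite{GreuelSteenbrink1983}, not only in the rational case, although your argument does not depend on this.
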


\begin{proof}
Wahl~\cite[Proposition~2.2]{Wahl2015} expresses \(1+\mu_\pi-\alpha_\pi\) and \(\tau_\pi-2\alpha_\pi\) in terms of the cohomology and intersection numbers of the minimal good resolution of \((V,0)\). Both expressions are independent of the smoothing. Subtracting their values for \(\pi\) and \(\pi'\) gives the two equalities.
\end{proof}

Suppose now that \((V,0)\) is rational and has canonical index \(r>1\). Corollary~\ref{cor:periodic} gives the comparison for every reflexive canonical power:
\begin{equation}\label{eq:smoothing-difference}
 \delta_i(\pi)-\delta_i(\pi')
 =\binom{i}{2}(\mu_\pi-\mu_{\pi'})
 \quad(i \in \mathbb{Z}).
\end{equation}
Indeed, the correction terms \(\gamma_i(A)\) are independent of the smoothing. Taking \(i=-1\) recovers \eqref{eq:alpha-component-comparison}.

Wahl's smoothing formulas~\cite[Proposition~2.2]{Wahl2015}, together with the vanishing of \(\alpha_{\pi_0}\) for a strong \(\mathbb{Q}\)-Gorenstein smoothing (Wahl~\cite[Lemma~5.1]{Wahl2015}), imply minimality of its Milnor number and component dimension. For rational singularities, the comparison \eqref{eq:smoothing-difference} also proves the converse. The index-two case follows already from de Jong--van Straten~\cite[Lemma~1.2]{DeJongVanStraten1992}, since the weak and strong conditions coincide in that index; the argument below applies to arbitrary canonical index.

\begin{proposition}\label{prop:one-qg}
Let \((V,0)\) be a rational non-Gorenstein normal complex surface singularity admitting a strong \(\mathbb{Q}\)-Gorenstein smoothing \(\pi_0\). Then every one-parameter smoothing \(\pi\) of \((V,0)\) satisfies
\[
 \mu_\pi \geq \mu_{\pi_0},
\]
with equality if and only if \(\pi\) is strongly \(\mathbb{Q}\)-Gorenstein.
\end{proposition}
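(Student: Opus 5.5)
The plan is to reduce both assertions to the numerical criterion of Corollary~\ref{cor:numerical} together with the lower bound $\mu_\pi\geq n+K^2$ of Corollary~\ref{cor:periodic}. Since $(V,0)$ is rational and non-Gorenstein, it is $\mathbb{Q}$-Gorenstein of canonical index $r>1$. Thus both corollaries apply to every one-parameter smoothing $\pi$ of $(V,0)$.

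First apply Corollary~\ref{cor:numerical} to the given strong $\mathbb{Q}$-Gorenstein smoothing $\pi_0$. This yields $\mu_{\pi_0}=n+K^2$ and $\gamma_j(A)=0$ for $2\le j<r$. Next, for an arbitrary smoothing $\pi$, Corollary~\ref{cor:periodic} gives $\mu_\pi\geq n+K^2=\mu_{\pi_0}$, which is the inequality. It comes from $\delta_r(\pi)=\binom r2(\mu_\pi-n-K^2)\ge0$. Equivalently, one may use \eqref{eq:smoothing-difference} with $i=r$ and $\delta_r(\pi_0)=0$.

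For the equality case, suppose that $\mu_\pi=\mu_{\pi_0}=n+K^2$. The vanishing $\gamma_j(A)=0$ for $2\le j<r$ is a property of $A$ alone and was already obtained from $\pi_0$. Corollary~\ref{cor:numerical} therefore shows that $\pi$ is strongly $\mathbb{Q}$-Gorenstein. Alternatively, \eqref{eq:smoothing-difference} gives $\delta_i(\pi)=\delta_i(\pi_0)=0$ for all $i$, and Proposition~\ref{thm:finite} then applies. Conversely, if $\pi$ is strongly $\mathbb{Q}$-Gorenstein, Corollary~\ref{cor:numerical} gives $\mu_\pi=n+K^2=\mu_{\pi_0}$.

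There is no serious obstacle. The only point needing care is that the correction terms $\gamma_j(A)$ depend only on $A$ and not on the smoothing, and Corollary~\ref{cor:periodic} states exactly this. This independence is what lets us transfer the vanishing $\gamma_j(A)=0$ from $\pi_0$ to $\pi$.
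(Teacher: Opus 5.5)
Your proof is correct and is essentially the paper's argument. The paper uses \eqref{eq:smoothing-difference} with $\delta_i(\pi_0)=0$ to get $\delta_i(\pi)=\binom{i}{2}(\mu_\pi-\mu_{\pi_0})$, takes $i=2$ for the inequality, and invokes Proposition~\ref{thm:finite} for the equality case, which is exactly your alternative route; your main route through Corollary~\ref{cor:numerical} just repackages the same ingredients, namely Corollary~\ref{cor:periodic} together with Proposition~\ref{thm:finite}.
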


\begin{proof}
Since \(\pi_0\) is strongly \(\mathbb{Q}\)-Gorenstein, every \(\delta_i(\pi_0)\) vanishes. Equation~\eqref{eq:smoothing-difference} therefore gives
\[
 \delta_i(\pi)
 =\binom{i}{2}(\mu_\pi-\mu_{\pi_0})
 \quad(i \in \mathbb{Z}).
\]
Taking \(i=2\) and using \(\delta_2(\pi)\geq 0\) proves the inequality. Equality holds precisely when every \(\delta_i(\pi)\) vanishes. By Proposition~\ref{thm:finite}, this is equivalent to the strong \(\mathbb{Q}\)-Gorenstein condition.
\end{proof}

Combining the Milnor-number criterion with the dimension comparison gives the following characterization of the components of smallest dimension.

\begin{corollary}\label{cor:qg-minimal-components}
Let \((V,0)\) be a rational non-Gorenstein normal complex surface singularity admitting a strong \(\mathbb{Q}\)-Gorenstein smoothing. A one-parameter smoothing of \((V,0)\) is strongly \(\mathbb{Q}\)-Gorenstein if and only if its corresponding smoothing component has the smallest dimension among all smoothing components.
\end{corollary}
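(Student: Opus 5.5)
The plan is to combine Proposition~\ref{prop:one-qg} with the dimension comparison of Corollary~\ref{prop:component-comparison}. Fix a strong \(\mathbb{Q}\)-Gorenstein smoothing \(\pi_0\) of \((V,0)\). The singularity is rational and non-Gorenstein, so it is not a rational double point, and Wahl's formulas apply. For an arbitrary one-parameter smoothing \(\pi\), with smoothing component of dimension \(\tau_\pi\), equation~\eqref{eq:alpha-component-comparison} gives
\[
 \tau_\pi-\tau_{\pi_0}=2(\mu_\pi-\mu_{\pi_0}).
\]

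By Proposition~\ref{prop:one-qg}, \(\mu_\pi\ge\mu_{\pi_0}\), with equality exactly when \(\pi\) is strongly \(\mathbb{Q}\)-Gorenstein. Substituting this into the displayed identity gives two conclusions.
\begin{enumerate}
\item \(\tau_\pi\ge\tau_{\pi_0}\) for every smoothing \(\pi\). Hence the component of \(\pi_0\) has the smallest dimension among all smoothing components.
\item \(\tau_\pi=\tau_{\pi_0}\) holds precisely when \(\pi\) is strongly \(\mathbb{Q}\)-Gorenstein.
\end{enumerate}

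It follows that \(\pi\) is strongly \(\mathbb{Q}\)-Gorenstein if and only if \(\tau_\pi\) equals the minimum \(\tau_{\pi_0}\). This minimum does not depend on which strong smoothing \(\pi_0\) is chosen, since any two strong smoothings have equal Milnor numbers and therefore equal \(\tau\).

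One bookkeeping point needs checking. Every smoothing component contains some one-parameter smoothing \(\pi\), for example a general curve through the origin in that component, and \(\tau_\pi\) is by definition the dimension of the component containing \(\pi\). So comparing all one-parameter smoothings compares all smoothing components.

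The only possible obstacle is that Milnor numbers, and hence \(\tau_\pi\), are well defined on components. This is already built into Wahl's formulas as used in Corollary~\ref{prop:component-comparison}, so no new argument is needed.
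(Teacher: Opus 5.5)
Your proposal is correct and follows essentially the same route as the paper. Both fix a strong smoothing \(\pi_0\), use Corollary~\ref{prop:component-comparison} to get \(\tau_\pi-\tau_{\pi_0}=2(\mu_\pi-\mu_{\pi_0})\), and invoke Proposition~\ref{prop:one-qg} for nonnegativity of the right side, with vanishing exactly in the strong case; your remarks about components containing one-parameter smoothings are harmless extra bookkeeping.
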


\begin{proof}
Choose a strong \(\mathbb{Q}\)-Gorenstein smoothing \(\pi_0\). By Corollary~\ref{prop:component-comparison},
\[
 \tau_\pi-\tau_{\pi_0}
 =2(\mu_\pi-\mu_{\pi_0})
\]
for every one-parameter smoothing \(\pi\). Proposition~\ref{prop:one-qg} shows that the right side is nonnegative and vanishes precisely when \(\pi\) is strongly \(\mathbb{Q}\)-Gorenstein. Since \(\tau_\pi\) is the dimension of the corresponding smoothing component, the assertion follows.
\end{proof}

The weak \(\mathbb{Q}\)-Gorenstein condition does not by itself impose Cohen--Macaulayness on the canonical cover of the total space; see Wahl~\cite[Remark~3.3]{Wahl2013}. The existence of one strong smoothing allows us to recover the strong condition for every weak smoothing by comparing their Milnor numbers.

\begin{corollary}\label{prop:weak-strong-with-smoothing}
Let \((V,0)\) be a rational non-Gorenstein normal complex surface singularity. If \((V,0)\) admits a strong \(\mathbb{Q}\)-Gorenstein smoothing, then every weakly \(\mathbb{Q}\)-Gorenstein one-parameter smoothing of \((V,0)\) is strongly \(\mathbb{Q}\)-Gorenstein. Consequently, a weakly \(\mathbb{Q}\)-Gorenstein smoothing which is not strong rules out the existence of a strong \(\mathbb{Q}\)-Gorenstein smoothing on any component.
\end{corollary}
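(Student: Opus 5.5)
The plan is to combine Proposition~\ref{prop:weak-numerical} with Proposition~\ref{prop:one-qg}. Fix a strong \(\mathbb{Q}\)-Gorenstein smoothing \(\pi_0\) of \((V,0)\). Since \((V,0)\) is rational and non-Gorenstein, it is \(\mathbb{Q}\)-Gorenstein of canonical index \(r>1\), so all results of Section~\ref{sec:rr} and Section~\ref{sec:cover} apply.

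\emph{Step 1: both smoothings have the same Milnor number.} Let \(\pi\) be a weakly \(\mathbb{Q}\)-Gorenstein one-parameter smoothing. By Proposition~\ref{prop:weak-numerical}, \(\mu_\pi=n+K^2\). The strong smoothing \(\pi_0\) is in particular weak: its total space is a finite quotient of the index-one-cover smoothing, and Proposition~\ref{thm:finite} shows that \(M_r\simeq R\). Hence Proposition~\ref{prop:weak-numerical} also gives \(\mu_{\pi_0}=n+K^2\). Alternatively, Corollary~\ref{cor:numerical} yields \(\mu_{\pi_0}=n+K^2\) directly. Either way, \(\mu_\pi=\mu_{\pi_0}\).

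\emph{Step 2: equality forces the strong condition.} By Proposition~\ref{prop:one-qg}, equality \(\mu_\pi=\mu_{\pi_0}\) holds exactly when \(\pi\) is strongly \(\mathbb{Q}\)-Gorenstein. One can also see this explicitly. By \eqref{eq:smoothing-difference}, \(\delta_i(\pi)=\delta_i(\pi_0)=0\) for every \(i\), and Proposition~\ref{thm:finite} applies.

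Equivalently, in CSRC terms, strongness of \(\pi_0\) gives \(\gamma_j(A)=0\) for \(2\le j<r\) by Corollary~\ref{cor:numerical}. Since these corrections depend only on \(A\), Proposition~\ref{prop:canonical-specialization-representation} gives \(\mathcal{C}_\pi=0\) for every weak \(\pi\).

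\emph{Step 3: the final sentence.} This is the contrapositive. If some weak smoothing is not strong, then no strong \(\mathbb{Q}\)-Gorenstein smoothing can exist on any smoothing component.

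There is no real obstacle here. The only point needing care is confirming that a strong smoothing is weak, or bypassing that question via Corollary~\ref{cor:numerical}, which gives \(\mu_{\pi_0}=n+K^2\) directly.
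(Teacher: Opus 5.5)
Your proposal is correct and matches the paper's proof. Proposition~\ref{prop:weak-numerical} gives \(\mu_\pi=n+K^2=\mu_{\pi_0}\), Proposition~\ref{prop:one-qg} then forces \(\pi\) to be strong, and the final sentence is the contrapositive; your explicit check that \(\pi_0\) is weak (via Proposition~\ref{thm:finite}, or directly via Corollary~\ref{cor:numerical}) supplies a step the paper leaves implicit.
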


\begin{proof}
Choose a strong \(\mathbb{Q}\)-Gorenstein smoothing \(\pi_0\), and let \(\pi\) be weakly \(\mathbb{Q}\)-Gorenstein. Write \(n\) for the number of exceptional components of the minimal good resolution and \(K\) for its numerical canonical cycle. Proposition~\ref{prop:weak-numerical} gives
\[
 \mu_\pi=n+K^2=\mu_{\pi_0}.
\]
Proposition~\ref{prop:one-qg} therefore shows that \(\pi\) is strongly \(\mathbb{Q}\)-Gorenstein. The last assertion is the contrapositive.
\end{proof}

Dao--Iyama--Takahashi--Wemyss~\cite[Theorem~1.3]{DaoIyamaTakahashiWemyss2020} characterize the existence of Gorenstein modifications by the \(\mathbb{Q}\)-Cartier property of the canonical divisor and the Gorenstein property of local canonical covers. In the setting below, the preceding corollary and Proposition~\ref{prop:one-qg} turn this criterion into minimality of the Milnor number.

\begin{corollary}\label{prop:gm-minimal-milnor}
Let \((V,0)\) be a rational non-Gorenstein normal complex surface singularity admitting a strong \(\mathbb{Q}\)-Gorenstein smoothing \(\pi_0\). Let \(\pi\colon(\mathcal{V},0)\to(T,0)\) be another one-parameter smoothing, and put \(R:=\mathcal{O}_{\mathcal{V},0}\). Then \(R\) admits a Gorenstein modification in the sense of \cite{DaoIyamaTakahashiWemyss2020} if and only if \(\mu_\pi=\mu_{\pi_0}\). Under these equivalent conditions, if \(r\) is the canonical index of \((V,0)\), the algebra
\[
 \End_R\left(\bigoplus_{j=0}^{r-1}\omega_R^{[j]}\right)
\]
is a Gorenstein modification of \(R\). If \(\mu_\pi>\mu_{\pi_0}\), then \(R\) admits no noncommutative crepant resolution.
\end{corollary}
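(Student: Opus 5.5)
The plan is to reduce the statement to the Dao--Iyama--Takahashi--Wemyss criterion~\cite[Theorem~1.3]{DaoIyamaTakahashiWemyss2020}. That criterion says, roughly, that a normal Cohen--Macaulay local ring \(R\) of dimension three admits a Gorenstein modification if and only if \(K_R\) is \(\mathbb{Q}\)-Cartier and the local canonical cover \(\widetilde{R}=\bigoplus_{j=0}^{r'-1}\omega_R^{[j]}\) is Gorenstein (in particular Cohen--Macaulay). In that case \(\End_R(\widetilde{R})\) is such a modification. We have already seen that \(R\) is normal and Cohen--Macaulay of dimension three. So it suffices to match these two conditions with \(\mu_\pi=\mu_{\pi_0}\).

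\emph{Step 1} (\(\mu_\pi=\mu_{\pi_0}\) implies a modification). By Proposition~\ref{prop:one-qg}, the equality \(\mu_\pi=\mu_{\pi_0}\) makes \(\pi\) strongly \(\mathbb{Q}\)-Gorenstein. Proposition~\ref{thm:finite} then shows that \(R\) has canonical index \(r\), so \(K_R\) is \(\mathbb{Q}\)-Cartier, and that every \(\omega_R^{[j]}\) is maximal Cohen--Macaulay. Its proof also constructs \(\widetilde{R}=\bigoplus_{j=0}^{r-1}\omega_R^{[j]}\) as a Gorenstein ring, namely the total space of an equivariant smoothing of the Gorenstein index-one cover \(B\). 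Feeding this into the criterion gives a Gorenstein modification. It also shows that the displayed endomorphism algebra is one, since the cover used there has index exactly \(r\).

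\emph{Step 2} (a modification implies \(\mu_\pi=\mu_{\pi_0}\)). Suppose \(R\) admits a Gorenstein modification. The criterion gives that \(K_{\mathcal{V}}\) is \(\mathbb{Q}\)-Cartier, so \(\pi\) is weakly \(\mathbb{Q}\)-Gorenstein. Corollary~\ref{prop:weak-strong-with-smoothing} then makes \(\pi\) strong. Alternatively, Proposition~\ref{prop:weak-numerical} together with Corollary~\ref{cor:periodic} gives directly \(\mu_\pi=n+K^2=\mu_{\pi_0}\). Note that this direction uses only the \(\mathbb{Q}\)-Cartier half of the criterion.

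\emph{Step 3} (no noncommutative crepant resolution when \(\mu_\pi>\mu_{\pi_0}\)). A noncommutative crepant resolution is in particular a Gorenstein modification, because it has finite global dimension and is a maximal Cohen--Macaulay order, hence Gorenstein in the DITW sense. The contrapositive of Step 2 therefore rules out a noncommutative crepant resolution whenever \(\mu_\pi>\mu_{\pi_0}\).

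The main obstacle is matching hypotheses and definitions with \cite{DaoIyamaTakahashiWemyss2020}. Their result is stated for rings whose canonical covers are taken locally, and possibly in a complete or algebraic setting. I would check that their local canonical cover agrees with the analytic cyclic algebra of Section~\ref{qg:sec:relative-cover}, using the index equality \(\indx R=r\), and pass to completions if necessary. Since all modules involved are finite and Cohen--Macaulayness is preserved under completion, this transfer should be harmless.
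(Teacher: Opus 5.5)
Your proposal is correct and follows the paper's proof essentially step for step. Theorem~3.5(2) of Dao--Iyama--Takahashi--Wemyss gives the \(\mathbb{Q}\)-Cartier property, and Corollary~\ref{prop:weak-strong-with-smoothing} with Proposition~\ref{prop:one-qg} then gives \(\mu_\pi=\mu_{\pi_0}\); conversely, Proposition~\ref{thm:finite} supplies the Gorenstein canonical cover needed for Theorem~3.5(1), and the noncommutative crepant resolution claim follows because every such resolution is a Gorenstein modification. Your shortcut through Proposition~\ref{prop:weak-numerical} is also valid, since both \(\mu_\pi\) and \(\mu_{\pi_0}\) equal \(n+K^2\), and the paper simply applies the cited criterion directly to the analytic ring \(R\), without the completion check you flag.
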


\begin{proof}
The ring \(R\) is a three-dimensional Cohen--Macaulay normal local domain with a canonical module and contains \(\mathbb{C}\). If \(R\) admits a Gorenstein modification, then \(K_R\) is \(\mathbb{Q}\)-Cartier by Dao--Iyama--Takahashi--Wemyss~\cite[Theorem~3.5(2)]{DaoIyamaTakahashiWemyss2020}. Thus \(\pi\) is weakly \(\mathbb{Q}\)-Gorenstein. Corollary~\ref{prop:weak-strong-with-smoothing} shows that \(\pi\) is strong, and Proposition~\ref{prop:one-qg} gives \(\mu_\pi=\mu_{\pi_0}\).

Conversely, suppose that \(\mu_\pi=\mu_{\pi_0}\). Proposition~\ref{prop:one-qg} shows that \(\pi\) is strongly \(\mathbb{Q}\)-Gorenstein. By Proposition~\ref{thm:finite}, \(R\) has canonical index \(r\), and its canonical cover
\[
 \widetilde{R}:=\bigoplus_{j=0}^{r-1}\omega_R^{[j]}
\]
is Gorenstein. By Dao--Iyama--Takahashi--Wemyss~\cite[Theorem~3.5(1)]{DaoIyamaTakahashiWemyss2020}, \(\End_R(\widetilde{R})\) is a Gorenstein modification of \(R\). Finally, every noncommutative crepant resolution is a Gorenstein modification, so the last assertion follows.
\end{proof}

The square of the canonical divisor is preserved in a strong \(\mathbb{Q}\)-Gorenstein smoothing of a projective surface. When the singularities are rational and each admits a strong smoothing, the local Milnor-number criterion gives a converse and measures the drop in \(K^2\) for an arbitrary smoothing.

\begin{corollary}\label{prop:global-qg-numerical}
Let \(Y\) be a normal projective complex surface with rational singularities \(P_1,\dotsc,P_s\), each admitting a strong \(\mathbb{Q}\)-Gorenstein smoothing of Milnor number \(\mu_i^0\). Let \(g\colon\mathcal{Y}\to T\) be a flat projective one-parameter smoothing over a smooth pointed curve \((T,0)\), with central fiber \(Y\). Write \(Y_t:=g^{-1}(t)\), and let \(\mu_i\) be the Milnor number of the induced smoothing at \(P_i\). Then, for \(t\ne0\) sufficiently close to \(0\),
\[
 K_Y^2-K_{Y_t}^2
 =\sum_{i=1}^s(\mu_i-\mu_i^0)\geq0.
\]
Equality holds if and only if \(g\) is strongly \(\mathbb{Q}\)-Gorenstein near \(Y\). Equivalently, writing \(e\) for the topological Euler characteristic,
\[
 e(Y_t)\geq e(Y)+\sum_{i=1}^s\mu_i^0,
\]
with equality under the same condition.
\end{corollary}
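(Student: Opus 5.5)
The plan is a global Riemann--Roch computation, the projective analogue of the proof of Theorem~\ref{thm:rr}, followed by a localization of the resulting defect at each singular point and an application of Proposition~\ref{prop:one-qg} there.

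\emph{Global comparison.} Take a projective resolution \(p\colon\overline{X}\to Y\) that is the minimal good resolution near each \(P_i\), and let \(n_i\) and \(K_i\) be the number of exceptional components and the numerical canonical cycle at \(P_i\). Since \(Y\) has rational singularities, \(K_Y\) is \(\mathbb{Q}\)-Cartier and
\[
 K_{\overline{X}}=p^*K_Y+\sum_iK_i,
 \quad\text{hence}\quad
 K_{\overline{X}}^2=K_Y^2+\sum_iK_i^2.
\]
The smooth fibres \(Y_t\) are obtained from \(Y\) by replacing a neighbourhood of each \(P_i\) by its Milnor fibre. Rationality gives \(b_1(M_i)=0\) by Greuel--Steenbrink, so \(e(M_i)=1+\mu_i\) and \(e(Y_t)=e(Y)+\sum_i\mu_i\). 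On the other hand, the resolution replaces each point by a tree of \(n_i\) rational curves, so \(e(\overline{X})=e(Y)+\sum_in_i\). Rational singularities and flatness give \(\chi(\mathcal{O}_{\overline{X}})=\chi(\mathcal{O}_Y)=\chi(\mathcal{O}_{Y_t})\). Noether's formula applied to \(\overline{X}\) and \(Y_t\) then yields
\[
 K_{\overline{X}}^2-K_{Y_t}^2=\sum_i(\mu_i-n_i),
 \quad\text{so}\quad
 K_Y^2-K_{Y_t}^2=\sum_i(\mu_i-n_i-K_i^2).
\]

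\emph{Localization.} By Corollary~\ref{cor:numerical}, or by Proposition~\ref{prop:weak-numerical} applied to the strong smoothing at \(P_i\), which is weak, we have \(\mu_i^0=n_i+K_i^2\). Substituting gives the displayed identity. This step is valid also when some \(P_i\) is a rational double point: then \(K_i=0\), every smoothing is strong, and \(\mu_i=\mu_i^0=n_i\), since the Milnor number of an ADE singularity is its rank. Proposition~\ref{prop:one-qg} gives \(\mu_i\ge\mu_i^0\) at non-Gorenstein points, so the sum is nonnegative.

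\emph{Equality.} The sum vanishes if and only if \(\mu_i=\mu_i^0\) for every \(i\). By Proposition~\ref{prop:one-qg} this holds if and only if each local smoothing is strongly \(\mathbb{Q}\)-Gorenstein; at rational double points the condition is automatic. Away from the \(P_i\) the family is smooth, so being strongly \(\mathbb{Q}\)-Gorenstein near \(Y\) is exactly this local condition at every singular point.

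\emph{Euler characteristic form.} Since \(e(Y_t)-e(Y)=\sum_i\mu_i\), the inequality \(\sum_i\mu_i\ge\sum_i\mu_i^0\) is the stated inequality \(e(Y_t)\ge e(Y)+\sum_i\mu_i^0\), with the same equality condition.

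The main obstacle I expect is justifying the topological comparison \(e(Y_t)=e(Y)+\sum_i\mu_i\) for a global family. The plan is to use the Milnor-ball decomposition: over small balls the family is the local smoothing, and outside them \(g\) is a smooth proper family over a small disc, hence topologically trivial by Ehresmann's theorem.
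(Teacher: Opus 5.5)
Your proposal is correct and follows essentially the same route as the paper. The argument applies Noether's formula on a resolution and on \(Y_t\), using \(e(Y_t)=e(Y)+\sum_i\mu_i\), \(e(\overline{X})=e(Y)+\sum_i n_i\), and the preservation of \(\chi(\mathcal{O})\), and combines this with \(\mu_i^0=n_i+K_i^2\) from Proposition~\ref{prop:weak-numerical} and the local minimality criterion of Proposition~\ref{prop:one-qg}, with rational double points handled separately.

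The differences are cosmetic. You first compute \(K_{\overline{X}}^2-K_{Y_t}^2\) and then localize, whereas the paper writes the single identity \(12\chi(\mathcal{O}_Y)=K_Y^2+e(Y)+\sum_i\mu_i^0\) and subtracts. For the equality clause, the paper also notes that the finitely many local canonical indices have a common multiple; this is what turns your local condition at each singular point into the strong condition for \(g\) near \(Y\).
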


\begin{proof}
At each non-Gorenstein point \(P_i\), Proposition~\ref{prop:one-qg} gives \(\mu_i\geq\mu_i^0\), with equality precisely when the induced smoothing is strongly \(\mathbb{Q}\)-Gorenstein. A rational Gorenstein surface singularity is a rational double point, and every smoothing of such a point is Gorenstein with the same Milnor number. Thus the same conclusion holds at every \(P_i\).

Let \(p\colon X\to Y\) be the minimal good resolution. Write
\[
 K_X=p^*K_Y+\sum_{i=1}^sK_i,
\]
where \(K_i\) is the numerical canonical cycle supported over \(P_i\), and let \(n_i\) be the number of exceptional components over \(P_i\). The divisor \(K_Y\) is \(\mathbb{Q}\)-Cartier because the singularities of \(Y\) are rational. Proposition~\ref{prop:weak-numerical} gives \(\mu_i^0=n_i+K_i^2\) at every non-Gorenstein point. At a rational double point, \(K_i=0\) and \(\mu_i^0=n_i\). Rationality gives \(\chi(\mathcal{O}_X)=\chi(\mathcal{O}_Y)\) and \(e(X)=e(Y)+\sum_i n_i\). Since the exceptional cycles over distinct points are disjoint and orthogonal to \(p^*K_Y\), Noether's formula yields
\[
 12\chi(\mathcal{O}_Y)
 =K_X^2+e(X)
 =K_Y^2+e(Y)+\sum_{i=1}^s\mu_i^0.
\]
Flatness gives \(\chi(\mathcal{O}_{Y_t})=\chi(\mathcal{O}_Y)\), and replacement of the singular points by their Milnor fibers gives \(e(Y_t)=e(Y)+\sum_i\mu_i\). Applying Noether's formula to \(Y_t\) and subtracting proves the identity and inequalities.

Equality holds precisely when every induced local smoothing is strong. By Proposition~\ref{thm:finite}, every reflexive canonical power then specializes at the non-Gorenstein points. The same holds at the Gorenstein points and on the smooth locus. These conditions are local, and the finitely many local canonical indices have a common multiple. They therefore give the strong \(\mathbb{Q}\)-Gorenstein condition for \(g\) near \(Y\). The converse follows by applying the same local criterion.
\end{proof}

\section{Application to rational homology disk smoothings}
\label{sec:qhd}

We show that every rational homology disk smoothing is \(\mathbb{Q}\)-Gorenstein; Theorem~\ref{thm:qhd-qg}. Applying Theorem~\ref{thm:QG-conjecture}, we then obtain quasi-homogeneity and reduce the analytic classification to the known weighted homogeneous cases.

Let \(\pi\colon(\mathcal{V},0) \to (\Delta,0)\) be a rational homology disk (QHD) smoothing of a normal complex surface singularity \((V,0)\), that is, a smoothing whose Milnor fiber has the rational homology of the \(4\)-disk. Durfee's formula then gives \(p_g(V,0)=0\), so \((V,0)\) is rational; see also Stipsicz--Szab\'{o}--Wahl~\cite[Proposition~2.4(1)]{SSW2008}.

Wahl~\cite[Proposition~3.1]{Wahl2013} proved that the total space of every rational homology disk smoothing has a \(\mathbb{Q}\)-Cartier canonical divisor. As noted in Wahl~\cite[Remark~3.3]{Wahl2013}, this does not immediately give an equivariant smoothing of the index-one cover of the central singularity. He established this stronger conclusion in the weighted homogeneous case; Wahl~\cite[Theorem~3.4]{Wahl2013}. We prove it without assuming weighted homogeneity, by showing that every reflexive canonical power specializes.

\begin{theorem}\label{thm:qhd-qg}
Let \(\pi\colon(\mathcal{V},0) \to (\Delta,0)\) be a rational homology disk smoothing of a normal complex surface singularity \((V,0)\). Put \(R:=\mathcal{O}_{\mathcal{V},0}\) and \(A:=\mathcal{O}_{V,0}\), and let \(t\) denote the pullback of a coordinate on \(\Delta\), so that \(A=R/(t)\). Then \(\pi\) is strongly \(\mathbb{Q}\)-Gorenstein, and \(R\) and \(A\) have the same canonical index. For every \(i \in \mathbb{Z}\), the natural map
\begin{equation*}%\label{eq:qhd-rr-base-change}
 \omega_R^{[i]}/t\omega_R^{[i]} \to \omega_A^{[i]}
\end{equation*}
is an isomorphism, and \(\omega_R^{[i]}\) is maximal Cohen--Macaulay.
\end{theorem}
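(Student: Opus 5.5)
The plan is to show that every specialization defect \(\delta_i(\pi)\) vanishes. Proposition~\ref{thm:finite} then gives the strong \(\mathbb{Q}\)-Gorenstein property, equality of canonical indices, the isomorphisms \(\omega_R^{[i]}/t\omega_R^{[i]}\simeq\omega_A^{[i]}\), and maximal Cohen--Macaulayness of every \(\omega_R^{[i]}\). As recalled above, \((V,0)\) is rational by Durfee's formula.

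If \((V,0)\) is Gorenstein, it is a rational double point, so its canonical index is one. In that case every \(\omega^{[i]}\) is free and there is nothing to prove. So assume the canonical index is \(r>1\), and work with the formula of Theorem~\ref{thm:rr} and Corollary~\ref{cor:periodic}.

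\textbf{Step 1 (the Milnor term).} A QHD smoothing has \(\mu_\pi=0\). I will show \(n+K^2=0\), so that \(\mu_\pi-n-K^2=0\) and hence \(\delta_i(\pi)=\gamma_i(A)\) for all \(i\). The input is Stipsicz--Szab\'{o}--Wahl~\cite[Proposition~2.4(2)]{SSW2008}, which gives \(K^2+n=0\) for the minimal good resolution. Alternatively, I can use Wahl's result that \(K_{\mathcal V}\) is \(\mathbb{Q}\)-Cartier~\cite[Proposition~3.1]{Wahl2013} together with Proposition~\ref{prop:weak-numerical}, which yields \(\mu_\pi=n+K^2\) directly. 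Either way, the smoothing is weakly \(\mathbb{Q}\)-Gorenstein with \(\delta_i=\gamma_i(A)\), and \(\delta_r=0\).

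\textbf{Step 2 (vanishing of the periodic corrections).} This is the main obstacle. Write \(\gamma_i(A)=u_i(u_i-K)/2\) with \(u_i=iK+Z_i\), where \(\mathscr{L}_i\simeq\mathcal{O}_X(iK_X+Z_i)\) on a resolution where the full pullback is invertible. The task is to show this quantity is zero, using the lattice property \cite[Proposition~2.4(4)]{SSW2008}: for a QHD smoothing, the discriminant group \(H=H_1(\text{link})\) has order \(m^2\), and the image of \(H_2(M_\pi)\)-orthogonal data gives a subgroup of order \(m\) that is isotropic for the linking form and contains the class of \(K\).

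Concretely, the class of \(\omega_A^{[i]}\) in the local class group \(H\) is \(i[K]\). Its full pullback cycle \(u_i\) is the anti-nef representative determined by this class. The rational number \(u_i(u_i-K)/2 \bmod \mathbb{Z}\) is a quadratic refinement of the linking form evaluated at \(i[K]\). Isotropy of \([K]\) makes it an integer. I will then argue that it is in fact zero by comparison with the global picture: on the compactification of Theorem~\ref{thm:rr}, \(\delta_i=\chi(Y_0,\omega_{Y_0}^{[i]})-\chi(Y_t,\omega_{Y_t}^{i})\). Since the Milnor fiber is a QHD, the local discriminant lattice embeds isotropically. This allows \(\omega_{Y_t}^{i}\) to be compared with a line bundle on the rational homology manifold, whose Euler characteristic is given by ordinary Riemann--Roch with no correction.

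If that global route is too delicate, my fallback is direct. From \(\gamma_i=\gamma_{1-i}\ge0\) and nonnegativity of \(\delta_i\), summing \(\delta_i\) over a period via the Blache/Reid local Riemann--Roch formula expresses \(\sum_{j=0}^{r-1}\gamma_j\) in terms of \(n+K^2\) and an invariant of the linking form. The QHD conditions should force this sum to vanish, and nonnegativity then gives each \(\gamma_j=0\).

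\textbf{Step 3.} With \(\delta_j=0\) for \(2\le j\le r\), Proposition~\ref{thm:finite} (or, equivalently, condition (4) of Proposition~\ref{prop:canonical-specialization-representation}) gives all the stated conclusions.
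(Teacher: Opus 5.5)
\paragraph{Verdict: genuine gap in Step 2.}
Your Steps 1 and 3 are fine. With \(\mu_\pi=0\) and \(K^2=-n\), Theorem~\ref{thm:rr} gives \(\delta_i(\pi)=\gamma_i(A)=u_i(u_i-K)/2\), and once every \(\delta_i\) vanishes, Proposition~\ref{thm:finite} gives all the stated conclusions. The problem is Step 2, which carries the entire content of the theorem.

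\paragraph{Why Step 2 does not work as written.}
\begin{itemize}
\item Your isotropy argument only shows that \(u_i(u_i-K)/2\) is an integer. That is already automatic: after Step 1 this number equals the length \(\delta_i(\pi)\).
\item Information about the discriminant group and linking form modulo \(\mathbb{Z}\) cannot force vanishing. You need an upper bound \(\gamma_i(A)\le 0\) to complement \(\delta_i(\pi)\ge 0\).
\item The ``global comparison'' is not spelled out. Its key claim, that an Euler characteristic is given by Riemann--Roch ``with no correction,'' is exactly the vanishing of \(\gamma_i(A)\) you are trying to prove.
\item The fallback only asserts that the QHD conditions ``should'' make \(\sum_j\gamma_j(A)\) vanish. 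Since each \(\gamma_j(A)\ge 0\), that is equivalent to the claim itself.
\end{itemize}

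\paragraph{The missing idea.}
Use the lattice property of Stipsicz--Szab\'{o}--Wahl~\cite[Proposition~2.4(2), (4)]{SSW2008} integrally, with its definite sign, rather than only modulo \(\mathbb{Z}\).
\begin{enumerate}
\item The resolution lattice embeds in a negative definite diagonal lattice of the same rank \(n\). Under this embedding \(K\) maps to an integral characteristic vector with \(K^2=-n\).
\item Write \(K=\sum_a k_ae_a\) with \(e_a\cdot e_b=-\delta_{ab}\). Each \(k_a\) is odd and \(\sum_a k_a^2=n\), so after changing signs \(K=\sum_a e_a\).
\item This persists on the blown-up resolution where \(\mathscr{L}_i\) is invertible. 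A point blowup adds an orthogonal class \(e\) with \(e^2=-1\) and replaces \(K\) by \(b^*K+e\).
\item Since \(K\) and \(Z_i\) are both integral there, \(u_i=iK+Z_i=\sum_a u_{i,a}e_a\) with \(u_{i,a}\in\mathbb{Z}\).
\end{enumerate}
It follows that
\[
 0\le\delta_i(\pi)=\frac{u_i(u_i-K)}2=-\frac12\sum_{a=1}^n u_{i,a}(u_{i,a}-1)\le 0,
\]
because \(m(m-1)\ge 0\) for every integer \(m\). Hence \(\delta_i(\pi)=0\) for every \(i\).

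\paragraph{Minor point.}
The same data give \(K^2=-n<0\), so \((V,0)\) is not a rational double point and \(r>1\) automatically. Your separate Gorenstein case is therefore vacuous, though harmless.
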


\begin{proof}
Fix \(i \in \mathbb{Z}\), and choose a good resolution \(p\colon(X,E) \to (V,0)\) dominating the minimal good resolution, on which the full pullback of \(\omega_A^{[i]}\) is a line bundle. Write
\[
 E=\sum_{a=1}^n E_a,\quad
 \mathscr{L}_i \simeq \mathcal{O}_X(iK_X+Z_i),\quad
 u_i:=iK+Z_i,
\]
where \(Z_i\) is an integral exceptional divisor and \(K\) is the numerical canonical cycle.

By Stipsicz--Szab\'{o}--Wahl~\cite[Proposition~2.4(2), (4)]{SSW2008}, the minimal good resolution lattice embeds in a negative diagonal lattice of the same rank. Under this embedding, the numerical canonical cycle maps to an integral characteristic element, and its square is the negative of the rank. These properties persist under point blowups. Indeed, a point blowup \(b\) adds an orthogonal class \(e\) with \(e^2=-1\), and the numerical canonical cycle becomes \(b^*K+e\). Thus the chosen resolution lattice also embeds in a negative diagonal lattice of rank \(n\), in which \(K\) is integral and characteristic and \(K^2=-n\).

Choose a diagonal basis \(e_1,\dotsc,e_n\) with \(e_a\cdot e_b=-\delta_{ab}\), and write \(K=\sum_a k_a e_a\). Since \(K\) is characteristic, every \(k_a\) is odd. The equality \(K^2=-n\) gives \(\sum_a k_a^2=n\), so every \(k_a\) is \(1\) or \(-1\). Changing the signs of the basis vectors, we may assume that \(K=\sum_a e_a\).

Since \(\mu_\pi=0\) and \(K^2=-n\), Theorem~\ref{thm:rr} gives
\[
 \delta_i=\frac{u_i(u_i-K)}2.
\]
Both \(K\) and the image of \(Z_i\) are integral in the diagonal lattice. Hence \(u_i=\sum_a u_{i,a}e_a\) with \(u_{i,a} \in \mathbb{Z}\), and
\begin{equation*}%\label{eq:qhd-rr-defect-vanishing}
 0 \leq \delta_i
 =-\frac12\sum_{a=1}^n u_{i,a}(u_{i,a}-1)
 \leq 0.
\end{equation*}
Thus \(\delta_i=0\). Since \(i\) was arbitrary, every reflexive canonical power specializes. Lemma~\ref{prop:depth} also shows that every \(\omega_R^{[i]}\) is maximal Cohen--Macaulay.

The rational singularity \((V,0)\) has finite canonical index \(r\). On its minimal good resolution, \(K^2=-n<0\), so it is not a rational double point. Since a rational Gorenstein surface singularity is a rational double point~\cite[pp.~134--136]{Artin1966}, \((V,0)\) is non-Gorenstein and \(r>1\). Proposition~\ref{thm:finite} now gives index preservation and a \(G\)-equivariant smoothing of the index-one cover.
\end{proof}

Since \((V,0)\) is non-Gorenstein, we can apply Theorem~\ref{thm:QG-conjecture}.

\begin{corollary}\label{cor:qhd-components}
Every normal surface singularity admitting a rational homology disk smoothing is rational and quasi-homogeneous. Every such smoothing \(\pi\) occurs over a one-dimensional smoothing component and satisfies \(\alpha_\pi=0\) and \(\Delta(V,0)=0\).
\end{corollary}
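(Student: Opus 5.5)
The plan is to combine Theorem~\ref{thm:qhd-qg} with Wahl's smoothing formula \eqref{eq:wahl} and the positivity Theorem~\ref{thm:main}, in that order.

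First, rationality comes from Durfee's formula, as recalled at the start of this section. By the last paragraph of the proof of Theorem~\ref{thm:qhd-qg}, the minimal good resolution has \(K^2=-n<0\), so \((V,0)\) is not a rational double point and is non-Gorenstein.

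Second, Theorem~\ref{thm:qhd-qg} says that \(\pi\) is strongly \(\mathbb{Q}\)-Gorenstein and that every reflexive canonical power specializes. In particular \(\alpha_\pi=\delta_{-1}(\pi)=0\). This also follows from Wahl's Lemma~5.1, or directly from Lemma~\ref{prop:duality}.

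Third, apply Wahl's formula \eqref{eq:wahl}:
\[
 \tau_\pi=1+\mu_\pi+\alpha_\pi-\Delta(V,0).
\]
With \(\mu_\pi=0\) (a QHD Milnor fiber has \(b_2=0\)) and \(\alpha_\pi=0\), this becomes
\[
 \tau_\pi=1-\Delta(V,0).
\]
Theorem~\ref{thm:main}, applied in the non-Gorenstein \(\mathbb{Q}\)-Gorenstein case, gives \(\Delta(V,0)\geq 0\). Hence \(\tau_\pi\leq 1\).

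Fourth, the smoothing component containing \(\pi\) has dimension at least one, since it carries the one-parameter smoothing \(\pi\). So \(\tau_\pi\geq 1\), and therefore \(\tau_\pi=1\) and \(\Delta(V,0)=0\). The equality clause of Theorem~\ref{thm:main}, or equivalently of Theorem~\ref{thm:QG-conjecture} with \(1+\mu_\pi=\tau_\pi\), then gives quasi-homogeneity.

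The only step needing care is the bound \(\tau_\pi\geq 1\), that is, that a smoothing component is positive dimensional. It holds because \(\pi\) is a nontrivial smoothing: its general fiber is smooth while \((V,0)\) is singular. Hence the image of \(T\) in the semi-universal base is a nonconstant curve germ lying in that component.
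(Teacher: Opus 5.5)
Your proposal is correct and follows the paper's proof: strong \(\mathbb{Q}\)-Gorenstein by Theorem~\ref{thm:qhd-qg}, then \(\mu_\pi=0\) and \(\tau_\pi\geq 1\) squeeze \(\tau_\pi=1\), and the equality criterion gives quasi-homogeneity, with \(\alpha_\pi=0\) and \(\Delta(V,0)=0\). The only difference is cosmetic. The paper cites Theorem~\ref{thm:QG-conjecture} directly, whereas you unpack it into Wahl's formula \eqref{eq:wahl} plus Theorem~\ref{thm:main}, which is exactly how that theorem is proved.
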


\begin{proof}
By Theorem~\ref{thm:qhd-qg}, the smoothing is strongly \(\mathbb{Q}\)-Gorenstein. Since \(\mu_\pi=0\) and a smoothing component has positive dimension, Theorem~\ref{thm:QG-conjecture} gives
\[
 1 \leq \tau_\pi \leq 1.
\]
Hence \(\tau_\pi=1\), and the equality criterion gives quasi-homogeneity. Theorem~\ref{thm:qhd-qg} and Lemma~\ref{prop:duality} give \(\alpha_\pi=\delta_{-1}=0\). Wahl's formula~\eqref{eq:wahl} therefore gives
\[
 \Delta(V,0)=1+\mu_\pi-\tau_\pi+\alpha_\pi=0. \qedhere
\]
\end{proof}

Corollary~\ref{cor:qhd-components} reduces the classification to the weighted homogeneous singularities studied by Stipsicz--Szab\'{o}--Wahl~\cite{SSW2008} and Bhupal--Stipsicz~\cite{BhupalStipsicz2011}. Their resolution graphs are the cyclic quotient graphs and the star-shaped graphs listed in Bhupal--Stipsicz~\cite[Figures~1 and~2]{BhupalStipsicz2011}; see Fowler~\cite[Theorem~1]{Fowler2013} and Wahl~\cite[\S1]{Wahl2021}. The cyclic quotient graphs and the valency-three graphs in Figure~1 are taut by Laufer~\cite{Laufer1973}, so each determines a unique analytic type. For the valency-four graphs, Fowler~\cite[Theorem~5]{Fowler2013} determines the admissible cross ratios in these three families.

For the cross ratios below, let \(x_1,x_2,x_3,x_4\) be the ordered attachment points on the central curve and set \(\lambda:=\varphi(x_4)\), where \(\varphi\) is the unique isomorphism to \(\mathbb{P}^1\) sending \(x_1,x_2,x_3\) to \(0,1,\infty\), respectively. In Fowler's basic models of types \((3,3,3;4)\), \((2,4,4;3)\), and \((2,3,6;2)\), we order the neighboring curves as \((L_{02},L_{03},L_{04},L_{01})\), \((L_{01},L_{02},L_{03},L_{05})\), and \((L_1,A_2,L_4,A_1)\), respectively; see Fowler~\cite[Examples~3.5.6, 3.5.10, and~3.5.14]{Fowler2013}.

\begin{corollary}[Complete analytic classification]\label{thm:qhd-classification}
The normal complex surface singularities admitting a rational homology disk smoothing are precisely the following:
\begin{enumerate}
\item the cyclic quotient singularities
\[
 \frac1{p^2}(1,pq-1), p>q>0, \gcd(p,q)=1;
\]
\item the weighted homogeneous singularities determined by the taut graphs in Bhupal--Stipsicz~\cite[Figure~1]{BhupalStipsicz2011};
\item the weighted homogeneous singularities with graphs in Bhupal--Stipsicz~\cite[Figure~2]{BhupalStipsicz2011}, with the cross ratio \(\lambda=e^{\pm\pi i/3}\) for \((3,3,3;4)\), \(\lambda=2\) for \((2,4,4;3)\), and \(\lambda=9\) for \((2,3,6;2)\), under the convention above.
\end{enumerate}
\end{corollary}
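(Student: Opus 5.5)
The plan is to prove the two inclusions separately. Necessity comes from Corollary~\ref{cor:qhd-components} together with the existing weighted homogeneous classification. Sufficiency comes from the known constructions of rational homology disk smoothings.

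\emph{Necessity.} Let \((V,0)\) admit a rational homology disk smoothing. By Corollary~\ref{cor:qhd-components}, \((V,0)\) is rational and quasi-homogeneous, so it carries a good \(\mathbb{C}^*\)-action. Its resolution graph is therefore either a chain (cyclic quotient case) or star-shaped with a central curve. At this point the combinatorial classification of graphs admitting rational homology disk smoothings applies: Stipsicz--Szab\'{o}--Wahl~\cite{SSW2008} and Bhupal--Stipsicz~\cite{BhupalStipsicz2011}, as recorded in Fowler~\cite[Theorem~1]{Fowler2013}. It shows that the graph is one of the cyclic quotient graphs \(\frac1{p^2}(1,pq-1)\) or one of the star-shaped graphs in Figures~1 and~2 of~\cite{BhupalStipsicz2011}.

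Next I would pass from graphs to analytic type, treating the two kinds of graph separately.
\begin{enumerate}
\item Cyclic quotient graphs and the valency-three graphs of Figure~1 are taut by Laufer~\cite{Laufer1973}, so the graph determines the singularity.
\item For the valency-four graphs of Figure~2, a weighted homogeneous singularity is determined by its Seifert invariants together with the cross ratio of the four attachment points on the central curve (Pinkham's description).
\end{enumerate}
For case (ii), Fowler~\cite[Theorem~5]{Fowler2013} shows that a rational homology disk smoothing exists only for the listed cross ratios. These are \(\lambda=e^{\pm\pi i/3}\), \(\lambda=2\), and \(\lambda=9\), under the ordering convention fixed above.

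The main obstacle is that Fowler's theorem must be applied without assuming weighted homogeneity. This is exactly what Corollary~\ref{cor:qhd-components} supplies, since it makes any such singularity weighted homogeneous. A second point needs care: the stated cross ratios must be matched with Fowler's normalization. I would check this by transcribing the ordering \((L_{02},L_{03},L_{04},L_{01})\) and its analogues from Fowler's Examples~3.5.6, 3.5.10 and~3.5.14 into the map \(\varphi\). This should also show that permuting the attachment points within the symmetry of the graph does not create additional values, apart from the conjugate pair \(e^{\pm\pi i/3}\).

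\emph{Sufficiency.} Every singularity in the list does admit a rational homology disk smoothing. For the cyclic quotients this follows from Wahl's construction~\cite{Wahl1981} and the \(\mathbb{Q}\)-Gorenstein smoothing of \(\frac1{p^2}(1,pq-1)\) as a quotient of an \(A_{p-1}\) smoothing. For the star-shaped families, it follows from the constructions of Stipsicz--Szab\'{o}--Wahl~\cite{SSW2008} and Fowler~\cite{Fowler2013}, which realize the smoothing in each listed analytic type.
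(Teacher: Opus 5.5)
Your proposal is correct and follows the paper's route. Corollary~\ref{cor:qhd-components} supplies rationality and weighted homogeneity; the weighted homogeneous graph classification (Stipsicz--Szab\'{o}--Wahl, Bhupal--Stipsicz, Fowler's Theorem~1), Laufer's tautness for the cyclic quotient and valency-three graphs, and Fowler's Theorem~5 for the valency-four cross ratios then give the list, and you simply spell out the sufficiency direction via the known constructions, which the paper leaves implicit in citing that classification as an if-and-only-if statement.
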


\begin{remark}\label{rem:qhd-csrc-application}
Theorem~\ref{thm:qhd-qg} and Corollary~\ref{cor:qhd-components} show that every normal surface singularity admitting a rational homology disk smoothing is rational and weighted homogeneous with \(\Delta(V,0)=0\), and that every such smoothing \(\pi\) is strongly \(\mathbb{Q}\)-Gorenstein with \(\tau_\pi=1\). The strong \(\mathbb{Q}\)-Gorenstein condition implies that the canonical-specialization representation of Proposition~\ref{prop:canonical-specialization-representation} vanishes. Expressing the vanishing of its character multiplicities in terms of the Seifert data of weighted homogeneous surface singularities and combining the resulting equations with Hodge inequalities yields explicit formulas for the Seifert data in many cases. Park--Shin~\cite{ParkShinQHD} combine these equations with the properties above and mainly algebraic-geometric arguments to recover the Bhupal--Stipsicz~\cite{BhupalStipsicz2011} classification of weighted homogeneous surface singularities admitting rational homology disk smoothings.
\end{remark}


\begin{thebibliography}{99}
\bibitem{Ambro2016}
F.~Ambro, \textit{Cyclic covers and toroidal embeddings}, Eur. J. Math. \textbf{2} (2016), 9--44, \href{https://doi.org/10.1007/s40879-015-0084-y}{doi:10.1007/s40879-015-0084-y}.

\bibitem{Artin1966}
M.~Artin, \textit{On isolated rational singularities of surfaces}, Amer. J. Math. \textbf{88} (1966), no.~1, 129--136, \href{https://doi.org/10.2307/2373050}{doi:10.2307/2373050}.

\bibitem{AtiyahBott1967}
M.~F. Atiyah and R.~Bott, \textit{A Lefschetz fixed point formula for elliptic complexes. I}, Ann. of Math. (2) \textbf{86} (1967), no.~2, 374--407, \href{https://doi.org/10.2307/1970694}{doi.org/10.2307/1970694}; \textit{II. Applications}, \textbf{88} (1968), no.~3, 451--491, \href{https://doi.org/10.2307/1970721}{doi.org/10.2307/1970721}.

\bibitem{AvendanoCamachoVorobiev2011}
M.~Avenda\~{n}o Camacho and Y.~M. Vorobiev, \textit{Homological equations for tensor fields and periodic averaging}, Russian J. Math. Phys. \textbf{18} (2011), no.~3, 243--257, \href{https://doi.org/10.1134/S1061920811030010}{doi.org/10.1134/S1061920811030010}.

\bibitem{BPSS2026}
M. Beke, H. Park, D. Shin, A. I. Stipsicz, \textit{Normal complex surface singularities with rational homology disk smoothings}, \href{https://doi.org/10.48550/arXiv.2610.20991}{arXiv:2610.20991}.

\bibitem{BergnerGraf2020}
H.~Bergner and P.~Graf, \textit{The Lipman--Zariski conjecture in genus one higher}, Forum Math. Sigma \textbf{8} (2020), e21, 16 pp., \href{https://doi.org/10.1017/fms.2020.19}{doi.org/10.1017/fms.2020.19}.

\bibitem{BhupalStipsicz2011}
M.~Bhupal and A.~I. Stipsicz, \textit{Weighted homogeneous singularities and rational homology disk smoothings}, Amer. J. Math. \textbf{133} (2011), no.~5, 1259--1297, \href{https://doi.org/10.1353/ajm.2011.0036}{doi:10.1353/ajm.2011.0036}.

\bibitem{Blache1995}
R.~Blache, \textit{Riemann--Roch theorem for normal surfaces and applications}, Abh. Math. Sem. Univ. Hamburg \textbf{65} (1995), 307--340, \href{https://doi.org/10.1007/BF02953338}{doi:10.1007/BF02953338}.

\bibitem{BuchweitzGreuel1980}
R.-O.~Buchweitz and G.-M.~Greuel, \textit{The Milnor number and deformations of complex curve singularities}, Invent. Math. \textbf{58} (1980), no.~3, 241--281, \href{https://doi.org/10.1007/BF01390254}{doi.org/10.1007/BF01390254}.

\bibitem{DaoIyamaTakahashiWemyss2020}
H.~Dao, O.~Iyama, R.~Takahashi and M.~Wemyss, \textit{Gorenstein modifications and \(\mathbb{Q}\)-Gorenstein rings}, J. Algebraic Geom. \textbf{29} (2020), no.~4, 729--751, \href{https://doi.org/10.1090/jag/760}{doi:10.1090/jag/760}.

\bibitem{DeJongVanStraten1992}
T.~de Jong and D.~van Straten, \textit{A construction of \(\mathbb{Q}\)-Gorenstein smoothings of index two}, Internat. J. Math. \textbf{3} (1992), no.~3, 341--347, \href{https://doi.org/10.1142/S0129167X92000126}{doi:10.1142/S0129167X92000126}.

\bibitem{DuYau2010}
R.~Du and S.~S.-T.~Yau, \textit{Local holomorphic De Rham cohomology}, Comm. Anal. Geom. \textbf{18} (2010), no.~2, 365--374, \href{https://doi.org/10.4310/CAG.2010.v18.n2.a4}{doi:10.4310/CAG.2010.v18.n2.a4}.

\bibitem{EbelingGuseinZadeSeade2004}
W.~Ebeling, S.~M. Gusein-Zade, and J.~Seade, \textit{Homological index for 1-forms and a Milnor number for isolated singularities}, Internat. J. Math. \textbf{15} (2004), no.~9, 895--905, \href{https://doi.org/10.1142/S0129167X04002624}{doi.org/10.1142/S0129167X04002624}.

\bibitem{EsnaultViehweg1992}
H.~Esnault and E.~Viehweg, \textit{Lectures on vanishing theorems}, DMV Seminar, vol.~20, Birkh\"{a}user, Basel, 1992, \href{https://doi.org/10.1007/978-3-0348-8600-0}{doi:10.1007/978-3-0348-8600-0}.

\bibitem{FavreRuggiero2014}
C.~Favre and M.~Ruggiero, \textit{Normal surface singularities admitting contracting automorphisms}, Ann. Fac. Sci. Toulouse Math. (6) \textbf{23} (2014), no.~4, 797--828, \href{https://doi.org/10.5802/afst.1425}{doi.org/10.5802/afst.1425}.

\bibitem{FlennerZaidenberg1994}
H.~Flenner and M.~Zaidenberg, \textit{\(\mathbb{Q}\)-acyclic surfaces and their deformations}, in \textit{Classification of algebraic varieties} (L'Aquila, 1992), Contemp. Math., vol.~162, Amer. Math. Soc., Providence, RI, 1994, pp.~143--208, \href{https://doi.org/10.1090/conm/162/01532}{doi.org/10.1090/conm/162/01532}.

\bibitem{Fowler2013}
J.~R. Fowler, \textit{Rational homology disk smoothing components of weighted homogeneous surface singularities}, Ph.D. thesis, University of North Carolina at Chapel Hill, 2013, \href{https://cdr.lib.unc.edu/concern/dissertations/b2773w797} {UNC Digital Repository}.

\bibitem{GrauertRiemenschneider1970}
H.~Grauert and O.~Riemenschneider, \textit{Verschwindungss\"{a}tze f\"{u}r analytische Kohomologiegruppen auf komplexen R\"{a}umen}, Invent. Math. \textbf{11} (1970), 263--292, \href{https://doi.org/10.1007/BF01403182} {doi:10.1007/BF01403182}.

\bibitem{Greuel1980}
G.-M.~Greuel, \textit{Dualit\"{a}t in der lokalen Kohomologie isolierter Singularit\"{a}ten}, Math. Ann. \textbf{250} (1980), 157--173, \href{https://doi.org/10.1007/BF02599793} {doi:10.1007/BF02599793}.

\bibitem{GreuelSteenbrink1983}
G.-M.~Greuel and J.~H.~M. Steenbrink, \textit{On the topology of smoothable singularities}, Proc. Sympos. Pure Math. \textbf{40}, Part~1 (1983), 535--545, \href{https://doi.org/10.1090/pspum/040.1/713090} {doi:10.1090/pspum/040.1/713090}.

\bibitem{JorderExtension}
C.~Jörder, \textit{A weak version of the Lipman--Zariski conjecture}, Math. Z. \textbf{278} (2014), nos.~3--4, 893--899, \href{https://doi.org/10.1007/s00209-014-1337-z} {doi:10.1007/s00209-014-1337-z}.

\bibitem{JorderPoincare2014}
C.~Jörder, \textit{On the Poincar\'{e} Lemma for reflexive differential forms}, preprint, 2014, \href{https://arxiv.org/abs/1401.7495}{arxiv:1401.7495}.

\bibitem{KollarShepherdBarron1988}
J.~Koll\'{a}r and N.~I. Shepherd-Barron, \textit{Threefolds and deformations of surface singularities}, Invent. Math. \textbf{91} (1988), no.~2, 299--338, \href{https://doi.org/10.1007/BF01389370} {doi:10.1007/BF01389370}.

\bibitem{Konno2008}
K.~Konno, \textit{On the cohomological cycle of a normal surface singularity}, RIMS K\^{o}ky\^{u}roku Bessatsu \textbf{B9} (2008), 143--151, %\url{https://www.kurims.kyoto-u.ac.jp/~kenkyubu/bessatsu/open/B9/pdf/B09_007.pdf}.

\bibitem{Laufer1973}
H.~B. Laufer, \textit{Taut two-dimensional singularities}, Math. Ann. \textbf{205} (1973), 131--164, \href{https://doi.org/10.1007/BF01350842} {doi:10.1007/\allowbreak BF01350842}.

\bibitem{LeeNakayama2018}
Y.~Lee and N.~Nakayama, \textit{Grothendieck duality and \(\mathbb{Q}\)-Gorenstein morphisms}, Publ. Res. Inst. Math. Sci. \textbf{54} (2018), 517--648, \href{https://doi.org/10.4171/PRIMS/54-3-3}{doi:10.4171/PRIMS/54-3-3}.

\bibitem{LimaPereiraEtAl2024}
B.~K. Lima-Pereira, J.~J. Nu\~{n}o-Ballesteros, B.~Or\'{e}fice-Okamoto, and J.~N. Tomazella, \textit{The Bruce--Roberts numbers of a function on an ICIS}, Q. J. Math. \textbf{75} (2024), no.~1, 31--50, \href{https://doi.org/10.1093/qmath/haad038} {doi:10.1093/qmath/haad038}.

\bibitem{Lipman1969}
J.~Lipman, \textit{Rational singularities with applications to algebraic surfaces and unique factorization}, Publ. Math. Inst. Hautes \'{E}tudes Sci. \textbf{36} (1969), 195--279, \href{https://doi.org/10.1007/BF02684604}{doi:10.1007/BF02684604}.

\bibitem{Looijenga1986}
E.~Looijenga, \textit{Riemann--Roch and smoothings of singularities}, Topology \textbf{25} (1986), no.~3, 293--302, \href{https://doi.org/10.1016/0040-9383(86)90045-5} {doi:10.1016/0040-9383(86)90045-5}.

\bibitem{LooijengaSteenbrink1985}
E.~Looijenga and J.~H.~M.~Steenbrink, \textit{Milnor number and Tjurina number of complete intersections}, Math. Ann. \textbf{271} (1985), no.~1, 121--124, \href{https://doi.org/10.1007/BF01455800} {doi:10.1007/BF01455800}.

\bibitem{LooijengaWahl1986}
E.~Looijenga and J.~Wahl, \textit{Quadratic functions and smoothing surface singularities}, Topology \textbf{25} (1986), no.~3, 261--291, \href{https://doi.org/10.1016/0040-9383(86)90044-3} {doi:10.1016/0040-9383(86)90044-3}.

\bibitem{Matsuki2002}
K.~Matsuki, \textit{Introduction to the Mori Program}, Universitext, Springer-Verlag, New York, 2002, \href{https://doi.org/10.1007/978-1-4757-5602-9} {doi:10.1007/978-1-4757-5602-9}.

\bibitem{Milnor}
J.~Milnor, \textit{Singular points of complex hypersurfaces}, Annals of Mathematics Studies, no.~61, Princeton University Press, Princeton, NJ, 1968, \href{https://doi.org/10.1515/9781400881819}{doi.org/10.1515/9781400881819}.

\bibitem{Mueller}
G.~M\"{u}ller, \textit{Symmetries of surface singularities}, J. London Math. Soc. (2) \textbf{59} (1999), no.~2, 491--506, \href{https://doi.org/10.1112/S0024610799007188} {doi:10.1112/S0024610799007188}.

\bibitem{NagyNemethi2019}
J.~Nagy and A.~N\'{e}methi, \textit{The Abel map for surface singularities I: generalities and examples}, Math. Ann. \textbf{375} (2019), 1427--1487, \href{https://doi.org/10.1007/s00208-019-01873-w}{doi:10.1007/s00208-019-01873-w}.

\bibitem{Okuma1997}
T.~Okuma, \textit{The plurigenera of Gorenstein surface singularities}, Manuscripta Math. \textbf{94} (1997), 187--194, \href{https://doi.org/10.1007/BF02677846}{doi.org/10.1007/BF02677846}.

\bibitem{ParkShinNonlinear}
H.~Park and D.~Shin, \textit{Nonlinear neighborhoods and arbitrarily negative Wahl defects}, \href{https://doi.org/10.48550/arXiv.2610.21093}{arXiv.2610.21093}.

\bibitem{ParkShinQHD}
H.~Park and D.~Shin, \textit{Weighted homogeneous surface singularities admitting rational homology disk smoothings}, preprint 2026.

\bibitem{ParkShinTaut}
H.~Park and D.~Shin, \textit{The classification of taut surface singularities via representations from logarithmic vector fields}, preprint 2026.

\bibitem{PopescuPampuSeade2009}
P.~Popescu-Pampu and J.~Seade, \textit{A finiteness theorem for dual graphs of surface singularities}, Internat. J. Math. \textbf{20} (2009), no.~8, 1057--1068, \href{https://doi.org/10.1142/S0129167X09005649}{doi:10.1142/S0129167X09005649}.

\bibitem{PopescuPampu2011}
P.~Popescu-Pampu, \textit{Numerically Gorenstein surface singularities are homeomorphic to Gorenstein ones}, Duke Math. J. \textbf{159} (2011), no.~3, 539--559, \href{https://doi.org/10.1215/00127094-1433412}{doi:10.1215/00127094-1433412}.

\bibitem{Reid1987}
M.~Reid, \textit{Young person's guide to canonical singularities}, in \textit{Algebraic Geometry, Bowdoin, 1985} (Brunswick, Maine, 1985), Proc. Sympos. Pure Math. \textbf{46}, Part~1, Amer. Math. Soc., Providence, RI, 1987, 345--414, \href{https://doi.org/10.1090/pspum/046.1/927963} {doi:10.1090/pspum/046.1/927963}.

\bibitem{RoulleauUrzua2015}
X.~Roulleau and G.~Urz\'{u}a, \textit{Chern slopes of simply connected complex surfaces of general type are dense in \([2,3]\)}, Ann. of Math. (2) \textbf{182} (2015), no.~1, 287--306, \href{https://doi.org/10.4007/annals.2015.182.1.6} {doi:10.4007/annals.2015.182.1.6}.

\bibitem{Saito1971}
K.~Saito, \textit{Quasihomogene isolierte Singularit\"{a}ten von Hyperfl\"{a}chen}, Invent. Math. \textbf{14} (1971), no.~2, 123--142, \href{https://doi.org/10.1007/BF01405360} {doi:10.1007/BF01405360}.

\bibitem{Saito1974}
K.~Saito, \textit{Einfach-elliptische Singularit\"{a}ten}, Invent. Math. \textbf{23} (1974), no.~3, 289--325, \href{https://doi.org/10.1007/BF01389749}{doi:10.1007/\allowbreak BF01389749}.

\bibitem{SchejaWiebe1980}
G.~Scheja and H.~Wiebe, \textit{Zur Chevalley-Zerlegung von Derivationen}, Manuscripta Math. \textbf{33} (1980/81), no.~2, 159--176, \href{https://doi.org/10.1007/BF01316974} {doi:10.1007/BF01316974}.

\bibitem{Serre1977}
J.-P. Serre, \textit{Linear Representations of Finite Groups}, translated from the French by L.~L. Scott, Graduate Texts in Mathematics, vol.~42, Springer-Verlag, New York--Heidelberg, 1977, \href{https://doi.org/10.1007/978-1-4684-9458-7}{doi.org/10.1007/978-1-4684-9458-7}.

\bibitem{Singh2003}
A.~K. Singh, \textit{Cyclic covers of rings with rational singularities}, Trans. Amer. Math. Soc. \textbf{355} (2003), no.~3, 1009--1024, \href{https://doi.org/10.1090/S0002-9947-02-03186-0} {doi:10.1090/S0002-9947-02-03186-0}.

\bibitem{Stacks}
The Stacks Project Authors, \textit{The Stacks Project}, \href{https://stacks.math.columbia.edu} {https://stacks.math.columbia.edu}.

\bibitem{Steenbrink1995}
J.~H.~M. Steenbrink, \textit{Monodromy and weight filtration for smoothings of isolated singularities}, Compositio Math. \textbf{97} (1995), no.~1--2, 285--293, \href{https://www.numdam.org/item/CM_1995__97_1-2_285_0/} {Numdam: CM\_1995\_\_97\_1-2\_285\_0}.

\bibitem{SSW2008}
A.~I. Stipsicz, Z.~Szab\'{o}, and J.~Wahl, \textit{Rational blowdowns and smoothings of surface singularities}, J. Topol. \textbf{1} (2008), no.~2, 477--517, \href{https://doi.org/10.1112/jtopol/jtn009} {doi:10.1112/jtopol/jtn009}.

\bibitem{VanStratenSteenbrink1985}
D.~van Straten and J.~H.~M.~Steenbrink, \textit{Extendability of holomorphic differential forms near isolated hypersurface singularities}, Abh. Math. Sem. Univ. Hamburg \textbf{55} (1985), 97--109, \href{https://doi.org/10.1007/BF02941491}{doi:10.1007/BF02941491}.

\bibitem{Vosegaard2002}
H.~Vosegaard, \textit{A characterization of quasi-homogeneous complete intersection singularities}, J. Algebraic Geom. \textbf{11} (2002), no.~3, 581--597, \href{https://doi.org/10.1090/S1056-3911-02-00298-9} {doi:10.1090/S1056-3911-02-00298-9}.

\bibitem{Wahl1981}
J.~Wahl, \textit{Smoothings of normal surface singularities}, Topology \textbf{20} (1981), no.~3, 219--246, \href{https://doi.org/10.1016/0040-9383(81)90001-X} {doi:10.1016/0040-9383(81)90001-X}.

\bibitem{Wahl1985}
J.~Wahl, \textit{A characterization of quasi-homogeneous Gorenstein surface singularities}, Compositio Math. \textbf{55} (1985), no.~3, 269--288, \href{http://www.numdam.org/item/CM_1985__55_3_269_0/} {Numdam: CM\_1985\_\_55\_3\_269\_0}.

%%\bibitem{Wahl2011}
%%J.~Wahl, \textit{On rational homology disk smoothings of valency 4 surface singularities}, Geom. Topol. \textbf{15} (2011), no.~2, 1125--1156, \href{https://doi.org/10.2140/gt.2011.15.1125}{doi:10.2140/gt.2011.15.1125}.

\bibitem{Wahl2013}
J. Wahl, \textit{Log-terminal smoothings of graded normal surface singularities}, Michigan Math. J. \textbf{62} (2013), 475--489. DOI: 10.1307/mmj/1378757884.

\bibitem{Wahl2015}
J.~Wahl, \textit{Milnor and Tjurina numbers for smoothings of surface singularities}, Algebraic Geometry \textbf{2} (2015), no.~3, 315--331, \href{https://doi.org/10.14231/AG-2015-014}{doi:10.14231/AG-2015-014}.

\bibitem{Wahl2021}
J.~Wahl, \textit{Complex surface singularities with rational homology disk smoothings}, in \textit{Singularities and Their Interaction with Geometry and Low Dimensional Topology}, Trends in Mathematics, Birkh\"{a}user/Springer, Cham, 2021, 259--287, \href{https://doi.org/10.1007/978-3-030-61958-9_12} {doi:10.1007/978-3-030-61958-9\_12}.

\bibitem{Watanabe1981}
K.~Watanabe, \textit{Some remarks concerning Demazure's construction of normal graded rings}, Nagoya Math. J. \textbf{83} (1981), 203--211, \href{https://doi.org/10.1017/S0027763000019498}{doi:10.1017/S0027763000019498}.


\end{thebibliography}
\end{document}